\theoremstyle{plain}
\newtheorem{thm}{Theorem}[section]
\newtheorem{lem}[thm]{Lemma}
\newtheorem{prop}[thm]{Proposition}
\def\@rst #1 #2other{#1}
\newcommand\MR[1]{\relax\ifhmode\unskip\spacefactor3000 \space\fi
  \MRhref{\expandafter\@rst #1 other}{#1}}
\newcommand{\MRhref}[2]{\href{http://www.ams.org/mathscinet-getitem?mr=#1}{MR#2}}
\theoremstyle{definition}
\newtheorem{defn}[thm]{Definition}
\newtheorem{remark}[thm]{Remark}
\numberwithin{equation}{section}
\newcommand{\dsb}{\begin{adjustwidth}{2.5em}{0pt}
\begin{footnotesize}}
\newcommand{\dse}{\end{footnotesize}
\end{adjustwidth}}
\newcommand{\ssb}{\begin{adjustwidth}{2.5em}{0pt}}
\newcommand{\sse}{\end{adjustwidth}}
\newcommand{\aryb}{\begin{eqnarray*}}
\newcommand{\arye}{\end{eqnarray*}}
\def\alb#1\ale{\begin{align*}#1\end{align*}}
\def\allb#1\alle{\begin{align}#1\end{align}}
\newcommand{\eqb}{\begin{equation}}
\newcommand{\eqe}{\end{equation}}
\newcommand{\eqbn}{\begin{equation*}}
\newcommand{\eqen}{\end{equation*}}
\newcommand{\BB}{\mathbbm}
\newcommand{\ol}{\overline}
\newcommand{\op}{\operatorname}
\newcommand{\eqD}{\overset{d}{=}}
\newcommand{\ep}{\varepsilon}
\newcommand{\rta}{\rightarrow}
\newcommand{\wt}{\widetilde}
\newcommand{\wh}{\widehat}
\newcommand{\mcl}{\mathcal}
\newcommand{\bdy}{\partial}
\let\originalleft\left
\let\originalright\right
\renewcommand{\left}{\mathopen{}\mathclose\bgroup\originalleft}
\renewcommand{\right}{\aftergroup\egroup\originalright}
\title{Geodesics and metric ball boundaries in Liouville quantum gravity}
 \date{ }
 \author{
\begin{tabular}{c} Ewain Gwynne\\[-5pt]\small Chicago \end{tabular}
\begin{tabular}{c} Joshua Pfeffer\\[-5pt]\small Columbia \end{tabular} 
\begin{tabular}{c} Scott Sheffield\\[-5pt]\small MIT \end{tabular} 
}
\begin{document}

\maketitle

\begin{abstract}
Recent works have shown that there is a canonical way to to assign a metric (distance function) to a Liouville quantum gravity (LQG) surface for any parameter $\gamma \in (0,2)$. We establish a strong confluence property for LQG geodesics, which generalizes a result proven by Angel, Kolesnik and Miermont for the Brownian map. Using this property, we also establish zero-one laws for the Hausdorff dimensions of geodesics, metric ball boundaries, and metric nets w.r.t.\ the Euclidean or LQG metric. In the case of a metric ball boundary, our result combined with earlier work of Gwynne (2020) gives a formula for the a.s.\ Hausdorff dimension for the boundary of the metric ball stopped when it hits a fixed point in terms of the Hausdorff dimension of the whole LQG surface. We also show that the Hausdorff dimension of the metric ball boundary is carried by points which are not on the boundary of any complementary connected component of the ball. 
\end{abstract}

\tableofcontents

\section{Introduction}
\label{sec-intro}

In the 1980s, physicists working in conformal field theory introduced a theory of random surfaces called Liouville quantum gravity (LQG) as canonical models of random two-dimensional Riemannian manifolds~\cite{polyakov-qg1,david-conformal-gauge,dk-qg}.  The subject has attracted a substantial amount of mathematical attention in recent years, because of both its relevance to several areas of mathematical physics and its relationship to random discrete surfaces called random planar maps. See~\cite{gwynne-ams-survey,berestycki-lqg-notes} for introductory articles on LQG from a mathematical perspective.
We can define LQG heuristically as follows.

\begin{defn}[Heuristic formulation of LQG]
Let $\gamma \in (0,2)$.  A $\gamma$-Liouville quantum gravity ($\gamma$-LQG) surface is a random Riemannian manifold with random Riemannian metric tensor
\eqb
e^{\gamma h} (dx^2 + dy^2),
\label{eqn-metric-tensor}
\eqe
where $h$ is a variant of the \emph{Gaussian free field} (GFF) on some domain $U\subset\BB C$ and $dx^2+dy^2$ is the Euclidean metric tensor on $U$.  
\end{defn}

See, e.g.,~\cite{shef-gff,berestycki-lqg-notes,pw-gff-notes} for an introduction to the GFF. 
The metric tensor~\eqref{eqn-metric-tensor} is not literally well-defined since $h$ is a distribution, not a function, so cannot be exponentiated pointwise.  Despite this obstacle, probabilists have rigorously defined both a random measure~\cite{kahane,shef-kpz,rhodes-vargas-log-kpz} and a random metric (distance function)~\cite{dddf-lfpp,gm-uniqueness} associated to~\eqref{eqn-metric-tensor} via renormalization procedures.  In this paper, we focus on the formulation of LQG as a random metric space, and we describe some fundamental properties of LQG geodesics and metric ball boundaries.
\bigskip

\noindent\textbf{Acknowledgments.} We thank an anonymous referee for helpful comments on an earlier version of this paper.
E.G.\ was partially supported by a Clay research fellowship and a Trinity college, Cambridge junior research fellowship. 
J.P.\ was partially supported by the National Science Foundation under Grant No. 2002159.
S.S.\ was partially supported by NSF Grant DMS 1712862. 
No code or data was involved in this work.

\subsection{A stronger confluence property for geodesics}

Throughout this paper, we will focus primarily on the case when $h$ is the whole-plane GFF (results for other variants of the GFF can be extracted via local absolute continuity). The whole-plane GFF is defined only modulo additive constant, but we will almost always fix the constant by requiring that the average of the field over the unit circle is zero.  

Miller and Sheffield in~\cite{lqg-tbm1,lqg-tbm2,lqg-tbm3} defined the metric associated with an LQG surface (i.e., the Riemannian distance function associated with~\eqref{eqn-metric-tensor}) in the special case $\gamma = \sqrt{8/3}$.  Their work also showed that certain special $\sqrt{8/3}$-LQG surfaces are isometric to \emph{Brownian surfaces}~\cite[Corollary 1.5]{lqg-tbm2}.  Brownian surfaces (such as the Brownian map) are random metric spaces that arise as scaling limits of uniform random planar maps with respect to the Gromov-Hausdorff topology; see, e.g.,~\cite{legall-uniqueness,miermont-brownian-map,bet-mier-disk}. For several years, this was the only value of $\gamma$ for which mathematicians could define an LQG metric.
 
More recently, \cite{gm-uniqueness} defined the $\gamma$-LQG metric $D_h$ for \emph{all} values of $\gamma$ as the culmination of a long series of papers~\cite{dddf-lfpp,local-metrics,lqg-metric-estimates,gm-confluence,gm-coord-change}. We recall their definition of the $\gamma$-LQG metric in Section~\ref{sec-metric-def}. 

An important difficulty in the study of LQG surfaces is the fact that LQG geodesics are not locally determined by the field $h$, since one needs to see the LQG lengths of all possible paths between two points to see which one has minimal length.   
One possible way to get around this difficulty is by means of \emph{confluence of geodesics}, which was first established for the LQG metric in~\cite{gm-confluence}.
The version of confluence in~\cite{gm-confluence} says that for any fixed point $z$, a.s.\ any two geodesics started from $z$ (with arbitrary target points) coincide for a non-trivial initial time interval. We note that this is very different from the behavior of geodesics in a smooth Riemannian manifold.
Very roughly speaking, confluence of geodesics is used in~\cite[Section 4]{gm-uniqueness} to show that LQG geodesics between typical points are stable in the sense that changing the field $h$ in a small neighborhood of a point on an LQG geodesic is unlikely to result in a macroscopic change to the geodesic. 
This provides an ``approximate Markov property" of LQG geodesics which plays a crucial role in the proof of the uniqueness of the LQG metric.  
The work~\cite{legall-geodesics} proved a similar confluence property for the Brownian map, which was used in the proof that uniform quadrangulations converge to the Brownian map.

Although the above confluence property is useful, it only concerns geodesics started from a \emph{fixed} point, not geodesics between arbitrary points, so its use is limited when we want to analyze finer properties of the geodesic structure of LQG. In the setting of a Brownian surface,~\cite{akm-geodesics} establish several properties of the geodesic structure of the Brownian map by first proving a \emph{stronger} version of the confluence property~\cite[Proposition 12]{akm-geodesics}.   Roughly speaking, they show that geodesics will merge, not only  when started from the same point, but also when started \emph{near} a typical point. 
See also~\cite{mq-strong-confluence} for an even stronger form of confluence for the Brownian map, which holds for geodesics between \emph{arbitrary} points.  
Our first result is the analog of the confluence property of~\cite{akm-geodesics} for the $\gamma$-LQG metric, for general $\gamma\in (0,2)$ (it remains open to extend the stronger result of~\cite{mq-strong-confluence} to the case of general $\gamma \in (0,2)$).

\begin{thm}[Confluence of LQG geodesics started near a typical point] \label{thm-general-confluence}
Let $\gamma\in(0,2)$, let $h$ be the whole-plane GFF and let $D_h$ be the associated $\gamma$-LQG metric. 
Almost surely, for each  neighborhood $U$ of 0 there is a neighborhood $U'\subset U$ of 0 and a point $z_0 \in U\setminus U'$ such that every $D_h$-geodesic from a point in $U'$ to a point in $\BB C\setminus U$ passes through $z_0$.
\end{thm}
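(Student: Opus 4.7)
The plan is to construct the confluence point $z_0$ using the weak confluence of $D_h$-geodesics from a fixed base point established in \cite{gm-confluence}, and then extend the coalescence to geodesics starting in a small neighborhood of $0$.

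\textbf{Construction of $z_0$ and choice of $U'$.} First, I apply the weak confluence theorem of \cite{gm-confluence} at the marked point $0$: almost surely there exist $\delta > 0$ and a geodesic segment $\eta \colon [0,\delta] \to \BB C$ with $\eta(0) = 0$ such that every $D_h$-geodesic emanating from $0$ begins with $\eta$. Shrinking $\delta$ if necessary, I may assume $\eta([0,\delta]) \subset U$ and I set $z_0 := \eta(\delta/2)$. For $U'$, I use the H\"older continuity of $D_h$ with respect to the Euclidean metric from \cite{gm-uniqueness} to choose a Euclidean neighborhood of $0$ whose $D_h$-diameter is much smaller than $\delta$ and which is disjoint from $z_0$.

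\textbf{Main step.} The core task is to show that every $D_h$-geodesic $\gamma$ from $w \in U'$ to $y \in \BB C \setminus U$ passes through $z_0$. Concatenating a short $D_h$-geodesic from $0$ to $w$ with $\gamma$ produces a path from $0$ to $y$ whose length exceeds $D_h(0,y)$ by at most $2\,\mathrm{diam}_{D_h}(U')$, so $\gamma$ is, up to a negligible correction near $0$, a near-geodesic from $0$ to $y$. The true $D_h$-geodesic from $0$ to $y$ begins with $\eta$ and hence passes through $z_0$. I would then close the argument by a contradiction/compactness scheme: if the conclusion failed, there would be sequences $w_n \to 0$ and $y_n \in \BB C \setminus U$ and $D_h$-geodesics $\gamma_n$ from $w_n$ to $y_n$ avoiding $z_0$; by the equicontinuity of $D_h$-geodesics (Arzel\`a--Ascoli), a subsequence converges to a $D_h$-geodesic from $0$ to some $y_\infty \in \overline{\BB C \setminus U}$, which must begin with $\eta$ and so must contain $z_0$.

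\textbf{Main obstacle.} The principal difficulty is that a sequence of geodesics avoiding $z_0$ can perfectly well converge to a limit geodesic that does contain $z_0$, so the compactness argument above does not immediately yield a contradiction. To overcome this, I would exploit that $z_0$ is not an arbitrary point on the limiting geodesic but a point of \emph{high multiplicity} in the geodesic tree from $0$: by weak confluence, every geodesic from $0$ enters a small $D_h$-ball around $z_0$ along the common trunk $\eta$. Applying weak confluence iteratively at points slightly before and after $z_0$ on $\eta$, this trunk-like behavior persists in a $D_h$-neighborhood of $z_0$. A geodesic whose endpoints are at $D_h$-distance much less than the radius of this neighborhood on one side and well outside $U$ on the other should then be forced to enter and exit the neighborhood along the common trunk, i.e., to pass through $z_0$. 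Turning this heuristic into a precise quantitative statement — controlling ``passes near $z_0$'' versus ``passes through $z_0$'' simultaneously at the relevant scales, using GFF-specific tools like local absolute continuity and Weyl scaling — is what I expect to be the real technical content of the proof.
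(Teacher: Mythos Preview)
Your construction of $z_0$ via weak confluence from \cite{gm-confluence} and your identification of the main obstacle are both correct: a sequence of geodesics missing $z_0$ can indeed limit onto one containing $z_0$, so Arzel\`a--Ascoli alone does not close the argument. But your proposed resolution --- iterating weak confluence at nearby points on $\eta$ to argue that ``trunk-like behavior persists'' --- is not a proof and, as stated, does not obviously work. Weak confluence only controls geodesics emanating from a fixed base point; it says nothing directly about a geodesic from a point $w \neq 0$ to a point $y$, and there is no a priori reason such a geodesic could not take a shortcut near $z_0$ without touching it.

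The paper resolves this with a concrete \emph{barrier} construction (Lemmas~\ref{lem-stable}, \ref{lem-H-shape}, \ref{lem-H-cross}), following \cite{akm-geodesics}. First, one proves a stability lemma: for any $z$ with a unique geodesic $P$ from $0$, geodesics from $0$ to points near $z$ coincide with $P$ except inside a small neighborhood of $z$. This uses the finite confluence structure of Theorem~\ref{thm-conf} in an essential way. Second, one uses stability (applied twice, once near each endpoint) to build auxiliary geodesics $P_+,P_-$ between \emph{rational} points $u_\pm,v_\pm$ just off $P$ on either side, which coincide with $P$ except in small caps near the endpoints --- an ``H-shape''. Third, because geodesics between rational points are a.s.\ unique, any geodesic $\gamma_n$ close to $P$ that enters the region bounded by $P_\pm$ cannot cross $P_+$ or $P_-$ without merging into them; topologically this forces $\gamma_n$ to coincide with $P$ outside the two caps. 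This is exactly the mechanism that upgrades ``passes near $z_0$'' to ``passes through $z_0$''. The theorem then follows by applying the resulting two-sided neighborhood statement (Lemma~\ref{lem-nbd-confluence}) for each $z\in\bdy U$ and using compactness of $\bdy U$. The barrier idea --- and specifically the role of uniqueness of geodesics between rational points as the obstruction to crossing --- is the technical content your proposal is missing.
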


\begin{figure}[ht!]
 \begin{center}
\includegraphics[scale=1]{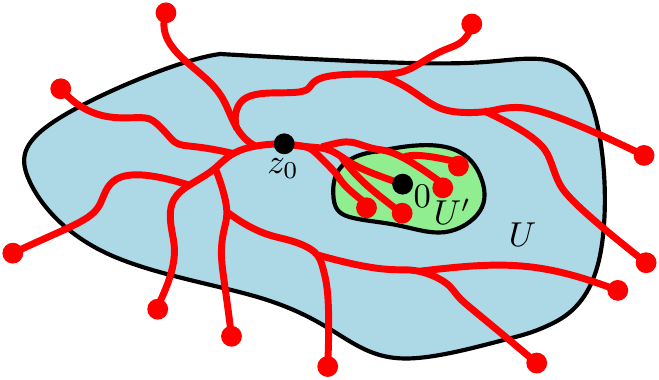}
\vspace{-0.01\textheight}
\caption{Illustration of the statement of Theorem~\ref{thm-general-confluence}. All of the geodesics from points inside the light green region $U'$ to points outside the union of the light blue and light green regions pass through the point $z_0$ (several such geodesics are shown in red). Note that this is stronger than the confluence result in~\cite[Theorem 1.3]{gm-confluence} which only concerns geodesics started from 0. 
}\label{fig-general-confluence}
\end{center}
\vspace{-1em}
\end{figure} 

See Figure~\ref{fig-general-confluence} for an illustration of the statement. We prove Theorem~\ref{thm-general-confluence} in Section~\ref{sec-strong-confluence}.
The proof is in some ways similar to that of the analogous property~\cite[Proposition 12]{akm-geodesics} of the Brownian map, but with different inputs.
 
\begin{remark} \label{remark-log-singularity}
Our proof of Theorem~\ref{thm-general-confluence} also works if instead of a whole-plane GFF we consider a whole-plane GFF plus $-\alpha\log|\cdot|$ for $\alpha \in (-\infty,2/\gamma+\gamma/2)$ (this is the range of $\alpha$ values for which the metric is locally finite, see~\cite[Theorem 1.10]{lqg-metric-estimates}). 
In particular, taking $\alpha=\gamma$, we see that our results hold with the origin replaced by a typical point sampled from the LQG area measure. 
This uses the fact that the results of~\cite{gm-confluence} also work for a whole-plane GFF plus $-\alpha\log|\cdot|$, see~\cite[Remark 1.5]{gm-confluence}.
\end{remark} 

As in the setting of the Brownian map, the  confluence property of Theorem~\ref{thm-general-confluence} is an important ingredient in analyzing the metric properties of LQG.  For example,~\cite{akm-geodesics} used~\cite[Proposition 12]{akm-geodesics} to classify the types of geodesic networks that are dense. Specifically, they showed that for each $k \in \{1,2,3,4,6,9\}$, there is a certain topological configuration of exactly $k$ distinct geodesics joining two points which occurs for a dense set of points in the Cartesian product of the Brownian map with itself; and the set of pairs of points joined by any other possible configuration of geodesics is nowhere dense. 
The paper~\cite{gwynne-geodesic-network} uses Theorem~\ref{thm-general-confluence} to prove the same result for LQG geodesics.

\subsection{Zero-one laws for dimensions of geodesics and metric ball boundaries}
\label{sec-intro-zero-one}

In this paper, we will apply Theorem~\ref{thm-general-confluence} in Sections~\ref{sec-two-ingredients}-\ref{sec-ball-bdry}, in which we prove results concerning the \emph{Hausdorff dimensions} of several random fractals associated with the LQG metric. We now describe these results.

\begin{defn} \label{def-ball}
For a metric space $(X,D)$, a subset $A\subset  X$, and a radius $r >0$, we write $\mcl B_r(A;D)$ for the closed metric ball consisting of the set of points in $X$ lying at $D$-distance at most $r$ from $A$. 
For $A\subset\BB C$, we write $B_r(A) = \mcl B_r(A;|\cdot|)$ for the Euclidean $r$-neighborhood of $z$.
When $A = \{x\}$ is a singleton, we abbreviate $ \mcl B_r(\{x\} ; D) = \mcl B_r(x;D)$ or $B_r(\{x\}) = B_r(x)$. 
\end{defn}

\begin{defn}[Definition of Hausdorff dimension]
The Hausdorff dimension of a metric space $(X,D)$ is the infimum of the set of $d> 0$ such that the following is true: for each $\ep > 0$, we can cover $X$ by a collection of balls $\{\mcl B_{r_j}(x_j;D)\}_{j \in \BB N}$ for which
\[
\sum_{j \in \BB N} r_j^d < \ep.
\] 
When $X\subset \BB C$ and $D = |\cdot|$ is the Euclidean metric, we write $\dim_{\mcl H}^0 X$ for the Hausdorff dimension of $(X,|\cdot|)$ and call it the \emph{Euclidean dimension} of $X$.  When $D = D_h$ is the $\gamma$-LQG metric, we write $\dim_{\mcl H}^\gamma X$ for the Hausdorff dimension of $(X,D_h)$ and call it the \emph{$\gamma$-quantum dimension} of $X$.  
\end{defn}

In the context of LQG, a natural first question to ask is, what is the Hausdorff dimension of the $\gamma$-LQG metric space---i.e., what is the $\gamma$-quantum dimension of $\BB{C}$?  This question has been studied by physicists since the 1990s, long before the rigorous construction of $\gamma$-LQG metric made it possible to state the question rigorously.  The value of this dimension as a function of $\gamma$ is not explicitly known except in the case when $\gamma=\sqrt{8/3}$, when we know that the dimension is 4. There is not even a plausible guess for the dimension for other values of $\gamma$ (the best-known physics guess, due to Watabiki~\cite{watabiki-lqg}, was disproven in~\cite{ding-goswami-watabiki}). 
It is shown in~\cite{gp-kpz} that $\dim_{\mcl H}^\gamma\BB C$ is a.s.\ equal to the so-called \emph{LQG dimension exponent} $d_\gamma$ from~\cite{dg-lqg-dim,dzz-heat-kernel}, which can be defined in terms of various approximations of LQG (such as random planar maps) and which features in the definition of the $\gamma$-LQG metric.  See~\cite{dg-lqg-dim,gp-lfpp-bounds,ang-discrete-lfpp} for the best currently known upper and lower bounds on $d_\gamma$ as a function of $\gamma$.

A next natural question is whether there is any relation between the Euclidean and $\gamma$-quantum dimensions of a random fractal set $X\subset\BB C$.
When $X$ is deterministic---or random but independent from the underlying field $h$---the KPZ formula~\cite{kpz-scaling,shef-kpz,rhodes-vargas-log-kpz} gives an explicit relationship between $\dim_{\mcl H}^0 X$ and $\dim_{\mcl H}^\gamma X$ in terms of $\gamma$ and $d_\gamma$ (see~\cite{gp-kpz} for a proof of the KPZ formula for the metric).  However, many of the most natural fractals to study in the LQG setting, such as geodesics and metric ball boundaries, are \emph{not} independent of the underlying field $h$, so the KPZ formula does not apply.  

In this paper, we will prove that the Euclidean and $\gamma$-quantum dimensions of several such fractals (such as ball boundaries, geodesics, and the metric net) are a.s.\ equal to deterministic constants (see theorem statements below). To explain why such zero-one law results are important, we first need to recall the most common approach for computing the Hausdorff dimension of a random fractal, which is based on a result from fractal geometry called \emph{Frostman's lemma} (see, e.g.,~\cite[Theorem 4.32]{peres-bm}).

\begin{lem}[Frostman's lemma]
A (deterministic) metric space $(X,D)$ has Hausdorff dimension at least $\Delta > 0$ if and only if for each $d < \Delta$, $X$ supports a \emph{Frostman measure} of dimension $d$; i.e., a measure $\nu$ on $X$ with positive mass such that
\[
\iint_{X \times X} D(z,w)^{-d} d\nu(z) d\nu(w) < \infty.
\]
\end{lem}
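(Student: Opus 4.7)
The plan is to prove the two directions of the equivalence separately, using classical techniques from fractal geometry: the ``if'' direction is a mass distribution argument, and the ``only if'' direction uses the construction of a Frostman measure on a set of positive Hausdorff measure.

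For the ``if'' direction, suppose the Frostman measure $\nu$ of dimension $d$ exists, i.e., $I_d(\nu) := \iint D(z,w)^{-d} d\nu(z) d\nu(w) < \infty$. The potential $\phi(z) := \int D(z,w)^{-d} d\nu(w)$ satisfies $\int \phi \, d\nu = I_d(\nu) < \infty$, so by Markov's inequality we can choose $M$ large enough that the set $E := \{\phi \leq M\}$ has $\nu(E) > 0$. For any $x \in E$ and $r > 0$, the trivial lower bound $D(x,w)^{-d} \geq r^{-d}$ on $\mcl B_r(x;D)$ yields $\nu(E \cap \mcl B_r(x;D)) \leq r^d \phi(x) \leq M r^d$; using the triangle inequality to handle ball centers outside $E$, the same bound (up to a factor of $2^d$) holds for arbitrary $x_j$. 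Hence any cover $\{\mcl B_{r_j}(x_j;D)\}$ of $X$ satisfies $\nu(E) \leq \sum_j \nu(E \cap \mcl B_{r_j}(x_j;D)) \leq 2^d M \sum_j r_j^d$, so $\sum_j r_j^d \geq \nu(E)/(2^d M) > 0$, giving $\dim_{\mcl H} X \geq d$.

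For the ``only if'' direction, fix $d' < d < \Delta$. Since $\dim_{\mcl H} X \geq \Delta > d$, we have $\mcl H^d(X) = \infty$. The key step is to construct a compactly supported Borel probability measure $\nu$ on $X$ satisfying the uniform mass bound $\nu(\mcl B_r(x;D)) \leq C r^d$ for all $x$ and $r > 0$. Such a measure is obtained by the classical ``dyadic mass redistribution'' procedure: one covers a bounded portion of $X$ at each dyadic scale, redistributes any mass that would exceed the $C r^d$ cap among neighboring cells, and passes to a weak limit; the hypothesis $\mcl H^d(X) = \infty$ guarantees the limiting total mass is positive. Given such a $\nu$, a layer-cake computation gives
\[
\int D(z,w)^{-d'} d\nu(w) = d' \int_0^\infty r^{-d'-1} \nu(\mcl B_r(z;D)) \, dr \leq d' \int_0^1 C r^{d-d'-1} dr + d' \int_1^\infty r^{-d'-1} dr < \infty,
\]
where the integrability at $0$ uses $d > d'$ and at $\infty$ uses $d' > 0$; integrating against $\nu$ shows $I_{d'}(\nu) < \infty$, so $\nu$ is a Frostman measure of dimension $d'$.

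The main obstacle is the construction of the mass-bounded measure in the ``only if'' direction, which is the true content of Frostman's original lemma. Carrying it out in a general metric space requires some mild regularity (e.g., separability and Borel measurability of balls) so that the inductive redistribution procedure converges; in $\BB C$ the standard dyadic grid makes the construction transparent. The other direction is essentially a single application of Markov's inequality plus a ball-covering comparison.
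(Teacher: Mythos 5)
The paper does not prove this lemma; it is stated as a recalled classical fact with a pointer to Theorem 4.32 of Peres--M\"orters, and is invoked only to frame the general Frostman method for computing dimensions. Your sketch is the standard textbook argument and is correct. The ``if'' direction is handled cleanly: Markov's inequality produces a sublevel set $E$ of the potential $\phi$ with positive mass, the pointwise bound $\nu(\mcl B_r(x;D))\le r^d\phi(x)$ holds for $x\in E$, the $2^d$-factor via the triangle inequality covers the case of balls not centered in $E$, and the mass distribution principle finishes. The ``only if'' direction correctly reduces, through the layer-cake identity, to the existence of a measure with the uniform mass bound $\nu(\mcl B_r(x;D))\le C r^d$, and you rightly flag that this step is the genuine content of Frostman's lemma and requires regularity hypotheses beyond a bare metric space: as literally stated the lemma fails in full generality, but it does hold for Souslin (or analytic) subsets of complete separable metric spaces, and in particular for Borel subsets of $\BB C$, which covers every application in the paper. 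One chain that deserves to be spelled out: $\dim_{\mcl H} X \ge \Delta > d$ implies $\mcl H^d(X)=\infty$ because one can choose $d''\in(d,\Delta)$ with $\mcl H^{d''}(X)>0$, and any cover at scale $\delta$ with small $d$-sum would have $d''$-sum at most $\delta^{d''-d}$ times it, forcing $\mcl H^{d''}(X)=0$, a contradiction. You also silently bound $\nu(\mcl B_r(z;D))$ by the total mass in the tail integral $\int_1^\infty r^{-d'-1}\,dr$, which is fine but worth stating.
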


Probabilists have used Frostman's lemma to compute the a.s.\ dimensions of many random fractals in Euclidean space, for example the graph of Brownian motion~\cite{peres-bm}, Schramm-Loewner evolution curves~\cite{beffara-dim}, conformal loop ensembles~\cite{mww-nesting}, and thick points of a GFF~\cite{hmp-thick-pts}, via the following general approach. To show that the random fractal $X$ in question has Hausdorff dimension $\Delta$ almost surely, one first proves that $\dim_{\mcl H} X \leq \Delta$ by establishing an upper bound for the probability that a given point is ``close" to $X$. One then uses first and second moment estimates to prove that, for every $d < \Delta$, the fractal $X$ supports a dimension-$d$ Frostman measure with positive probability. 
The reason this holds only \emph{with positive probability} is that the method for proving that the measure has positive mass uses the Payley-Zygmund inequality.
Combining this positive probability result and the almost sure upper bound, one gets that $\Delta$ is the \emph{essential supremum} of $\dim_{\mcl H} X$; i.e.,
\[
\Delta = \sup \{ d \geq 0 : \BB{P}(\dim_{\mcl H} X \geq d) > 0 \} .
\]

To upgrade this to the statement that $\dim_{\mcl H} X = \Delta$ a.s., one typically shows that the dimension of the random fractal $X$ satisfies a zero-one law; i.e., the dimension of $X$ must equal \emph{some} deterministic constant almost surely. 
For many interesting random fractals (e.g., many sets defined in terms of SLE or Brownian motion), one has some sort of Markovian or long-range independence property which allows for a relatively straightforward proof of the zero-one law. 
However, the zero-one laws for many interesting sets associated with the LQG metric are less straightforward, in large part because (as noted above) LQG geodesics are not locally determined by $h$. 

The most difficult zero-one law argument in this paper is for the boundary of an LQG metric ball. The essential suprema of the Euclidean and $\gamma$-quantum dimensions of this fractal have already been computed in~\cite{gwynne-ball-bdy}. Combining this with our zero-one law gives the following theorem. 

\begin{thm}[Dimension of LQG ball boundaries]
\label{thm-ball-bdy-bound}
For each fixed $z \in \BB{C}$, a.s.\ the boundary of the LQG metric ball $\mcl B_{D_h(0,z)}(0;D_h)$ centered at $0$ and run until it hits $z$ has $\gamma$-quantum dimension
\eqb \label{eqn-ball-bdy-gamma}
d_\gamma - 1
\eqe
and Euclidean dimension
\eqb \label{eqn-ball-bdy-0}
2 - \frac{\gamma}{d_\gamma} \left(\frac{2}{\gamma} + \frac{\gamma}{2}\right) + \frac{\gamma^2}{2 d_\gamma^2}
\eqe
(or, equivalently, $2 - \xi Q + \xi^2/2$ with $\xi,Q$ defined in~\eqref{eqn-xi-Q}).
\end{thm}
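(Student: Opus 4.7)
The theorem combines two ingredients. First, the essential suprema of the Euclidean and $\gamma$-quantum dimensions of $\partial \mcl B_{D_h(0,z)}(0;D_h)$ were computed in~\cite{gwynne-ball-bdy} and equal the expressions in~\eqref{eqn-ball-bdy-0} and~\eqref{eqn-ball-bdy-gamma}, respectively. Second, we prove a zero-one law: each of these two dimensions is almost surely equal to a deterministic constant. Once the zero-one law is established, the deterministic value must coincide with the essential supremum, proving the theorem.

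For the zero-one law, we use the strong confluence Theorem~\ref{thm-general-confluence} to localize the ball boundary. Fix a small $\epsilon > 0$ with $z \notin B_\epsilon(0)$, and apply Theorem~\ref{thm-general-confluence} with $U = B_\epsilon(0)$ to obtain a smaller neighborhood $U' \subset U$ and a point $z_0 \in U \setminus U'$ such that every $D_h$-geodesic from $U'$ to $\BB C \setminus U$ passes through $z_0$. For every $w \in \partial \mcl B_{D_h(0,z)}(0;D_h) \cap (\BB C \setminus U)$ we then have $D_h(0,w) = D_h(0,z_0) + D_h(z_0,w)$ and $D_h(0,z) = D_h(0,z_0) + D_h(z_0,z)$, so
\[
\partial \mcl B_{D_h(0,z)}(0;D_h) \setminus U \;=\; \{w \in \BB C \setminus U : D_h(z_0,w) = D_h(z_0,z)\}.
\]
A second application of Theorem~\ref{thm-general-confluence} shows that the $D_h$-geodesic from $z_0$ to any such $w$ stays in $\BB C \setminus U'$: if it entered $U'$ at some first point $u$, confluence applied to $u \in U'$ and $w \in \BB C \setminus U$ would force the geodesic from $u$ to $w$ to pass again through $z_0$, contradicting injectivity. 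Hence the displayed set is measurable with respect to $h|_{\BB C \setminus U'}$ and the random parameters $(z_0, D_h(z_0,z))$.

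Since almost surely $0$ lies in the open ball $\mcl B_{D_h(0,z)}(0;D_h)$, for $\epsilon$ small enough the full ball boundary coincides with this level set, and thus $X := \dim \partial \mcl B_{D_h(0,z)}(0;D_h)$ (in either the Euclidean or the $\gamma$-quantum sense) is measurable with respect to the same data for arbitrarily small $U'$. Crucially, the same identification applies verbatim with any center $v \in U'$ in place of $0$, because the very same confluence point $z_0$ governs the $D_h$-geodesics from $v$ to $\BB C \setminus U$. Consequently $\dim \partial \mcl B_{D_h(v,z)}(v;D_h) = X$ for every $v \in U'$. Combining this with the translation and scale invariance of the whole-plane GFF modulo additive constant (which yields $\dim \partial \mcl B_{D_h(v,z)}(v;D_h) \eqD X$ for every deterministic $v \neq z$, since additive constants rescale all distances uniformly and so preserve dimensions) and applying a standard ergodicity argument, we conclude that $X$ is a.s.\ equal to a deterministic constant.

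The main technical obstacles are the ``double use'' of Theorem~\ref{thm-general-confluence} needed to show that the $D_h$-geodesic from $z_0$ to points outside $U$ avoids $U'$---which is what makes the level-set identification actually measurable with respect to $h|_{\BB C \setminus U'}$---and the final ergodicity argument, which requires care because the whole-plane GFF is translation-invariant only modulo its additive constant and because the confluence set $U'$ is itself random.
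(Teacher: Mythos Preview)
Your proposal has two genuine gaps.

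\textbf{The zero-one law step is incomplete.} You establish that (i) a.s.\ $X_v = X_0$ for all $v$ in some random neighborhood $U'$ of $0$, and (ii) $X_v \eqD X_0$ for each fixed $v$. These two facts alone do not force $X_0$ to be deterministic: you have neither placed $X_0$ in a trivial $\sigma$-algebra nor exhibited an ergodic group action under which $X_0$ is invariant. Your measurability statement says that $X_0$ is determined by $h|_{\BB C\setminus U'}$ \emph{together with} the confluence point $z_0$, but $z_0$ depends on $h|_U$, so this is not a tail measurability at $0$. And the translation/scaling maps $T_v$ you implicitly invoke satisfy $X_0(T_v h) = X_v(h)$, but you only know $X_v(h) = X_0(h)$ for \emph{random} $v\in U'$, not for any fixed $v$, so ergodicity of $\{T_v\}$ (even if it held) does not apply. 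The paper avoids this by constructing a genuinely scale-invariant object, the ball $\mcl B_0^\infty$ started from $\infty$ (Proposition~\ref{prop-ball-infty}); exact scale invariance of its law lets one push the relevant event down to arbitrarily small Euclidean scales and then invoke the triviality of $\bigcap_{r>0}\sigma(h|_{B_r(0)})$ (Proposition~\ref{prop-ball-infty-dim}). Translation invariance alone provides no such mechanism.

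\textbf{The input from \cite{gwynne-ball-bdy} is for fixed-radius balls, not fixed-target balls.} That paper computes the essential supremum of $\dim_{\mcl H}^0 \bdy\mcl B_s$ for deterministic $s$; it does not directly treat $\bdy\mcl B_{D_h(0,z)}$. The radius $D_h(0,z)$ is random with singular law, so one cannot simply condition on its value. The paper transfers the upper bound via a Gaussian bump-function perturbation of $h$ near $z$ that renders the radius absolutely continuous (Lemma~\ref{lem-bdy-dim-compare2revised}), and matches the lower bound (Proposition~\ref{prop-ball-bdy-bound}) by combining the zero-one law for $\bdy\mcl B_0^\infty$ with the generalized upper bound of Theorem~\ref{thm-gen-upper} applied to the confluence events of Lemma~\ref{lem-conf-event}. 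Even if your zero-one law were correct, you would still need both of these nontrivial transfers to identify the constant.
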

 
Since $d_{\sqrt{8/3}}=4$, for $\gamma = \sqrt{8/3}$ the quantum and Euclidean dimensions in Theorem~\ref{thm-ball-bdy-bound} are equal to $3$ and $5/4$, respectively. 

We next consider $\gamma$-LQG geodesics. We know that a.s.\ the $\gamma$-quantum dimension of every such geodesic equals $1$ (the dimension of a geodesic is always equal to 1 w.r.t.\ the metric for which it is a geodesic). We do not even have a conjecture for the Euclidean dimension, although~\cite[Corollary 1.10]{gp-kpz} gives a rigorous upper bound. In this paper we establish the following zero-one law.

\begin{thm}[Zero-one law for LQG geodesics] \label{thm-geo-dim}
There is a deterministic constant $\Delta_{\op{geo}} > 0$ such that a.s.\ the Euclidean dimension of every $D_h$-geodesic started from 0 is equal to $\Delta_{\op{geo}}$. 
\end{thm}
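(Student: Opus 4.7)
The plan is to combine a scaling and ergodicity argument with confluence of geodesics from $0$. Define the \emph{geodesic tree from $0$} as $T := \bigcup_\eta \eta$, where the union is over all $D_h$-geodesics started from $0$, and set $\Delta := \dim_{\mcl H}^0(T)$. Since $\eta \subset T$ for every such $\eta$, the quantity $\Delta$ is automatically an upper bound for $\dim_{\mcl H}^0(\eta)$, so it suffices to show both that $\Delta$ is a.s. equal to a deterministic constant and that this upper bound is attained by every $D_h$-geodesic from $0$.

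For the first claim, I would invoke scale invariance. Under the coordinate change $h \mapsto h^{(r)}(\cdot) := h(r\cdot) + Q\log r$ for $r > 0$, the law of the whole-plane GFF modulo additive constant is preserved, while the $\gamma$-LQG metric transforms as $D_{h^{(r)}}(z,w) = D_h(rz,rw)$; hence the tree transforms by Euclidean scaling, $T(h^{(r)}) = r^{-1} T(h)$. Since Euclidean Hausdorff dimension is scale invariant, $\Delta$ is a scale-invariant functional of $h$, and ergodicity of the scaling action on the whole-plane GFF modulo additive constant (a Kolmogorov-type $0$-$1$ law based on the approximate independence of the GFF across disjoint logarithmic scales) forces $\Delta$ to be a.s. equal to a deterministic constant.

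For the lower bound $\dim_{\mcl H}^0(\eta) \ge \Delta$, set $\Delta_\epsilon := \dim_{\mcl H}^0(T \cap B_\epsilon(0))$. The identity $T \cap B_\epsilon(0) = r(T^{(r)} \cap B_{\epsilon/r}(0))$ together with $h^{(r)} \eqD h$ (mod constant) gives $\Delta_\epsilon \eqD \Delta_{\epsilon'}$ for all $\epsilon,\epsilon' > 0$; combined with the monotonicity $\Delta_\epsilon \le \Delta_{\epsilon'}$ for $\epsilon \le \epsilon'$, the standard fact that $X \le Y$ a.s. together with $X \eqD Y$ implies $X = Y$ a.s., and using $\Delta = \sup_n \Delta_n$, this yields $\Delta_\epsilon = \Delta$ a.s. for every $\epsilon > 0$. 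By Theorem~\ref{thm-general-confluence} (or the weaker confluence from a fixed point proved in~\cite{gm-confluence}), there is a random $r^* > 0$ such that $T \cap \mcl B_{r^*}(0;D_h)$ consists of a single initial geodesic segment from $0$. Thus, for any $D_h$-geodesic $\eta$ from $0$ and sufficiently small (random) $\epsilon > 0$, the set $T \cap B_\epsilon(0)$ is contained in the initial portion of $\eta$, giving $\dim_{\mcl H}^0(\eta) \ge \Delta_\epsilon = \Delta$. Setting $\Delta_{\op{geo}} := \Delta$ completes the proof.

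The main obstacle is the ergodicity step: rigorously upgrading the distributional invariance of $\Delta$ under scaling to a.s.\ determinism requires care with the additive-constant ambiguity and with the precise transformation of the LQG metric under spatial scaling (cf.~\cite{gm-coord-change}). A secondary subtlety is ensuring the confluence statement produces a single initial geodesic in $D_h$-balls of arbitrarily small radius so that the argument applies uniformly to geodesics of arbitrarily short $D_h$-length, not only to long ones; this is the step where the scaling invariance of $\Delta_\epsilon$ across \emph{all} $\epsilon > 0$ does most of the work.
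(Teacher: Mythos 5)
The central object of your argument, $T := \bigcup_\eta \eta$ (union over all $D_h$-geodesics started from $0$), is problematic from the start: since every point $z \in \BB C$ is the endpoint of a $D_h$-geodesic from $0$ to $z$, we have $T = \BB C$, so $\Delta = 2$ and the ``upper bound'' $\dim_{\mcl H}^0(\eta) \le \Delta$ is vacuous. The lower-bound step then collapses: $T \cap \mcl B_{r^*}(0;D_h)$ is all of $\mcl B_{r^*}(0;D_h)$, not a single geodesic segment, for any $r^* > 0$. Confluence from a fixed point (\cite[Theorem 1.3]{gm-confluence}) says only that geodesics from $0$ to targets \emph{outside a fixed neighborhood $U$} pass through a common point $z_0 \in U \setminus \{0\}$; geodesics from $0$ to points \emph{inside} $U$ are also geodesics started from $0$ and are included in your $T$, and these branch off the main spine at arbitrarily small scales. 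So $T \cap B_\ep(0)$ is never a single arc. If instead you intended $T$ to be (say) the unique geodesic ray $P_0^\infty$ to $\infty$, then the containment $\eta \subset T$ fails for almost all geodesics $\eta$ from $0$, and the upper bound disappears. Either way the argument breaks.

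Two further points, both of which the paper must confront and your sketch does not. First, your ergodicity step implicitly requires that $\dim_{\mcl H}^0(T \cap B_\ep(0))$ be measurable with respect to a tail $\sigma$-algebra of $h$ near $0$; but $D_h$-geodesics are \emph{not} locally determined by $h$ (one must compare lengths of all competing paths), so this measurability is far from automatic. The paper handles this by building explicit local confluence events (Lemma~\ref{lem-conf-event}) and intersecting them with the dimension event so that the result is genuinely $\sigma(h|_{B_{A^2 r}(0)})$-measurable; without something like this, the tail-triviality argument does not apply. Second, even after one proves (as the paper does in Proposition~\ref{prop-geo-zero-one}) that $\dim_{\mcl H}^0 P_0^\infty$ is a.s. a constant $\Delta_{\op{geo}}$ and that $\dim_{\mcl H}^0 P \ge \Delta_{\op{geo}}$ for every geodesic $P$ from $0$, one still needs the \emph{reverse} inequality for a general geodesic from $0$. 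The paper's mechanism for this (Lemma~\ref{lem-geo-arc} and Lemma~\ref{lem-geo-dim-bdy}) is a resampling argument: conditionally on $(\mcl B_t^\bullet, h|_{\mcl B_t^\bullet})$, the ray $P_0^\infty$ exits $\mcl B_t^\bullet$ through any prescribed boundary arc with positive conditional probability, and since each confluence geodesic $P_x$ is $\sigma(\mcl B_t^\bullet, h|_{\mcl B_t^\bullet})$-measurable, this forces $\dim_{\mcl H}^0 P_x = \Delta_{\op{geo}}$ a.s. for every $x \in \mcl X$. Nothing in your sketch replaces this step.
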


As a ``warm-up" for the proofs of Theorems~\ref{thm-ball-bdy-bound} and~\ref{thm-geo-dim}, we will also prove a zero-one law for the so-called \emph{metric net}, which is much easier than the proofs in the case of ball boundaries and geodesics. To define the metric net, we first introduce the notion of a \emph{filled metric ball}. To motivate the definition, we note that the complement of an LQG metric ball is typically not connected.

\begin{defn}[The filled metric ball] \label{def-filled}
Let $w \in \BB{C}$ and $z \in \mathbb{C} \cup \{\infty\}$.  We define the \emph{filled metric ball centered at $w$ and targeted at $z$} with radius $s>0$ as
\[
\mcl B^{z,\bullet}_s(w;D_h) :=
\begin{cases} 
\text{the union of the closed metric ball $\ol{\mcl B_s(w;D_h)}$} \\
\text{and the set of points that this closed}  &\qquad\mbox{for } s < D_h(w,z) \\
\text{metric ball disconnects from $z$} \\\\
\BB{C} &\qquad\mbox{for } s  \geq D_h(w,z)\\\\
\end{cases}
\]
We will most often work with filled metric balls centered at zero and filled metric balls targeted at infinity, so to lighten notation, we abbreviate
\eqb
\mcl B^\bullet_s(w;D_h) := \mcl B^{\infty,\bullet}_s(w;D_h), \qquad
\mcl B_s^{z,\bullet} := \mcl B_s^{z,\bullet}(0;D_h) \quad \text{and} \quad \mcl B_s^\bullet := \mcl B_s^\bullet(0;D_h).
\eqe
\end{defn}


The metric net is the region of space traced by the boundary of a growing filled metric ball targeted at infinity. 

\begin{defn}[The metric net]
\label{def-metric-net}
The \emph{metric net} at time $s > 0$ is 
\eqb \label{eqn-metric-net}
\mcl N_s(w;D_h) := \bigcup_{t \leq s} \bdy \mcl B_t^\bullet(w;D_h).
\eqe 
The metric net at time infinity is $\mcl N_\infty(w;D_h) := \bigcup_{s > 0} \mcl N_s(w;D_h)$. We abbreviate $\mcl N_s  := \mcl N_s (0;D_h)$.
\end{defn}

The metric net at time $s$ is a closed subset of the complex plane.  If we were working in a smooth metric space, the metric net would have full Lebesgue measure and Hausdorff dimension; and at time infinity, it would be the entire complex plane.  In contrast, the metric net in the LQG metric space has \emph{zero} Lebesgue measure almost surely, since the probability that any fixed point $z \in\BB C$ lies on the boundary of the unbounded connected component of $\BB C\setminus \mcl B_{D_h(w,z)}(w;D_h)$ is zero (see, e.g., the argument of Section~\ref{sec-outer}).  In Section~\ref{sec-net-dim}, we analyze the $\gamma$-quantum and Euclidean dimension of the metric net, and we show that the scale invariance of the metric net (Lemma~\ref{lem-net-scale}) and a locality property of the metric net (Lemma~\ref{lem-net-local}) easily imply a zero-one law.

\begin{thm} \label{thm-net-dim}
There are deterministic constants $\Delta_{\op{net	}}^0, \Delta_{\op{net	}}^\gamma > 0$ such that a.s.\ $\dim_{\mcl H}^0 \mcl N_s  = \Delta_{\op{net}}^0$ and $\dim_{\mcl H}^\gamma \mcl N_s  = \Delta_{\op{net}}^\gamma$ for every $s >0$. 
\end{thm}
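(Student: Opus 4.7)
The plan is to combine the scale invariance of the metric net (Lemma~\ref{lem-net-scale}) with its locality property (Lemma~\ref{lem-net-local}). First, scale invariance says that for any $r>0$, $r^{-1}\mcl N_s$ has the same law as $\mcl N_{s'}$ for some $s'>0$ that depends on $r$ (and possibly on a random additive constant arising from the scaling symmetry of the whole-plane GFF modulo constants). Since the Euclidean Hausdorff dimension is invariant under Euclidean rescaling and the $\gamma$-quantum Hausdorff dimension is invariant under adding a constant to $h$ (which just rescales $D_h$ by a global factor), this yields that $\dim_{\mcl H}^*\mcl N_s$ and $\dim_{\mcl H}^*\mcl N_{s'}$ are equal in distribution for any $s,s'>0$ and $*\in\{0,\gamma\}$.

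Second, I would exploit the monotonicity of $s\mapsto\mcl N_s$. Since $\mcl N_{s_1}\subset\mcl N_{s_2}$ for $s_1\leq s_2$, the dimensions are nondecreasing in $s$; combined with equal marginal distributions and the fact that the dimensions lie in $[0,2]$, taking expectations of the nonnegative difference forces $\dim_{\mcl H}^*\mcl N_{s_1}=\dim_{\mcl H}^*\mcl N_{s_2}$ a.s.\ for any fixed pair $s_1\leq s_2$. Choosing a countable dense set of values of $s$ and interpolating via monotonicity, I conclude that there exist random constants $\Delta^0,\Delta^\gamma\in[0,2]$ such that a.s.\ $\dim_{\mcl H}^0\mcl N_s=\Delta^0$ and $\dim_{\mcl H}^\gamma\mcl N_s=\Delta^\gamma$ simultaneously for every $s>0$.

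Third, I would upgrade this to the statement that $\Delta^0$ and $\Delta^\gamma$ are deterministic. By scale invariance one may take $s$ arbitrarily small. By locality (Lemma~\ref{lem-net-local}), for small $s$ the set $\mcl N_s$ is contained in a small Euclidean neighborhood of $0$ and is measurable with respect to the restriction of $h$ to such a neighborhood. Hence $\Delta^*$ is measurable with respect to the germ $\sigma$-algebra $\bigcap_{r>0}\sigma(h|_{B_r(0)})$ of the whole-plane GFF at $0$, modulo the additive constant (which does not affect the dimension). A standard tail-triviality argument for the GFF---exploiting the scale-invariance of $h$ modulo additive constant together with the near-independence of the field on well-separated dyadic annuli---shows that this germ $\sigma$-algebra is trivial, so $\Delta^*$ is a.s.\ equal to a deterministic constant.

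The main obstacle is the locality step. The metric net is a priori a global object, defined in terms of the whole LQG metric $D_h$, so it is not obvious that $\mcl N_s$ is determined by $h$ restricted to a neighborhood of $\mcl B_s^\bullet$; producing Lemma~\ref{lem-net-local} to make this measurability statement precise is the primary technical work. Once in hand, the remainder of the argument---scale invariance, monotonicity, and GFF germ triviality---is essentially formal, consistent with the excerpt's description of the implication as easy.
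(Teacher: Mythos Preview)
Your overall strategy---scale invariance, monotonicity, locality, and tail triviality at the origin---matches the paper's. However, your first step has a gap: Lemma~\ref{lem-net-scale} does not give that $\dim_{\mcl H}^* \mcl N_s$ and $\dim_{\mcl H}^* \mcl N_{s'}$ are equal in distribution for arbitrary \emph{fixed} $s, s'$. The time change in the scale invariance is $s \mapsto r^{-\xi Q} e^{-\xi h_r(0)} s$, and the factor $e^{-\xi h_r(0)}$ is random and correlated with the field; the time $s'$ you obtain is therefore not a deterministic function of $s$ and $r$, and you cannot conclude that the marginal law of $\dim_{\mcl H}^* \mcl N_s$ is independent of $s$. (A nondecreasing process $(X_s)$ satisfying $(X_{\lambda s})_{s>0} \eqD (X_s)_{s>0}$ for some random $\lambda$ correlated with $X$ need not have constant marginals---consider $X_s = \BB 1_{\{s \geq T\}}$ for suitable random $T$.) Without this, your second step does not go through.

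The paper circumvents this by working with the exit times $\tau_r = \inf\{s : \mcl B_s \not\subset B_r(0)\}$ rather than fixed times $s$. For these, Lemma~\ref{lem-net-scale} gives exact scale invariance: the law of $r^{-1} \mcl N_{\tau_r}$ is genuinely independent of $r$, so $\dim_{\mcl H}^* \mcl N_{\tau_r}$ has the same law for every $r > 0$. The paper then runs essentially your tail-triviality argument with $\tau_r$ in place of $s$: given $c$ with $\BB P[\dim_{\mcl H}^* \mcl N_\infty \geq c] > 0$, countable stability and scale invariance give $\BB P[\dim_{\mcl H}^* \mcl N_{\tau_r} \geq c - \delta] \geq p > 0$ uniformly in $r$; since $\mcl N_{\tau_r}$ is $\sigma(h|_{B_r(0)})$-measurable by Lemma~\ref{lem-net-local}, germ triviality upgrades this to an almost sure statement for all $r$, and sandwiching $\mcl N_{\tau_r}\subset\mcl N_s\subset\mcl N_\infty$ finishes. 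Finally, Lemma~\ref{lem-net-local} is not the ``primary technical work'' you suggest---it is immediate from Axiom~\ref{item-metric-local}, since $D_h$-geodesics from $0$ to points of $\mcl B_s$ stay inside $\mcl B_s$.
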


Proving zero-one laws for LQG geodesics and metric ball boundaries is considerably more challenging: these fractals are neither scale-invariant nor locally determined by the underlying field.  We overcome this hurdle as follows.
\begin{itemize}
\item
Since
LQG geodesics and the boundaries of LQG metric balls are not themselves scale-invariant,  we instead consider an appropriate ``infinite-volume" object whose law is exactly scale invariant. 
In the case of geodesics, this object is an infinite geodesic ray from 0 to $\infty$. In the case of the metric ball boundary, this object is the boundary of a ``metric ball started from $\infty$ and grown until it hits 0", which will be defined as the limit of $\mcl B_{D_h(0,w)}(w;D_h)$ as $w\rta\infty$. 
We construct these objects in Section~\ref{sec-infty} using the strong confluence of geodesics property established in Theorem~\ref{thm-general-confluence}.
\item
To address the issue that LQG geodesics and the boundaries of LQG metric balls are not locally determined by the underlying field, we again use the strong confluence of geodesics property established in Theorem~\ref{thm-general-confluence}.  Specifically, we apply this theorem to construct ``good" events on which the random fractals are in some sense locally determined (see Lemma~\ref{lem-conf-event}).
\end{itemize}
This strategy yields zero-one laws (Propositions~\ref{prop-infty-geo} and~\ref{prop-ball-infty-dim}) for the infinite-volume versions of LQG geodesics and metric ball boundaries.  We then transfer these results from the infinite-volume setting to the finite-volume objects appearing in Theorems~\ref{thm-ball-bdy-bound} and~\ref{thm-geo-dim}.

For metric ball boundaries, we are able to derive not just a zero-one law, but explicit expressions~\eqref{eqn-ball-bdy-gamma} and~\eqref{eqn-ball-bdy-0} for the Euclidean and $\gamma$-quantum dimensions.  This is because we can apply the work of~\cite{gwynne-ball-bdy}, which identified~\eqref{eqn-ball-bdy-gamma} and~\eqref{eqn-ball-bdy-0} as the essential suprema of the dimension of an LQG metric ball of a fixed radius with respect to the $\gamma$-LQG and Euclidean metrics, respectively.  We apply these results to prove Theorem~\ref{thm-ball-bdy-bound} by transferring the results for metric balls of a fixed radius to the case of metric balls run until they hit a fixed \emph{point}---the type of metric ball for which we have a zero-one law.

\begin{remark}[Intersection with the the thick points] \label{remark-thick-pts}
Following~\cite{gwynne-ball-bdy}, for $\alpha \in [-2,2]$ we define the set of \emph{metric $\alpha$-thick points} of $h$ by
\eqb \label{eqn-thick-pts}
\wh{\mcl T}_h^\alpha := \left\{z\in\BB C : \lim_{\ep\rta 0} \frac{\log \sup_{u,v\in B_\ep(z)} D_h(u,v)}{\log \ep } = \frac{\gamma}{d_\gamma}\left(2/\gamma+\gamma/2 - \alpha \right) \right\}
\eqe
where $B_\ep(z)$ is the Euclidean ball of radius $\ep$ centered at $z$. The reason for the somewhat strange looking number $\frac{\gamma}{d_\gamma}\left(2/\gamma+\gamma/2 - \alpha \right)$ in~\eqref{eqn-thick-pts} is that this makes it so that $\wh{\mcl T}_h^\alpha$ has similar properties to the ordinary $\alpha$-thick points, as considered, e.g., in~\cite{hmp-thick-pts}. 
Essentially the same arguments as in the proofs of Theorems~\ref{thm-ball-bdy-bound}, \ref{thm-geo-dim}, and Theorem~\ref{thm-net-dim} also yield zero-one laws for the Euclidean and $\gamma$-LQG dimensions of the intersection of the random fractals in the theorem statements with $\wh{\mcl T}_h^\alpha$. In the setting of Theorem~\ref{thm-ball-bdy-bound}, we get a formula for the dimension of the intersection in terms of $\gamma,d_\gamma,\alpha$ from~\cite[Theorem 1.2]{gwynne-ball-bdy}. Similar considerations also apply with the metric thick points replaced by the ordinary thick points (defined using circle averages as in~\cite{hmp-thick-pts,shef-kpz}).
\end{remark}

\subsection{Exterior boundaries of metric balls}

Finally, in Section~\ref{sec-outer}, we will analyze the \emph{exterior boundaries} of LQG metric balls.  

\begin{defn}[The exterior boundary of an LQG metric ball]
\label{def-outer}
Fix $w\in \BB{C}$ and a radius $s  > 0$, we define the \emph{exterior boundary} $\mcl O_s(w;D_h)$ of the metric ball $\mcl B_s(w;D_h)$ to be the union of the boundaries of the connected components of $\BB{C} \backslash \ol{\mcl B_s(w;D_h)}$. Equivalently, $\mcl O_s(w;D_h)$ is the union of the boundaries of the filled metric balls $\mcl B^{z,\bullet}_{s}(w;D_h)$ over all $z\in\BB Q^2$.
\end{defn}

For a smooth metric, the notions of boundary and exterior boundary of a metric ball are equivalent.  This is not the case for LQG metric balls.  The points on the boundary of $\mcl B_{D_h(z,w)}(w;D_h)$ that are not on the exterior boundary  
arise as accumulation points of connected components of $\BB C\setminus \mcl B_s$ with arbitrarily small diameters. Our main result for exterior boundaries of metric balls is the following theorem, which asserts that ``most" points of an LQG metric ball boundary are \emph{not} in the exterior boundary.

\begin{thm} \label{thm-outer-bdy-compare}
There is a constant $q > 0$ such that for each fixed $s > 0$ and each fixed $z\in\BB C$, the Euclidean and $\gamma$-quantum dimensions of $\mcl O_{s}(0;D_h)$ are a.s.\ at most  \eqbn
2- \frac{\gamma}{d_\gamma}\left( \frac{2}{\gamma}+\frac{\gamma}{2} \right) + \frac{\gamma^2}{2d_\gamma^2} -q \quad \text{and} \quad d_\gamma - 1 - q , 
\eqen
respectively. The same dimension upper bounds hold for $\mcl O_{D_h(0,z)}(0;D_h)$.
In particular, due to~\cite[Theorem 1.1]{gwynne-ball-bdy} (resp.\ Theorem~\ref{thm-ball-bdy-bound}), with positive probability (resp.\ almost surely), the set of points in $\bdy\mcl B_s(0;D_h)$ (resp.\ $\bdy \mcl B_{D_h(0,z)}(0;D_h)$) which do not lie on the boundary of any complementary connected component of $\mcl B_s(0;D_h)$ (resp.\ $\mcl B_{D_h(0,z)}(0;D_h)$) has full Hausdorff dimension, hence is uncountable. 
\end{thm}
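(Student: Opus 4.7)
The plan is to reduce the proof to a one-point estimate on the probability that a fixed $z\in\BB C$ lies within Euclidean distance $\epsilon$ of $\mcl O_s(0;D_h)$, and to show that this probability carries an extra power-law decay $\epsilon^q$ beyond what one obtains for $\bdy \mcl B_s(0;D_h)$ in \cite{gwynne-ball-bdy}. Concretely, I would aim for a bound of the form
\eqbn
\BB P\bigl[\operatorname{dist}(z, \mcl O_s(0;D_h)) < \epsilon\bigr] \le \epsilon^{2 - d^\ast + q}
\eqen
where $d^\ast$ is the essential supremum of the Euclidean dimension of $\bdy \mcl B_s(0;D_h)$ given in \cite{gwynne-ball-bdy}, with an analogous estimate using $D_h$-balls for the $\gamma$-quantum dimension. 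A standard first-moment covering argument (as in~\cite{gwynne-ball-bdy}) then upgrades this one-point bound to the a.s.\ dimension bounds in the theorem statement, both for $\mcl O_s(0;D_h)$ and, by applying the same reasoning with $s$ replaced by $D_h(0,z)$, for $\mcl O_{D_h(0,z)}(0;D_h)$.

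The first step is to use the characterization in Definition~\ref{def-outer} to write $\mcl O_s(0;D_h) = \bigcup_{w\in \BB Q^2} \bdy \mcl B^{w,\bullet}_s(0;D_h)$, which by countable subadditivity of Hausdorff dimension reduces the task to bounding $\dim \bdy \mcl B^{w,\bullet}_s(0;D_h)$ for each individual rational $w$; by local absolute continuity arguments it is enough to treat $w=\infty$. A point $z'\in \bdy \mcl B^\bullet_s(0;D_h)$ is characterized as a point of $\bdy \mcl B_s(0;D_h)$ that additionally lies in the closure of the unbounded connected component of $\BB C\setminus \ol{\mcl B_s(0;D_h)}$. Thus the event in the display above is the intersection of the ``boundary'' event (the probability of which already gives the exponent $2-d^\ast$ via \cite{gwynne-ball-bdy}) with an ``exterior accessibility'' event, and the task is to extract the additional $\epsilon^q$ from the latter.

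The main obstacle is precisely extracting this independent power-law decay for the accessibility event, and my plan is a multi-scale argument powered by the strong confluence of geodesics (Theorem~\ref{thm-general-confluence}). For each dyadic scale $2^{-k}$ with $\epsilon \le 2^{-k} \le 1$, applying Theorem~\ref{thm-general-confluence} (or its consequence used in Lemma~\ref{lem-conf-event}) in a Euclidean neighborhood of $z$ of radius $2^{-k}$ yields, with positive probability uniform in $k$, a confluence point $z_0^{(k)}$ through which every $D_h$-geodesic from a smaller sub-neighborhood $U'_k$ of $z$ to points outside must pass. Whenever the growing metric ball $\mcl B_t(0;D_h)$ reaches such a $z_0^{(k)}$ strictly before reaching $z$, the confluence point acts as a bottleneck that disconnects $U'_k$ from the unbounded complementary component of $\mcl B_s$, so $z$ cannot lie within Euclidean distance $2^{-k}$ of $\bdy \mcl B^\bullet_s$. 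Using the near-independence of $h$ across well-separated scales (the Markov/local-independence properties of the GFF exploited already in~\cite{gm-confluence,gwynne-ball-bdy}), a positive density of scales can be made essentially independent, and a Borel--Cantelli-type estimate then shows that the probability the pinch-off fails at every scale decays as $\epsilon^{q}$ for some explicit $q>0$. The hardest technical step is controlling the conditioning between the ``boundary'' event and the ``accessibility'' events at each scale without spoiling the independence structure; this is where careful use of the local-independence reformulation of confluence (analogous to \cite[Proposition 12]{akm-geodesics}) is essential.

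Finally, the ``in particular'' conclusion is immediate from the strict inequality between our upper bound and the matching lower bound: Theorem~\ref{thm-ball-bdy-bound} gives the a.s.\ Hausdorff dimension of $\bdy \mcl B_{D_h(0,z)}(0;D_h)$, which strictly exceeds our bound on $\dim \mcl O_{D_h(0,z)}(0;D_h)$, so $\bdy \mcl B_{D_h(0,z)} \setminus \mcl O_{D_h(0,z)}$ must carry the full Hausdorff dimension and in particular be uncountable; the corresponding positive-probability statement for $\bdy \mcl B_s(0;D_h)$ follows from the same comparison with \cite[Theorem 1.1]{gwynne-ball-bdy}.
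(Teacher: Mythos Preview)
Your overall architecture---a one-point estimate with an extra $\epsilon^q$ decay coming from a multi-scale argument, then a covering argument---is the right shape, and the ``in particular'' conclusion is handled correctly. But the specific mechanism you propose for the extra decay has a genuine gap, and it differs from what the paper does.

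You argue that if the confluence event of Theorem~\ref{thm-general-confluence} occurs at scale $2^{-k}$ around $z$, the confluence point $z_0^{(k)}$ ``acts as a bottleneck that disconnects $U'_k$ from the unbounded complementary component of $\mcl B_s$''. This does not follow: a single point cannot topologically disconnect an open planar region. The fact that all $D_h$-geodesics from $U'_k$ to $\BB C\setminus U_k$ pass through $z_0^{(k)}$ says nothing about whether, at time $s$, there is a continuous path in $\BB C\setminus \ol{\mcl B_s}$ from $z$ to infinity. Being on the exterior boundary is a topological condition on the complement of the ball, not a metric condition on geodesics, so confluence is the wrong tool here.

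The paper's mechanism is different and simpler, and does not use confluence at all. The good event $G_r(z)$ is that the annulus $B_{2r}(z)\setminus B_r(z)$ contains a loop $\pi$ separating the boundary circles whose $D_h$-length is shorter than $D_h(\bdy B_{r/2}(z),\bdy B_r(z))$. If $z\in\mcl O_s$ and $G_r(w)$ holds for some $w$ close to $z$, then the $D_h$-geodesic from $0$ to $z$ hits $\pi$ and then still has to cross $B_r(w)\setminus B_{r/2}(w)$; since that crossing takes longer than the whole of $\pi$, the ball has swallowed $\pi$ before time $s$, so $\pi\subset\mcl B_s$ disconnects $z$ from infinity---contradicting $z\in\mcl O_s$. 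Thus $z\in\mcl O_s$ forces $G_r(w)$ to fail at \emph{every} scale, and the near-independence of these annulus events across scales (Lemma~\ref{lem-check-conditions}) gives the $\epsilon^q$. This is then fed into the generalized upper bound Theorem~\ref{thm-gen-upper} rather than a bare first-moment covering.

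One further point: transferring from fixed radius $s$ to the random radius $D_h(0,z)$ is not just ``the same reasoning with $s$ replaced by $D_h(0,z)$''. The paper does this by the bump-function randomization trick of Lemma~\ref{lem-bdy-dim-compare2revised}, which turns an a.s.\ statement for Lebesgue-a.e.\ radius into one for the specific radius $D_h(0,z)$; your proposal should acknowledge that this step requires its own argument.
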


The proof of Theorem~\ref{thm-outer-bdy-compare}, given in Section~\ref{sec-outer}, is based on a generalization of the argument used to prove the upper bound for the Euclidean and LQG dimensions of an LQG metric ball boundary in~\cite{gwynne-ball-bdy}, see Theorem~\ref{thm-gen-upper}. 
 

\subsection{Outline}
 
In Section~\ref{sec-metric-def}, we review the definition of the $\gamma$-LQG metric.  In Section~\ref{sec-strong-confluence}, we prove our main confluence result (Theorem~\ref{thm-general-confluence}). In Section~\ref{sec-two-ingredients}, we prove a zero-one law for the metric net (Theorem~\ref{thm-net-dim}), and we describe the versions of scale invariance and locality that we will use to prove zero-one laws for LQG geodesics and metric ball boundaries. In Section~\ref{sec-geo-dim}, we prove a zero-one law for geodesics (Theorem~\ref{thm-geo-dim}). Finally, in Section~\ref{sec-ball-bdry}, we compute the Euclidean and $\gamma$-quantum dimensions of metric ball boundaries (Theorem~\ref{thm-ball-bdy-bound}), and we study the exterior boundaries of LQG metric balls (Theorem~\ref{thm-outer-bdy-compare}).

\section{Background: definition of the LQG metric}
\label{sec-metric-def}

In this section, we review the definition of the $\gamma$-LQG metric.
The $\gamma$-LQG metric can be defined in two equivalent ways: as the limit of an explicit approximation scheme (called \emph{Liouville first passage percolation}), and as the unique metric satisfying a list of axioms. In this paper, we will use the axiomatic definition  of the LQG metric, which we now state after introducing some metric space terminology.

\begin{defn}[Terminology for general metric spaces] \label{def-metric-stuff}
Let $(X,D)$ be a metric space.
\begin{itemize}
\item
For a curve $P : [a,b] \rta X$, the \emph{$D$-length} of $P$ is defined by 
\eqbn
\op{len}\left( P ; D  \right) := \sup_{T} \sum_{i=1}^{\# T} D(P(t_i) , P(t_{i-1})) 
\eqen
where the supremum is over all partitions $T : a= t_0 < \dots < t_{\# T} = b$ of $[a,b]$. Note that the $D$-length of a curve may be infinite.
\item
We say that $(X,D)$ is a \emph{length space} if for each $x,y\in X$ and each $\ep > 0$, there exists a curve of $D$-length at most $D(x,y) + \ep$ from $x$ to $y$. 
\item
For $Y\subset X$, the \emph{internal metric of $D$ on $Y$} is defined by
\eqb \label{eqn-internal-def}
D(x,y ; Y)  := \inf_{P \subset Y} \op{len}\left(P ; D \right) ,\quad \forall x,y\in Y 
\eqe 
where the infimum is over all paths $P$ in $Y$ from $x$ to $y$. 
Note that $D(\cdot,\cdot ; Y)$ is a metric on $Y$, except that it is allowed to take infinite values.  
\item
If $X$ is an open subset of $\BB C$, we say that $D$ is  a \emph{continuous metric} if it induces the Euclidean topology on $X$. 
We equip the set of continuous metrics on $X$ with the local uniform topology on $X\times X$ and the associated Borel $\sigma$-algebra.
\end{itemize}
\end{defn}

We now define the $\gamma$-LQG metric axiomatically. The definition is phrased in terms of the two parameters $Q$ and $\xi$, defined as
\eqb
Q = Q_\gamma =  \frac{2}{\gamma} + \frac{\gamma}{2} \quad \text{and} \quad \xi = \xi_\gamma := \frac{\gamma}{d_\gamma} \label{eqn-xi-Q}
\eqe 
where, as above, $d_\gamma  = \dim_{\mcl H}^\gamma \BB C$ is the LQG dimension exponent~\cite{dzz-heat-kernel,dg-lqg-dim,gp-kpz}.

\begin{defn}[The LQG metric]
\label{def-lqg-metric}
For $U\subset \BB C$, let $\mcl D'(U)$ be the space of distributions (generalized functions) on $\BB C$, equipped with the usual weak topology.   
A \emph{$\gamma$-LQG metric} is a collection of measurable functions $h\mapsto D_h$, one for each open set $U\subset\BB C$, from $\mcl D'(U)$ to the space of continuous metrics on $U$ with the following properties.\footnote{Our axioms for a $\gamma$-LQG metric only concern a.s.\ properties of $D_h$ when $h$ is a GFF plus a continuous function. So, once we have defined $D_h$ a.s.\ when $h$ is a GFF plus a continuous function, we can take $D$ to be any measurable mapping $\mcl D'(U) \rta \{\text{continuous metrics on $U$}\}$ which is a.s.\ consistent with our given definition when $h$ is a GFF plus a continuous function. 
In fact, the construction of the metric in~\cite{dddf-lfpp,lqg-metric-estimates,gm-confluence,gm-uniqueness,gm-coord-change} only gives an explicit definition of $D_h$ in the case when $h$ is a GFF plus a continuous function. }
Let $U\subset \BB C$ and let $h$ be a \emph{GFF plus a continuous function} on $U$: i.e., $h$ is a random distribution on $U$ which can be coupled with a random continuous function $f$ in such a way that $h-f$ has the law of the (zero-boundary or whole-plane, as appropriate) GFF on $U$.  Then the associated metric $D_h$ satisfies the following axioms.
\begin{enumerate}[I.]
\item \textbf{Length space.} Almost surely, $(U,D_h)$ is a length space, i.e., the $D_h$-distance between any two points of $U$ is the infimum of the $D_h$-lengths of $D_h$-continuous paths (equivalently, Euclidean continuous paths) in $U$ between the two points. \label{item-metric-length}
\item \textbf{Locality.} Let $V \subset U$ be a deterministic open set. 
The $D_h$-internal metric $D_h(\cdot,\cdot ; V)$ is a.s.\ equal to $D_{h|_V}$, so in particular it is a.s.\ determined by $h|_V$.  \label{item-metric-local}
\item \textbf{Weyl scaling.} Let $\xi = \gamma/d_\gamma$ be as in~\eqref{eqn-xi-Q}. For a continuous function $f : U\rta \BB R$, define
\eqb \label{eqn-metric-f}
(e^{\xi f} \cdot D_h) (z,w) := \inf_{P : z\rta w} \int_0^{\op{len}(P ; D_h)} e^{\xi f(P(t))} \,dt , \quad \forall z,w\in U,
\eqe 
where the infimum is over all continuous paths from $z$ to $w$ in $U$ parametrized by $D_h$-length.
Then a.s.\ $ e^{\xi f} \cdot D_h = D_{h+f}$ for every continuous function $f: U\rta \BB R$. \label{item-metric-f}
\item \textbf{Conformal coordinate change.} Let $\wt U\subset \BB C$ and let $\phi : U \rta \wt U$ be a deterministic conformal map. Then, with $Q$ as in~\eqref{eqn-xi-Q}, a.s.\ \label{item-metric-coord}
\eqb \label{eqn-metric-coord}
 D_h \left( z,w \right) = D_{h\circ\phi^{-1} + Q\log |(\phi^{-1})'|}\left(\phi(z) , \phi(w) \right)  ,\quad  \forall z,w \in U.
\eqe    
\end{enumerate}
\end{defn}

The following theorem~\cite{dddf-lfpp,gm-uniqueness,gm-coord-change} asserts that the $\gamma$-LQG metric defined in Definition~\ref{def-lqg-metric} exists and is unique.

\begin{thm}[Existence and uniqueness of the LQG metric] \label{thm-lqg-metric} 
For each $\gamma \in (0,2)$, there exists a metric satisfying the axioms of Definition~\ref{def-lqg-metric}.  This metric is unique in the following sense: if $D$ and $\wt D$ are two such metrics, then there is a deterministic constant $C>0$ such that whenever $h$ is a GFF plus a continuous function, a.s.\ $\wt D_h = C D_h$.
\end{thm}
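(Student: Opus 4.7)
The plan is to split the theorem into two essentially independent pieces: existence and uniqueness. For existence, I would construct a candidate metric as a subsequential limit of a concrete regularized approximation and then check each of the four axioms in turn. For uniqueness, I would show, starting from any two metrics $D$ and $\wt D$ satisfying the axioms, that their optimal bi-Lipschitz constants must coincide, forcing $\wt D_h = C D_h$ almost surely for a deterministic $C>0$.

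For existence, the natural candidate is \emph{Liouville first passage percolation}. Let $h_\ep^*$ denote the circle average of $h$ at scale $\ep$ and put
\[
D_h^\ep(z,w) := \inf_{P : z\to w} \int_0^1 e^{\xi h_\ep^*(P(t))} |P'(t)|\, dt .
\]
The first step, carried out in \cite{dddf-lfpp}, is to prove tightness of $\{a_\ep^{-1} D_h^\ep\}$ for a suitable deterministic normalization $a_\ep$; this requires multiplicative chaos style moment estimates, together with a Russo--Seymour--Welsh type argument to get uniform Hölder continuity of the rescaled metrics. Passing to a subsequential limit $D_h$, the length space axiom is inherited from the approximations, Weyl scaling is built into the $e^{\xi h_\ep^*}$ weighting, and locality follows from the fact that the weight function $e^{\xi h_\ep^*}$ only uses $h$ on an $\ep$-neighborhood, together with a limit argument as $\ep \to 0$. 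The last axiom, conformal coordinate change, does not come for free from the approximation and requires the separate argument of \cite{gm-coord-change}, which leverages the exact Gaussian structure of the field and the conformal invariance of the GFF modulo the $Q\log|\phi'|$ correction.

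For uniqueness, the strategy I would follow is that of \cite{gm-uniqueness}. Given two metrics $D$ and $\wt D$ satisfying the axioms, Weyl scaling and coordinate change together imply that the optimal bilateral Lipschitz ratios
\[
c_* := \sup\{c>0 : \wt D_h \geq c D_h \text{ a.s.\ on macroscopic scales}\}, \qquad C_* := \inf\{C>0 : \wt D_h \leq C D_h \text{ a.s.}\},
\]
are deterministic and lie in $(0,\infty)$; the goal is to show $c_* = C_*$. The key idea is that if $c_* < C_*$ strictly, then by a concentration-plus-confluence argument one can produce a small ball-shaped region $B$ and a local additive perturbation $f$ supported in $B$ such that, with positive probability, enough geodesics are forced to route through $B$ so that the Weyl scaling relation applied in $h+f$ and $h$ yields an improvement of one of the extremal constants — contradicting optimality. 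The bookkeeping relies on independence across well-separated regions (enabled by locality and by decomposing the GFF into a sum of independent pieces) and on strong confluence of geodesics from a fixed point, proved in \cite{gm-confluence}, to ensure that a positive density of geodesic arcs actually visits the perturbation region.

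The hardest step by far is the uniqueness argument, and within it the main obstacle is the non-local nature of geodesics: $D_h$-geodesics are not determined by $h$ restricted to any small set, so one cannot simply decouple the problem region-by-region. Overcoming this is the central innovation of \cite{gm-uniqueness}: one uses confluence and an ``approximate Markov property'' of geodesics to show that, even though geodesics are non-local, the \emph{improvement} obtained by a local perturbation can be controlled using only local information, which is what makes the bootstrap from $c_* < C_*$ to a contradiction close. In the existence part, by contrast, the main delicate point is verifying the conformal coordinate change axiom, since the LFPP approximation is not itself conformally covariant and one must take the limit carefully.
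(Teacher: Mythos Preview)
Your sketch is accurate as a high-level summary of the arguments in \cite{dddf-lfpp,local-metrics,lqg-metric-estimates,gm-confluence,gm-uniqueness,gm-coord-change}, but note that the paper does not actually prove Theorem~\ref{thm-lqg-metric}: it is stated purely as background, with the paragraph following the theorem simply pointing to \cite[Theorem 1.2]{gm-uniqueness} (building on the preceding works) for existence and uniqueness in the whole-plane case, to \cite[Remark 1.5]{gm-uniqueness} for the extension to general domains, and to \cite[Theorem 1.1]{gm-coord-change} for the full conformal coordinate change axiom. So there is nothing to compare beyond observing that your outline matches the strategy of those cited papers; no independent argument is given (or expected) here.
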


More precisely, it is shown in~\cite[Theorem 1.2]{gm-uniqueness}, building on~\cite{dddf-lfpp,local-metrics,lqg-metric-estimates,gm-confluence}, that for each $\gamma \in (0,2)$, there is a measurable function $h\mapsto D_h$ from $\mcl D'(\BB C)$ to the space of continuous metrics on $\BB C$ which satisfies the conditions of Definition~\ref{def-lqg-metric} for $U=\BB C$ (note that this means $\phi$ in Axiom~\ref{item-metric-coord} is required to be a complex affine map) and is unique in the sense of Theorem~\ref{thm-lqg-metric}. As explained in~\cite[Remark 1.5]{gm-uniqueness}, this gives a way to define $D_h$ whenever $h$ is a GFF plus a continuous function on an open domain $U\subset\BB C$ in such a way that Axioms~\ref{item-metric-length} through~\ref{item-metric-f} hold.
Note that the metric in the whole-plane case determines the metric on other domains due to Axiom~\ref{item-metric-local}.
It is shown in~\cite[Theorem 1.1]{gm-coord-change} that with the above definition, Axiom~\ref{item-metric-coord} holds for general conformal maps. 
 
Because of Theorem~\ref{thm-lqg-metric}, we may refer to the unique metric satisfying Definition~\ref{def-lqg-metric} as \emph{the $\gamma$-LQG metric}. Technically, the metric is unique only up to a global deterministic multiplicative constant.  When referring to the $\gamma$-LQG metric, we are implicitly fixing the constant in some arbitrary way. For example, we could require that the median distance between the left and right sides of $[0,1]^2$ is 1 when $h$ is a whole-plane GFF normalized so that its average over the unit circle is zero. The choice of constant will not play any role in our results or proofs.

 \subsection{Boundaries of filled LQG metric balls are Jordan curves}
\label{sec-metric-curve}

Here we record a basic topological fact about LQG metric balls which will be convenient to use at several places later in the paper. The lemma is a minor variant of~\cite[Proposition 2.1]{tbm-characterization} and is proven in the same way. 

\begin{lem} \label{lem-metric-curve}
Let $D$ be a length metric on $\BB C$ which induces the same topology as the Euclidean metric and satisfies $\lim_{z\rta\infty} D(w,z) = \infty$ for some (equivalently, every) $w\in\BB C$. 
For $z\in\BB C$ and $s > 0$, write $\mcl B_s(z;D)$ for the $D$-metric ball of radius $s$ centered at $z$. 
The boundary of each connected component of $\BB C\setminus \ol{\mcl B_s(z;D)}$ is a Jordan curve.
\end{lem}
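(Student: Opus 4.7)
My plan is to adapt the argument of~\cite[Proposition 2.1]{tbm-characterization}, which uses only the metric-space structure. The proof is essentially topological, exploiting that $D$ is a length metric inducing the Euclidean topology together with the fact that closed $D$-balls are compact (the latter is a consequence of the hypothesis $\lim_{z \to \infty} D(w,z) = \infty$, which forces closed $D$-balls to be Euclidean-bounded).

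The first step is to show that $K := \overline{\mcl B_s(z;D)}$ is a compact continuum and that each connected component $U$ of $\BB C \setminus K$ is simply connected in $\widehat{\BB C}$. Compactness of $K$ is immediate from the preceding remark. Connectedness of $\mcl B_s(z;D)$ (and hence of $K$) uses the length space property: for any $w$ with $D(z,w) < s$ there is a path from $z$ to $w$ of $D$-length $< s$, and every point on this path lies in the open ball. Simple connectedness of $U$ in $\widehat{\BB C}$ follows by a standard planar topology argument: $\widehat{\BB C} \setminus U$ is the union of $K$, all other complementary components of $K$ (whose boundaries lie in $K$), and possibly $\{\infty\}$, and each piece is connected and meets $K$, so $\widehat{\BB C} \setminus U$ is connected.

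The main step is then to apply Carath\'eodory's theorem: the Riemann map $\phi : \BB D \to U$ extends to a homeomorphism $\overline{\BB D} \to \overline U$ (equivalently, $\partial U$ is a Jordan curve) provided $\partial U$ is locally connected and no two prime ends of $U$ correspond to the same boundary point. For local connectedness, I would use the length structure of $D$ restricted to $K$ to join any two nearby points $w, w' \in \partial U$ (which satisfy $D(z,w) = D(z,w') = s$) by a short path inside $K$: choose near-geodesics from $z$ to $w$ and from $z$ to $w'$ (which lie in $K$ up to parametrization) and concatenate their terminal portions through a point of $K$ close to both. This yields a connected subset of $K$ of small diameter containing $w, w'$, and one then argues that its intersection with $\partial U$ is likewise connected and of small diameter. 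To rule out cut points, if two distinct prime ends mapped to a single $w \in \partial U$, then $U \cup \{w\}$ would contain a Jordan loop separating $\BB C$ into two regions, each containing points of $K$; the same length-metric surgery produces a path in $K$ crossing this loop only at $w$, contradicting the separation.

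The main obstacle is the construction of the short connecting paths inside $K$ that underlie both local connectedness and the absence of cut points. This is delicate because the Euclidean segment between two nearby points of $\partial U$ need not lie in $K$, so the argument must exploit the internal length geometry of $D$ on $K$ rather than the ambient plane. Once this is in place, the rest is a standard application of Carath\'eodory/prime-end theory.
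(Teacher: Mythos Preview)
Your proposal takes essentially the same approach as the paper: both reduce to the argument of~\cite[Proposition 2.1]{tbm-characterization}. The paper's proof is shorter only because it cites that reference wholesale after observing one preliminary fact, whereas you sketch out the Carath\'eodory-based argument explicitly. One small point worth aligning: the paper first notes that the hypotheses make $(\BB C,D)$ boundedly compact and then invokes~\cite[Corollary 2.5.20]{bbi-metric-geometry} to get the existence of \emph{actual} $D$-geodesics between any two points, rather than the near-geodesics you use; working with genuine geodesics (which automatically stay in $\ol{\mcl B_s(z;D)}$) cleans up the ``short connecting paths inside $K$'' step you flagged as the main obstacle, and also the cut-point argument. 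Your step ``its intersection with $\partial U$ is likewise connected'' is the one place where the sketch is underspecified---the standard route is to show local connectedness of the complement $\widehat{\BB C}\setminus U$ (which follows once $K$ is locally connected) and then appeal to the Carath\'eodory--Torhorst theorem, rather than trying to intersect a connecting set with $\partial U$ directly.
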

\begin{proof}
Our hypotheses on $D$ imply that $(\BB C,D)$ is boundedly compact, i.e., each closed $D$-bounded subset of $\BB C$ is compact.
By~\cite[Corollary 2.5.20]{bbi-metric-geometry}, this implies that there is a $D$-geodesic between any two points of $\BB C$.
The lemma now follows from exactly the same argument as~\cite[Proposition 2.1]{tbm-characterization}, which is the analogous statement for metrics on the sphere rather than the plane.
\end{proof}

Note that the hypotheses of Lemma~\ref{lem-metric-curve} apply a.s.\ when $D = D_h$ is the $\gamma$-LQG metric associated with a whole-plane GFF (see~\cite[Lemma 3.8]{lqg-metric-estimates} for a proof that $\lim_{z\rta \infty} D(w,z) = \infty$). 

We also note that each connected component $U$ of $\BB C\setminus \ol{\mcl B_s(z;D)}$ is simply connected (since it is connected with connected complement). Since the boundary of such a connected component is a Jordan curve, it follows from Carath\'eodory's theorem~\cite[Theorem 2.6]{pom-book} that if $U$ is bounded, then any conformal map $\phi : \BB D \rta U$ extends to a homeomorphism $\ol{\BB D} \rta \ol U$. If $U$ is unbounded, the same is true with $U$ replaced by $U\cup \{\infty\}$ (viewed as a subset of the Riemann sphere). 

Strictly speaking, Lemma~\ref{lem-metric-curve} is not actually necessary in this paper since, whenever we would be inclined to use it, we can work with prime ends instead of actual boundary points. However, the lemma allows us to avoid some technical annoyances and makes some geometric arguments more transparent.

\section{Strong confluence of LQG geodesics}
\label{sec-strong-confluence}

The goal of this section is to prove Theorem~\ref{thm-general-confluence}. We will make frequent use of the notation for filled LQG metric balls from Definition~\ref{def-filled}. 

\subsection{Confluence at a single point}
\label{sec-conf}

In this subsection we will review some results concerning confluence of geodesics from~\cite{gm-confluence} and also prove some minor improvements on these results.
To do so we first recall the notion of a \emph{leftmost geodesic}. 
Each point $x\in \bdy \mcl B_s^{z,\bullet}(w;D_h) $ lies at $D_h$-distance exactly $s$ from $w$, so every $D_h$-geodesic from $w$ to $x$ stays in $\mcl B_s^{z,\bullet}(w;D_h)$. For some points $x$ there might be many such $D_h$-geodesics. But, it is shown in~\cite[Lemma 2.4]{gm-confluence} that there is always a distinguished $D_h$-geodesic from $w$ to $x$, called the \emph{leftmost geodesic}, which lies (weakly) to the left of every other $D_h$-geodesic from $w$ to $x$ if we stand at $x$ and look outward from $\mcl B_s^{z,\bullet}(w;D_h)$. Strictly speaking,~\cite[Lemma 2.4]{gm-confluence} only treats the case of filled metric balls targeted at $\infty$, but the same proof works for filled metric balls with different target points. 

The following theorem is a compilation of results from~\cite{gm-confluence}.
See Figure~\ref{fig-conf}, right, for an illustration.

\begin{thm}[\cite{gm-confluence}] \label{thm-conf}
Almost surely, for every $0 < t  < s$ the following is true. 
\begin{enumerate}
\item There is a finite set of points $\mcl X = \mcl X_{t,s} \subset \bdy\mcl B_t^\bullet$ such that every leftmost $D_h$-geodesic from 0 to a point of $\bdy \mcl B_s^\bullet$ passes through some $x\in\mcl X$. \label{item-conf}
\item There is a unique $D_h$-geodesic from 0 to $x$ for each $x\in \mcl X$. \label{item-conf-unique}
\item For $x\in \mcl X$, let $I_x$ be the set of $y\in\bdy\mcl B_s^\bullet$ such that that the leftmost $D_h$-geodesic from 0 to $y$ passes through $x$.
Each $I_x $ for $x\in\mcl X $ is a connected arc of $\bdy\mcl B_s^\bullet $ (possibly a singleton) and $\bdy\mcl B_s^\bullet$ is the disjoint union of the arcs $I_x $ for $x\in \mcl X $. \label{item-conf-arcs}
\item The counterclockwise cyclic ordering of the arcs $I_x $ is the same as the counterclockwise cyclic ordering of the corresponding points $x\in \mcl X \subset\bdy \mcl B_t^\bullet$. \label{item-conf-order}
\end{enumerate}
\end{thm}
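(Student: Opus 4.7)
The plan is to assemble the four statements from results already present in~\cite{gm-confluence}, which establishes confluence of geodesics from a fixed point. Each of items~\ref{item-conf}--\ref{item-conf-order} is either directly stated or is an immediate consequence of a statement in~\cite{gm-confluence}; the real work is packaging these together and upgrading ``almost surely for each fixed $(t,s)$'' to ``almost surely simultaneously for every $0<t<s$''.

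First I would invoke the main confluence-at-a-point theorem of~\cite{gm-confluence} to obtain item~\ref{item-conf} at each fixed pair $(t,s)$. To promote this to a simultaneous statement, I would apply the fixed-pair result along a countable dense collection $\{(t_j, s_j)\}$ and then use monotonicity in $(t,s)$: if $t<t'<s'<s$ and $\mcl X_{t,s}$ is the (a.s.\ finite) confluence set at level $t$ targeted to level $s$, then every leftmost $D_h$-geodesic from $0$ to a point of $\bdy \mcl B_{s'}^\bullet$ extends to a leftmost geodesic reaching $\bdy \mcl B_s^\bullet$ (using $\mcl B_{s'}^\bullet \subset \mcl B_s^\bullet$), hence must pass through some $x\in \mcl X_{t,s}$; intersecting the resulting geodesic initial segments with $\bdy \mcl B_{t'}^\bullet$ produces a finite confluence set at level $t'$ for targets on $\bdy \mcl B_{s'}^\bullet$. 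This reduces the simultaneous-in-$(t,s)$ statement to the countable case.

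For item~\ref{item-conf-unique}, I would argue by contradiction: if two distinct $D_h$-geodesics from $0$ to some $x\in\mcl X$ existed, then by taking the leftmost extension of each past $x$ all the way to $\bdy \mcl B_s^\bullet$ one obtains a point $y\in\bdy \mcl B_s^\bullet$ whose leftmost geodesic from $0$ passes through $x$ but does not agree, on $[0,s]$, with the geodesic used in the definition of $\mcl X$, contradicting that $\mcl X$ was built from leftmost geodesics. (A version of this uniqueness statement for points on an inner ball boundary is already present in~\cite{gm-confluence}.) For items~\ref{item-conf-arcs} and~\ref{item-conf-order}, the key input is the non-crossing property of leftmost geodesics from~\cite{gm-confluence}: any two such geodesics either coincide on an initial segment or are disjoint except at $0$. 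This implies that the map $y\mapsto$ (merging point on $\bdy \mcl B_t^\bullet$ of the leftmost geodesic from $0$ to $y$) is monotone with respect to the cyclic orders on $\bdy \mcl B_s^\bullet$ and $\mcl X\subset \bdy \mcl B_t^\bullet$. Monotonicity and continuity of leftmost geodesics in their endpoints (also in~\cite{gm-confluence}) together give both the connected-arc decomposition of item~\ref{item-conf-arcs} and the preservation of cyclic order in item~\ref{item-conf-order}.

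The main obstacle is really the bookkeeping for the ``simultaneously in $(t,s)$'' quantifier, since the confluence arguments in~\cite{gm-confluence} are phrased for fixed $t$ and $s$; the monotonicity argument above resolves this cleanly. Nothing substantively new beyond~\cite{gm-confluence} is needed.
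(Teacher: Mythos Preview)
Your proposal correctly identifies that all four items derive from~\cite{gm-confluence}, and your treatment of items~\ref{item-conf-arcs} and~\ref{item-conf-order} via the non-crossing of leftmost geodesics matches the paper. However, your arguments for items~\ref{item-conf} and~\ref{item-conf-unique} have gaps.

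For item~\ref{item-conf}, your monotonicity step asserts that every leftmost $D_h$-geodesic from $0$ to a point of $\bdy\mcl B_{s'}^\bullet$ \emph{extends} to a leftmost geodesic reaching $\bdy\mcl B_s^\bullet$ when $s'<s$. This is not justified: a point $y\in\bdy\mcl B_{s'}^\bullet$ need not lie on any $D_h$-geodesic from $0$ to a point of $\bdy\mcl B_s^\bullet$, so no such extension need exist, and hence you cannot conclude that the leftmost geodesic to $y$ hits $\mcl X_{t,s}$. In fact the paper sidesteps this entirely: \cite[Theorem~1.4]{gm-confluence} already establishes the finiteness of $\mcl X_{t,s}$ simultaneously for all $0<t<s$, so what you flag as the ``main obstacle'' is a non-issue.

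For item~\ref{item-conf-unique}, your contradiction does not close: the existence of two distinct geodesics from $0$ to some $x\in\mcl X$ is not obviously incompatible with $\mcl X$ being defined via leftmost geodesics to $\bdy\mcl B_s^\bullet$ --- those leftmost geodesics could all simply coincide with one of the two candidate geodesics on $[0,t]$, and nothing in your argument rules this out. The paper instead argues via~\cite[Lemma~2.4]{gm-confluence}: leftmost geodesics to boundary points are uniform limits of the a.s.\ unique geodesics from $0$ to rational points, so each $x\in\mcl X$ lies on such a unique geodesic, and two distinct geodesics from $0$ to $x$ would yield two distinct geodesics from $0$ to that rational point.
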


We note that $\bdy\mcl B_s^\bullet$ is a Jordan curve by Lemma~\ref{lem-metric-curve}. This allows us to talk about arcs of $\bdy\mcl B_s^\bullet$ without worrying about prime ends, etc.

\begin{proof}[Proof of Theorem~\ref{thm-conf}]
Assertion~\ref{item-conf} is immediate from~\cite[Theorem 1.4]{gm-confluence}.
Assertion~\ref{item-conf-unique} can be easily deduced from the uniqueness of geodesics between rational points together with the fact that leftmost $D_h$-geodesics can be approximated by geodesics to rational points~\cite[Lemma 2.4]{gm-confluence}; see the proof of~\cite[Theorem 3.1]{gm-confluence}.
Assertion~\ref{item-conf-arcs} follows from~\cite[Lemma 2.7]{gm-confluence}.
Assertion~\ref{item-conf-order} is implicit in the proof of~\cite[Lemma 2.7]{gm-confluence}, or alternatively can be extracted from the fact that distinct leftmost $D_h$-geodesics from 0 to points of $\bdy\mcl B_s$ do not cross. 
\end{proof}

\begin{figure}[t!]
 \begin{center}
\includegraphics[scale=.85]{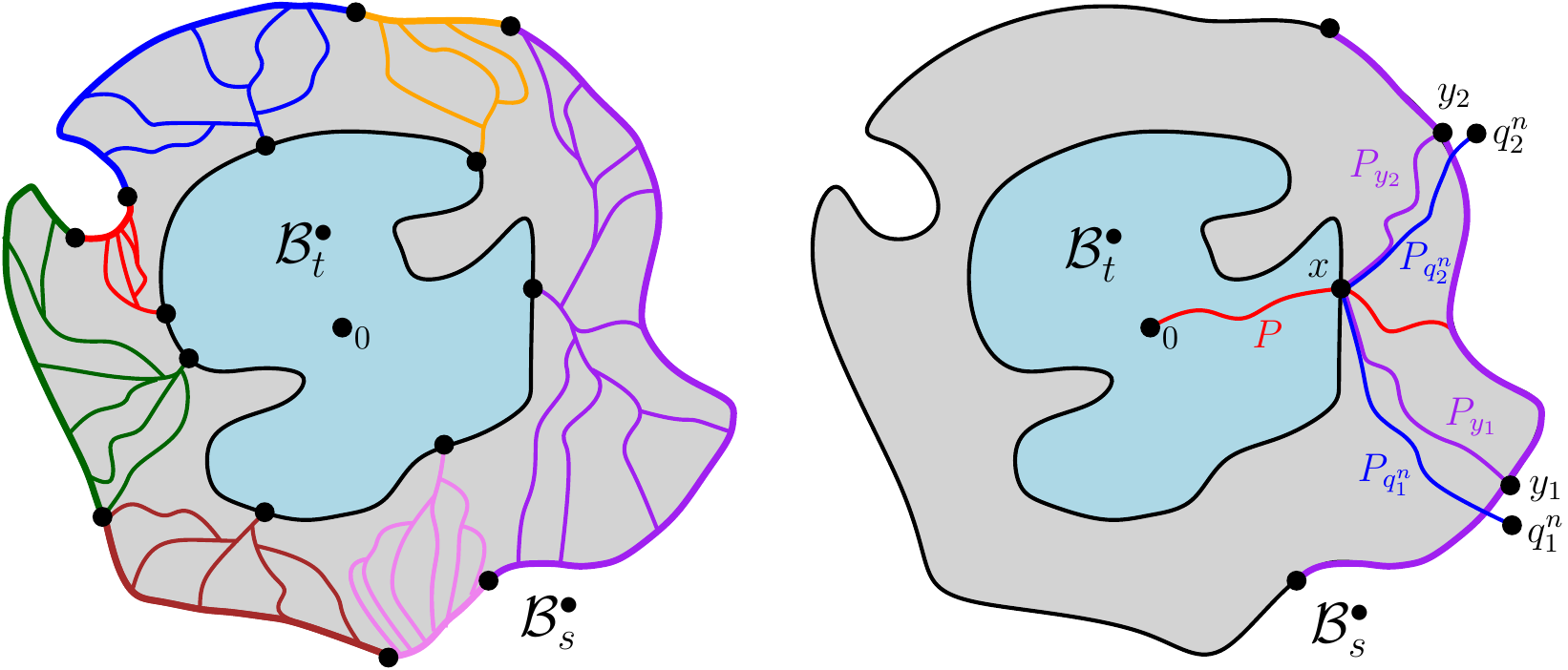}
\vspace{-0.01\textheight}
\caption{ \textbf{Left:} Illustration of the statement of Theorem~\ref{thm-conf}. The confluence points $\mcl X\subset\bdy\mcl B_t^\bullet$ are shown in black. For each $x\in\mcl X$, we have shown in a different color the arc $I_x \subset\bdy\mcl B_s^\bullet$ and the segments $P([s,t])$ for several representative leftmost $D_h$-geodesics $P$ from $x$ to points of $\bdy\mcl B_s^\bullet$. 
\textbf{Right:} Illustration of the proof of Lemma~\ref{lem-conf-no-leftmost}. We want to show that the given red $D_h$-geodesic $P$ (which is not necessarily a leftmost $D_h$-geodesic) passes through $x$. To do this, we construct $D_h$-geodesics from 0 to points $q_1^n,q_2^n\in\BB Q^2\setminus \mcl B_s^\bullet$ which pass through $x$ (blue) with the property that if $P$ does not pass through $x$, then it must cross one of these two blue geodesics. 
}\label{fig-conf}
\end{center}
\vspace{-1em}
\end{figure}

The following minor improvement on Theorem~\ref{thm-conf} allows us to avoid worrying about whether $D_h$-geodesics are leftmost and about what happens at the endpoints of the arcs $I_x$. 
Note that now we fix $t$ and $s$, whereas Theorem~\ref{thm-conf} is required to hold simultaneously for all $t$ and $s$. 

\begin{prop} \label{prop-conf-endpt}
Fix $0<t < s$ and let $\mcl X = \mcl X_{t,s}$ be the set of confluence points as in Theorem~\ref{thm-conf}. 
Almost surely, for every $D_h$-geodesic $P$ from 0 to a point of $\BB C\setminus \mcl B_s^\bullet$ there is an $x\in\mcl X$ such that $P(t)  = x$ and $P(s) $ is a point of the arc $I_x$ which is not one of the endpoints of $I_x$. 
\end{prop}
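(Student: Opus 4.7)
The plan is to reduce Proposition~\ref{prop-conf-endpt} to Theorem~\ref{thm-conf} by approximating arbitrary $D_h$-geodesics from $0$ by the unique $D_h$-geodesics from $0$ to rational points, to which Theorem~\ref{thm-conf} applies directly. I will work on the almost-sure event on which Theorem~\ref{thm-conf} holds, $\bdy\mcl B_s^\bullet$ is a Jordan curve (Lemma~\ref{lem-metric-curve}), and for every $q\in\BB Q^2$ there is a unique $D_h$-geodesic from $0$ to $q$ (the latter holds by~\cite[Lemma 2.4]{gm-confluence} combined with a countable union bound). On this event, for each $q \in \BB Q^2 \setminus \mcl B_s^\bullet$ the unique geodesic $P_q$ from $0$ to $q$ coincides with the leftmost geodesic, so by Theorem~\ref{thm-conf} we have $P_q(t) \in \mcl X$ and $P_q(s) \in I_{P_q(t)}$.

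Let $P$ be a $D_h$-geodesic from $0$ to some $y \in \BB C \setminus \mcl B_s^\bullet$, and let $x_0 \in \mcl X$ be the unique element with $P(s) \in I_{x_0}$ (unique because the arcs $I_x$ are pairwise disjoint by Theorem~\ref{thm-conf}). To show $P(t) = x_0$ in the case when $P(s)$ lies in the interior of $I_{x_0}$, I will use the topological trapping argument suggested in the caption of Figure~\ref{fig-conf}. Choose rational sequences $q_1^n, q_2^n \in \BB Q^2 \setminus \mcl B_s^\bullet$ converging to $y$ from opposite sides of $P$ near $y$, and close enough to $y$ that the exit points $P_{q_i^n}(s)$ lie in $I_{x_0}$ on opposite sides of $P(s)$ along $\bdy\mcl B_s^\bullet$. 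Then $P_{q_i^n}(t) = x_0$ for both $i$ and all large $n$, and the curves $P_{q_1^n}|_{[0,s]}$, $P_{q_2^n}|_{[0,s]}$, together with the sub-arc of $\bdy\mcl B_s^\bullet$ from $P_{q_1^n}(s)$ to $P_{q_2^n}(s)$ containing $P(s)$, form a Jordan curve passing through $x_0$. The path $P|_{[0,s]}$ enters this Jordan curve at $0$ and exits at $P(s)$. If $P(t) \neq x_0$, then $P|_{[0,s]}$ would have to properly cross one of the $P_{q_i^n}|_{[0,s]}$ at a positive time, which is impossible: two distinct $D_h$-geodesics from the common point $0$ cannot cross transversally, since swapping their post-crossing segments would produce a strictly shorter curve to one of their endpoints, contradicting the geodesic property.

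To rule out $P(s)$ being an endpoint of $I_{x_0}$, I argue by contradiction. Suppose $P(s)$ is a common endpoint of $I_{x_0}$ and an adjacent arc $I_{x'}$ with $x' \in \mcl X \setminus \{x_0\}$. By Arzel\`a--Ascoli applied to the unique geodesics from $0$ to rationals in $\BB C \setminus \mcl B_s^\bullet$ whose exit points lie in $I_{x'}$ and converge to $P(s)$, I obtain a $D_h$-geodesic $P^*$ from $0$ to $P(s)$ with $P^*(t) = x'$. Since $P(s)$ lies on the geodesic $P$, the concatenation $P' := P^* \cdot P|_{[s,\mathrm{length}(P)]}$ has total length $D_h(0, P(s)) + D_h(P(s), y) = D_h(0, y)$ and is therefore a $D_h$-geodesic from $0$ to $y$ with $P'(t) = x'$. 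A variant of the trapping argument applied to $P'$, using rational approximants to $y$ on the $I_{x_0}$ side of $P'$ near $P(s)$ (whose geodesics pass through $x_0$), then produces a Jordan curve through $x_0$ enclosing the initial segment of $P'$, forcing $P'(t) = x_0$, which contradicts $P'(t) = x'$.

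The main obstacle is the trapping argument, with two delicate sub-steps: (a) the no-proper-crossing property for distinct $D_h$-geodesics from $0$ follows from the length-space swap argument, but requires care when the two geodesics share a non-trivial initial segment (handled by applying the swap past the splitting point and using uniqueness of $D_h$-geodesics to rational points); and (b) verifying that the curves $P_{q_1^n}|_{[0,s]}, P_{q_2^n}|_{[0,s]}$ and the short arc of $\bdy\mcl B_s^\bullet$ really form a Jordan curve enclosing $P|_{[0,s]}$. The latter is achieved by using Arzel\`a--Ascoli to show $P_{q_i^n}$ converges to a $D_h$-geodesic from $0$ to $y$ and shrinking the Euclidean diameter of the arc of $\bdy\mcl B_s^\bullet$ inside the Jordan curve by taking the $q_i^n$ close to $y$.
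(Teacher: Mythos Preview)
Your treatment of the ``interior'' case (when $P(s)$ is not an endpoint of $I_{x_0}$) is essentially the paper's Lemma~\ref{lem-conf-no-leftmost}: trap $P|_{[0,s]}$ between two geodesics to rational points that both pass through $x_0$, and use uniqueness of $D_h$-geodesics to rational endpoints. Two small remarks: the swap argument gives a path of the \emph{same} length, not a strictly shorter one---the contradiction is with \emph{uniqueness} of the geodesic to $q_i^n$, not with minimality; and to ensure $P_{q_i^n}(s)$ lands on the correct side of $P(s)$ inside $I_{x_0}$, the paper does not approximate $y$ directly but instead fixes $y_1,y_2\in I_{x_0}$ on either side of $P(s)$ and invokes \cite[Lemma~2.4]{gm-confluence} to get rational approximants whose geodesics converge uniformly to the leftmost geodesics to $y_1,y_2$. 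Your version needs a sentence or two to justify why $P_{q_i^n}(s)$ is close to $P(s)$ when the geodesic to $y$ may not be unique.

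The genuine gap is in your endpoint argument. Suppose $P(s)$ is the common endpoint of $I_{x_0}$ and $I_{x'}$, and you build $P'$ with $P'(t)=x'$ as you describe. Your ``variant of the trapping argument'' cannot force $P'(t)=x_0$. Any trap you build with rational approximants near $P(s)$ will use geodesics through $x_0$ on the $I_{x_0}$ side and geodesics through $x'$ on the $I_{x'}$ side; the resulting Jordan curve meets $\bdy\mcl B_t^\bullet$ at both $x_0$ and $x'$, and the no-crossing argument only confines $P'(t)$ to the arc of $\bdy\mcl B_t^\bullet$ between them. This is perfectly consistent with $P'(t)=x'$, so no contradiction arises. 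If you take approximants only from the $I_{x_0}$ side, you get a single curve on one side of $P'$, which does not enclose $P'|_{[0,s]}$ at all.

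The paper handles the endpoint case by a completely different mechanism (Lemma~\ref{lem-conf-endpt'}): it invokes the ``shield'' estimate \cite[Lemma~3.6]{gm-confluence} (restated as Lemma~\ref{lem-geo-kill}), which says that conditionally on $(\mcl B_s^\bullet,h|_{\mcl B_s^\bullet})$, for each endpoint $y$ of an arc $I_x$ there is a high-probability event on which no $D_h$-geodesic from $0$ to a point far outside $\mcl B_s^\bullet$ enters a small neighborhood of $y$. Since the endpoint set is finite and $\sigma(\mcl B_s^\bullet,h|_{\mcl B_s^\bullet})$-measurable, a union bound and $\ep\to 0$ rule out all endpoints almost surely for the fixed pair $t<s$. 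This step genuinely uses the randomness of $h|_{\BB C\setminus\mcl B_s^\bullet}$ and cannot be replaced by a purely topological argument on the a.s.\ event of Theorem~\ref{thm-conf}.
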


The rest of this subsection is devoted to the proof of Proposition~\ref{prop-conf-endpt}. 
We first establish an improvement on Theorem~\ref{thm-conf} which does not require leftmost geodesics.

\begin{lem} \label{lem-conf-no-leftmost}
Almost surely, for each $0 < t < s$ the following is true. 
Define $\mcl X = \mcl X_{t,s}$ and the arcs $I_x \subset\bdy\mcl B_s^\bullet$ for $x\in\mcl X$ as in Theorem~\ref{thm-conf}.
If $P$ is a $D_h$-geodesic from 0 to a point of $I_x$ (not necessarily a leftmost $D_h$-geodesic) and $P(s)$ is not one of the endpoints of $I_x$, then $P(t) = x$.
\end{lem}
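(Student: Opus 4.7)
The plan is to realize the picture suggested in Figure~\ref{fig-conf} (right): pinch the target point $y := P(s)$, which lies in the interior of the arc $I_x$, between two auxiliary $D_h$-geodesics from $0$ to rational points that both pass through $x$, then use a.s.\ uniqueness of geodesics to rational targets to show that any putative deviation of $P$ from $x$ at time $t$ would have to cross one of these auxiliaries, giving a contradiction. As a first step, pick $y_1, y_2 \in I_x$ with $y_1, y, y_2$ in cyclic order along $I_x$, all distinct from each other and from the endpoints of $I_x$; this is possible since $y$ is interior to $I_x$.

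To construct the auxiliary geodesics, I work on the a.s.\ event that the $D_h$-geodesic from $0$ to $q$ is unique for every $q \in \BB Q^2$. For rational $q_n \in \BB Q^2$ in the unbounded component of $\BB C \setminus \mcl B_s^\bullet$ approaching $y_i$, the unique geodesic $\beta_n$ from $0$ to $q_n$ exits $\mcl B_s^\bullet$ at time $s$, and $D_h(\beta_n(s), y_i) \leq (T_n - s) + D_h(q_n, y_i) \to 0$ with $T_n := D_h(0, q_n)$; hence $\beta_n(s) \to y_i$. Choosing $q_i$ close enough to $y_i$ that $y_i' := \beta_i(s) \in I_x$ and $y$ lies strictly between $y_1'$ and $y_2'$ on $\bdy \mcl B_s^\bullet$ (where $\beta_i$ is the unique geodesic from $0$ to $q_i$), I claim $\beta_i(t) = x$. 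Indeed, by Theorem~\ref{thm-conf} the leftmost $D_h$-geodesic from $0$ to $y_i'$ passes through $x$, and its concatenation with $\beta_i|_{[s, T_i]}$ is a path from $0$ to $q_i$ of length $T_i$, hence a geodesic; by uniqueness it equals $\beta_i$, so $\beta_i|_{[0, s]}$ is itself the leftmost geodesic to $y_i'$ and $\beta_i(t) = x$.

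Let $A$ be the arc of $\bdy \mcl B_s^\bullet$ from $y_1'$ to $y_2'$ containing $y$; by Lemma~\ref{lem-metric-curve} $\bdy \mcl B_s^\bullet$ is a Jordan curve. Using the injectivity of geodesics together with the uniqueness of the geodesic from $0$ to $x$ (Theorem~\ref{thm-conf}(\ref{item-conf-unique})), the three simple arcs $\beta_1|_{[t, s]}$, $A$, and $\beta_2|_{[t, s]}$ meet pairwise only at the shared endpoints $x, y_1', y_2'$, so their union is a Jordan curve $C$ bounding a simply connected region $R \subset \mcl B_s^\bullet$; near $\op{int}(A)$, $R$ coincides locally with $\op{int}(\mcl B_s^\bullet)$. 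The common initial segment $\alpha := \beta_1|_{[0, t]} = \beta_2|_{[0, t]}$ (equal by the same uniqueness) meets $C$ only at $x$---it stays at $D_h$-distance $\leq t < s$ from $0$ so avoids $A$, and meets $\beta_i|_{[t, s]}$ only at $x$ by injectivity---and locally exits $x$ in the sector opposite $R$, so $0 = \alpha(0)$ lies in the exterior of $C$.

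Now suppose for contradiction that $P(t) \neq x$. Any intersection $P(u) \in C$ with $u < s$ must lie on $\beta_i|_{[t, s)}$ for some $i$, since $P$ reaches $\bdy \mcl B_s^\bullet$ only at $u = s$. Matching distances from $0$ gives $P(u) = \beta_i(u)$ with $u \in [t, s)$, and the concatenation of $P|_{[0, u]}$ with $\beta_i|_{[u, T_i]}$ is a geodesic from $0$ to $q_i$ of length $T_i$; by uniqueness it equals $\beta_i$, forcing $P(t) = \beta_i(t) = x$, contrary to assumption. Hence $P|_{[0, s)}$ is disjoint from $C$ and lies entirely in the exterior component of $C$ (which contains $0$). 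But $P(u) \to y \in \op{int}(A)$ with $P(u) \in \op{int}(\mcl B_s^\bullet)$ for $u < s$, which forces $P(u) \in R$ for $u$ close to $s$---the desired contradiction, so $P(t) = x$. The main obstacle I anticipate is the construction of the auxiliary geodesics through $x$, which combines the elementary $D_h$-distance estimate yielding $\beta_n(s) \to y_i$ with the uniqueness-plus-concatenation trick that elevates $\beta_i|_{[0, s]}$ to a leftmost geodesic; once those are secured, the planar-topology argument runs essentially on autopilot.
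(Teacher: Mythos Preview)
Your proposal follows essentially the same line as the paper's proof: build two auxiliary geodesics to rational targets on either side of $y=P(s)$, show both pass through $x$ at time $t$, and then argue topologically that $P$ must merge with one of them. Your construction of the auxiliaries is in fact cleaner than the paper's. Rather than citing \cite[Lemma~2.4]{gm-confluence} for uniform convergence of $P_{q_i^n}$ to the leftmost geodesic $P_{y_i}$ and then using finiteness of $\mcl X$ to pin down $P_{q_i^n}(t)=x$, you observe directly that the concatenation of the leftmost geodesic to $y_i'$ with $\beta_i|_{[s,T_i]}$ is a path to $q_i$ of length $T_i$, hence a geodesic, hence equal to $\beta_i$ by uniqueness. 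This identifies $\beta_i|_{[0,s]}$ with the leftmost geodesic to $y_i'$ in one stroke.

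Two points in your Jordan-curve argument need more care. First, the claim that $\beta_1|_{[t,s]}$ and $\beta_2|_{[t,s]}$ meet only at $x$ is not justified: the leftmost geodesics to $y_1'$ and $y_2'$ may well coincide on a longer interval $[0,\tau]$ with $\tau>t$ (this happens whenever $y_1',y_2'$ lie in a common arc of the finer confluence partition $\mcl X_{t',s}$ for some $t'\in(t,s)$). This is easily repaired by using $\tau:=\sup\{u:\beta_1(u)=\beta_2(u)\}\in[t,s)$ in place of $t$ when building $C$; note that $\tau\geq t$ since $\beta_1(t)=\beta_2(t)=x$, so the conclusion $P(t)=x$ still follows once you force $P|_{[0,u]}=\beta_i|_{[0,u]}$ for some $u\geq\tau$. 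Second, and more substantive, the assertion that $0$ lies in the exterior of $C$---justified only by ``$\alpha$ locally exits $x$ in the sector opposite $R$''---is the crux of the planar separation step, and it is not obvious: a priori the three arcs emanating from the branch point $\beta_1(\tau)$ could sit in either cyclic order, and nothing you have written rules out $\alpha((0,\tau))\subset R$. The paper is equally terse here (it writes only ``topological considerations''), so your argument is not less rigorous than the published one; but since you chose to make the topology explicit, this is the step that would benefit from a genuine justification.
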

\begin{proof}
See Figure~\ref{fig-conf}, right, for an illustration. Throughout the proof, we let $P$ be a geodesic as in the lemma statement and we omit the qualifier ``a.s.".

Choose $y_1,y_2 \in I_x$ such that the counterclockwise arc of $\bdy \mcl B_s^\bullet$ from $y_1$ to $y_2$ is contained in $I_x$ and contains $P(s)$.
We can arrange that neither $y_1$ nor $y_2$ is equal to $P(s)$ or to one of the endpoints of $I_x$.
Let $P_{y_1}$ (resp.\ $P_{y_2}$) be the leftmost $D_h$-geodesic from 0 to $y_1$ (resp.\ $y_2$). Then $P_{y_1}(t) = P_{y_2}(t) = x$.
By~\cite[Lemma 2.4]{gm-confluence}, there are sequences of points $q_1^n , q_2^n \in \BB Q^2 \setminus \mcl B_s^\bullet$ such that the following is true.
If we let $P_{q_1^n} $ be the (a.s.\ unique) $D_h$-geodesic from 0 to $q_1^n$, then $P_{q_1^n} \rta P_{y_1}$ uniformly; and the same is true with 2 in place of 1. 

For each $n$, $P_{q_1^n}|_{[0,s]}$ is the unique (hence leftmost) geodesic from 0 to $q_1^n$, and the same is true with 2 in place of 1. 
Hence each of $P_{q_1^n}(t)$ and $P_{q_2^n}(t)$ must belong to the finite set $\mcl X$. 
By the above uniform convergence for large enough $n$ we have $P_{q_1^n}(t) = P_{q_2^n}(t) = x$. 
Furthermore, for large enough $n$ the counterclockwise arc of $\bdy \mcl B_s^\bullet$ from $P_{q_1^n}(s)$ to $P_{q_2^n}(s)$ contains $P(s)$. 
Henceforth assume $n$ is large enough that these conditions are satisfied.

If $P(t)\not= x$, then topological considerations imply that $P$ must intersect either $P_{q_1^n}(s)$ or $P_{q_2^n}(s)$ between time $t$ and time $s$. 
Assume without loss of generality that $P$ hits $P_{q_1^n}$. 
By the uniqueness of $D_h$-geodesics to rational points (see~\cite[Lemma 2.3]{gm-confluence}) if this is the case then there must be a time $\tau \in [t,s]$ such that $P|_{[0,\tau]} = P_{q_1^n}|_{[0,\tau]}$. 
But, this implies that $P(t) = P_{q_1^n}(t) = x$. 
\end{proof}

We now argue that $D_h$-geodesics cannot hit the endpoints of the arcs $I_x$. 

\begin{lem} \label{lem-conf-endpt'}
Fix $0 < t < s$. Almost surely, no $D_h$-geodesic from 0 to a point of $\BB C\setminus \mcl B_s^\bullet$ hits any of the endpoints of the arcs $I_x$ for $x\in\mcl X$.
\end{lem}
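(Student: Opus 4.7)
The plan is to proceed by contradiction, closely mirroring the proof of Lemma~\ref{lem-conf-no-leftmost} but with the twist that the two approximating geodesics used to trap $P$ will land on \emph{different} confluence points. Suppose $P$ is a $D_h$-geodesic from $0$ to some $z \in \BB C \setminus \mcl B_s^\bullet$ with $P(s) = y$ an endpoint of some arc $I_x$. Because the arcs $\{I_x\}_{x \in \mcl X}$ are pairwise disjoint and partition the Jordan curve $\bdy\mcl B_s^\bullet$ (Lemma~\ref{lem-metric-curve}), the point $y$ lies in the closure of some adjacent arc $I_{x'}$ with $x, x' \in \mcl X$ distinct. The goal is to force $P(t) = x$ and $P(t) = x'$ simultaneously, yielding a contradiction.

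To set up the trap, I would pick interior points $y^+ \in I_x$ and $y^- \in I_{x'}$ on opposite sides of $y$ along $\bdy\mcl B_s^\bullet$, both arbitrarily close to $y$. By~\cite[Lemma 2.4]{gm-confluence} together with Lemma~\ref{lem-conf-no-leftmost}, one can find rational points $q^\pm \in \BB Q^2 \setminus \mcl B_s^\bullet$ close to $y^\pm$ such that the unique $D_h$-geodesics $P_{q^\pm}$ from $0$ to $q^\pm$ satisfy $P_{q^+}(s) \in I_x$ near $y$, $P_{q^-}(s) \in I_{x'}$ near $y$, and $P_{q^+}(t) = x$, $P_{q^-}(t) = x'$. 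Let $\tau_0 < t$ be the last time at which $P_{q^+}$ and $P_{q^-}$ coincide, so that they diverge strictly after $\tau_0$. Using that $\BB C \setminus \mcl B_s^\bullet$ is connected (it is the unique unbounded component of $\BB C \setminus \ol{\mcl B_s(0;D_h)}$) and that $\bdy\mcl B_s^\bullet$ is a Jordan curve, pick a Jordan arc $\alpha \subset \BB C \setminus \mcl B_s^\bullet$ from $q^+$ to $q^-$ that hugs the short sub-arc of $\bdy\mcl B_s^\bullet$ from $y^+$ to $y^-$ through $y$ and is disjoint from $P_{q^\pm}$ except at its endpoints. Concatenating $P_{q^+}|_{[\tau_0,\cdot]}$, $\alpha$, and the reverse of $P_{q^-}|_{[\tau_0,\cdot]}$ yields a Jordan curve $C$ whose interior $\Omega$ contains~$y$.

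The trapping argument then proceeds as in the proof of Lemma~\ref{lem-conf-no-leftmost}. Since $P(s) = y \in \Omega$ while $P(0) = 0$ lies on the common initial segment $P_{q^\pm}|_{[0,\tau_0]}$, which is a ``tail'' attached to $C$ at $P_{q^\pm}(\tau_0)$ and sits outside $\Omega$, the path $P$ must cross $C$ at some time $\tau^* \in (0, s)$. The crossing cannot occur along $\alpha$, since $P|_{[0,s]} \subset \mcl B_s^\bullet$ while $\alpha \subset \BB C \setminus \mcl B_s^\bullet$; so the crossing is along $P_{q^+}$ or $P_{q^-}$, say $P_{q^+}$. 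By the arc-length parametrization of geodesics, $D_h(0, P(\tau^*)) = \tau^* = D_h(0, P_{q^+}(\tau^*))$, and combining with the uniqueness of $D_h$-geodesics to rational points \cite[Lemma 2.3]{gm-confluence} forces $P|_{[0, \tau^*]} = P_{q^+}|_{[0, \tau^*]}$. By iterating this argument with $y^\pm \to y$ --- which shrinks $\Omega$ toward $y$ and pushes the crossing time past $t$ --- and by the symmetric version with the roles of $+$ and $-$ interchanged, one forces $P|_{[0, t]}$ to agree with both $P_{q^+}|_{[0,t]}$ and $P_{q^-}|_{[0,t]}$. This yields $P(t) = P_{q^+}(t) = x$ and $P(t) = P_{q^-}(t) = x'$, contradicting $x \neq x'$.

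The main obstacle is the planar topology needed to justify this trap rigorously: in particular, one must check that $\alpha$ can be chosen on the ``short'' side of $\bdy\mcl B_s^\bullet$ so that $\Omega$ genuinely captures $y$ (and not the complementary long way around), that the sub-arcs $P_{q^\pm}|_{[\tau_0, \cdot]}$ do not cross each other past the bifurcation time (which should follow from the left-to-right ordering of leftmost geodesics in Theorem~\ref{thm-conf}), and that the common initial segment of $P_{q^\pm}$ lies outside $\Omega$ so that $P$ is genuinely forced to cross $C$. Edge cases in which $I_x$ or $I_{x'}$ is a singleton (so $y$ meets three adjacent confluence points) can be handled by choosing any two adjacent confluence points for the trap.
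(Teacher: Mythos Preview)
Your trapping argument has a genuine gap: it can at best show that $P(t)\in\{x,x'\}$, not that $P(t)=x$ \emph{and} $P(t)=x'$. Once $P$ crosses the Jordan curve $C$, uniqueness of geodesics to rational points forces $P|_{[0,\tau^*]}$ to coincide with \emph{one} of $P_{q^+}|_{[0,\tau^*]}$ or $P_{q^-}|_{[0,\tau^*]}$; it says nothing about the other. Neither ``iterating with $y^\pm\to y$'' nor invoking a ``symmetric version'' changes this, because each instance of the trap again yields only a disjunction --- $P$ picks a side, and there is no mechanism forcing it to pick both. Concretely, suppose $P$ happens to satisfy $P(t)=x$ (so that, by Theorem~\ref{thm-conf}\eqref{item-conf-unique}, $P|_{[0,t]}=P_{q^+}|_{[0,t]}$). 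Then for every choice of $y^\pm$ your trap is satisfied with $P$ merging into $P_{q^+}$, and no contradiction ever arises. Your claim that shrinking $\Omega$ ``pushes the crossing time past $t$'' is also unjustified: the bifurcation time $\tau_0$ of $P_{q^+}$ and $P_{q^-}$ stays strictly below $t$ as $y^\pm\to y$ (since the limiting geodesics still pass through $x\neq x'$), so the portion of $C$ coming from $P_{q^\pm}|_{[\tau_0,t]}$ does not disappear.

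The deeper issue is that the lemma is \emph{not} a topological fact. There is no planar obstruction to a $D_h$-geodesic from $0$ landing at an arc endpoint --- the statement is that this event has probability zero, and the paper proves it via the probabilistic ``shield'' estimate of~\cite[Lemma 3.6]{gm-confluence} (restated here as Lemma~\ref{lem-geo-kill}): for each endpoint $y$ one builds, with conditional probability $1-C\ep^\alpha$ given $(\mcl B_s^\bullet,h|_{\mcl B_s^\bullet})$, an event on which no geodesic from $0$ to a far-away point enters $B_\ep(y)\setminus\mcl B_s^\bullet$. Summing over the finitely many endpoints and sending $\ep\to 0$ gives the result. This input is essential and cannot be replaced by a trapping argument in the style of Lemma~\ref{lem-conf-no-leftmost}, which only works because there both flanking geodesics pass through the \emph{same} confluence point.
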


We will prove Lemma~\ref{lem-conf-endpt'} using~\cite[Lemma 3.6]{gm-confluence}, which we re-state just below, and which allows us to prevent $D_h$-geodesics from hitting particular points of $\bdy\mcl B_s^\bullet$.

\begin{lem}[\cite{gm-confluence}] \label{lem-geo-kill}
Let $\tau$ be a stopping time for the filtration generated by $(\mcl B_s^\bullet , h|_{\mcl B_s^\bullet})$ and let $y \in\bdy\mcl B_\tau^\bullet$ and $\ep \in (0,1)$ be chosen in a $\sigma(\mcl B_\tau^\bullet , h|_{\mcl B_\tau^\bullet})$-measurable manner. 
There exists an event $G_y^\ep \in \sigma(h)$ and a $\sigma(h)$-measurable random variable $R^\ep(\mcl B_\tau^\bullet) \in (0,\infty)$ (which does not depend on $y$) such that $R^\ep(\mcl B_\tau^\bullet) \rta 0$ in probability as $\ep\rta 0$ and the following is true. 
\begin{enumerate}[A.]
\item If $R^\ep(\mcl B_\tau^\bullet ) \leq \op{diam} \mcl B_\tau^\bullet$ and $G_y^\ep$ occurs, then no $D_h$-geodesic from 0 to a point of $\BB C\setminus B_{R^\ep(\mcl B_\tau^\bullet)}(\mcl B_\tau^\bullet)$ can enter $B_\ep(y) \setminus \mcl B_\tau^\bullet$ (recall that $B_r(\cdot)$ denotes a Euclidean neighborhood).  \label{item-geo-kill-enter}
\item There are deterministic constants $C,\alpha > 0$ depending only on $\gamma$ such that a.s.\ $\BB P\left[ G_y^\ep \,|\, \mcl B_\tau^\bullet , h|_{\mcl B_\tau^\bullet} \right] \geq 1 - C \ep^\alpha$. \label{item-geo-kill-prob}
\end{enumerate} 
\end{lem}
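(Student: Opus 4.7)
The plan is to construct, conditionally on $(\mcl B_\tau^\bullet, h|_{\mcl B_\tau^\bullet})$, a very short Jordan curve encircling $B_\ep(y)$ that lies outside the filled metric ball; any geodesic headed to a sufficiently distant endpoint that tried to detour through $B_\ep(y)\setminus\mcl B_\tau^\bullet$ could then be strictly shortened by rerouting along this curve, contradicting minimality. The probability estimate in Item~B will be obtained by boosting a single-scale positive-probability estimate to $1-C\ep^\alpha$ via approximate independence of the GFF across many dyadic annular scales.

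\textbf{Step 1: Markov decomposition and construction of $G_y^\ep$.} I would first use the fact (as in~\cite{gm-confluence}) that $\mcl B_\tau^\bullet$ is a local set for $h$ to decompose $h|_{\BB C\setminus\mcl B_\tau^\bullet}$, conditionally on $(\mcl B_\tau^\bullet, h|_{\mcl B_\tau^\bullet})$, as the harmonic extension of its boundary data plus an independent zero-boundary GFF. Inside the annulus $A_\ep := B_{(1+c)\ep}(y)\setminus B_\ep(y)$ for a fixed small $c > 0$, pick $N_\ep \asymp \log(1/\ep)$ disjoint dyadic sub-annuli lying at positive Euclidean distance from $\bdy\mcl B_\tau^\bullet$. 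On each such sub-annulus, Weyl scaling (Axiom~\ref{item-metric-f}) applied to a sufficiently negative bump function, combined with the continuity of $D_h$, gives uniform positive probability $p_0$ that a Jordan curve encircling $y$ of $D_h$-length at most $\delta(\ep)$ exists inside that sub-annulus. Define $G_y^\ep$ as the event that at least one sub-annulus contains such a shortcut; approximate independence of the GFF on disjoint scales then yields $\BB P[G_y^\ep \mid \mcl B_\tau^\bullet, h|_{\mcl B_\tau^\bullet}] \geq 1-(1-p_0)^{N_\ep} \geq 1 - C\ep^\alpha$, proving Item~B.

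\textbf{Step 2: Rerouting argument.} Choose $R^\ep(\mcl B_\tau^\bullet)$ slightly larger than $(1+c)\ep$ so that any endpoint outside $B_{R^\ep}(\mcl B_\tau^\bullet)$ lies outside the shortcut annulus. On $G_y^\ep$, suppose for contradiction that a geodesic $P$ from $0$ to such an endpoint enters $B_\ep(y)\setminus\mcl B_\tau^\bullet$; let $t_1 < t_2$ be its first entry and last exit times, and let $\gamma$ denote the shortcut curve from Step~1. Connecting $P(t_1)$ and $P(t_2)$ to nearby points of $\gamma$ by short segments (their shortness following from the small $D_h$-length of $\gamma$ together with H\"older continuity of $D_h$ with respect to the Euclidean metric) and then traversing $\gamma$ produces a replacement path of $D_h$-length $O(\delta(\ep))$. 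On the other hand, any path crossing $B_\ep(y)\setminus B_{\ep/2}(y)$ has $D_h$-length at least $\ep^\beta$ for some $\beta > 0$ by standard LQG-vs-Euclidean metric comparisons (cf.~\cite{lqg-metric-estimates}). Choosing $\delta(\ep)$ small enough relative to $\ep^\beta$ yields a contradiction and proves Item~A.

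\textbf{Main obstacle.} The principal difficulty is uniformity in the $(\mcl B_\tau^\bullet, h|_{\mcl B_\tau^\bullet})$-measurable choice of $y$: since $y \in \bdy\mcl B_\tau^\bullet$, the boundary data of the exterior field may be arbitrarily irregular right next to $y$, which would destroy any single-scale estimate if one worked too near $\bdy\mcl B_\tau^\bullet$. This is handled by restricting attention to dyadic sub-annuli whose Euclidean distance from $\bdy\mcl B_\tau^\bullet$ is a fixed fraction of their diameter, and by using harmonic measure estimates to quantify the decay of boundary effects across scales. A secondary issue is making the connecting paths in the rerouting argument simultaneously short for \emph{every} admissible pair $(P(t_1), P(t_2))$; this is dealt with by taking a union bound over a fine discretization of $\bdy A_\ep$ when defining $G_y^\ep$.
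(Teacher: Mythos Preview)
Your high-level strategy---build a short ``shield'' near $y$, then reroute any offending geodesic along it---is indeed the idea behind the result in~\cite{gm-confluence} (the present paper simply cites~\cite[Lemma~3.6]{gm-confluence} rather than reproving it). However, your implementation has two genuine gaps.

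First, the annulus $A_\ep = B_{(1+c)\ep}(y)\setminus B_\ep(y)$ has radius ratio $1+c$, so it contains only $O(1)$ disjoint dyadic sub-annuli, not $N_\ep\asymp\log(1/\ep)$ of them. To fit $\log(1/\ep)$ scales you would need outer radius of order $\ep^\beta$ for some $\beta<1$. Without this, the iteration producing the polynomial bound $1-C\ep^\alpha$ in Item~B collapses to a constant.

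Second, and more fundamentally: since $y\in\bdy\mcl B_\tau^\bullet$ and $\bdy\mcl B_\tau^\bullet$ is a Jordan curve separating $0$ from $\infty$, \emph{every} concentric annulus $B_{r_2}(y)\setminus B_{r_1}(y)$ meets $\bdy\mcl B_\tau^\bullet$. So no sub-annulus of $A_\ep$ can lie at positive Euclidean distance from $\bdy\mcl B_\tau^\bullet$ as you require, and any Jordan curve encircling $B_\ep(y)$ must pass through $\mcl B_\tau^\bullet$. But the Markov decomposition only controls the field \emph{outside} $\mcl B_\tau^\bullet$, so you cannot bound the $D_h$-length of the portion of your shortcut curve lying inside the filled ball. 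The correct construction does not encircle $B_\ep(y)$: the shield is an arc lying entirely in $\BB C\setminus\mcl B_\tau^\bullet$, with endpoints on $\bdy\mcl B_\tau^\bullet$, which disconnects $B_\ep(y)\setminus\mcl B_\tau^\bullet$ from $\infty$ within $\BB C\setminus\mcl B_\tau^\bullet$. Building such an arc and getting uniform estimates requires analyzing the zero-boundary GFF in the random, rough domain $\BB C\setminus\mcl B_\tau^\bullet$; this is substantially more delicate than iterating over Euclidean annuli and is the real content of~\cite[Section~3]{gm-confluence}.
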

\begin{proof}
This is~\cite[Lemma 3.6]{gm-confluence} with $\BB r =1$. Note that the random variable $R^\ep(\mcl B_\tau^\bullet)$ is defined in~\cite[Equation (3.16)]{gm-confluence} and, as explained just after that equation, it converges to zero in probability as $\ep\rta 0$ by~\cite[Lemma 3.5]{gm-confluence}. 
\end{proof}
 
In the setting of Lemma~\ref{lem-geo-kill}, roughly speaking, $G_y^\ep$ is the event that there is a ``shield" around $B_\ep(y) \setminus \mcl B_\tau^\bullet$ in $\BB C\setminus \mcl B_\tau^\bullet$ which no $D_h$-geodesic started from 0 can pass through. The number $R^\ep(\mcl B_\tau^\bullet)$ is the maximum possible Euclidean radius of one of these shields.
 
\begin{proof}[Proof of Lemma~\ref{lem-conf-endpt'}]
Let $\mcl Y$ be the set of endpoints of the arcs $I_x$ for $x\in\mcl X$. 
For $y\in\mcl Y$, let $G_y^\ep$ be the event of Lemma~\ref{lem-geo-kill} with $\tau = s$. 
Also let $R^\ep(\mcl B_s^\bullet)$ be as in Lemma~\ref{lem-geo-kill}. 
Since $\mcl Y$ is a finite, $\sigma(\mcl B_\tau^\bullet , h|_{\mcl B_\tau^\bullet})$-measurable set, it follows from Lemma~\ref{lem-geo-kill}\ref{item-geo-kill-prob} and the fact that $R^\ep(\mcl B_\tau^\bullet) \rta 0$ in probability as $\ep\rta 0$ that
\eqb
\lim_{\ep\rta 0} \BB P\left[\left\{ R^\ep(\mcl B_\tau^\bullet) \leq \op{diam} \mcl B_\tau^\bullet\right\} \cap \bigcap_{y\in \mcl Y} G_y^\ep \right] = 1 .
\eqe
The lemma statement now follows from Lemma~\ref{lem-geo-kill}\ref{item-geo-kill-enter} together with the fact that $R^\ep(\mcl B_\tau^\bullet) \rta 0$ in probability. 
\end{proof}

\begin{proof}[Proof of Proposition~\ref{prop-conf-endpt}]
Let $P$ be a $D_h$-geodesic from 0 to a point of $\BB C\setminus \mcl B_s^\bullet$. 
By Lemma~\ref{lem-conf-endpt'}, there is an $x\in\mcl X$ such that $P(s) \in I_x$ and $P(s)$ is not one of the endpoints of $I_x$.
By Lemma~\ref{lem-conf-no-leftmost}, applied to the geodesic $P|_{[0,s]}$, we have $P(t) = x$.
\end{proof}

\subsection{Confluence across metric annuli with finite target points}
\label{sec-conf-finite}

The results of Section~\ref{sec-conf} concern geodesic across an annulus between two filled metric balls targeted at $\infty$.
We now show that the same results are also true for filled metric balls targeted at any $z\in\BB C$ using the conformal covariance of the metric~\cite{gm-coord-change} and local absolute continuity.
Due to the translation invariance of the law of $h$, modulo additive constant, it is no loss of generality to restrict attention to filled metric balls centered at 0. 
The following is a generalization of Theorem~\ref{thm-conf}.

\begin{prop} \label{prop-conf-finite} 
Almost surely, for each $z\in\BB C\setminus \{0\}$ and each $0 < t < s < D_h(0,z)$, the following is true.
\begin{enumerate}
\item There is a finite set of points $\mcl X = \mcl X_{t,s}^z \subset \bdy\mcl B_t^{z,\bullet}$ such that every leftmost $D_h$-geodesic from 0 to a point of $\bdy \mcl B_s^{z,\bullet}$ passes through some $x\in\mcl X$. \label{item-conf-finite}
\item There is a unique $D_h$-geodesic from 0 to $x$ for each $x\in \mcl X$. \label{item-conf-finite-unique}
\item For $x\in \mcl X$, let $I_x$ be the set of $y\in\bdy\mcl B_s^{z,\bullet}$ such that that the leftmost $D_h$-geodesic from 0 to $y$ passes through $x$.
Each $I_x $ for $x\in\mcl X $ is a connected arc of $\bdy\mcl B_s^{z,\bullet} $ (possibly a singleton) and $\bdy\mcl B_s^{z,\bullet}$ is the disjoint union of the arcs $I_x $ for $x\in \mcl X $. \label{item-conf-finite-arcs}
\item The counterclockwise cyclic ordering of the arcs $I_x $ is the same as the counterclockwise cyclic ordering of the corresponding points $x\in \mcl X \subset\bdy \mcl B_t^{z,\bullet}$. \label{item-conf-finite-order} 
\end{enumerate}
\end{prop}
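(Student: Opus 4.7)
The plan is to reduce Proposition~\ref{prop-conf-finite} to Theorem~\ref{thm-conf} by a conformal change of coordinates that sends the target $z$ to $\infty$. Concretely, fix $z \in \BB C \setminus \{0\}$ and let $\phi(w) = zw/(z-w)$, a M\"obius transformation of $\BB C \cup \{\infty\}$ with $\phi(0) = 0$, $\phi(z) = \infty$, and $\phi(\infty) = -z$. Its inverse is $\phi^{-1}(u) = uz/(u+z)$, whence $(\phi^{-1})'(u) = z^2/(u+z)^2$ and $\log|(\phi^{-1})'(u)| = 2\log|z| - 2\log|u+z|$. Setting $\tilde h := h \circ \phi^{-1} + Q\log|(\phi^{-1})'|$, the field $\tilde h$ has the law of a whole-plane GFF plus the deterministic function $u \mapsto -2Q\log|u+z|$, modulo additive constant; in particular, $\tilde h|_V$ is a whole-plane GFF plus a continuous function on every open $V \subset \BB C$ whose closure avoids $-z$.

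By the conformal coordinate change axiom (Axiom~\ref{item-metric-coord}), $\phi$ is an isometry from $(\BB C \setminus \{z\}, D_h)$ onto $(\BB C \setminus \{-z\}, D_{\tilde h})$ and extends to a homeomorphism of $\BB C \cup \{\infty\}$. It maps the filled metric ball $\mcl B_s^{z,\bullet}(0; D_h)$ onto $\mcl B_s^\bullet(0; D_{\tilde h})$, preserves the counterclockwise cyclic ordering of boundary arcs (since $\phi$ is orientation-preserving), and carries leftmost $D_h$-geodesics from $0$ to leftmost $D_{\tilde h}$-geodesics from $0$. It therefore suffices to prove the analog of Theorem~\ref{thm-conf} for $D_{\tilde h}$ with target $\infty$, and then pull the conclusions back through $\phi^{-1}$.

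To obtain this analog, the crucial point is that since $\phi^{-1}(-z) = \infty$, the singular point $-z$ of $\tilde h$ lies at infinite $D_{\tilde h}$-distance from $0$. Hence for every $s > 0$, the filled metric ball $\mcl B_s^\bullet(0; D_{\tilde h})$ and every $D_{\tilde h}$-geodesic from $0$ to its boundary are contained in a compact subset $K \subset \BB C \setminus \{-z\}$. Fix an exhaustion of $\BB C \setminus \{-z\}$ by bounded open sets $V_n$ with $\ol{V_n} \subset \BB C \setminus \{-z\}$. On each $V_n$, the law of $\tilde h|_{V_n}$ is mutually absolutely continuous with that of the restriction of a whole-plane GFF, since they differ by a continuous function on $V_n$. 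Combined with the locality axiom (Axiom~\ref{item-metric-local}), which lets us identify $\mcl B_s^\bullet(0; D_{\tilde h})$ and the associated leftmost geodesic data from $\tilde h|_{V_n}$ once $V_n$ contains $K$, this absolute continuity transfers the almost sure conclusions of Theorem~\ref{thm-conf} from the whole-plane GFF to $\tilde h$.

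The main obstacle will be ensuring that the simultaneous ``for all $0 < t < s$'' almost sure statement of Theorem~\ref{thm-conf} is preserved by this absolute continuity transfer, since the transfer on $V_n$ naturally yields the conclusion only for radii with $\mcl B_s^\bullet(0; D_{\tilde h}) \subset V_n$. This is handled by encoding the joint event as a countable intersection over rational pairs $(t,s)$ and indices $n$, using monotonicity of filled metric balls in the radius and the continuity of their boundaries to pass from rational to arbitrary $t, s$. A secondary subtlety is that the singularity strength $2Q$ exceeds the range $\alpha < Q$ of Remark~\ref{remark-log-singularity} for which the LQG metric is locally finite at the singular point, so Remark~\ref{remark-log-singularity} cannot be invoked directly; the absolute continuity argument above sidesteps this by working only in compact sets at positive distance from $-z$, where the strength of the singularity is irrelevant.
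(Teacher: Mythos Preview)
Your approach is essentially the same as the paper's: use a M\"obius map sending the target $z$ to $\infty$, relate the $z$-targeted filled balls for $D_h$ to the $\infty$-targeted filled balls for the transformed field $\tilde h$, and transfer Theorem~\ref{thm-conf} via local absolute continuity between $\tilde h$ and a whole-plane GFF away from the singular point. The paper runs the argument in the reverse order---it applies Theorem~\ref{thm-conf} directly to $h$, pushes the conclusion forward by $\phi$ to obtain the proposition for $\tilde h$ with target $z$, and then uses absolute continuity of $\tilde h|_U$ against $h|_U$ (for bounded $U$ avoiding the singular point) to transfer the statement back to $h$---but the two orderings are equivalent and rely on the same ingredients. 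Your observation that the singularity strength $2Q$ exceeds the range in Remark~\ref{remark-log-singularity} is correct, and your workaround (stay in compact sets away from the singularity) is exactly what the paper does implicitly.

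There is, however, one genuine omission: you fix $z$ at the outset and never address the passage from ``for each fixed $z$, almost surely'' to ``almost surely, for each $z$'' that the proposition actually asserts. Since $z$ ranges over an uncountable set, this does not follow automatically. The paper handles it by first obtaining the statement a.s.\ simultaneously for all $z \in \BB Q^2 \setminus \{0\}$, and then noting that for arbitrary $z$ and $s < D_h(0,z)$ one may choose $z' \in \BB Q^2$ in the same connected component of $\BB C \setminus \ol{\mcl B_s}$ as $z$, so that $\mcl B_r^{z,\bullet} = \mcl B_r^{z',\bullet}$ for all $r \leq s$.

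A minor stylistic point: your proposed passage from rational to arbitrary $(t,s)$ via monotonicity and boundary continuity is unnecessarily delicate. Theorem~\ref{thm-conf} already delivers its conclusion simultaneously for all $0 < t < s$, so one can directly transfer, via absolute continuity, the single event ``the four conditions hold for all $t < s \leq S$''; on $\{\mcl B_S \subset V_n\}$ this event is determined by the field restricted to $V_n$ by locality, and no rational approximation in $(t,s)$ is needed. This is how the paper structures the transfer.
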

\begin{proof}
The analogous statement for filled metric balls targeted at $\infty$ instead of $z$ is Theorem~\ref{thm-conf}. 
To get the desired statement for filled metric balls targeted at $z$, we use a conformal invariance argument.
We first consider a fixed choice of target point $z\in\BB C\setminus \{0\}$ (we will explain how to get the proposition for all $z$ simultaneously at the end of the proof). 
 
Let $\phi(w) := z w/(w-z)$, so that $\phi(0) =0$, $\phi(z) = \infty$, and $\phi(\infty) = z$. 
Define the field 
\eqb \label{eqn-conf-field}
\wt h := h\circ\phi^{-1}  + Q\log |(\phi^{-1})'| . 
\eqe
By the LQG coordinate change formula (Axiom~\ref{item-metric-coord}), a.s.\ $D_{\wt h}(\phi(u) , \phi(v)) = D_h(u,v)$ for each $u,v\in \BB C$. 
Consequently, a.s.\ $D_{\wt h}(0,z) = \infty$ and 
\eqb
\mcl B_s^{z,\bullet}(0;D_{\wt h}) = \phi\left(\mcl B_s^\bullet \right) ,\quad\forall s >0. 
\eqe
Therefore, the statement of the proposition is true with $\wt h$ in place of $h$. 

By the conformal invariance of the whole-plane GFF, modulo additive constant, the law of the field $\wt h$ of~\eqref{eqn-conf-field} is that of a whole-plane GFF (with some choice of additive constant) plus a deterministic function which is smooth on $\BB C\setminus \{0,z\}$. 
Consequently, for each bounded open set $U\subset \BB C$ which lies at positive distance from $z$, the laws of $h|_U$ and $\wt h|_U$, viewed modulo additive constant, are mutually absolutely continuous (see, e.g.,~\cite[Proposition 2.9]{ig4}). 

Now fix such an open set $U$ with contains 0. 
By the locality of the metric (Axiom~\ref{item-metric-local}), if $S > 0$ then on the event $\{\mcl B_S \subset U\}$ the metric ball $\mcl B_S $ and the restriction $h|_{\mcl B_S }$ are both a.s.\ determined by $h|_U$. 
Almost surely, each filled metric ball $ \mcl B_s^{z,\bullet} $ for $s\in [0,S]$ is determined by $\mcl B_S $ and $h|_{\mcl B_S }$.
Moreover, each $D_h$-geodesic from 0 to a point of $ \mcl B_S $ is contained in $ \mcl B_S $, which implies that $D_h(0,u) = D_h(0, u ; \mcl B_S )$ for each $u\in \mcl B_S $. 

For $S > 0$, let $E_S$ be the event that $  D_h(0,z) < S$ and the four conditions of the proposition statement hold with the given choice of $z$ and for all $0 < t  < s  \leq S$. 
From the preceding paragraph and Axiom~\ref{item-metric-local} (locality), we infer that $E_S$ is a.s.\ determined by $h|_U$ on $\{\mcl B_S  \subset U\}$. 

By the conclusion of the preceding paragraph and the absolute continuity of the laws of $h|_U$ and $\wt h|_U$, the event $E_S$ occurs a.s.\ on the event $\{\mcl B_S  \subset U\}$. 
Letting $U$ increase to $\BB C\setminus \{  z\}$ and then sending $S\rta\infty$ shows that a.s.\ the proposition statement holds for our fixed choice of $z$. 

We now upgrade to a statement which holds for all $z\in\BB C\setminus \{0\}$ simultaneously. 
Indeed, we know from the fixed $z$ case that a.s.\ the proposition statement holds simultaneously for all $z\in\BB Q^2\setminus \{0\}$.
For any $z\in\BB C\setminus \{0\}$ and any $0 <  s < D_h(0,z)$, there exists $z' \in \BB Q^2 \setminus \mcl B_s^{z,\bullet} $. For such a choice of $z'$ we have $\mcl B_s^{z,\bullet} = \mcl B_s^{z' ,\bullet} $ for each $t\in [0,s]$. 
Hence the proposition statement for all $z\in\BB Q^2\setminus \{0\}$ implies the lemma statement for all $z\in\BB C\setminus \{0\}$.
\end{proof}

We will also need an analog of Proposition~\ref{prop-conf-endpt} for filled metric balls with arbitrary target points.

\begin{prop} \label{prop-conf-endpt-finite}
Fix $0<t < s$ and let $\mcl X = \mcl X_{t,s}^z$ be the set of confluence points as in Proposition~\ref{prop-conf-finite}. 
Almost surely, on the event $\{s < D_h(0,z)\}$, the following is true. For every $D_h$-geodesic $P$ from 0 to a point of $\BB C\setminus \mcl B_s^{z,\bullet}$ there is an $x\in\mcl X$ such that $P(t)  = x$ and $P(s) $ is a point of the arc $I_x$ which is not one of the endpoints of $I_x$. 
\end{prop}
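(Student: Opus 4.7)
The plan is to follow the template of Proposition~\ref{prop-conf-finite}, transferring the $\infty$-target statement of Proposition~\ref{prop-conf-endpt} to the $z$-target case via a conformal coordinate change together with a local absolute continuity argument. Fix $z\in\BB C\setminus\{0\}$ and consider the M\"obius map $\phi(w) = zw/(w-z)$, which satisfies $\phi(0)=0$, $\phi(z)=\infty$, and $\phi(\infty)=z$. Set $\wt h := h\circ\phi^{-1} + Q\log|(\phi^{-1})'|$. By the LQG coordinate change (Axiom~\ref{item-metric-coord}), $\phi$ is an isometry from $(\BB C, D_h)$ onto $(\BB C, D_{\wt h})$, so it carries $D_h$-geodesics to $D_{\wt h}$-geodesics (parametrized by length), and sends $\mcl B_r^\bullet(0; D_h)$ to $\mcl B_r^{z,\bullet}(0; D_{\wt h})$ for every $r>0$. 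Applying Proposition~\ref{prop-conf-endpt} to $h$ at the fixed times $t,s$ and pushing the conclusion forward under $\phi$ yields the statement of Proposition~\ref{prop-conf-endpt-finite} with $h$ replaced by $\wt h$.

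Next I transfer the conclusion from $\wt h$ to $h$ via the local absolute continuity argument used in the proof of Proposition~\ref{prop-conf-finite}. Fix a bounded open set $U\ni 0$ lying at positive Euclidean distance from $z$. Since $\wt h$ differs from a whole-plane GFF (modulo additive constant) by a function which is smooth on $\BB C\setminus\{0,z\}$, the laws of $h|_U$ and $\wt h|_U$, viewed modulo additive constant, are mutually absolutely continuous. On the event $\{\mcl B_s^{z,\bullet}(0; D_h) \subset U\}$, the locality axiom (Axiom~\ref{item-metric-local}) implies that $\mcl B_s^{z,\bullet}$, the set $\mcl X$, the arcs $I_x$, and the collection of all $D_h$-geodesics from $0$ up to time $s$ are $\sigma(h|_U)$-measurable: indeed, these geodesic segments stay in $\mcl B_s^{z,\bullet}$ and are precisely the geodesics of the internal metric $D_h(\cdot,\cdot; \mcl B_s^{z,\bullet})$, which is determined by $h|_{\mcl B_s^{z,\bullet}}$ and hence by $h|_U$. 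The good event of the proposition is therefore $\sigma(h|_U)$-measurable on this containment event, and mutual absolute continuity promotes its $\wt h$-a.s.\ validity to $h$-a.s.\ validity there.

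Finally, on $\{s<D_h(0,z)\}$ the set $\mcl B_s^{z,\bullet}(0; D_h)$ is a compact subset of $\BB C\setminus\{z\}$. Hence if we exhaust $\BB C\setminus\{z\}$ by an increasing sequence of bounded open sets $U_n$ (each at positive distance from $z$), then for every outcome in $\{s<D_h(0,z)\}$ we eventually have $\mcl B_s^{z,\bullet}\subset U_n$. Taking a countable union over $n$ yields the proposition. The main obstacle I anticipate is verifying carefully that the full family of $D_h$-geodesic \emph{segments} from $0$ up to time $s$ (not just the confluence points $\mcl X$ and the arcs $I_x$) is measurable with respect to $h|_U$ on the containment event; this is handled by the identification of these segments as internal-metric geodesics inside the filled metric ball, which in turn follows from the fact that each geodesic to a point of $\bdy \mcl B_s^{z,\bullet}$ or beyond spends its first $s$ units of time inside $\mcl B_s^{z,\bullet}$.
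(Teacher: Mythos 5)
Your high-level plan (conformal coordinate change to reduce to the $\infty$-target case, then local absolute continuity to pass from $\wt h$ to $h$) is the right framework, and it matches the template used in the paper for Proposition~\ref{prop-conf-finite}. However, there is a genuine gap at the measurability step, and it is the same gap that you flag at the end as the "main obstacle I anticipate" without actually resolving it.

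Concretely: you claim that on the containment event $\{\mcl B_s^{z,\bullet}\subset U\}$, the good event of the proposition is $\sigma(h|_U)$-measurable, because the relevant geodesic segments (the restrictions $P|_{[0,s]}$ of geodesics from $0$ to points of $\BB C\setminus\mcl B_s^{z,\bullet}$) live inside the filled ball and are thus internal-metric geodesics. But the event is a universal statement over $D_h$-geodesics from $0$ to points \emph{outside} $\mcl B_s^{z,\bullet}$, and knowing $h|_{\mcl B_s^{z,\bullet}}$ only identifies the collection of all geodesics from $0$ to boundary points; it does not tell you \emph{which} of those geodesics to boundary points are restrictions of geodesics that actually exit the ball — whether a geodesic to a boundary point extends beyond depends on $h$ outside the ball. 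This distinction genuinely matters: you cannot simply replace "every geodesic from $0$ to a point of $\BB C\setminus\mcl B_s^{z,\bullet}$" by the $\sigma(h|_U)$-measurable strengthening "every geodesic from $0$ to a point of $\bdy\mcl B_s^{z,\bullet}$," because the stronger version is false (for instance, the leftmost geodesic from $0$ to the endpoint of an arc $I_x$ has $P(s)$ equal to an endpoint). So the event in the proposition is not a function of $h|_U$ on the containment event, and the absolute-continuity step as you have written it does not go through.

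The paper's proof closes exactly this gap by introducing a monotone family of approximating events. For each $\ep>0$ it defines $G(\ep)$ to be the event that every $D_h$-geodesic from $0$ to a point of $\bdy\mcl B_{s+\ep}^{z,\bullet}$ satisfies the conclusion (with $\mcl X$, $I_x$ still defined at radii $t,s$). All such geodesics stay inside $\mcl B_{s+\ep}^{z,\bullet}$, so $G(\ep)\in\sigma(\mcl B_{s+\ep}^{z,\bullet},h|_{\mcl B_{s+\ep}^{z,\bullet}})$, which is the right kind of measurability for the conformal/absolute-continuity argument (now applied on $\{s+\ep<D_h(0,z)\}$). Since $G(\ep')\subset G(\ep)$ for $\ep'<\ep$, and every geodesic from $0$ to a point of $\BB C\setminus\mcl B_s^{z,\bullet}$ has a sub-path which is a geodesic from $0$ to a point of $\bdy\mcl B_{s+\ep}^{z,\bullet}$ for some $\ep>0$, the event of the proposition equals $\bigcap_{\ep>0}G(\ep)$. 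One then runs your argument for each fixed $\ep$ to get $G(\ep)$ a.s.\ on $\{s+\ep<D_h(0,z)\}$, and sends $\ep\rta 0$. You should insert this $\ep$-approximation layer before the absolute-continuity step.
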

\begin{proof}
For $\ep > 0$, let $G(\ep) = G_{t,s}^z(\ep)$ be the event that the following is true.
For every $D_h$-geodesic $P$ from 0 to a point of $\bdy \mcl B_{s+\ep}^{z,\bullet}$ there is an $x\in\mcl X$ such that $P(t)  = x$ and $P(s) $ is a point of the arc $I_x$ which is not one of the endpoints of $I_x$. 
We observe that $G(\ep) \in \sigma\left( \mcl B_{s+\ep}^{z,\bullet} , h|_{\mcl B_{s+\ep}^{z,\bullet}} \right)$ (due to Axiom~\ref{item-metric-local}). 
Furthermore, every $D_h$-geodesic from 0 to a point of $\BB C\setminus \mcl B_s^{z,\bullet}$ which lies at $D_h$-distance at least $\ep$ from $\bdy\mcl B_s^{z,\bullet}$ has a sub-path which is a $D_h$-geodesic from 0 to a point of $\bdy \mcl B_{s+\ep}^{z,\bullet}$. 
From this, we get that if $\ep' < \ep$ then $G(\ep') \subset G(\ep)$ and moreover the event described in the lemma statement is equal to $\bigcap_{\ep>0} G(\ep)$. 

Via exactly the same argument as in the proof of Proposition~\ref{prop-conf-finite}, we get that for each fixed $\ep > 0$, a.s.\ $G(\ep)$ occurs on the event $\{s+\ep  <D_h(0,z)\}$. Sending $\ep\rta 0$ now concludes the proof.
\end{proof}

\subsection{Confluence in a neighborhood of a typical point}
\label{sec-general-confluence}

In this subsection we will prove Theorem~\ref{thm-general-confluence}, following roughly the argument used to prove the Brownian map analog~\cite[Proposition 12]{akm-geodesics}.  
Our next lemma says that any two geodesics started from 0 with nearby target points coincide along a large initial segment. 
It is the $\gamma$-LQG analog of~\cite[Lemma 18]{akm-geodesics}, but it is proven in a very different manner. 

\begin{lem} \label{lem-stable}
Almost surely, the following is true for each $z\in\BB C$ such that the $D_h$-geodesic from $0$ to $z$ is unique. For each open set $U$ containing $z$, there exists an open set $U'\subset U$ containing $z$ such that each $D_h$-geodesic from 0 to a point of $U'$ coincides with the $D_h$-geodesic from 0 to $z$ outside of $U$.
\end{lem}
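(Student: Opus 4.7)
The plan is to combine Proposition~\ref{prop-conf-endpt-finite} with a compactness argument exploiting the uniqueness of $P_z$. First, using continuity of $P_z : [0, D_h(0,z)] \to \BB C$ and the fact that $z$ is interior to $U$, I would pick rational $0 < t < s < D_h(0,z)$ so that the compact segment $P_z([t, D_h(0,z)])$ is contained in $U$. Applying Proposition~\ref{prop-conf-endpt-finite} to $P_z$ itself then yields a confluence point $x \in \mcl X := \mcl X_{t,s}^z$ with $P_z(t) = x$ and $P_z(s)$ in the relative interior of the arc $I_x \subset \bdy \mcl B_s^{z,\bullet}$.

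The core claim will be that there exists $\delta > 0$ such that for every $z' \in B_\delta(z)$ and every $D_h$-geodesic $P_{z'}$ from $0$ to $z'$, (a) $P_{z'}|_{[0,t]} = P_z|_{[0,t]}$ as parameterized paths and (b) $P_{z'}([t, D_h(0,z')]) \subset U$. To establish this I would argue by contradiction: if no such $\delta$ works, there is a sequence $z'_n \to z$ and geodesics $P_n$ from $0$ to $z'_n$ violating (a) or (b). The $P_n$ are $1$-Lipschitz in $D_h$ and eventually contained in a fixed Euclidean-compact set, so Arzel\`a--Ascoli supplies a uniformly convergent subsequential limit that is a $D_h$-geodesic from $0$ to $z$; by uniqueness of $P_z$, this limit is $P_z$, and since every subsequential limit equals $P_z$ the full sequence satisfies $P_n \to P_z$ uniformly. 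For large $n$, $z'_n$ lies in the connected component of $\BB C \setminus \ol{\mcl B_s(0;D_h)}$ containing $z$, so $\mcl B_s^{z'_n,\bullet} = \mcl B_s^{z,\bullet}$ and the entire confluence structure $(\mcl X, \{I_x\})$ coincides for targets $z$ and $z'_n$. Uniform convergence then forces $P_n(s) \in I_x$ (since $I_x$ is relatively open around $P_z(s)$) and $P_n([t, D_h(0,z'_n)]) \subset U$ (since $P_z([t, D_h(0,z)])$ is compactly contained in $U$) for all sufficiently large $n$. Applying Proposition~\ref{prop-conf-endpt-finite} to $P_n$ (targeted at $z'_n$) gives $P_n(t) = x$, and the uniqueness clause of Proposition~\ref{prop-conf-finite}\eqref{item-conf-finite-unique} for the geodesic from $0$ to $x$ upgrades this to $P_n|_{[0,t]} = P_z|_{[0,t]}$ as parameterized paths, contradicting the choice of sequence.

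Once the claim is in hand, the lemma follows by taking $U' := B_\delta(z) \cap U$: (a) and (b), together with $P_z([t, D_h(0,z)]) \subset U$, imply that $P_{z'}$ and $P_z$ coincide on the complement of $U$. The main obstacle is the Arzel\`a--Ascoli/uniqueness step, combined with verifying that the confluence structure is stable under small perturbations of the target; the latter reduces to the observation that $\mcl B_s^{z,\bullet}$ depends on $z$ only through which connected component of $\BB C \setminus \ol{\mcl B_s(0;D_h)}$ contains it. The quantifier ``a.s., for every $z$'' in the statement is handled by restricting to rational $(s,t)$ and invoking the rational-target-point density trick already used in the proof of Proposition~\ref{prop-conf-finite}.
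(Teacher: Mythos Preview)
Your approach is correct and takes a genuinely different route from the paper's. The paper uses \emph{random} radii $\tau_1 < \tau_2$ (the first times the filled ball reaches small $D_h$-balls around $z$), which forces it to rely on Proposition~\ref{prop-conf-finite} rather than the sharper Proposition~\ref{prop-conf-endpt-finite} (indeed, the paper's own footnote points out that~\ref{prop-conf-endpt-finite} is unavailable for random radii). Because Proposition~\ref{prop-conf-finite} only controls \emph{leftmost} geodesics, the paper then needs an additional leftmost/rightmost sandwich: it introduces $P_n^L$ and $P_n^R$, argues each converges to $P|_{[0,\tau_2]}$, uses finiteness of $\mcl X$ to pin down $P_n^L(\tau_1) = P_n^R(\tau_1) = P(\tau_1)$, and finally traps $P_n(\tau_1)$ between them. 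Your argument sidesteps all of this by choosing \emph{deterministic rational} $t < s$, which makes Proposition~\ref{prop-conf-endpt-finite} directly applicable and delivers the conclusion $P_n(t) = x$ for general (not just leftmost) geodesics in one stroke, using that $P_z(s)$ lands in the relative interior of $I_x$. The price you pay is the rational-$(t,s,q)$ density step at the end to handle the universal quantifier over $z$; the paper gets this for free since Proposition~\ref{prop-conf-finite} already holds simultaneously for all targets and radii. Both trade-offs are mild, and your version is arguably cleaner. One small point of phrasing: when you write ``applying Proposition~\ref{prop-conf-endpt-finite} to $P_n$ (targeted at $z'_n$)'', what you actually invoke is the proposition for the fixed rational target $q$ in the same complementary component as $z$, which covers all geodesics to points of $\BB C\setminus \mcl B_s^{q,\bullet}$ and in particular $P_n$; your final sentence makes this clear, but it would read better to say so directly at that step.
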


\begin{figure}[t!]
 \begin{center}
\includegraphics[scale=1]{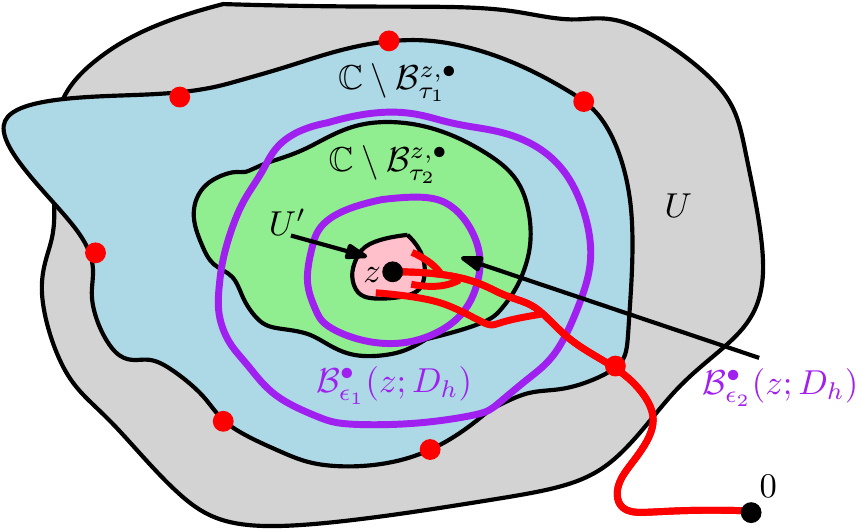}
\vspace{-0.01\textheight}
\caption{Illustration of the sets involved in the proof of Lemma~\ref{lem-stable}. 
The sets $\BB C\setminus \mcl B_{\tau_1}^{z,\bullet}$ and $\BB C\setminus\mcl B_{\tau_2}^{z,\bullet}$ are shown in blue and green, respectively. The boundaries of the $D_h$-balls of radius $\ep_1$ and $\ep_2$ centered at 0 are shown in purple. The red points are the elements of the set $X$ of points of $\bdy \mcl B_{\tau_1}^{z,\bullet} $ which are hit by leftmost or rightmost $D_h$-geodesics from 0 to $\bdy\mcl B_{\tau_2}^{z,\bullet} $. 
This set is finite by Proposition~\ref{prop-conf-finite}. The red curves are $D_h$-geodesics from 0 to points in $U'$, which all coincide outside of $U$. 
}\label{fig-stable}
\end{center}
\vspace{-1em}
\end{figure} 

We recall that it is shown in~\cite[Theorem 1.2]{mq-geodesics} that if $z \in \BB C$ is fixed, then a.s.\ the $D_h$-geodesic from 0 to $z$ is unique.  

\begin{proof}[Proof of Lemma~\ref{lem-stable}] 
By possibly shrinking $U$, we can assume without loss of generality that $U$ is bounded and that $U$ lies at positive distance from 0. 
Let $0 < \ep_2 < \ep_1 < D_h(z,\bdy U)/3$. 
Let $\tau_1$ (resp.\ $\tau_2$) be the smallest $s  >0$ for which $\mcl B_s^{z,\bullet} $ intersects $\mcl B_{\ep_1}(z;D_h)$ (resp.\ $\mcl B_{\ep_2}(z;D_h)$), so that $0 < \tau_1 < \tau_2 < D_h(0,z)$.
See Figure~\ref{fig-stable} for an illustration. 
 
By Proposition~\ref{prop-conf-finite} and its analog with rightmost geodesics in place of leftmost geodesics, there is a finite set of points $\mcl X \subset \bdy\mcl B_{\tau_1}^{z,\bullet} $ such that each leftmost or rightmost $D_h$-geodesic from 0 to a point of $\bdy \mcl B_{\tau_2}^{z,\bullet} $ passes through some $x\in \mcl X $.\footnote{We cannot apply Proposition~\ref{prop-conf-endpt-finite} instead of Proposition~\ref{prop-conf-finite} here since the radii $\tau_1$ and $\tau_2$ are random.} 
Since the $D_h$-geodesic $P$ from 0 to $z$ is unique, the restriction $P |_{[0,\tau_2]}$ is both a leftmost and a rightmost $D_h$-geodesic from 0 to $\bdy\mcl B_{\tau_2}^{z,\bullet} $, hence $P (\tau_1) \in \mcl X $.

Now consider a sequence of points $z_n\rta z$ and for each $n\in\BB N$ let $P_n$ be a $D_h$-geodesic from 0 to $z_n$ (we do not know that $P_n$ is unique). 
We claim that 
\eqb \label{eqn-stable-claim}
P_n(\tau_1)  = P(\tau_1), \quad \text{for each large enough $n\in\BB N$}.
\eqe 
Indeed, each $P_n$ is 1-Lipschitz w.r.t.\ $D_h$, so by the Arz\'ela-Ascoli theorem for every sequence of $n$'s tending to $\infty$, there is a subsequence along which $P_n $ converges uniformly.
The uniform limit is necessarily a $D_h$-geodesic from 0 to $z$, so must be equal to $P$. 
Hence $P_n\rta P$ uniformly. 

Let $P_n^L$ (resp.\ $P_n^R$) be the leftmost (resp.\ rightmost) $D_h$-geodesic from 0 to $P_n(\tau_2)$. 
Since $P_n(\tau_2) \rta P(\tau_2)$ and the $D_h$-geodesic $P|_{[0,\tau_2]}$ from 0 to $P(\tau_2)$ is unique, the Arz\'ela-Ascoli theorem applied as above shows that $P_n^L$ and $P_n^R$ each converge uniformly to $ P|_{[0,\tau_2]}$. 
Each of the points $P_n^L(\tau_1) , P_n^R(\tau_1), P(\tau_1)$ belongs to the finite set $\mcl X $, so by the above uniform convergence it follows that for large enough $n\in\BB N$ we have $P_n^L(\tau_1) = P_n^R(\tau_1) = P(\tau_1)$. 
Since $P_n|_{[0,\tau_2]}$ lies between $P_n^L$ and $P_n^R$, this implies~\eqref{eqn-stable-claim}.

We now deduce from~\eqref{eqn-stable-claim} that there is an open set $U' \subset U$ containing $z$ such that each $D_h$-geodesic from 0 to a point of $U'$ passes through $P(\tau_1)$. 
Indeed, if there were no such $U'$ then we could find a sequence of $D_h$-geodesics from 0 to points arbitrarily close to $z$ which do not pass through $P(\tau_1)$, which would contradict~\eqref{eqn-stable-claim}. 

We can assume without loss of generality that the open set $U'$ in the preceding paragraph has $D_h$-diameter at most $  \max\{\ep_1 , D_h(z,\bdy U) - 3 \ep_1 \}  $. 
Then by the definition of $\tau_1$ and the triangle inequality, for each $z' \in U'$, 
\alb
 D_h(z',\bdy U) 
&\geq D_h(z,\bdy U) - D_h(z,z') 
\geq 3 \ep_1 
\quad \text{and} \notag \\
D_h\left( z' , \bdy \mcl B_{\tau_1}^{z,\bullet}  \right)
&\leq D_h\left( z  , \bdy \mcl B_{\tau_1}^{z,\bullet}  \right) +  D_h(z,z')
\leq 2\ep_1 .
\ale 
It follows that no $D_h$-geodesic from 0 to $z'$ can exit $U$ after time $\tau_1$. 
By the definition of $U'$, each such $D_h$-geodesic passes through $P(\tau_1)$.
By the uniqueness of $P$, it follows that each such $D_h$-geodesic in fact coincides with $P$ on $[0,\tau_1]$.
Therefore, each such $D_h$-geodesic coincides with $P$ outside of $U$, as required.
\end{proof}

The following lemma will allow us to apply Proposition~\ref{prop-conf-finite} to sub-segments of general geodesics started from 0.

\begin{lem} \label{lem-regular-geo}
Almost surely, the following is true for each $z\in\BB C$ and each $D_h$-geodesic $P : [0,D_h(0,z)] \rta \BB C$ from 0 to $z$. 
For each $0 < r < D_h(0,z)$, the segment $P|_{[0,r]}$ is the only $D_h$-geodesic from 0 to $P(r)$.
\end{lem}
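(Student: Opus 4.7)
The plan is to use a splicing construction together with confluence. Let $P_1$ be any $D_h$-geodesic from $0$ to $w := P(r)$; the goal is to show $P_1 = P|_{[0,r]}$. Concatenating $P_1$ with $P|_{[r, D_h(0,z)]}$ yields a path $\wt P : [0, D_h(0,z)] \rta \BB C$ of total $D_h$-length $D_h(0,w) + D_h(w,z) = D_h(0,z)$, hence another unit-speed $D_h$-geodesic from $0$ to $z$. By construction $\wt P|_{[0,r]} = P_1$, while $\wt P(u) = P(u)$ for all $u \in [r, D_h(0,z)]$.

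The next step is to apply the confluence results of Section~\ref{sec-conf-finite} with suitably chosen rational parameters. Fix rational $s \in (r, D_h(0,z))$ and, using that the connected component of $\BB C \setminus \ol{\mcl B_s(0;D_h)}$ containing $z$ is open and nonempty, choose $z_0 \in \BB Q^2$ lying in this component, so that $z \notin \mcl B_s^{z_0,\bullet}$ and $s < D_h(0,z_0)$. Pick rational $t \in (r,s)$. Both $P$ and $\wt P$ are $D_h$-geodesics from $0$ to $z \in \BB C \setminus \mcl B_s^{z_0,\bullet}$, so Proposition~\ref{prop-conf-endpt-finite} (applied with these parameters) gives $P(t), \wt P(t) \in \mcl X_{t,s}^{z_0}$. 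Since $t > r$, however, $\wt P(t) = P(t)$. The uniqueness assertion in Proposition~\ref{prop-conf-finite}\ref{item-conf-finite-unique} then forces $P|_{[0,t]}$ and $\wt P|_{[0,t]}$ both to agree with the unique $D_h$-geodesic from $0$ to $P(t)$, and restricting to $[0,r] \subset [0,t]$ gives $P_1 = \wt P|_{[0,r]} = P|_{[0,r]}$, as desired.

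To upgrade this to an almost-sure statement valid for all $z, P, r$ simultaneously, I would work on the probability-one event $\Omega_0$ on which the conclusion of Proposition~\ref{prop-conf-finite} holds (quantified over all $z \in \BB C \setminus \{0\}$ and all $0 < t < s < D_h(0,z)$) and on which the conclusion of Proposition~\ref{prop-conf-endpt-finite} holds for every rational triple $(z_0, t, s)$ with $z_0 \in \BB Q^2 \setminus \{0\}$ and rational $0 < t < s$ satisfying $s < D_h(0,z_0)$; this is a countable intersection of almost sure events, hence itself almost sure. The existence of the required rational parameters for arbitrary $z, r$ is immediate from the openness of the complementary component of $\ol{\mcl B_s(0;D_h)}$ containing $z$ and of the interval $(r, D_h(0,z))$. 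The crucial ingredient is Proposition~\ref{prop-conf-endpt-finite}, whose extension to arbitrary (not necessarily leftmost) geodesics is exactly what permits the direct appeal to the uniqueness statement Proposition~\ref{prop-conf-finite}\ref{item-conf-finite-unique}; I do not anticipate any serious obstacle beyond correctly setting up the splicing and the rational approximation.
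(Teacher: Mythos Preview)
Your proof is correct and follows essentially the same approach as the paper: both arguments pick rational $t,s$ with $r<t<s<D_h(0,z)$ and a rational target point in the same complementary component as $z$, invoke Proposition~\ref{prop-conf-endpt-finite} to place $P(t)$ in the confluence set $\mcl X$, and then use the uniqueness assertion Proposition~\ref{prop-conf-finite}\ref{item-conf-finite-unique}. The only cosmetic difference is that you splice the putative second geodesic $P_1$ all the way out to $z$ before applying confluence, whereas the paper first shows $P|_{[0,t]}$ is the unique geodesic to $P(t)$ and then splices $P_1$ with $P|_{[r,t]}$ to reach a contradiction; the underlying logic is identical.
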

\begin{proof}
Let $q \in \BB Q^2$ and radii $0 < t < s$ with $t,s\in \BB Q$. We claim that on the event $\{s < D_h(0,q)\}$, a.s.\ the following is true. 
For each $z\in \BB C\setminus \mcl B_s^{q,\bullet}$ and each $D_h$-geodesic $P$ from 0 to $z$, $P|_{[0,t]}$ is the only $D_h$-geodesic from 0 to $P(t)$.
To see this, let $\mcl X = \mcl X_{t,s}^q \subset\bdy\mcl B_t^\bullet$ be the set of confluence points as in Proposition~\ref{prop-conf-finite}. 
By assertion~\ref{item-conf-finite-unique} of Proposition~\ref{prop-conf-finite}, for each $x\in \mcl X$ there is a unique $D_h$-geodesic from 0 to $x$. 
By Proposition~\ref{prop-conf-endpt-finite}, a.s.\ each $D_h$-geodesic $P$ from 0 to a point of $ \BB C\setminus \mcl B_s^{q,\bullet}$ passes through some $x\in \mcl X$.
Hence $P|_{[0,t]}$ must be the unique $D_h$-geodesic from 0 to $P(t) =x$, as required.  

Let us now deduce the lemma statement from the above claim. 
The event described in the above claim holds a.s.\ for every $q\in\BB Q^2$ and every $0 < t < s < D_h(0,q)$ with $t,s\in\BB Q$. 
We work on the (full probability) event that this is the case.
For $z\in\BB C$ and $0 < r < D_h(0,z)$, we choose $t,s \in \BB Q $ such that $r < t < s < D_h(0,z)$. 
The point $z$ lies in a connected component of $\BB C\setminus \mcl B_s$, which is necessarily equal to $\BB C\setminus \mcl B_s^{q,\bullet}$ for some $q\in\BB Q^2$. 
By the above claim applied to this choice of $q,t,s$, we get that $P|_{[0,t]}$ is the only $D_h$-geodesic from 0 to $P(t)$. 
Since $r < t$, this implies that $P|_{[0,r]}$ is the only $D_h$-geodesic from 0 to $P(r)$ (otherwise, we could obtain two distinct geodesics from 0 to $P(t)$ by concatenating geodesics from 0 to $P(r)$ with $P|_{[r,t]}$).
\end{proof}

The rest of the proof of Theorem~\ref{thm-general-confluence} is very similar to the proof of~\cite[Proposition 12]{akm-geodesics}. 
We will work with the following setup. 
Fix $z\in\BB C \setminus \{0\}$ and a $D_h$-geodesic $P$ from 0 to $z$.  
All of the almost sure statements below are required to hold for every choice of $z$ and $P$ simultaneously.
To simplify the geometry of the problem, we also fix a homeomorphism $\psi : \BB C\rta \BB C$ which takes the geodesic $P$ to the line segment $[0,1]$. 
The existence of such a homeomorphism follows from general topological theorems, as explained in~\cite[Section 3]{akm-geodesics}. 
The following lemma asserts the existence of an ``auxiliary" geodesic which will allow us to force $D_h$-geodesics near $P$ to merge into $P$. 
See Figure~\ref{fig-H-shape}, left, for an illustration of the statement.

\begin{figure}[t!]
 \begin{center}
\includegraphics[scale=1]{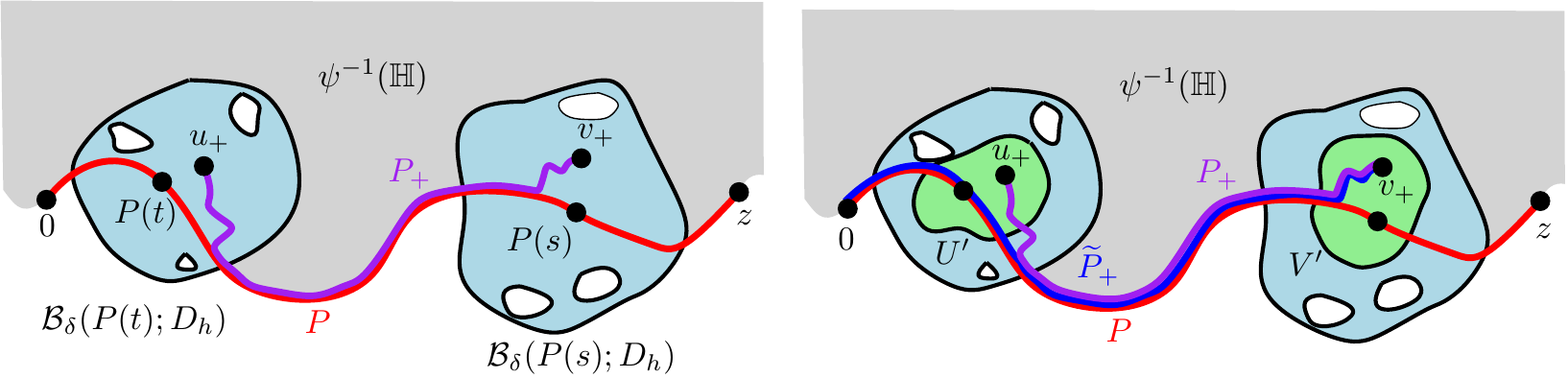}
\vspace{-0.01\textheight}
\caption{ \textbf{Left.} Illustration of the statement of Lemma~\ref{lem-H-shape}. The lemma asserts the existence of the purple $D_h$-geodesic in the figure.
\textbf{Right.} Illustration of the proof of Lemma~\ref{lem-H-shape}. We first apply Lemma~\ref{lem-stable} with $P(s)$ in place of $z$ and $ \mcl B_\delta(P(s) ; D_h)$ in place of $U$ to produce the open set $V'$. 
We then apply Lemma~\ref{lem-stable} a second time with $v_+ \in V'$ in place of zero, $P(t)$ in place of $z$, and $ \mcl B_\delta(P(t) ; D_h)$ in place of $U$ to produce the open set $U'$. 
Note that the red, blue, and purple $D_h$-geodesics all coincide outside of the union of the light blue $D_h$-metric balls. 
}\label{fig-H-shape}
\end{center}
\vspace{-1em}
\end{figure}

\begin{lem} \label{lem-H-shape}
Let $0 < t < s < D_h(u,v)$. 
Almost surely, for each $\delta >0$ there are points $u_+   \in \BB Q^2 \cap \mcl B_\delta(P(t);D_h)$
and $v_+   \in \BB Q^2 \cap \mcl B_\delta(P(s);D_h)$
such that if $P_+$ is the (a.s.\ unique) $D_h$-geodesic from $u_+$ to $v_+$, then 
\eqb \label{eqn-H-shape}
P_+ \setminus P \subset   \left[ \mcl B_\delta(P(t);D_h) \cup \mcl B_\delta(P(s);D_h)  \right] \cap  \psi^{-1}(\BB H)   . 
\eqe
\end{lem}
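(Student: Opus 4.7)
My strategy is to apply Lemma~\ref{lem-stable} twice, following the template in Figure~\ref{fig-H-shape}. To control which side of $P$ deviations land on, I first set $\wt P := \psi^{-1}(\BB R)$, a simple curve in $\BB C$ extending $P = \psi^{-1}([0,1])$. Since $P(t)$ and $P(s)$ lie in the interior of $P$, they sit at positive Euclidean distance from the closed set $\wt P\setminus P$, so I fix Euclidean open neighborhoods $N_t \ni P(t)$ and $N_s \ni P(s)$ that are disjoint from $\wt P\setminus P$ and (after possibly shrinking) from each other.

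For the first application, I apply Lemma~\ref{lem-stable} with $P(s)$ in place of $z$ (the uniqueness hypothesis follows from Lemma~\ref{lem-regular-geo}) and $N_s\cap \mcl B_\delta(P(s);D_h)$ in place of $U$, producing an open $V'\ni P(s)$ such that every $D_h$-geodesic from $0$ to a point of $V'$ coincides with $P|_{[0,s]}$ outside this small set. Because $P(s)\in \bdy\psi^{-1}(\BB H)$, the intersection $V'\cap \psi^{-1}(\BB H)\cap \BB Q^2$ is nonempty; I pick $v_+$ from it. The unique $D_h$-geodesic $P^{v_+}$ from $0$ to $v_+$ then passes through $P(t)$ and agrees with $P|_{[0,s]}$ outside $N_s\cap \mcl B_\delta(P(s);D_h)$.

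For the second application, I use an analog of Lemma~\ref{lem-stable} with $v_+$ as the starting point. Although Lemma~\ref{lem-stable} is stated for geodesics from $0$, by translation invariance of the law of the whole-plane GFF modulo additive constant and the countability of $\BB Q^2$, the statement holds a.s.\ simultaneously for every rational starting point. Applying this analog with $v_+$ as starting point, $P(t)$ in place of $z$ (uniqueness following from Lemma~\ref{lem-regular-geo} applied to the reversal of the $P(t)$-to-$v_+$ sub-arc of $P^{v_+}$, via its rational-starting-point analog), and $N_t\cap \mcl B_\delta(P(t);D_h)$ in place of $U$, I obtain $U'\ni P(t)$ and pick $u_+ \in U'\cap \psi^{-1}(\BB H)\cap \BB Q^2$. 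The a.s.\ unique $D_h$-geodesic $P_+$ from $u_+$ to $v_+$ then coincides with the reversed geodesic from $v_+$ to $P(t)$ outside $N_t\cap \mcl B_\delta(P(t);D_h)$; combined with the first application, $P_+$ coincides with $P$ outside $\mcl B_\delta(P(t);D_h)\cup \mcl B_\delta(P(s);D_h)$, proving one half of~\eqref{eqn-H-shape}.

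For the inclusion $P_+\setminus P\subset \psi^{-1}(\BB H)$, the two ``arms'' of $P_+$ (its portions inside the $\delta$-balls but not on $P$) are contained in $N_t$ and $N_s$ respectively and, by construction, have their interiors disjoint from $P$ and from $\wt P\setminus P$. Hence each arm interior lies in $\BB C\setminus \wt P = \psi^{-1}(\BB H)\sqcup \psi^{-1}(-\BB H)$; since each arm has a free endpoint at $u_+$ or $v_+$ in $\psi^{-1}(\BB H)$, by connectedness the arm lies entirely in $\psi^{-1}(\BB H)$. I expect the main technical obstacle to be the rigorous verification that Lemma~\ref{lem-stable} holds a.s.\ for all rational starting points simultaneously (which is needed for the second application), together with the careful bookkeeping to show that the arm/joining-point structure of $P_+$ takes the claimed form; once these are in place, the upper-half-plane condition follows cleanly from the topological argument above.
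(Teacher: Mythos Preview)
Your proposal is correct and follows essentially the same two-step strategy as the paper: apply Lemma~\ref{lem-stable} first at $P(s)$ (using Lemma~\ref{lem-regular-geo} for the uniqueness hypothesis) to pick a rational $v_+\in\psi^{-1}(\BB H)$, then apply it again from the rational base point $v_+$ at $P(t)$ to pick $u_+$, and finally combine the two to get $P_+\setminus P\subset \mcl B_\delta(P(t);D_h)\cup \mcl B_\delta(P(s);D_h)$. The paper handles the second application in the same way you do, invoking translation invariance to use $v_+\in\BB Q^2$ as the starting point; it simply says ``since $v_+\in\BB Q^2$, we can apply Lemma~\ref{lem-stable} with $v_+$ in place of $0$'' without spelling out the countability argument you make explicit.

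The only substantive difference is in the topological half of~\eqref{eqn-H-shape}. The paper shrinks $\delta$ so that the two $D_h$-balls are disjoint and avoid $0$ and $z$, then observes that the set of times at which $P_+$ lies on $P$ must be a single interval (otherwise one could splice a segment of $P$ into $P_+$ and contradict the uniqueness of the $D_h$-geodesic between the rational points $u_+$ and $v_+$). This is exactly the ``arm/joining-point'' structure you flag as needing justification; once you have it, your connectedness argument via the neighborhoods $N_t,N_s$ disjoint from $\wt P\setminus P$ goes through and is equivalent to the paper's. So the piece you identified as the main technical obstacle is handled by this one-line splicing observation.
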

\begin{proof} 
We can assume without loss of generality that $\delta < (s-t)/100$, so that $P(s) \notin \mcl B_\delta(P(t);D_h)$ and the same is true with $s$ and $t$ interchanged.

By Lemma~\ref{lem-regular-geo}, $P|_{[0,s]}$ is a.s.\ the unique $D_h$-geodesic from 0 to $P(s)$.
Hence Lemma~\ref{lem-stable} (applied with $P(s)$ in place of $z$) implies that there is an open set $V' \subset \mcl B_\delta(P(s) ; D_h)$ containing $P(s)$ such that each $D_h$-geodesic from 0 to a point of $V'$ coincides with $P|_{[0,s]}$ outside of $\mcl B_\delta(P(s) ; D_h)$.
Let $v_+ \in V' \cap \BB Q^2 \cap \psi^{-1}(\BB H)$ and let $\wt P_+$ be the $D_h$-geodesic from 0 to $v_+$ (which is a.s.\ unique, since $v_+ \in \BB Q^2$). 
Note that the symmetric difference of $\wt P_+$ and $P([0,s])$ is contained in $\mcl B_\delta(P(s) ; D_h)$. 

Since $P(t)\notin \mcl B_\delta(P(s) ;D_h)$, we have $P(t) = \wt P_+(t)$. 
Almost surely, the $D_h$-geodesic from $v_+$ to $P(t)$ is unique (it coincides with a segment of the time reversal of $\wt P_+$) since otherwise there would be more than one $D_h$-geodesic from 0 to $v_+$. 
Since $v_+\in\BB Q^2$, we can apply Lemma~\ref{lem-stable} with $v_+$ in place of 0 and $P(t)$ in place of $z$ to find that a.s.\ there is an open set $U'\subset \mcl B_\delta(P(t) ; D_h) $ containing $P(t)$ such that each $D_h$-geodesic from $v_+$ to a point of $U'$ coincides with a segment of $\wt P_+$ outside of $\mcl B_\delta(P(t) ; D_h)$. 
We now choose $u_+ \in U' \cap \BB Q^2 \cap \psi^{-1}(\BB H)$ and let $P_+$ be the (a.s.\ unique, since $u_+,v_+\in\BB Q^2$) $D_h$-geodesic from $u_+$ to $v_+$.

By our choice of $U'$, the geodesic $P_+$ coincides with a segment of $P_+$ outside of $\mcl B_\delta(P(t) ; D_h)$, which in turn coincides with a segment of $P|_{[0,s]}$ outside of $\mcl B_\delta(P(s) ; D_h)$. Therefore, 
\eqb \label{eqn-H-shape0}
P_+ \setminus P \subset \mcl B_\delta(P(t);D_h) \cup \mcl B_\delta(P(s);D_h)   . 
\eqe 

It remains to show that $P_+ \setminus P \subset \psi^{-1}(\BB H)$. 
By possibly shrinking $\delta$, we can assume without loss of generality that $0,z\notin \mcl B_\delta(P(t) ; D_h) \cup \mcl B_\delta(P(s) ; D_h)$ and $  \mcl B_\delta(P(t) ; D_h) \cap \mcl B_\delta(P(s) ; D_h) = \emptyset$.
Since $\mcl B_\delta(P(t) ; D_h) $ and $ \mcl B_\delta(P(s) ; D_h)$ are connected, this implies that no path in $\psi(\mcl B_\delta(P(t) ; D_h))$ or $\psi(\mcl B_\delta(P(s) ; D_h))$ can cross $\psi^{-1}(\BB R)$ without first crossing $P$. 
The set of times $t$ for which $P_+(t) \in P$ must be an interval, for otherwise by replacing a segment of $P_+$ by a segment of $P$ we would violate the uniqueness of $P$. 
If follows that the segments of $P_+$ contained in $\mcl B_\delta(P(t);D_h) $ and $ \mcl B_\delta(P(s);D_h) $ cannot cross $P$.
From~\eqref{eqn-H-shape0} and since $\psi(u_+ ) , \psi(v_+) \in \BB H$, it therefore follows that $\psi(P_+ \setminus P) \subset \BB H$, as required.
\end{proof}

\begin{figure}[t!]
 \begin{center}
\includegraphics[scale=1]{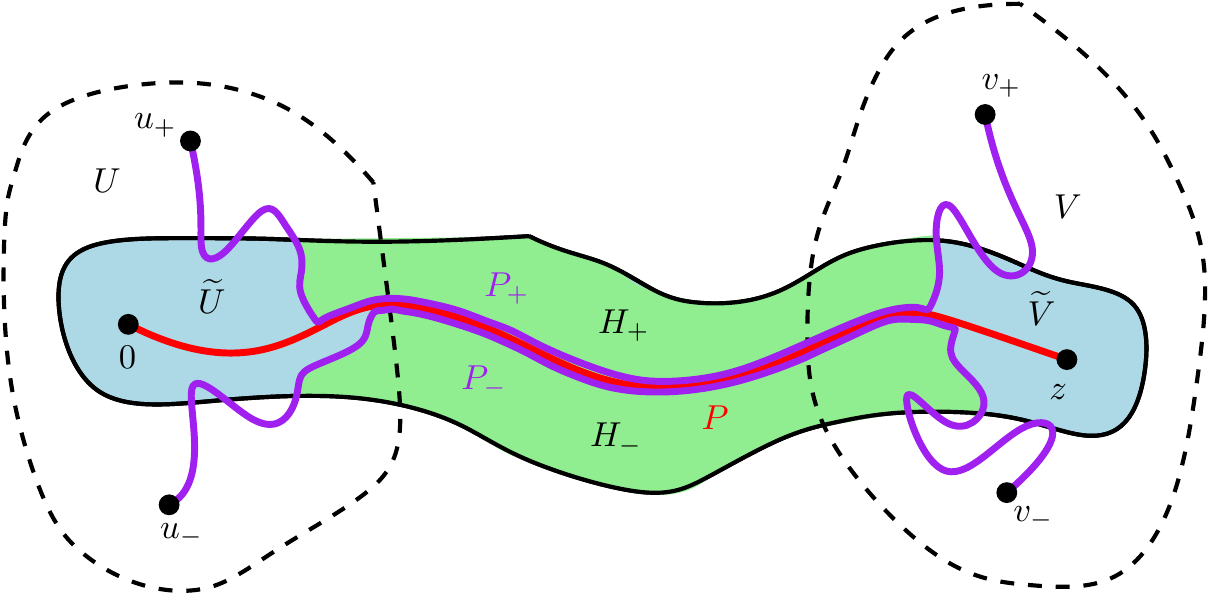}
\vspace{-0.01\textheight}
\caption{Illustration of the proof of Lemma~\ref{lem-H-cross}. The purple $D_h$-geodesics $P_+$ and $P_-$ come from Lemma~\ref{lem-H-shape}. They divide the region $\psi^{-1}(B_\ep([0,1]))$ (light green and light blue, not labelled) into four sub-regions $\wt U , \wt V , H_+ , H_-$. For large enough $n$, the $D_h$-geodesic $P_n$ must be contained in $\psi^{-1}(B_\ep([0,1]))$ and its initial (resp.\ terminal) point must be contained in $\wt U$ (resp.\ $\wt V$). Furthermore, due to the uniqueness of the $D_h$-geodesics between $u_+$ and $v_+$ and between $u_-$ and $v_-$ this $D_h$-geodesic cannot enter $H_+ \cup H_-$. This forces $P_n$ to merge into $P$. 
}\label{fig-H-cross}
\end{center}
\vspace{-1em}
\end{figure}

\begin{lem} \label{lem-H-cross}
Almost surely, the following is true simultaneously for each $z\in\BB C$, each neighborhood $U$ of 0, and each neighborhood $V$ of $z$.
Let $P$ be a $D_h$-geodesic from 0 to $z$ and let $\{P_n\}_{n\in\BB N}$ be a sequence of $D_h$-geodesics which converge uniformly to $P$.
Then
\eqb \label{eqn-H-cross}
P_n \setminus (U \cup V) \subset P ,\quad \text{for each large enough $n\in\BB N$}. 
\eqe
\end{lem}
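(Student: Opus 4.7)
The plan is to use Lemma~\ref{lem-H-shape} to construct two auxiliary $D_h$-geodesics $P_+$ and $P_-$ on the two sides of $P$ in the $\psi$-coordinates, each between rational endpoints, which coincide with $P$ except inside small $D_h$-balls around two chosen points $P(t_0), P(s_0)$. Together with $P$, these auxiliary geodesics will serve as barriers trapping $P_n$ near $P$ outside $U\cup V$, and the uniqueness of $P_\pm$ (between their rational endpoints) and of $P$ (via Lemma~\ref{lem-regular-geo}) will force $P_n$ to actually coincide with $P$ there.

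Concretely, set $\ell := D_h(0,z)$. By the continuity of $P$ and the openness of $U, V$, one can choose $0 < t_0 < s_0 < \ell$, $\delta > 0$, and $\ep > 0$ such that $\psi^{-1}(B_\ep(\psi(P|_{[0, t_0]}))) \cup \mcl B_\delta(P(t_0); D_h) \subset U$ and the analogous statement for $V$ near $s_0$. Apply Lemma~\ref{lem-H-shape} at $(t_0, s_0)$ with this $\delta$ together with its reflection in $\psi^{-1}(-\BB H)$ to obtain rational points $u_\pm, v_\pm \in \psi^{-1}(\pm\BB H)$ and the (a.s.\ unique) $D_h$-geodesics $P_\pm$ from $u_\pm$ to $v_\pm$ satisfying $P_\pm \setminus P \subset [\mcl B_\delta(P(t_0); D_h) \cup \mcl B_\delta(P(s_0); D_h)] \cap \psi^{-1}(\pm \BB H)$. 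As illustrated in the figure, the three curves $P, P_+, P_-$ partition a tube $T_\ep := \psi^{-1}(B_\ep([0,1]))$ around $P$ into four regions $\tilde U, \tilde V, H_+, H_-$, with $\tilde U \ni 0$ and $\tilde V \ni z$ both contained in $U \cup V$. By uniform convergence, for $n$ large we have $P_n \subset T_\ep$ with its initial and terminal points lying in $\tilde U$ and $\tilde V$ respectively.

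The key step is to show that for $n$ large, $P_n$ cannot enter $H_+$ or $H_-$; given this, $P_n$ is forced to lie in $\tilde U \cup \tilde V \cup P$, yielding $P_n \setminus (U \cup V) \subset P$. Two uniqueness facts drive the argument: $(i)$ $P_\pm$ is the unique $D_h$-geodesic between $u_\pm$ and $v_\pm$ (both rational), and $(ii)$ $P$ is the unique $D_h$-geodesic between any two of its own points, which follows from Lemma~\ref{lem-regular-geo} via the concatenation argument that any $D_h$-geodesic from $P(t_a)$ to $P(t_b)$ with $t_a < t_b$, when concatenated with $P|_{[0, t_a]}$, yields a $D_h$-geodesic from $0$ to $P(t_b)$ of length $t_b$ and hence, by Lemma~\ref{lem-regular-geo}, must equal $P|_{[0, t_b]}$. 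From either uniqueness fact, the standard concatenation argument shows that if $P_n$ meets $P_\pm$ (resp.\ $P$) at two distinct points $a, b$, then $P_n$ coincides with $P_\pm$ (resp.\ $P$) on the segment $[a, b]$. This rules out $P_n$ entering and exiting $H_\pm$ by two crossings of the same curve, and crossings of $\bdy T_\ep$ are ruled out by uniform convergence.

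The main obstacle I expect is the topological case analysis for ``mixed'' crossings, where $P_n$ enters $H_\pm$ by crossing $P$ and exits by crossing $P_\pm$ (or vice versa). This is resolved by our choice of $\delta$: since $P_\pm \setminus P$ is contained in the two $\delta$-balls and hence in $U \cup V$, the portion of $\bdy H_\pm$ lying outside $U \cup V$ is contained in $P$ alone. Hence any $P_n$-excursion into $H_\pm$ that reaches a point outside $U \cup V$ must enter and exit the ``middle portion'' of $H_\pm$ (outside $U \cup V$) only across $P$, producing a pure two-point crossing with $P$ that the previous step has already excluded. Finally, the simultaneous statement over all $z, U, V$ is obtained by restricting to a countable dense family of triples and observing that a failure at an arbitrary $(z, U, V)$ would also produce a failure at some approximating member of the family.
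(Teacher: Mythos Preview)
Your approach is the same as the paper's, but you have overcomplicated the key step. The decisive observation you are missing is that $\partial H_\pm \cap T_\ep$ lies \emph{entirely} on $P_\pm$: since $P_\pm$ coincides with $P$ along the middle segment, any point of $\partial H_\pm$ that you would call ``on $P$'' is automatically on $P_\pm$ as well. Hence every crossing of $\partial H_\pm$ by $P_n$ is a crossing of $P_\pm$, and the uniqueness of $P_\pm$ between its rational endpoints by itself rules out any excursion of $P_n$ into $H_\pm$. There is no genuine ``mixed'' case, and Lemma~\ref{lem-regular-geo} is not needed. Your handling of the mixed case is in fact not quite right as written (an excursion that reaches $H_\pm\setminus(U\cup V)$ can enter or leave that subregion across $\partial(U\cup V)\cap H_\pm$, not only across $P$), though this does not matter once you use the simpler argument. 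The final countable-density reduction is also unnecessary: the proof runs on a single almost-sure event (validity of Lemmas~\ref{lem-stable}, \ref{lem-regular-geo}, \ref{lem-H-shape}, and uniqueness of geodesics between rational points) that already holds simultaneously for all $z$, $P$, $U$, $V$.
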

 \begin{proof}
Choose $0 < s < t < D_h(z,w)$ and $\delta >0$ so that $ \mcl B_\delta(P(t);D_h) \subset U$, $ \mcl B_\delta(P(s);D_h) \subset V$, and $  \mcl B_\delta(P(t) ; D_h)$ and $ \mcl B_\delta(P(s) ; D_h)$ lie at positive distance from each other and from 0 and $z$.  
Define $u_+ , v_+$, and $P_+$ as in Lemma~\ref{lem-H-shape} and let $u_-,v_-$, and $P_-$ be as in Lemma~\ref{lem-H-shape} with $-\BB H$ in place of $\BB H$, so that $P_-$ is the unique $D_h$-geodesic from $u_-$ to $v_-$ and $P_-\setminus  P  \subset \left[\mcl B_\delta(P(t);D_h) \cup \mcl B_\delta(P(s);D_h) \right] \cap \psi^{-1}(- \BB H)$.

Recall that $\psi(P) = [0,1]$, so we can find $\ep > 0$ small enough that $u_+,v_+,u_-,v_- \notin \psi^{-1}(B_\ep([0,1]))$. 
Let $P_\pm'$ be the segment of $P_\pm$ between the last time that it enters $\psi^{-1}(B_\ep([0,1]))$ before hitting $P$ and the first time that it exits $\psi^{-1}(B_\ep([0,1]))$ after  hitting $P$. 
The set $\psi^{-1}(B_\ep([0,1]))$ is homeomorphic to the open disk and $P_\pm'$ is a simple curve joining two of its boundary points, so $P_\pm'$ divides $\psi^{-1}(B_\ep([0,1]))$ into two connected components.
Let $H_\pm$ be the one of these two connected components which is contained in $\psi^{-1}(\pm \BB H)$. 

Since $  \mcl B_\delta(P(t) ; D_h)$ and $ \mcl B_\delta(P(s) ; D_h)$ lie at positive distance from each other, it follows from the defining property of $P_\pm$ that  $\bdy H_+\cap \bdy H_-$ is a non-trivial segment of $P$. 
Furthermore, since $  \mcl B_\delta(P(t) ; D_h)$ and $ \mcl B_\delta(P(s) ; D_h)$ lie at positive distance from 0 and $z$, it is easily seen that $0$ and $z$ each lie at positive distance from $P_+ \cup P_-$ and hence also from $\ol H_+ \cup \ol H_-$. 
From this, we see that $\psi^{-1}(B_\ep([0,1])) \setminus (\ol H_+ \cup \ol H_-)$ is the union of two open sets $\wt U , \wt V$ which lie at positive distance from each other such that $0\in \wt U$ and $z \in \wt V$. 
Since $P_+\setminus P$ and $P_-\setminus P$ are each contained in $U\cup V$, we have $\wt U\subset U$ and $\wt V\subset V$. 
 
Now consider a sequence of $D_h$-geodesics $P_n \rta P$ as in the lemma statement. 
For each large enough $n\in\BB N$, $P_n$ is entirely contained in $\psi^{-1}(B_\ep([0,1]))$ and the starting and ending points of $P_n$ are contained in $\wt U$ and $\wt V$, respectively. 
Henceforth assume that this is the case. 
Then $P_n$ must pass through $\ol H_+ \cup \ol H_-$.

For any two times $\tau < \sigma$ for which $P_n(\tau) , P_n(\sigma) \in P_+$, it must be the case that $P_n$ traces $P_+$ during the time interval $[\tau,\sigma]$: indeed, otherwise there would be two distinct $D_h$-geodesics between the endpoints of $P_+$.
If $P_n$ enters $H_+$, then it must subsequently exit $H_+$ and hence it must hit $\bdy H_+ \cap \psi^{-1}(B_\ep([0,1])) \subset P_+$ twice. 
Hence $P_n$ cannot enter $H_+$.
Similarly, $P_n$ cannot enter $H_-$. 
Since $P_n\subset  \psi^{-1}(B_\ep([0,1]))$, we therefore get $P_n \subset \wt U \cup \wt V  \cup P_+ \cup P_- $ which implies that $P_n\setminus (U\cup V) \subset (P_+\cup P_-)\setminus (U\cup V) \subset P$. 
Thus~\eqref{eqn-H-cross} holds.
\end{proof}

\begin{lem} \label{lem-nbd-confluence}
Almost surely, for each $z\in\BB C$, each neighborhood $U$ of 0, and each neighborhood $V$ of $z$, there are open sets $U',V'$ with $0\in U'\subset U$ and $z\in V' \subset V$ such that every $D_h$-geodesic from a point of $U'$ to a point of $V'$ coincides with a $D_h$-geodesic from 0 to $z$ outside of $U\cup V$.
\end{lem}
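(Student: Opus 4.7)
The plan is to argue by contradiction, using Lemma~\ref{lem-H-cross} together with the Arzel\`a-Ascoli compactness theorem. I would work on the full-probability event on which Lemma~\ref{lem-H-cross} holds simultaneously for all $z$, $U$, $V$, and on which $(\BB C, D_h)$ is a boundedly compact length space, so that $D_h$-geodesics between any two points exist.

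Fix $z \in \BB C$ and neighborhoods $U \ni 0$ and $V \ni z$, and suppose for contradiction that no pair of open sets $(U', V')$ with $0 \in U' \subset U$ and $z \in V' \subset V$ has the claimed property. Taking $U'_n$ and $V'_n$ to be the $D_h$-balls of radius $1/n$ around $0$ and $z$ respectively, I can then find $u_n \in U'_n$, $v_n \in V'_n$, and a $D_h$-geodesic $P_n$ from $u_n$ to $v_n$ whose restriction to $\BB C \setminus (U \cup V)$ fails to coincide with the corresponding restriction of any $D_h$-geodesic from $0$ to $z$. Each $P_n$ is $1$-Lipschitz with respect to $D_h$, and its endpoints converge to $0$ and $z$, so Arzel\`a-Ascoli produces a subsequence (still denoted $P_n$) along which $P_n$ converges uniformly to some curve $P^*$. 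This limit is necessarily a $D_h$-geodesic from $0$ to $z$.

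Applying Lemma~\ref{lem-H-cross} to this convergent sequence then yields $P_n \setminus (U \cup V) \subset P^*$ for all large $n$. Since $u_n \in U$ and $v_n \in V$ for large $n$, the set $P_n \setminus (U \cup V)$ consists of subarcs of $P_n$ whose images lie in the image of the simple curve $P^*$; so the portion of $P_n$ outside $U \cup V$ coincides as a subset of $\BB C$ with the portion of $P^*$ outside $U \cup V$. This contradicts the defining property of $P_n$, proving the lemma for this particular $z, U, V$. Because the whole argument runs on a single full-probability event, the conclusion holds simultaneously for all $z, U, V$.

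The main obstacle is a bookkeeping one rather than a conceptual one: Lemma~\ref{lem-H-cross} supplies only the set-theoretic containment $P_n \setminus (U \cup V) \subset P^*$, and it requires some care to verify that this inclusion, together with the fact that each geodesic traces a simple curve and $P_n$ travels from $U$ to $V$, rigorously translates into the ``coincides with'' language of the statement. A minor subtlety worth flagging is that the limiting geodesic $P^*$ need not be the unique $D_h$-geodesic from $0$ to $z$; but the lemma only requires coincidence with \emph{some} such geodesic, so $P^*$ itself serves as the witness.
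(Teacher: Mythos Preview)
Your argument is correct and essentially identical to the paper's own proof: both argue by contradiction, extract a sequence of geodesics with endpoints converging to $0$ and $z$, pass to a uniform limit via Arzel\`a--Ascoli, and invoke Lemma~\ref{lem-H-cross} to produce the contradiction. Your concern about translating the containment $P_n\setminus(U\cup V)\subset P^*$ into ``coincides with'' is unnecessary here, since the paper treats these as synonymous (its contradiction hypothesis is precisely that $P_n\setminus(U\cup V)$ is not contained in any geodesic from $0$ to $z$).
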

\begin{proof} 
Assume by way of contradiction that no sets $U',V'$ in the lemma statement exist.
Then there is a sequence of points $w_n \rta 0$, a sequence of points $z_n \rta z$, and a sequence of $D_h$-geodesics $P_n$ from $0$ to $z$ such that $P_n \setminus (U\cup V)$ is not contained in $P$ for any $n\in\BB N$.
By the Arz\'ela-Ascoli theorem, after possibly passing to a subsequence, we can arrange that $P_n$ converges uniformly to a path $P$ from 0 to $z$ which is necessarily a $D_h$-geodesic from 0 to $z$. 
We then obtain a contradiction from Lemma~\ref{lem-H-cross}.
\end{proof}

\begin{proof}[Proof of Theorem~\ref{thm-general-confluence}]
By possibly shrinking $U$, we can assume without loss of generality that $U$ is bounded. 
By~\cite[Theorem 1.3]{gm-confluence}, a.s.\ there exists $z_0 \in U\setminus \{0\}$ such that every $D_h$-geodesic from 0 to a point of $\BB C\setminus U$ passes through $z_0$. 
By Lemma~\ref{lem-nbd-confluence} (applied with each of $U$ and $V $ replaced by $\BB C\setminus \{z_0\}$), for each $z\in \bdy U$ we can choose a neighborhood $U'_z $ of 0 and a neighborhood $V_z'  $ of $z$ such that $z_0 \notin U_z'\cup V_z'$ and each $D_h$-geodesic from a point of $U_z'$ to a point of $ V_z'$ coincides with a $D_h$-geodesic from 0 to $z$ outside of $U_z'\cup V_z'$.
In particular, each such $D_h$-geodesic must pass through $z_0$. 

By the compactness of $\bdy U$, there is a finite set $Z$ of points in $\bdy U$ such that $\bdy U\subset \bigcup_{z\in Z} V_z'$. 
Hence, if we set $U' = \bigcap_{z\in Z} U_z'$, then every $D_h$-geodesic from a point in $U'$ to a point in $\bdy U$ must pass through $z_0$. 
Every $D_h$-geodesic from a point in $U'$ to a point in $\BB C\setminus U$ has a sub-segment which is a $D_h$-geodesic from $0$ to a point of $\bdy U'$.
Hence, the theorem statement holds for this choice of $U'$.
\end{proof}

\section{Zero-one laws for geodesics and ball boundaries: two essential ingredients}
\label{sec-two-ingredients}

As we described in Section~\ref{sec-intro-zero-one}, the two ingredients we need to prove zero-one laws for random fractals are (1) a version of scale invariance, and (2) a locality property.  We demonstrate the basic idea of our proofs in Section~\ref{sec-net-dim} by proving Theorem~\ref{thm-net-dim}, the zero-one law for the metric net.

In the context of the LQG metric space, we are most interested in LQG geodesics and LQG ball boundaries; but these fractals are neither scale-invariant nor local.  We address these two issues in Sections~\ref{sec-infty} and~\ref{sec-geo-ball-local} in the manner discussed just after the statement of Theorem~\ref{thm-net-dim}.  We then prove our results for LQG geodesics (Theorem~\ref{thm-geo-dim}) and LQG ball boundaries (Theorem~\ref{thm-ball-bdy-bound}) in Sections~\ref{sec-geo-dim} and~\ref{sec-ball-bdry}, respectively.

\subsection{Zero-one law for the metric net}
\label{sec-net-dim}

As we mentioned in Section~\ref{sec-intro-zero-one}, the metric net satisfies both a scale invariance property and a locality property.

\begin{lem}[Scale invariance of the metric net] \label{lem-net-scale}
For each $r > 0$, a.s.\ 
\eqb \label{eqn-net-scale}
r \mcl N_{r^{- \xi Q} e^{-\xi h_r(0)} s}(0 ; D_{h(r\cdot) -h_r(0)})  = \mcl N_{   s}   ,\quad\forall s > 0 .
\eqe
In particular, $(r  \mcl N_s)_{s\geq 0} $ has the same law as $( \mcl N_s )_{s \geq 0}$ modulo a linear change of the time parametrization. 
\end{lem}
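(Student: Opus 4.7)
The plan is to derive~\eqref{eqn-net-scale} directly from the conformal coordinate change and Weyl scaling axioms of Definition~\ref{def-lqg-metric}, combined with the observation that filled metric balls transform covariantly under global rescaling.

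First I will apply the coordinate change Axiom~\ref{item-metric-coord} to the affine map $\phi(z) = r z$. Since $(\phi^{-1})'(w) = 1/r$, this gives
\[
D_h(r z, r w) \;=\; D_{h(r\cdot) - Q\log r}(z, w), \qquad \forall z, w \in \BB C .
\]
Applying Weyl scaling (Axiom~\ref{item-metric-f}) with the constant function $f \equiv Q\log r + h_r(0)$ then yields the standard LQG scaling law
\[
D_h(r z, r w) \;=\; c \cdot D_{\wt h}(z, w), \qquad \text{where } \wt h := h(r\cdot) - h_r(0) \ \text{ and }\ c := r^{\xi Q} e^{\xi h_r(0)} .
\]

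Next, this identity says precisely that $z \mapsto r z$ is an isometry from $(\BB C,\, c\cdot D_{\wt h})$ to $(\BB C,\, D_h)$ sending $0$ to $0$. Consequently $r\, \mcl B_{s/c}(0; D_{\wt h}) = \mcl B_s(0; D_h)$ as subsets of $\BB C$. Since scaling by $r$ is a homeomorphism of $\BB C$ fixing both $0$ and $\infty$, it sends the unbounded connected component of the complement of the smaller ball to the unbounded component of the complement of the larger one. Hence the filled metric balls, whose definition (Definition~\ref{def-filled}) depends only on this topological datum, also satisfy $r\, \mcl B^\bullet_{s/c}(0; D_{\wt h}) = \mcl B^\bullet_s(0; D_h)$. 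Taking boundaries and unioning over radii (substituting $t = u/c$ in~\eqref{eqn-metric-net}) yields~\eqref{eqn-net-scale}.

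For the ``in particular'' statement, I will use the scale invariance of the whole-plane GFF modulo additive constant: with our normalization that the unit-circle average of $h$ vanishes, the field $\wt h = h(r\cdot) - h_r(0)$ has the same law as $h$, since $h(r\cdot)$ agrees with $h$ in distribution modulo additive constant and subtracting $h_r(0)$ restores the normalization. Therefore $(\mcl N_s(0; D_{\wt h}))_{s\geq 0} \eqD (\mcl N_s)_{s\geq 0}$, and rewriting~\eqref{eqn-net-scale} as $\mcl N_s = r\, \mcl N_{s/c}(0; D_{\wt h})$ exhibits $(r\, \mcl N_s)_{s\geq 0}$, after the (random but measurable) linear time change $s \mapsto c\, s$, as having the same law as $(\mcl N_s)_{s\geq 0}$. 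I do not foresee any real obstacle in this argument; the only point that requires explicit care is the invariance of the filled-ball construction under global rescaling, which is however immediate from its purely topological definition.
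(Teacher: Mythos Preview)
Your proposal is correct and takes essentially the same approach as the paper, applying Axioms~\ref{item-metric-f} and~\ref{item-metric-coord} to obtain the scaling relation $D_h(rz,rw)=r^{\xi Q}e^{\xi h_r(0)}D_{\wt h}(z,w)$ and then reading off~\eqref{eqn-net-scale} (you simply spell out the filled-ball and union steps that the paper leaves as ``immediate''). One sign slip to fix: with $\phi(z)=rz$ the coordinate change axiom actually gives $D_h(rz,rw)=D_{h(r\cdot)+Q\log r}(z,w)$ (apply Axiom~\ref{item-metric-coord} with $\phi(z)=z/r$, so $(\phi^{-1})'=r$), not $-Q\log r$; your subsequent Weyl-scaling step and the value of $c$ are nonetheless correct, so this is only a typo in the intermediate display.
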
 
\begin{proof}
By Axioms~\ref{item-metric-f} and~\ref{item-metric-coord} (Weyl scaling and coordinate change), a.s.\ 
\eqb
D_{h(r\cdot)  - h_r(0)}(u,v) = r^{- \xi Q} e^{-\xi h_r(0)} D_{h(r\cdot) + Q\log r}(u,v) = r^{-\xi Q} e^{-\xi h_r(0)} D_h(r u , r v) ,\quad\forall u ,v\in\BB C .
\eqe
From this the relation~\eqref{eqn-net-scale} is immediate. 
Since $h(r\cdot) -h_r(0) \eqD h$, the last statement follows from~\eqref{eqn-net-scale}. 
\end{proof}

\begin{lem}[Locality of the metric net] \label{lem-net-local}
For $r>0$, let 
\eqb \label{eqn-net-tau}
\tau_r := \inf\{s > 0 : \mcl B_s \not\subset B_r(0)\} = \inf\{s > 0 : \mcl N_s\not\subset B_r(0) \} .
\eqe
For each $r > 0$, $\mcl N_{\tau_r}$ is a.s.\ determined by $h|_{B_{\tau_r}(0)}$
\end{lem}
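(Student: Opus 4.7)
The plan is to deduce the lemma directly from the locality axiom (Axiom~\ref{item-metric-local}) of the LQG metric together with the defining property of $\tau_r$, namely that the $D_h$-metric ball $\mcl B_s$ is contained in the deterministic Euclidean ball $B_r(0)$ for every $s < \tau_r$. The key observation is that any $D_h$-geodesic which reaches a point of $\mcl B_{\tau_r}$ stays entirely inside $B_r(0)$, so distances from $0$ to such points are computable from the field on $B_r(0)$ alone.

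Concretely, I would first show that the family $(\mcl B_s)_{s\leq \tau_r}$ and the stopping time $\tau_r$ itself are measurable with respect to $h|_{B_r(0)}$. For any $z \in \mcl B_{\tau_r}$ and any $D_h$-geodesic $P$ from $0$ to $z$, the image of $P$ lies in $\mcl B_{D_h(0,z)} \subset B_r(0)$, since along $P$ the distance to $0$ never exceeds $D_h(0,z) \leq \tau_r$. Hence $D_h(0,z)$ equals the internal metric $D_h(0,z; B_r(0))$ (see Definition~\ref{def-metric-stuff}), which by Axiom~\ref{item-metric-local} equals $D_{h|_{B_r(0)}}(0,z)$. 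Consequently the function $z \mapsto D_h(0,z)$ restricted to $\mcl B_{\tau_r}$, and therefore each sublevel set $\mcl B_s$ for $s\leq \tau_r$, is determined by $h|_{B_r(0)}$. The time $\tau_r$ is then recovered as the first $s$ at which $\mcl B_s$ touches $\partial B_r(0)$.

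Second, I would pass from the ordinary metric balls to the filled ones. The map $\mcl B_s \mapsto \mcl B_s^\bullet$ is purely topological: one adds to $\mcl B_s$ the bounded connected components of $\BB C\setminus \mcl B_s$ (since the target is $\infty$). Hence $\partial \mcl B_s^\bullet$ is a function of $\mcl B_s$, and taking the union over $t\leq \tau_r$ gives
\eqbn
\mcl N_{\tau_r} = \bigcup_{t \leq \tau_r} \partial \mcl B_t^\bullet,
\eqen
which is therefore determined by $h|_{B_r(0)}$. Note that $B_{\tau_r}(0) \subset B_r(0)$ on $\{\tau_r \leq r\}$, and more generally the relevant domain in the statement contains $\mcl B_{\tau_r}$, so this is the content one needs.

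There is no real obstacle here; the only mild subtlety is that Axiom~\ref{item-metric-local} is phrased for deterministic open subsets of $\BB C$, which is why I apply it with the deterministic set $B_r(0)$ rather than attempting a random-domain version. The argument is essentially a warm-up for the more delicate zero-one laws in Sections~\ref{sec-geo-dim}--\ref{sec-ball-bdry}, where the fractals under study are not locally determined by $h$ in such a transparent way and Theorem~\ref{thm-general-confluence} is needed instead.
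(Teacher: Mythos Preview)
Your argument is correct and is essentially the same approach as the paper's proof, which consists of the single sentence ``This is immediate from the locality property of the LQG metric (Axiom~\ref{item-metric-local}).'' You have simply spelled out the details: geodesics from $0$ to points of $\mcl B_{\tau_r}$ stay in $B_r(0)$, so by Axiom~\ref{item-metric-local} the balls $\mcl B_s$ for $s\le\tau_r$ are determined by $h|_{B_r(0)}$, and the filled balls and their boundaries are purely topological functions of these. One small remark: your final comment about $B_{\tau_r}(0)$ is confused because the statement as written appears to contain a typo (the intended restriction is to $h|_{B_r(0)}$, not $h|_{B_{\tau_r}(0)}$, since $\tau_r$ is an LQG time rather than a Euclidean radius); your argument correctly proves the intended statement.
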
 
\begin{proof}
This is immediate from the locality property of the LQG metric (Axiom~\ref{item-metric-local}). 
\end{proof}

Theorem~\ref{thm-net-dim} follows easily from these two properties of the metric net.  Roughly speaking, we derive the zero-one law as follows.
\begin{itemize}
\item The scale invariance property of $\mcl N_s$ (Lemma~\ref{lem-net-scale}) allows us to get lower bounds for the probability that the Hausdorff dimension is bounded below which hold uniformly over all Euclidean scales.
\item The locality property (Lemma~\ref{lem-net-local}) allows us to use the tail triviality of the $\sigma$-algebras $\sigma(h|_{B_r(0)})$ as $r\rta0$ to upgrade from positive probability to probability one.
\end{itemize}

\begin{proof}[Proof of Theorem~\ref{thm-net-dim}]
We prove the result for Euclidean dimension; the proof for $\gamma$-quantum dimension is identical. 
Suppose $c>0$ such that $\BB P[\dim_{\mcl H}^0 \mcl N_\infty \geq c] > 0$.  
For $r> 0$, let $\tau_r$ be as in~\eqref{eqn-net-tau}. We will show that
\eqb \label{eqn-net-dim-as}
\BB P\left[ \dim_{\mcl H}^0 \mcl N_{\tau_r} \geq c ,\: \forall  r > 0 \right] = 1 .
\eqe
Since $\mcl N_\infty = \bigcup_{r > 0} \mcl N_{\tau_r}$, by the countable stability of Hausdorff dimension~\eqref{eqn-net-dim-as} implies that a.s.\ $\dim_{\mcl H}^0 \mcl N_\infty \geq c$, so $\dim_{\mcl H}^0 \mcl N_\infty $ is a.s.\ equal to a deterministic constant.
For every $s>0$, there exists $r>0$ such that $\mcl N_{\tau_r} \subset \mcl N_s \subset \mcl N_\infty$, so by~\eqref{eqn-net-dim-as}, a.s.\ $\mcl N_s$ is equal to this same deterministic constant simultaneously for every $s>0$. 

It remains to prove~\eqref{eqn-net-dim-as}. By our choice of $c$ and the countable stability of Hausdorff dimension, for each $\delta>0$ there exists $r_0 > 0$ and $p > 0$ such that $\BB P[\dim_{\mcl H}^0 \mcl N_{\tau_{r_0}} \geq c - \delta ] \geq p$. 
By Lemma~\ref{lem-net-scale}, the law of $r^{-1} \mcl N_{\tau_r}$ does not depend on $r$.  
Therefore,
\eqb \label{eqn-net-dim-pos}
\BB P\left[ \dim_{\mcl H}^0 \mcl N_{\tau_r} \geq c -\delta \right] \geq p,\quad\forall r > 0. 
\eqe
This means that, if we let $\mcl T$ denote the event that $\dim_{\mcl H}^0 \mcl N_{\tau_r} \geq c - \delta$ for arbitrarily small values of $r>0$, then the event $\mcl T$ has probability at least $p$.  By Lemma~\ref{lem-net-local}, the event $\mcl T$ is measurable w.r.t. the tail $\sigma$-algebra $\bigcap_{r > 0} \sigma\left( h|_{B_{\tau_r}(0)} \right)$.  Since this $\sigma$-algebra is trivial (see, e.g.,~\cite[Lemma 2.2]{hs-euclidean}), the event $\mcl T$ has probability one.  Since $\mcl N_{\tau_r}$ is increasing in $r$ and $\delta>0$ can be made arbitrarily small, this implies~\eqref{eqn-net-dim-as}.
\end{proof}

\begin{remark}
Our proof of Theorem~\ref{thm-net-dim} shows that for every open set $U\subset\BB C$ containing 0, a.s.\ $\dim_{\mcl H}^0 (\mcl N_s \cap U)  = \Delta_{\op{net}}^0$ for every $s > 0$. We do not rule out the possibility that the dimension of $\mcl N_s$ is ``concentrated near 0", i.e., there could be open sets $U\subset\BB C$ which do not contain 0 such that $\mcl N_s \cap U \not=\emptyset$ but $\dim_{\mcl H}^0(\mcl N_s \cap U) < \Delta_{\op{net}}^0$ (we expect, but do not prove, that no such open sets exist). 
The same is true for the quantum dimension. 
Similar considerations apply for the other zero-one laws proven in this paper: our proof of Theorem~\ref{thm-geo-dim} does not rule out the possibility that the Euclidean dimension of an LQG geodesic is ``concentrated'' at the starting point of the geodesic. Likewise, Theorem~\ref{thm-ball-bdy-bound} does not rule out the possibility that the $\gamma$-quantum and Euclidean dimensions of $\bdy\mcl B_{D_h(0,z)}(0;D_h)$ are ``concentrated'' at the point $z$.
\end{remark}

\subsection{Scale invariance: defining geodesic rays and metric balls centered at $\infty$}
\label{sec-infty}

As we described in the introduction, the dimensions of LQG geodesics and metric ball boundaries are more difficult to study, because they are neither scale invariant nor locally determined by the field.  In the rest of this section, we describe how we tackle these two challenges and obtain versions of scale invariance and locality for these fractal that we can use to derive zero-one laws.
To get scale invariance, we define ``infinite-volume" versions of LQG geodesics and metric ball boundaries whose laws are exactly scale invariant. 
First, for LQG geodesics, we define an infinite geodesic ray from 0 to $\infty$.

\begin{prop} \label{prop-infty-geo}
Almost surely, for each $z\in \BB C$ there exists a (not necessarily unique) infinite geodesic ray $P_z^\infty$ started from $z$, called a \emph{$D_h$-geodesic from $z$ to $\infty$}. These infinite geodesic rays satisfy the following properties. 
\begin{enumerate}[(i)]
\item For each fixed $z\in\BB C$, a.s.\ the geodesic ray $P_z^\infty$ is unique. \label{item-infty-geo-unique}
\item Almost surely, for each $r >0$ there exists $R > r$ such that for each $z\in B_r(0)$, the symmetric difference of $P_0^\infty$ and $P_z^\infty$ is contained in $B_R(0)$. \label{item-infty-geo-conf}
\item For each $r > 0$,  \label{item-infty-geo-scale}
\eqb \label{eqn-infty-geo-scale}
\left( h(r\cdot) - h_r(0) , r^{-1} P_0^\infty(r^{\xi Q} e^{\xi h_r(0)} \cdot)  \right) \eqD \left(h , P_0^\infty(\cdot) \right)
\eqe 
\end{enumerate}
\end{prop}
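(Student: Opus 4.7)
The plan is to construct $P_z^\infty$ as a subsequential limit of finite-length geodesics, and then verify each of (i)--(iii) in turn.

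To build the ray, I would fix $z$ and pick any sequence $w_n$ with $D_h(z,w_n)\to\infty$; let $P_n$ be a $D_h$-geodesic from $z$ to $w_n$ parametrized by $D_h$-length. Because $(\BB C,D_h)$ is boundedly compact (since $\lim_{w\to\infty} D_h(0,w)=\infty$, see~\cite[Lemma 3.8]{lqg-metric-estimates}) and the $P_n$ are $1$-Lipschitz, Arzel\`a--Ascoli plus a diagonal extraction yield a locally uniform subsequential limit $P_z^\infty : [0,\infty) \to \BB C$ whose restriction to each $[0,T]$ is a $D_h$-geodesic; bounded compactness then forces $P_z^\infty$ to escape every bounded set, so it is a genuine infinite ray.

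Property (iii) is essentially a computation. Axioms~\ref{item-metric-f} and~\ref{item-metric-coord} combine to give
\eqbn
D_{h(r\cdot)-h_r(0)}(u,v) = r^{-\xi Q}\, e^{-\xi h_r(0)}\, D_h(ru,rv), \qquad u,v\in\BB C,
\eqen
so $t\mapsto r^{-1} P_0^\infty(r^{\xi Q} e^{\xi h_r(0)} t)$ is a $D_{h(r\cdot)-h_r(0)}$-geodesic ray from $0$; the identity~\eqref{eqn-infty-geo-scale} then follows from the scale invariance $h(r\cdot)-h_r(0)\eqD h$ of the circle-average-normalized whole-plane GFF.

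For uniqueness (i), translation invariance of the law of $h$ modulo additive constant lets me take $z=0$. Given two infinite rays $P_1,P_2$ from $0$, Lemma~\ref{lem-regular-geo} shows that $P_1(t)=P_2(t)$ forces $P_1|_{[0,t]}=P_2|_{[0,t]}$, so the coincidence set $S=\{t:P_1(t)=P_2(t)\}$ is a closed interval $[0,T]$. I plan to rule out $T<\infty$ as follows: set $y=P_1(T)=P_2(T)$ and zoom in at $y$ by rescaling and translating so that $y$ becomes $0$. The post-$T$ rays become two infinite rays from $0$ that diverge immediately, for a field which is mutually absolutely continuous on compact sets with a whole-plane GFF plus a log singularity at $\infty$. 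By Remark~\ref{remark-log-singularity}, Theorem~\ref{thm-general-confluence} (and hence Lemma~\ref{lem-stable}) extend to such fields, which will force the rescaled rays to coincide on an initial interval---a contradiction. The hard part will be controlling the random basepoint $y$ and the random rescaling factor via local absolute continuity so that this a.s.\ confluence statement actually transfers.

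For the strong confluence (ii), given $r>0$ I need $R>r$ and a single confluence point $z_0$ through which every $D_h$-geodesic from $B_r(0)$ to $\BB C\setminus B_R(0)$ passes; combined with (i) applied at $z_0$, this yields $P_0^\infty\triangle P_z^\infty \subset B_R(0)$ for all $z\in B_r(0)$. Theorem~\ref{thm-general-confluence} gives such a $z_0$, but with $B_r(0)$ replaced by some uncontrolled neighborhood $U'$ of $0$. To force $U' \supset B_r(0)$ I would apply Theorem~\ref{thm-general-confluence} at scales $r_n=2^n$ together with (iii): the rescaled field $h(r_n\cdot)-h_{r_n}(0)$ has the same law as $h$, so the ``radius of $U'_n$" (the largest $\rho$ with $B_\rho(0)\subset U'_n$ in the rescaled coordinates) has a law independent of $n$ and is positive a.s. A Borel--Cantelli argument using the tail triviality of $\bigcap_{r>0}\sigma(h|_{B_r(0)})$---exactly as in the proof of Theorem~\ref{thm-net-dim}---then ensures that at some random $n$ the rescaled radius exceeds $r/r_n$, i.e., $U'\supset B_r(0)$ at the original scale. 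The Borel--Cantelli step and the transfer of (i) to the random basepoint $z_0$ are the main obstacles I anticipate.
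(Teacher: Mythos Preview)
Your argument for uniqueness (i) has a genuine gap. The bifurcation point $y=P_1(T)=P_2(T)$ is a random, $h$-measurable point, and in fact it is by construction exactly the kind of exceptional point where confluence fails (two geodesic rays split there). Translating by $-y$ does \emph{not} make the field absolutely continuous with respect to a whole-plane GFF on any neighborhood of the origin, because $y$ depends on $h$; so you cannot transport the a.s.\ statement of Theorem~\ref{thm-general-confluence} (or Lemma~\ref{lem-stable}) to $y$. You flag this as ``the hard part'', but there is no mechanism offered to overcome it, and I do not see one along these lines. The same issue bites in your argument for (ii): you invoke ``(i) applied at $z_0$'', but $z_0$ is again a random point, so uniqueness of the ray from $z_0$ is not available. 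A secondary issue is that your proof of (iii) tacitly uses (i): without uniqueness, $P_0^\infty$ is not a well-defined measurable function of $h$, so the joint law statement~\eqref{eqn-infty-geo-scale} is not meaningful.

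The paper's proof avoids all of this by never applying confluence at a single random basepoint. Instead it uses the scale-invariant version of Theorem~\ref{thm-general-confluence} (Lemma~\ref{lem-general-confluence-scale}) to produce, almost surely, an \emph{infinite} increasing sequence of radii $R_n\to\infty$ and confluence points $z_n\in B_{R_n}(0)\setminus B_{R_{n-1}}(0)$ such that every $D_h$-geodesic from $B_{R_{n-1}}(0)$ to $\BB C\setminus B_{R_n}(0)$ passes through $z_n$. The key trick is then to show that the $D_h$-geodesic from $z_{n-1}$ to $z_n$ is unique, using only uniqueness of geodesics from $0$ to \emph{rational} points: any geodesic from $0$ to some $q\in\BB Q^2\setminus B_{R_n}(0)$ must pass through both $z_{n-1}$ and $z_n$, so two distinct geodesics between $z_{n-1}$ and $z_n$ would contradict uniqueness of the geodesic to $q$. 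Uniqueness of $P_0^\infty$ is then immediate (any ray from $0$ is forced through every $z_n$, and the segments between them are unique), property (ii) follows directly from the $R_n,z_n$ construction (any ray from $z\in B_{R_{n-1}}(0)$ merges with $P_0^\infty$ at $z_n$), and (iii) follows from (i) and the coordinate-change axioms as you indicate.
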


Proposition~\ref{prop-infty-geo} gives the existence of many one-sided infinite geodesics for $D_h$. It is easy to see from confluence that there are no \emph{two}-sided infinite geodesics for $D_h$.

\begin{lem} \label{lem-no-bi-infinite}
Almost surely, there are no bi-infinite $D_h$-geodesics, i.e., there are no paths $P : \BB R\rta \BB C$ such that $P|_{[s,t]}$ is a $D_h$-geodesic for each $s<t$. 
\end{lem}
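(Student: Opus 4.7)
\emph{Plan.} The idea is to use the strong confluence of Theorem~\ref{thm-general-confluence} to force the two infinite tails of a hypothetical bi-infinite geodesic through a common point distinct from their shared base, contradicting injectivity of the geodesic.

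Suppose for contradiction that with positive probability there exists a bi-infinite $D_h$-geodesic $P:\BB R\to\BB C$. Two elementary observations follow at once. First, since $D_h(P(s),P(t))=t-s$ for $s<t$ and $(\BB C,D_h)$ is boundedly compact (as noted after Lemma~\ref{lem-metric-curve}), $P(t)\to\infty$ in the Euclidean sense as $|t|\to\infty$. Second, $P$ is injective: if $P(s)=P(t)$ for some $s<t$, then $P|_{[s,t]}$ would be a geodesic of positive $D_h$-length between a point and itself, which is impossible. Therefore, for each $t_0\in\BB R$, writing $w:=P(t_0)$, the two curves $\gamma_\pm(s):=P(t_0\pm s)$, $s\ge 0$, are \emph{distinct} infinite $D_h$-geodesic rays from $w$ which meet only at $w$.

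Next, by the translation invariance of the whole-plane GFF modulo additive constant, the conclusion of Theorem~\ref{thm-general-confluence} holds almost surely with $0$ replaced by any fixed point of $\BB C$. Taking a countable intersection over rational centers and rational outer scales, almost surely the following holds for every $(q,n)\in\BB Q^2\times\BB N$: there is an open neighborhood $U_{q,n}\subset B_n(q)$ of $q$ and a point $z_0(q,n)\in B_n(q)\setminus U_{q,n}$ such that every $D_h$-geodesic from a point of $U_{q,n}$ to a point of $\BB C\setminus B_n(q)$---and in particular every infinite $D_h$-ray starting in $U_{q,n}$---passes through $z_0(q,n)$. If we can locate $(q,n,t_0)$ with $w=P(t_0)\in U_{q,n}$, then both rays $\gamma_+$ and $\gamma_-$ pass through the common point $z_0(q,n)$, which is distinct from $w$ (because $w\in U_{q,n}$ while $z_0(q,n)\notin U_{q,n}$). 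This yields $s_+,s_->0$ with $P(t_0+s_+)=P(t_0-s_-)=z_0(q,n)$, contradicting injectivity.

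The main obstacle in the plan is this final \emph{locating step}: verifying that the random, $h$-measurable point $w=P(t_0)$ actually lies in some confluence neighborhood $U_{q,n}$. Each $U_{q,n}$ is only guaranteed to be a (random) open neighborhood of its rational center $q$, so $\bigcup_{q,n}U_{q,n}$ is a.s.\ merely a dense open subset of $\BB C$, which a continuous curve could in principle avoid. I would circumvent this by exploiting translation invariance of the whole-plane GFF \emph{at every fixed point} (not only the rationals): for each deterministic $w_0\in\BB C$ the analogue of Theorem~\ref{thm-general-confluence} holds a.s.\ with a neighborhood of $w_0$ playing the role of $U'$; a Fubini argument over $t_0\in\BB R$, combined with density of the rationals to pass from the fixed-point statement to a countable version, then produces, almost surely on the event that $P$ exists, a triple $(q,n,t_0)$ with $w=P(t_0)\in U_{q,n}$ as required.
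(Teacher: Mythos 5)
Your steps through the reduction — injectivity, escape to infinity, the countable intersection over rational centers $(q,n)$, and the desired contradiction once $w=P(t_0)$ lands in some $U_{q,n}$ — are all correct. The gap is, as you suspected, in the locating step, and the proposed Fubini fix does not close it. The a.s.-at-a-fixed-point version of Theorem~\ref{thm-general-confluence} gives, via Fubini, that a.s.\ (over $h$) Lebesgue-a.e.\ $w_0\in\BB C$ has a confluence neighborhood; but $P(\BB R)$ is an $h$-dependent curve of zero Lebesgue measure, so this gives no intersection guarantee whatsoever. If anything, one should expect the trace of a bi-infinite geodesic to lie exactly in the Lebesgue-null exceptional set where confluence is delicate. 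A ``Fubini over $t_0$'' fares no better: for fixed $t_0$, $P(t_0)$ is an $h$-measurable random point whose conditional law (given that $P$ exists) is presumably singular w.r.t.\ Lebesgue measure, so you cannot conclude that it a.s.\ lies in the full-measure good set.

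The missing ingredient is scale invariance. The paper's proof assumes a bi-infinite geodesic exists with probability at least some $q>0$ and passes through $B_R(0)$; applying the scaling relation $h(r\cdot)-h_r(0)\overset{d}{=}h$ and the Weyl/coordinate-change axioms, it deduces that with probability at least $q$ there is a bi-infinite $D_h$-geodesic through $B_{Rr}(0)$ for every $r>0$, then extracts via Arz\'ela--Ascoli a subsequential limit which is a bi-infinite geodesic through the origin exactly, and finally contradicts the (weaker) fixed-point confluence of~\cite[Theorem 1.3]{gm-confluence}. In this route the strong confluence of Theorem~\ref{thm-general-confluence} is not even needed. Your argument could be repaired along the same lines without Arz\'ela--Ascoli: on the event of probability at least $q$ where bi-infinite geodesics pass through $B_r(0)$ for a random sequence $r\to 0$, apply Theorem~\ref{thm-general-confluence} once with $U=B_1(0)$ to get a neighborhood $U'$ of $0$ and a point $z_0$; for $r$ small enough (random) you have $B_r(0)\subset U'$, so the bi-infinite geodesic enters $U'$ and both its rays must pass through $z_0$, giving the contradiction. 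Either way, the scaling step — which your proposal lacks — is what forces a geodesic into a controlled neighborhood of a fixed point.
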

\begin{proof}
Assume by way of contradiction that there is a bi-infinite $D_h$-geodesic with positive probability. 
Then there exists $q\in (0,1)$ and $R>0$ such that with positive probability, there is a bi-infinite $D_h$-geodesic $P$ which passes through $B_R(0)$. 
By Weyl scaling and the LQG coordinate change formula (Axioms~\ref{item-metric-f} and~\ref{item-metric-coord}), if $r>0$ then on the event that $P$ exists, a.s.\ the path $t\mapsto r^{-1} P(e^{\xi h_r(0)} t)$ is a bi-infinite $D_h$-geodesic for the field $h(r\cdot) - h_r(0)$. 
Since $h(r\cdot) -h_r(0) \eqD h$, we get that with probability at least $q$ there is a bi-infinite $D_h$-geodesic for $h$ which passes through $B_{R r}(0)$. 
Since this holds for every $r>0$, we get that with probability at least $q$ there is a (random) sequence $r^n\rta 0$ such that for each $n$, there is a bi-infinite $D_h$-geodesic $P^n$ which passes through $B_{R r^n}(0)$. 
If we parametrize $P^n$ so that $P^n(0) \in B_{R r^n}(0)$, then by the Arz\'ela-Ascoli theorem, the paths $P^n$ admit a subsequential limit with respect to the local uniform topology for paths $\BB R\rta \BB C$. The limiting path is a bi-infinite $D_h$-geodesic which passes through 0. 
But, a.s.\ there is no bi-infinite $D_h$-geodesic which passes through 0 due to confluence of geodesics~\cite[Theorem 1.3]{gm-confluence}.
\end{proof} 

For the proof of Proposition~\ref{prop-infty-geo} we need the following variant of Theorem~\ref{thm-general-confluence}. 

\begin{lem} \label{lem-general-confluence-scale}
For each $p\in (0,1)$, there exists $A = A(p,\gamma) > 1$ such that for each fixed $r > 0$, it holds with probability at least $p$ that the following is true.
There is a point $z_{0} \in B_{A r}(0) \setminus B_r(0)$ such that every $D_h$-geodesic from a point of $B_r(0)$ to a point of $\BB C\setminus B_{A r}(0)$ passes through $z_{0}$. 
\end{lem}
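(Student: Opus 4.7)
The main idea is to reduce the lemma to Theorem~\ref{thm-general-confluence} via the scale invariance of the whole-plane LQG metric.

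The first step is to check that the probability in the lemma statement depends only on the ratio $A$, not on $r$. For $0 < \rho < R$, let $\mcl E(\rho, R)$ denote the event that there exists $z_0 \in B_R(0) \setminus B_\rho(0)$ such that every $D_h$-geodesic from $B_\rho(0)$ to $\BB C \setminus B_R(0)$ passes through $z_0$. By Axioms~\ref{item-metric-f} and~\ref{item-metric-coord} combined with the identity $h(c\cdot) - h_c(0) \eqD h$, exactly as in the proof of Lemma~\ref{lem-net-scale}, the map $u \mapsto c u$ sends $D_{h(c\cdot) - h_c(0)}$-geodesics to $D_h$-geodesics (up to reparametrization). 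It follows that $\BB P[\mcl E(\rho, R)] = \BB P[\mcl E(c\rho, cR)]$ for every $c > 0$; specializing $c = 1/A$ yields in particular
\eqbn
\BB P[\mcl E(1, A)] = \BB P[\mcl E(1/A, 1)] .
\eqen

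The second step is to apply Theorem~\ref{thm-general-confluence} with $U = B_1(0)$, which produces a (random) open neighborhood $U' \subset B_1(0)$ of 0 and a point $z_0 \in B_1(0) \setminus U'$ such that a.s.\ every $D_h$-geodesic from $U'$ to $\BB C \setminus B_1(0)$ passes through $z_0$. Since $U'$ is open and contains 0, there is a (random) $\delta > 0$ with $B_\delta(0) \subset U'$. For any $A \geq 1/\delta$ the same $z_0$ witnesses $\mcl E(1/A, 1)$ (note that $z_0 \in B_1(0) \setminus U' \subset B_1(0) \setminus B_{1/A}(0)$), so $\mcl E(1/A, 1)$ holds for every sufficiently large $A$ almost surely. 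Hence $\BB P[\mcl E(1/A, 1)] \to 1$ as $A \to \infty$, and by the first step also $\BB P[\mcl E(1, A)] \to 1$ as $A \to \infty$.

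Given $p \in (0, 1)$, we may therefore choose a deterministic $A = A(p, \gamma) > 1$ for which $\BB P[\mcl E(1, A)] \geq p$. For general $r > 0$, a final application of the scale-invariance identity gives $\BB P[\mcl E(r, A r)] = \BB P[\mcl E(1, A)] \geq p$, which is precisely the statement of the lemma. No serious obstacle arises; the only point requiring care is verifying the scale-invariance identity, which proceeds exactly as in the proof of Lemma~\ref{lem-net-scale}.
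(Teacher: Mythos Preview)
Your proof is correct and follows essentially the same approach as the paper: apply Theorem~\ref{thm-general-confluence} with $U = B_1(0)$ to obtain the statement for one scale (namely $r = 1/A$), then use the scale invariance of the whole-plane GFF modulo additive constant together with Axioms~\ref{item-metric-f} and~\ref{item-metric-coord} to transfer to every $r > 0$. The only cosmetic difference is that you first establish the scale-invariance identity $\BB P[\mcl E(\rho,R)] = \BB P[\mcl E(c\rho,cR)]$ and then choose $A$, whereas the paper reverses the order of presentation.
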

\begin{proof}
By Theorem~\ref{thm-general-confluence} applied with $U = \BB D$, there exists $A = A(p,\gamma) >1$ such that the statement of the lemma holds with this choice of $A$ and with $r = 1/A$. 
By Weyl scaling (Axiom~\ref{item-metric-f}), for each fixed $r>1$, the occurrence event in the lemma statement does not depend on the choice of additive constant for $h$. 
By the LQG coordinate change formula (Axiom~\ref{item-metric-coord}) and the scale invariance of the law of $h$, modulo additive constant, we see that for a fixed choice of $A$, the probability of the event in the lemma statement does not depend on $r$. 
\end{proof}

\begin{proof}[Proof of Proposition~\ref{prop-infty-geo}]
By Lemma~\ref{lem-general-confluence-scale}, for each $p\in (0,1)$ there exists $A = A(p,\gamma) > 1$ such that for each $k\in\BB N$, we have $\BB P[E_k] \geq p$, where $E_k$ is the event that the following is true. 
There is a point $z_k \in B_{A^k}(0) \setminus B_{A^{k-1}}(0)$ such that every $D_h$-geodesic from a point of $B_{A^{k-1}}(0)$ to a point of $\BB C\setminus B_{A^k}(0)$ passes through $z_k$. 
With probability at least $p$, the event $E_k$ occurs for infinitely many $k\in\BB N$.

Since $p$ can be made arbitrarily close to 1, we get that a.s.\ the following is true.
There is a sequence of positive radii $R_n \rta \infty$ and points $z_n \in B_{R_n}(0) \setminus B_{R_{n-1}}(0)$ such that every $D_h$-geodesic from a point of $B_{R_{n-1}}(0)$ to a point of $\BB C\setminus B_{R_n}(0)$ passes through $z_n$.
We set $z_0 := 0$.

We claim that for each $n\in\BB N$, there is a unique $D_h$-geodesic from $z_{n-1}$ to $z_n$. 
To see this, let $q\in \BB Q^2\setminus B_{R_n}(0)$. Then a.s.\ the $D_h$-geodesic $P_q$ from 0 to $q$ is unique and this $D_h$-geodesic must pass through both $z_{n-1}$ and $z_n$. 
If there were more than one $D_h$-geodesic from $z_{n-1}$ to $z_n$, then we could replace the segment of $P_q$ between $z_{n-1}$ and $z_n$ by one of these geodesics to get a contradiction to the uniqueness of $P_q$. 

In particular, for each $n\in\BB N$ the $D_h$-geodesic from 0 to $z_n$ is unique. Moreover, the $D_h$-geodesic from 0 to $z_{n-1}$ is the sub-path of the $D_h$-geodesic from 0 to $z_n$ between 0 and $z_{n-1}$. 
By sending $n\rta\infty$, we get that a.s.\ there is a unique infinite geodesic ray $P_0^\infty$ from 0 to $\infty$.
By the translation invariance of the law of $h$, modulo additive constant, this shows that for each fixed $z\in\BB C$ there is a.s.\ a unique infinite geodesic ray from $z$ to $\infty$, i.e., assertion~\eqref{item-infty-geo-unique} holds.

To construct infinite geodesic rays for all possible starting points simultaneously, consider $z\in\BB C$ and let $n\in\BB N$ be chosen so that $z\in B_{R_{n-1}}(0)$. 
There is a $D_h$-geodesic $P$ from $z$ to $z_{n+1}$, which must pass through $z_n$. 
In particular, $P$ coincides with the unique $D_h$-geodesic from $z_n$ to $z_{n+1}$ between the times when it hits $z_n$ and $z_{n+1}$.
From this, it follows that the concatenation of $P$, stopped upon hitting $z_n$, with the $D_h$-geodesic ray from $z_n$ to $\infty$ is a $D_h$-geodesic ray from $z$ to $\infty$. 

We next prove assertion~\eqref{item-infty-geo-conf}. 
Given $r > 0$, choose $n\in\BB N$ such that $r \leq R_{n-1}$ and let $R := R_n$.
A $D_h$-geodesic from a point of $B_r(0)$ to $\infty$ stopped when it first hits $\BB C\setminus B_R(0)$ is a $D_h$-geodesic from a point of $B_r(0)$ to a point of $\BB C\setminus B_R(0)$. 
By the definition of the $R_n$'s, every $D_h$-geodesic from a point of $B_r(0)$ to $\infty$ must pass through $z_n$. 
Similarly, each such $D_h$-geodesic must hit $z_N$ for each $N \geq n$. 
By the uniqueness of the $D_h$-geodesic from $z_N$ to $z_{N+1}$, each such $D_h$-geodesic must coincide with $P_0^\infty$ after hitting $z_n$, so must coincide with $P_0^\infty$ after its first exit time from $B_R(0)$.

Finally, we prove assertion~\eqref{item-infty-geo-scale}. By Axioms~\ref{item-metric-f} and~\ref{item-metric-coord}, applied in the same manner as in Lemma~\ref{lem-net-scale}, we get that $r^{-1} P_0^\infty(r^{\xi Q} e^{\xi h_r(0)} \cdot)$ is an infinite geodesic ray started from 0 for $D_{h(r\cdot)  -h_r(0)}$. 
Since such an infinite geodesic ray is unique (assertion~\eqref{item-infty-geo-unique}) and $h(r\cdot) - h_r(0) \eqD h$, we obtain~\eqref{eqn-infty-geo-scale}. 
\end{proof}

\begin{figure}[t!]
 \begin{center}
\includegraphics[scale=1]{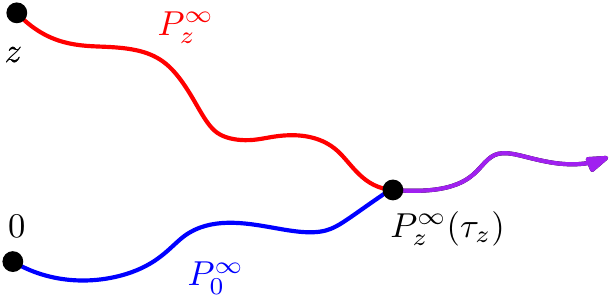}
\vspace{-0.01\textheight}
\caption{The infinite $D_h$-geodesic ray started from 0 (resp.\ $z$) is the union of the blue and purple (resp.\ red and purple) curves. 
The point $z$ belongs to $\mcl B_0^\infty$ if and only if the $D_h$-length of the red geodesic segment is smaller than or equal to the length of the blue geodesic segment. 
}\label{fig-geodesic-merge}
\end{center}
\vspace{-1em}
\end{figure} 

For LQG metric ball boundaries, we define a ``metric ball started from $\infty$ and grown until it hits 0" as the limit of $\mcl B_{D_h(0,w)}(w;D_h)$ as $w\rta\infty$.

\begin{prop} \label{prop-ball-infty}
There is a random unbounded set $\mcl B_0^\infty = ``\mcl B_{D_h(0,\infty)}(\infty ; D_h)" \subset \BB C$ such that the balls $\mcl B_{D_h(0,w)}(w;D_h)$ converge to $\mcl B_0^\infty$ as $w\rta\infty$ in the following sense.
Almost surely, for each $r> 0$, there exists $R > r$ such that 
\eqb \label{eqn-ball-infty-conv}
\mcl B_{D_h(0,w)}(w;D_h)\cap B_r(0) = \mcl B_0^\infty \cap B_r(0),\quad\forall w \in \BB C\setminus B_R(0) . 
\eqe 
Furthermore, $\mcl B_0^\infty$ is a.s.\ determined by $h$ viewed modulo additive constant and the law of $\mcl B_0^\infty$ is scale invariant in the sense that 
\eqb \label{eqn-ball-infty-scale}
\left( h , \mcl B_0^\infty \right) \eqD \left( h(r \cdot) - h_r(0) , r^{-1} \mcl B_0^\infty\right) ,\quad\forall r > 0 .
\eqe
\end{prop}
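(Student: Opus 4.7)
The plan is to construct $\mcl B_0^\infty$ directly from the infinite $D_h$-geodesic rays supplied by Proposition~\ref{prop-infty-geo}, rather than as a sequential limit. For each $z\in\BB C$, Proposition~\ref{prop-infty-geo}(ii) guarantees that the rays $P_z^\infty$ and $P_0^\infty$ have a common terminal segment; for any point $y$ on this segment, the difference
$$\Delta(z) := D_h(z,y) - D_h(0,y)$$
is independent of the choice of $y$ (moving $y$ further out along the common segment adds the same length to both terms by additivity of distance along a geodesic). Set $\mcl B_0^\infty := \{z\in\BB C : \Delta(z)\leq 0\}$; note $0\in\mcl B_0^\infty$ since $\Delta(0)=0$.

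For the convergence~\eqref{eqn-ball-infty-conv}, fix $r>0$ and use Proposition~\ref{prop-infty-geo}(ii) to select $R_0>r$ such that $P_z^\infty$ and $P_0^\infty$ coincide outside $B_{R_0}(0)$ for every $z\in B_r(0)$. Pick $x := P_0^\infty(S)$ for some $S$ past the exit time of $P_0^\infty$ from $\overline{B_{R_0}(0)}$, so $x$ lies on every such $P_z^\infty$. The key step is to produce $R>R_0$ such that for every $w\in\BB C\setminus B_R(0)$ and every $z\in B_r(0)$, the $D_h$-geodesics from $z$ to $w$ and from $0$ to $w$ both pass through $x$. Granting this, additivity of distance along geodesics yields
$$D_h(z,w) - D_h(0,w) = D_h(z,x) - D_h(0,x) = \Delta(z),$$
so $z\in\mcl B_{D_h(0,w)}(w;D_h)$ iff $\Delta(z)\leq 0$ iff $z\in\mcl B_0^\infty$, proving~\eqref{eqn-ball-infty-conv}.

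The remaining conclusions follow from Axioms~\ref{item-metric-f}--\ref{item-metric-coord}. Adding a constant $c$ to $h$ multiplies $\Delta$ by $e^{\xi c}$, which preserves sign, so $\mcl B_0^\infty$ is determined by $h$ modulo additive constant. Scale invariance~\eqref{eqn-ball-infty-scale} follows from the computation of Lemma~\ref{lem-net-scale} together with the scaling of infinite rays in Proposition~\ref{prop-infty-geo}(iii): these identify $\Delta_{h(r\cdot)}(z) = r^{-\xi Q}\Delta_h(rz)$ and hence $\mcl B_0^\infty(h(r\cdot)) = r^{-1}\mcl B_0^\infty(h)$; combining with $h(r\cdot)-h_r(0)\eqD h$ and modulo-constant invariance yields~\eqref{eqn-ball-infty-scale}. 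Unboundedness follows from scale invariance: if $\mcl B_0^\infty$ were a.s.\ bounded, scale invariance would force the distribution of $\sup_{z\in\mcl B_0^\infty}|z|$ to be invariant under rescaling, collapsing $\mcl B_0^\infty$ to $\{0\}$ a.s.; this is ruled out because, by~\eqref{eqn-ball-infty-conv}, the boundary of $\mcl B_0^\infty$ near $0$ is the local limit of the Jordan-curve boundaries of the finite balls $\mcl B_{D_h(0,w)}(w;D_h)$ (Lemma~\ref{lem-metric-curve}), which cannot degenerate to a point.

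The main obstacle is the uniform-in-$z$ claim in the second paragraph. For a fixed $z$, an Arzel\`a--Ascoli argument combined with the a.s.\ uniqueness of $P_z^\infty$ (Proposition~\ref{prop-infty-geo}(i)) implies locally uniform convergence of the geodesic from $z$ to $w$ toward $P_z^\infty$ as $|w|\to\infty$, and combined with confluence from $z$~\cite{gm-confluence} this forces coincidence on any prescribed initial segment once $|w|$ is large. The upgrade to uniformity in $z\in B_r(0)$ exploits that all $P_z^\infty$ have merged into $P_0^\infty$ before reaching $x$: it then suffices to control the single terminal portion of the geodesic from $w$ into $\overline{B_{R_0}(0)}$, which can be handled by applying confluence from the fixed point $0$ (Theorem~\ref{thm-conf}) to the reversed geodesics from $w$ to points near $0$, producing a single confluence point on $P_0^\infty$ past $x$ through which all the relevant geodesics must pass.
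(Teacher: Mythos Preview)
Your construction of $\mcl B_0^\infty$ via the difference $\Delta(z)$ is exactly the paper's definition~\eqref{eqn-ball-infty-def} rewritten: if $y = P_z^\infty(\tau_z)$ then $D_h(z,y) = \tau_z$ along the ray, so $\Delta(z)\leq 0$ iff $\tau_z \leq D_h(0,P_z^\infty(\tau_z))$. The scale-invariance and modulo-constant arguments are likewise essentially the paper's.

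The gap is in your final paragraph, where you justify the key uniform-in-$z$ claim. You need that every $D_h$-geodesic from any $z\in B_r(0)$ to any $w\in\BB C\setminus B_R(0)$ passes through a single point on $P_0^\infty$. You propose to obtain this from Theorem~\ref{thm-conf} applied ``to the reversed geodesics from $w$ to points near $0$''. But Theorem~\ref{thm-conf} concerns geodesics \emph{starting from the fixed point $0$}; it says nothing about geodesics starting from an arbitrary $w$ and ending at a variable $z\in B_r(0)$, nor about geodesics starting from a variable $z$ near $0$. The finite confluence set $\mcl X_{t,s}$ in that theorem lives on $\partial\mcl B_t^\bullet(0;D_h)$ and controls only geodesics emanating from $0$. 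Your ``upgrade to uniformity'' therefore does not go through.

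What is actually needed is precisely the strong confluence of Theorem~\ref{thm-general-confluence} (in its scale-invariant form, Lemma~\ref{lem-general-confluence-scale}): there exist $R>r$ and a point $Z_r\in B_R(0)\setminus B_r(0)$ such that every $D_h$-geodesic from a point of $B_r(0)$ to a point of $\BB C\setminus B_R(0)$ passes through $Z_r$. This single statement immediately yields $D_h(z,w) = D_h(z,Z_r) + D_h(Z_r,w)$ for all such $z,w$, and subtracting the $z=0$ case gives $D_h(z,w) - D_h(0,w) = D_h(z,Z_r) - D_h(0,Z_r) = \Delta(z)$, which is your desired identity. The paper's proof does exactly this. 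You already implicitly have access to this tool, since Proposition~\ref{prop-infty-geo}(ii) is itself proved via Lemma~\ref{lem-general-confluence-scale}; you just need to invoke it directly rather than attempting to rederive it from the weaker single-point confluence.

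A minor remark: unboundedness is immediate without the scale-invariance detour, since $P_0^\infty\subset\mcl B_0^\infty$ (for $z = P_0^\infty(t)$ one has $\Delta(z) = -t \leq 0$).
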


Due to the translation invariance of the law of $h$, modulo additive constant, Proposition~\ref{prop-ball-infty} allows us to define the ball $\mcl B_z^\infty = ``\mcl B_{D_h(z,\infty)}(\infty ; D_h)"$ for each $z\in\BB C$. In particular, $\mcl B_z^\infty$ is constructed from $h(\cdot+z)$ in the same manner that $\mcl B_0^\infty$ is constructed from $h$.

\begin{proof}[Proof of Proposition~\ref{prop-ball-infty}]
For $z\in\BB C$, let $P_z^\infty$ be a $D_h$-geodesic from $z$ to $\infty$, as in Proposition~\ref{prop-infty-geo} (chosen in an arbitrary manner if there is more than one such $D_h$-geodesic).
By Proposition~\ref{prop-infty-geo}, $P_0^\infty$ is a.s.\ unique and each $P_z^\infty$ merges into $P_0^\infty$ at some finite time.
Let $\tau_z := \inf\left\{ t\geq 0 : P_z^\infty(t) \in P_0^\infty\right\}$ and define
\eqb \label{eqn-ball-infty-def}
\mcl B_0^\infty := \left\{ z\in\BB C : \tau_z \leq D_h\left( 0 , P_z^\infty(\tau_z) \right) \right\}.
\eqe
In other words, $\mcl B_0^\infty$ is the set of $z\in\BB C$ for which the segment of $P_z^\infty$ before it first hits $P_0^\infty$ is shorter than the segment of $P_0^\infty$ before it reaches $P_z^\infty(\tau_z)$. See Figure~\ref{fig-geodesic-merge} for an illustration. 
We note that $P_z^\infty$ merges into $P_0^\infty$ at time $\tau_z$, i.e., $P_z^\infty([\tau_z,\infty)) \subset P_0^\infty$: if not, then we could find two distinct $D_h$-geodesic rays from 0 to $\infty$ by replacing a segment of $P_0^\infty$ by a segment of $P_z^\infty$, which would contradict the uniqueness of $P_0^\infty$. 

To make sure that $\mcl B_0^\infty$ is well-defined, we need to check that the definition of $\mcl B_0^\infty$ does not depend on the choice of $P_z^\infty$ in the case when the geodesic from $z$ to $\infty$ is not unique. 
Indeed, suppose $P_z^\infty$ and $\wt P_z^\infty$ are two $D_h$-geodesics from $z$ to $\infty$ and let $\wt\tau_z$ be defined as above with $\wt P_z^\infty$ in place of $P_z^\infty$.
By re-labeling, we can assume without loss of generality that $D_h\left( 0 , \wt P_z^\infty(\wt \tau_z) \right) \leq D_h\left( 0 , P_z^\infty(\tau_z) \right)$. 
Then $P_z^\infty|_{[0,\tau_z]}$ and the concatenation of $\wt P_z^\infty|_{[0,\wt\tau_z]}$ with $P_0^\infty|_{[D_h\left( 0 , \wt P_z^\infty(\wt \tau_z) \right) , D_h\left( 0 ,   P_z^\infty( \tau_z) \right)]}$ are each $D_h$-geodesics from 0 to $P_z^\infty(\tau_z)$, so their lengths agree, i.e., 
\eqbn
\tau_z = \wt\tau_z   +  D_h\left( 0 ,   P_z^\infty( \tau_z) \right)  -    D_h\left( 0 , \wt P_z^\infty(\wt \tau_z) \right) .
\eqen
This shows that the definition of~\eqref{eqn-ball-infty-def} is unaffected by replacing $P_z^\infty$ with $\wt P_z^\infty$. 

We now check the convergence property~\eqref{eqn-ball-infty-conv}.
By Lemma~\ref{lem-general-confluence-scale}, a.s.\ for each $r >0$ there exists $R > 0$ and a point $Z_r\in B_R(0)\setminus B_r(0)$ such that every $D_h$-geodesic from a point of $B_r(0)$ to a point of $\BB C \setminus B_R(0)$ passes through $Z_r$.
Hence
\eqb \label{eqn-ball-infty-out}
D_h\left( z , w \right) = D_h(z ,Z_r) + D_h(Z_r,w) ,\quad \forall z\in B_r(0),\quad \forall w\in\BB C\setminus B_R(0).
\eqe
Furthermore, for each $z\in B_r(0)$, the $D_h$-geodesic $P_0^\infty$ hits $P_z^\infty(\tau_z)$ before hitting $Z_r$ and hence
\eqb \label{eqn-ball-infty-merge}
  D_h\left( 0, Z_r \right)   = D_h\left( 0 , P_z^\infty(\tau_z) \right)   + D_h\left(  P_z^\infty(\tau_z) , Z_r \right) \quad \text{and} \quad
  D_h\left(z , Z_r \right)  = \tau_z +  D_h\left(  P_z^\infty(\tau_z) , Z_r \right) .
\eqe
By subtracting the two equations in~\eqref{eqn-ball-infty-merge} then applying~\eqref{eqn-ball-infty-out} once for $z$ and once with 0 in place of $z$, we obtain that for each $w\in \BB C\setminus B_R(0)$, 
\eqbn
\tau_z - D_h\left( 0 , P_z^\infty(\tau_z) \right) 
 =    D_h\left(z , Z_r \right)  - D_h\left( 0, Z_r \right)
 =  D_h(z ,w) - D_h(0,w) .
\eqen
Recalling~\eqref{eqn-ball-infty-def}, we now get~\eqref{eqn-ball-infty-conv}.

Due to Weyl scaling (Axiom~\ref{item-metric-f}), adding a constant to $h$ causes us to multiply both $\tau_z$ and $D_h\left( 0 , P_z^\infty(\tau_z) \right)$ in~\eqref{eqn-ball-infty-def} by the factor $e^{\xi C}$. It follows that adding a constant to $h$ does not affect the definition of $\mcl B_0^\infty$, so $\mcl B_0^\infty$ is a.s.\ determined by $h$ modulo additive constant. 

It remains to prove the scaling relation~\eqref{eqn-ball-infty-scale}. 
From~\eqref{eqn-ball-infty-def} and Weyl scaling (Axiom~\ref{item-metric-f}), it is immediate that the definition of $\mcl B_0^\infty$ is unaffected by adding a constant to $h$. 
From this, Axiom~\ref{item-metric-coord}, and the scale invariance of the law of $h$ modulo additive constant, we get~\eqref{eqn-ball-infty-scale}.
\end{proof}

\subsection{Locality: applying the strong confluence property}
\label{sec-geo-ball-local}

We now apply Theorem~\ref{thm-general-confluence} to construct an event of positive probability on which LQG geodesics and metric ball boundaries are in some sense locally determined by $h$.

\begin{lem} \label{lem-conf-event}
For each $p\in (0,1)$, there exists $ A  = A(p,\gamma)>1$ such that for each $z\in\BB C$ and $r>0$, there is a random point $Z_r(z) \in B_{A r}(z) \setminus B_r(z)$ and an event $E_r(z)$, both of which are a.s.\ given by measurable functions of $h|_{B_{A^2 r}(z) \setminus B_{r/A}(z)}$ viewed modulo additive constant, such that the following is true.  
We have $\BB P[E_r(z)] \geq p$. 
Moreover, the metric $D_h$ has the following properties on the event $E_r(z)$.
\begin{enumerate}[(i)]
\item
Each $D_h$-geodesic from a point of $B_r(z)$ to a point of $\BB C\setminus B_{A r}(z)$ passes through $Z_r(z)$.
\label{item-confluence}
\item 
Each $D_h$-geodesic between two points of $\ol{B_{A r}(z)}$ is contained in $B_{A^2 r}(z)$. 
\label{item-avoids0}
\item \label{item-avoids} Each $D_h$-geodesic between two points of $\BB C \setminus B_{r}(z)$ is contained in $\BB C \setminus \ol{B_{r/A}(z)}$. More strongly, there is a path $\pi \subset B_r(z) \setminus \ol{B_{r/A}(z)}$ such that 
\eqb \label{eqn-avoids-path}
\left(\text{$D_h$-length of $\pi$} \right) < D_h\left(\pi , B_{r/A}(z) \right) .
\eqe 
\end{enumerate}
\end{lem}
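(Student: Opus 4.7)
The plan is to construct $E_r(z)$ as an intersection $E^{(\text{i})}_r(z) \cap E^{(\text{ii})}_r(z) \cap E^{(\text{iii})}_r(z)$ of three events corresponding to the three listed properties, each of which can be made to have probability at least $1 - (1-p)/3$ by taking $A$ sufficiently large. By the translation invariance of the law of $h$ modulo additive constant, it suffices to take $z = 0$; and by the scale invariance obtained in the proof of Lemma~\ref{lem-general-confluence-scale} (combining Axioms~\ref{item-metric-f} and~\ref{item-metric-coord}), the probabilities of the events below, when defined in a scale-equivariant manner, do not depend on $r$, so we may also assume $r = 1$.

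For~(i), Lemma~\ref{lem-general-confluence-scale} directly furnishes, for any $p' \in (0,1)$, a constant $A_1 = A_1(p',\gamma)>1$ and a confluence point $Z_1(0) \in B_{A_1}(0)\setminus B_1(0)$ such that with probability at least $p'$, every $D_h$-geodesic from a point of $B_1(0)$ to a point of $\BB C\setminus B_{A_1}(0)$ passes through $Z_1(0)$. Take $A \geq A_1$ and define $E^{(\text{i})}_1(0)$ in terms of the analogous confluence statement for the annulus $B_A(0) \setminus B_1(0)$. For~(ii), I would use standard moment estimates for LQG distances (as developed, e.g., in~\cite{lqg-metric-estimates}) to show that for $A$ large, with high probability the $D_h$-distance across the Euclidean annulus $B_{A^2}(0)\setminus B_A(0)$ exceeds twice the $D_h$-diameter of $\ol{B_A(0)}$. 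On this event, any $D_h$-geodesic between two points of $\ol{B_A(0)}$ is trapped inside $B_{A^2}(0)$, since exiting would cost more than going directly. For~(iii), I would construct a ``shield" $\pi$: a simple closed curve in the Euclidean annulus $B_1(0) \setminus \ol{B_{1/A}(0)}$ (e.g., a circle of intermediate radius) with small $D_h$-length relative to the $D_h$-distance from it to $\bdy B_{1/A}(0)$. Such shields exist with probability tending to $1$ as $A \to \infty$ by analogous LQG moment estimates applied to thin annuli. Once $\op{len}(\pi; D_h) < D_h(\pi, B_{1/A}(0))$, a geodesic between two points of $\BB C\setminus B_1(0)$ that enters $\ol{B_{1/A}(0)}$ must cross $\pi$ twice, and rerouting along $\pi$ would produce a strictly shorter path, a contradiction.

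For measurability with respect to $h|_{B_{A^2}(0)\setminus B_{1/A}(0)}$ modulo additive constant, I would use Axiom~\ref{item-metric-local} together with the observation that on $E^{(\text{ii})}_1(0) \cap E^{(\text{iii})}_1(0)$, every $D_h$-geodesic that starts on $\bdy B_1(0)$ and ends on $\bdy B_A(0)$ is trapped inside the annulus $B_{A^2}(0)\setminus \ol{B_{1/A}(0)}$, and hence coincides with an internal geodesic for the restricted metric $D_h(\cdot,\cdot; B_{A^2}(0)\setminus \ol{B_{1/A}(0)}) = D_{h|_{B_{A^2}(0)\setminus \ol{B_{1/A}(0)}}}$. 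Thus I would redefine $Z_1(0)$ and $E^{(\text{i})}_1(0)$ using these internal geodesics (which is equivalent to the ambient definition on the intersection of events), giving the required measurability. The condition in~(ii) can be rewritten as an inequality between two quantities computable from $D_h(\cdot,\cdot; B_{A^2}(0)\setminus \ol{B_{1/A}(0)})$ once the relevant subsets are all within the annulus, and the shield condition in~(iii) is already phrased in terms of quantities determined by this internal metric. Invariance under additive constants follows from Weyl scaling (Axiom~\ref{item-metric-f}), since adding a constant $C$ to $h$ multiplies every $D_h$-length and every $D_h$-distance by the same factor $e^{\xi C}$, so the inequalities defining all three events are preserved and the confluence point $Z_1(0)$ is unchanged.

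To pass from the full-geodesic statement~(i) to the internal-geodesic statement actually used in the measurability argument, I would use that any $D_h$-geodesic from a point $u \in B_1(0)$ to a point $v \in \BB C\setminus B_A(0)$ decomposes, after its last exit from $\ol{B_1(0)}$ prior to reaching $\bdy B_A(0)$, into a segment that is a geodesic from $\bdy B_1(0)$ to $\bdy B_A(0)$; on $E^{(\text{ii})}_1(0)$ this segment is confined to $B_{A^2}(0)\setminus \ol{B_{1/A}(0)}$ (using (iii) to keep it out of $B_{1/A}(0)$, after possibly also excising any incursion into $B_{1/A}(0)$ via the shield argument). The main obstacle I anticipate is the LQG metric estimate needed to show that the shield event~(iii) has probability close to $1$: this requires controlling simultaneously the $D_h$-length of a carefully chosen curve in a Euclidean annulus and the $D_h$-distance from that curve to the inner boundary, and relies on the tail bounds for LQG circle-averages and distances from~\cite{lqg-metric-estimates}, combined with a union bound over a suitable collection of candidate radii to amplify the success probability.
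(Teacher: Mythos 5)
Your overall structure tracks the paper's closely: both define $E_r(z)$ as an intersection of a confluence condition for property~(\ref{item-confluence}) and shield-type conditions that yield~(\ref{item-avoids0}) and~(\ref{item-avoids}), both obtain measurability by passing to the internal metric $D_h(\cdot,\cdot;B_{A^2 r}(z)\setminus\ol{B_{r/A}(z)})$ (legitimate because, once~(\ref{item-avoids0}) and~(\ref{item-avoids}) hold, the relevant geodesics are trapped in that annulus), and both handle the additive constant via Weyl scaling. The treatment of property~(\ref{item-confluence}) via Lemma~\ref{lem-general-confluence-scale} and the reduction to internal geodesics is sound. However, two of your three \emph{defining} events are not actually measurable with respect to $h|_{B_{A^2 r}(z)\setminus B_{r/A}(z)}$, and the tool you reach for in the probability bound does not give the needed inequality.

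For your $E^{(\text{ii})}_1(0)$, you compare the $D_h$-distance across $B_{A^2}(0)\setminus B_A(0)$ to the $D_h$-diameter of $\ol{B_A(0)}$ and assert this can be recast via the internal metric on the annulus ``once the relevant subsets are all within the annulus.'' But $\ol{B_A(0)}$ is not contained in $B_{A^2}(0)\setminus\ol{B_{1/A}(0)}$; it contains $B_{1/A}(0)$, and the $D_h$-diameter of $\ol{B_A(0)}$ genuinely depends on the field there. Likewise, your $E^{(\text{iii})}_1(0)$ is defined directly via the inequality $\op{len}(\pi;D_h) < D_h(\pi,\bdy B_{1/A}(0))$, i.e.\ essentially by~\eqref{eqn-avoids-path}; but $D_h(\pi,B_{1/A}(0))$ is an ambient distance whose competitor paths may enter $B_{1/A}(0)$ or exit $B_{A^2}(0)$, so it is not determined by the restricted field either. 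Note that~\eqref{eqn-avoids-path} only needs to \emph{hold} on $E_r(z)$; it need not itself define a measurable event, and the paper derives it as a consequence, not a definition. The paper's fix is to state \emph{both} shield conditions entirely in terms of annulus quantities: it posits a disconnecting curve in $B_{2\rho}(z)\setminus B_\rho(z)$ (with $\rho\in[Ar,A^2 r/4]$) whose $D_h$-length is at most $\tfrac12 D_h(\bdy B_{2\rho}(z),\bdy B_{4\rho}(z))$, and similarly an inner shield in $B_{4\rho'}(z)\setminus B_{2\rho'}(z)$ (with $\rho'\in[r/A,r/4]$) compared against $D_h(\bdy B_{\rho'}(z),\bdy B_{2\rho'}(z))$. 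Those across-annulus distances coincide with internal ones, so the defining event really is measurable; and property~(\ref{item-avoids0}) then follows from the \emph{outer} shield by the same shortcut argument you describe for~(\ref{item-avoids}). The idea you are missing is that~(\ref{item-avoids0}) also needs a shield, not a diameter-vs-crossing comparison.

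Finally, to make $\BB P[\,\exists\,\rho\in[Ar,A^2 r/4]:\text{shield at scale }\rho\,]$ close to $1$ as $A\rta\infty$, a union bound over candidate $\rho$'s gives an upper bound, which is the wrong direction. You need a near-independence-across-scales argument. The paper's route is cleaner than the moment estimates you anticipate: the fixed-scale shield event has probability $q>0$ not depending on the scale (by absolute continuity, e.g.\ \cite[Lemma 6.1]{gwynne-ball-bdy}), each is determined by the field in an annulus, and the tail $\sigma$-algebra at infinity is trivial; hence a.s.\ the shield event occurs at infinitely many integer scales, which gives an $A_0$ with the desired probability over the range $[1,A_0/4]$, and scale invariance transfers this to $[Ar,A^2 r/4]$. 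This zero-one argument sidesteps the quantitative tail bounds entirely.
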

\begin{proof}
\noindent\textit{Step 1: defining the event.}
Let $A >1$ to be chosen later and for $z\in\BB C$, let $ E_r(z)$ be the event that the following is true.
\begin{enumerate}
\item There is a point $Z \in B_{A r}(z) \setminus B_r(z)$ such that every $D_h\left( \cdot,\cdot; B_{A^2 r}(z)\setminus \ol{B_{r/A}(z)} \right)$-geodesic from a point of $\bdy B_r(z)$ to a point of $\bdy B_{A r}(z)$ passes through $Z$. \label{item-conf-event-conf} 
\item There is a number $\rho \in [A r , A^2 r / 4]$ with the following property. There is path in the annulus $B_{2\rho}(z) \setminus B_\rho(z)$ which disconnects the inner and outer boundaries of the annulus and whose $D_h$-length is at most $\frac12 D_h(\bdy B_{2\rho}(z) , \bdy B_{4\rho}(z))$. \label{item-conf-event-dist}
\item There is a number $\rho' \in [r/A , r / 4]$ with the following property. There is path $\pi$ in the annulus $B_{4\rho'}(z) \setminus B_{2\rho'}(z)$ which disconnects the inner and outer boundaries of the annulus and whose $D_h$-length is at most $\frac12 D_h(\bdy B_{ \rho'}(z) , \bdy B_{2\rho'}(z))$.  \label{item-conf-event-dist'}
\end{enumerate}
By the locality of the metric (Axiom~\ref{item-metric-local}), we have $ E_r(z) \in \sigma\left( h|_{B_{A^2 r}(z) \setminus B_{r/A}(z)} \right)$ (this is why we use the internal metric on $B_{A^2 r}(z)$ in condition~\ref{item-conf-event-conf}).
It is easily seen from Axiom~\ref{item-metric-f} that adding a constant to $h$ does not affect the occurrence of $E_r(z)$, so $E_r(z)$ is determined by $h|_{B_{A^2 r}(z) \setminus B_{r/A}(z)}$ viewed modulo additive constant.
On $E_r(z)$, we can choose $Z_r(z) \in B_{A r}(z) \setminus B_r(z)$ in a manner depending only on $h|_{B_{A^2 r}(z) \setminus B_{r/A}(z)}$ viewed modulo additive constant, such that condition~\ref{item-conf-event-conf} in the definition of $E_r(z)$ occurs with $Z = Z_r(z)$. 
On the complement of $E_r(z)$, we arbitrarily define $Z_r(z) := z + (r,0)$.  
Then the pair $(E_r(z) , Z_r(z))$ satisfies the measurability condition in the lemma statement. 
\medskip

\noindent\textit{Step 2: properties of the event.}
We now assume that $E_r(z)$ occurs and check the three numbered properties in the lemma statement.
We start with property~\eqref{item-avoids0}. 
Let $\rho$ be as in condition~\ref{item-conf-event-dist} in the definition of $E_r(z)$ and let $\pi$ be the path in $B_{2\rho}(z)\setminus B_\rho(z)$ as in that condition. 
Suppose $P : [0,T] \rta \BB C$ is a path between two points of $\ol{B_{A r}(z)}$ which exits $B_{A^2 r}(z)$. 
We claim that $P$ is not a $D_h$-geodesic.
Indeed, since each of $\pi$ and $B_{4\rho}(z) \setminus B_{2\rho}(z)$ disconnects $\ol{B_{A r}(z)}$ from $\BB C\setminus B_{A^2 r}(z)$ there must be times $0 < s < t < T$ with the following properties. We have $P(s) , P(t) \in \pi$ and $P$ crosses between the inner and outer boundaries of the annulus $B_{4\rho}(z) \setminus B_{2\rho}(z)$ between time $s$ and time $t$. 
Since the $D_h$-length of $\pi$ is at most $\frac12 D_h(\bdy B_{ \rho'}(z) , \bdy B_{2\rho'}(z))$ it follows that the $D_h$-distance from $P(s)$ to $P(t)$ is at most half of the $D_h$-length of $P|_{[s,t]}$. Therefore $P$ is not a $D_h$-geodesic.
This gives property~\eqref{item-avoids0}.
We similarly obtain property~\eqref{item-avoids} from condition~\ref{item-conf-event-dist'} in the definition of $E_r(z)$, with the path $\pi$ as in condition~\ref{item-conf-event-dist'}. 

We now check property~\eqref{item-confluence}.
The combination of properties \eqref{item-avoids0} and~\eqref{item-avoids} of $E_r(z)$ tells us that every $D_h$-geodesic between points of $\ol{B_{A  r}(z)\setminus B_r(z)}$ is contained in $B_{A^2 r}(z)\setminus \ol{B_{r/A}(z)}$. 
This implies that the set of $D_h\left(\cdot,\cdot ; B_{A^2 r}(z)\setminus \ol{B_{r/A}(z)} \right)$-geodesics between any two points of $\ol{B_{A  r}(z)\setminus B_r(z)}$ is the same as the set of $D_h$-geodesics between these two points.
Consequently, condition~\ref{item-conf-event-conf} in the definition of $E_r(z)$ (together with the definition of $Z_r(z)$) implies that every $D_h$-geodesic from a point of $\bdy B_r(z)$ to a point of $\bdy B_{A r}(z)$ passes through $Z_r(z)$. 
A $D_h$-geodesic from a point of $B_r(z)$ to a point of $\BB C\setminus B_{A r}(z)$ has a sub-segment which is a $D_h$-geodesic from a point of $\bdy B_r(z)$ to a point of $\bdy B_{A r}(z)$, so any such $D_h$-geodesic must also pass through $Z_r(z)$. 
This gives property~\eqref{item-confluence}. 
\medskip

\noindent\textit{Step 3: estimating the probability of $E_r(z)$.}
It remains to show that we can choose $A$ in such a way that $\BB P[E_r(z)] \geq p$ for each $z\in\BB C$ and $r>0$. 
By the scale and translation invariance of the law of $h$, modulo additive constant, and Axioms~\ref{item-metric-f} and~\ref{item-metric-coord}, $\BB P[ E_r(z) ]$ does not depend on $z$ or $r$. 
Hence it suffices to choose $A$ so that $\BB P[E_1(0))] \geq p$. 

We first deal with condition~\ref{item-conf-event-dist} as follows. 
For $\rho > 0$, let $G_\rho$ be the event that there is a path in $B_{2\rho}(0) \setminus B_\rho(0)$ which disconnects the inner and outer boundaries of the annulus and whose $D_h$-length is at most $\frac12 D_h(\bdy B_{2\rho}(0) , \bdy B_{4\rho}(0))$. 
By the scale invariance of the law of $h$, modulo additive constant, together with Axioms~\ref{item-metric-f} and~\ref{item-metric-coord} (Weyl scaling and coordinate change), we see that $\BB P[G_\rho]$ does not depend on $\rho$.
By Axiom~\ref{item-metric-local} we see that $G_\rho$ is a.s.\ determined by $h|_{B_{4\rho}(0) \setminus B_\rho(0)}$ viewed modulo additive constant.
By an easy absolute continuity argument (see, e.g.,~\cite[Lemma 6.1]{gwynne-ball-bdy}) we have $q := \BB P[G_1] >  0$. Since $\BB P[G_\rho] = q $ for every $\rho  >0$ and the tail $\sigma$-algebra $\bigcap_{\rho > 1} \sigma\left( h|_{\BB C\setminus B_\rho(0)} \right)$ is trivial, it follows that a.s.\ $G_\rho$ occurs for infinitely many positive integer values of $\rho$. 

Therefore, we can choose $A_0 > 4$ such that with probability at least $1-p/3$, the event $G_\rho$ occurs for at least one value of $\rho$ in $[1,A_0/4]$.
By scale invariance, if $A > 0$ it also holds with probability at least $1-p/3$ that $G_\rho$ occurs for at least one value of $\rho$ in $[A  , A_0 A/4]$. 
Hence if $A \geq A_0$ then condition~\ref{item-conf-event-dist} in the definition of $E_1(0)$ occurs with probability at least $1-(1-p)/3$. 
By an identical argument, we see that after possibly increasing $A_0$, for any $A\geq A_0$ it holds with probability at least $1-2(1-p)/3$ that conditions~\ref{item-conf-event-dist} and~\ref{item-conf-event-dist'} in the definition of $E_1(0)$ both occur.
 
By Lemma~\ref{lem-general-confluence-scale} (with $r=1$) combined with the preceding paragraph, there exists $A = A(p,\gamma)  \geq A_0$ such that with probability at least $p$, conditions~\ref{item-conf-event-conf} and~\ref{item-conf-event-dist'} in the definition of $E_1(0)$ both occur and also there is a point $Z\in B_A(0) \setminus B_1(0)$ such that every $D_h$-geodesic from a point of $\bdy B_1(0)$ to a point of $\bdy B_A(0)$ passes through $Z$. 
As explained in step 2, if conditions~\ref{item-conf-event-dist} and~\ref{item-conf-event-dist'} in the definition of $E_1(0)$ both occur then the set of $D_h\left(\cdot,\cdot ;  B_{A^2 }(0)\setminus \ol{B_{1/A}(0)}  \right)$-geodesics between any two points of $B_A(0) \setminus \ol{B_1(0)}$ is the same as a $D_h$-geodesics between these two points. 
We therefore have $\BB P[E_1(0)] \geq p$, as required. 
\end{proof}

\section{Zero-one law for LQG geodesics}
\label{sec-geo-dim}

In this section, we prove a zero-one law for LQG geodesics (Theorem~\ref{thm-geo-dim}).  We begin by proving a zero-one law for the Euclidean dimension of the infinite geodesic ray $P_0^\infty$ from Proposition~\ref{prop-infty-geo}. 
This case is easier than the case of general geodesics since the law of $P_0^\infty$ is scale invariant.

\begin{prop} \label{prop-geo-zero-one}
Let $P_0^\infty$ be the geodesic ray from 0 to $\infty$ as in Proposition~\ref{prop-infty-geo}. 
There is a deterministic constant $\Delta_{\op{geo}} >0$ such that the random variable $\dim_{\mcl H}^0 P_0^\infty$ is a.s.\ equal to $\Delta_{\op{geo}}$. 
Moreover, a.s.\ $\dim_{\mcl H}^0 P \geq \Delta_{\op{geo}}$ for each $D_h$-geodesic $P$ from 0 to a point of $\BB C\setminus \{0\}$. 
\end{prop}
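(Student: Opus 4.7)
The plan is to adapt the zero-one law strategy from Section~\ref{sec-net-dim} (scale invariance plus tail triviality) to the non-local setting, using Lemma~\ref{lem-conf-event} to produce approximate locality. Define $\Delta_{\op{geo}} := \esssup \dim_{\mcl H}^0 P_0^\infty$. My first task is to show that the dimension of $P_0^\infty$ is spread uniformly across all Euclidean scales: letting $\pi_r$ be the initial segment of $P_0^\infty$ up to its first exit time from $B_r(0)$, Proposition~\ref{prop-infty-geo}(iii) implies that $r^{-1} \pi_r \eqD \pi_1$ and hence the law of $\dim_{\mcl H}^0 \pi_r$ does not depend on $r$. Countable stability of Hausdorff dimension gives $\dim_{\mcl H}^0 \pi_r \nearrow \dim_{\mcl H}^0 P_0^\infty$ as $r \to \infty$, so the common value of $\BB P[\dim_{\mcl H}^0 \pi_r \geq c]$ must equal the limit $\BB P[\dim_{\mcl H}^0 P_0^\infty \geq c]$; combined with the deterministic inequality $\pi_r \subset P_0^\infty$, this forces $\dim_{\mcl H}^0 \pi_r = \dim_{\mcl H}^0 P_0^\infty$ almost surely for each fixed $r > 0$.

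To upgrade the essential supremum into a genuine a.s.\ identity, I will use Lemma~\ref{lem-conf-event} to produce locally determined approximations to initial segments. Apply that lemma with some $p$ close to $1$ to fix $A = A(p,\gamma) > 1$, and let $r_k := A^{-2k}$. On the event $E_{r_k}(0)$, by Lemma~\ref{lem-regular-geo}, the segment $\sigma_k$ of $P_0^\infty$ from $0$ to $Z_{r_k}(0)$ is the unique $D_h$-geodesic from $0$ to $Z_{r_k}(0)$; property~(ii) of Lemma~\ref{lem-conf-event} combined with Axiom~\ref{item-metric-local} gives $\sigma_k \subset B_{A^2 r_k}(0)$ and shows that $\sigma_k$ is determined by $h|_{B_{A^2 r_k}(0)}$ viewed modulo additive constant. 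The containment $\pi_{r_k} \subset \sigma_k \subset P_0^\infty$ together with the first-paragraph identity yields $\dim_{\mcl H}^0 \sigma_k = \dim_{\mcl H}^0 P_0^\infty$ a.s.\ on $E_{r_k}(0)$. A scale-invariance computation identical to the one in the proof of Theorem~\ref{thm-net-dim} shows $\BB P[E_{r_k}(0)] \geq p$ for every $k$; since these events are determined by the restriction of $h$ modulo additive constant to annuli shrinking to $0$, Fatou's lemma combined with triviality of the tail $\sigma$-algebra $\bigcap_{k_0} \sigma(h|_{B_{A^{-2 k_0 + 2}}(0)}\text{ mod.\ const.})$ gives $\BB P[E_{r_k}(0)\text{ i.o.}]=1$. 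It follows that $\dim_{\mcl H}^0 P_0^\infty$ is a.s.\ measurable with respect to this trivial tail $\sigma$-algebra, hence a.s.\ equal to a deterministic constant $\Delta_{\op{geo}}$, which must be at least $1$ since $P_0^\infty$ is a nontrivial continuous curve.

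For the final assertion, let $P$ be any $D_h$-geodesic from $0$ to a point $z \in \BB C \setminus \{0\}$. Choose a rational $r \in (0,|z|)$ and apply Theorem~\ref{thm-general-confluence} with $U = B_r(0)$ to produce a neighborhood $U' \subset U$ of $0$ and a point $z_0 \in U \setminus U'$ through which every $D_h$-geodesic from $U'$ to $\BB C \setminus U$ passes. Since both $P$ and $P_0^\infty$ satisfy this hypothesis, Lemma~\ref{lem-regular-geo} forces their initial segments up to the first hitting time of $z_0$ to coincide; picking a rational $r' \in (0,|z_0|)$ then gives $P \supset P_0^\infty|_{[0,T_{z_0}]} \supset \pi_{r'}$, and the first-paragraph identity $\dim_{\mcl H}^0 \pi_{r'} = \Delta_{\op{geo}}$ (which we arrange to hold a.s.\ for all rational $r'$ simultaneously) delivers the desired bound. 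The main obstacle I anticipate is the locality claim in the second paragraph: verifying that on $E_{r_k}(0)$ the set $\sigma_k$ is genuinely a measurable function of $h|_{B_{A^2 r_k}(0)}$ modulo additive constant requires carefully combining the uniqueness of the initial geodesic segment from Lemma~\ref{lem-regular-geo}, the trapping from property~(ii) of Lemma~\ref{lem-conf-event}, and the locality Axiom~\ref{item-metric-local}.
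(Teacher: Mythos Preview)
Your proof is correct and follows the same broad strategy as the paper (scale invariance of $P_0^\infty$, localization via Lemma~\ref{lem-conf-event}, tail triviality), but the organization differs in a couple of noteworthy ways. Your first-paragraph observation---that the monotone limit $\dim_{\mcl H}^0 \pi_r \nearrow \dim_{\mcl H}^0 P_0^\infty$ together with the scale-invariance identity $\dim_{\mcl H}^0 \pi_r \eqD \dim_{\mcl H}^0 \pi_1$ forces $\dim_{\mcl H}^0 \pi_r = \dim_{\mcl H}^0 P_0^\infty$ a.s.\ for each fixed $r$---is a clean shortcut that the paper does not isolate; the paper instead carries a $\delta$-slack throughout and works with probabilities $\BB P[\dim_{\mcl H}^0(P_0^\infty\cap B_r(0)) \geq c-\delta]$. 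For the localization step, the paper defines an event $G_r$ concerning \emph{all} $D_h$-geodesics from $0$ to $\bdy B_{Ar}(0)$ and checks it lies in $\sigma(h|_{B_{A^2 r}(0)})$, whereas you localize via the single segment $\sigma_k$ from $0$ to $Z_{r_k}(0)$, invoking Lemma~\ref{lem-regular-geo} for uniqueness; both routes yield the needed measurability. For the general-geodesic lower bound, the paper folds this into the same tail-triviality argument (using~\cite[Theorem~1.3]{gm-confluence} rather than Theorem~\ref{thm-general-confluence}), and in so doing obtains the slightly stronger conclusion $\dim_{\mcl H}^0(P\cap B_r(0)) \geq \Delta_{\op{geo}}$ for every $r>0$ and every geodesic $P$ from $0$; your separate argument via Theorem~\ref{thm-general-confluence} suffices for the proposition as stated. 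The measurability concern you flag at the end is handled exactly as you suggest: on $E_{r_k}(0)$, property~(ii) of Lemma~\ref{lem-conf-event} traps $\sigma_k$ in $B_{A^2 r_k}(0)$, so it coincides with the unique $D_h(\cdot,\cdot;B_{A^2 r_k}(0))$-geodesic from $0$ to $Z_{r_k}(0)$, which by Axiom~\ref{item-metric-local} is determined by $h|_{B_{A^2 r_k}(0)}$ (and the trace is unaffected by additive constants).
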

\begin{proof}
The proof is similar to that of Theorem~\ref{thm-net-dim}.
Let $c >0$ such that $\BB P[\dim_{\mcl H}^0 P_0^\infty \geq c] > 0$. We claim that a.s.\ for each $D_h$-geodesic $P$ from 0 to a point of $\BB C\setminus \{0\}$,  
\eqb \label{eqn-geo-dim-as}
 \dim_{\mcl H}^0( P  \cap B_r(0) ) \geq c ,\quad \forall r >0  .
\eqe
Applying~\eqref{eqn-geo-dim-as} with $P = P_0^\infty$ shows that $ \dim_{\mcl H}^0 P_0^\infty$ is a.s.\ equal to a deterministic constant, and then applying~\eqref{eqn-geo-dim-as} for an arbitrary choice of $P$ shows that a.s.\ $\dim_{\mcl H}^0 P$ is bounded below by this constant for every $D_h$-geodesic $P$ from 0 to a point of $\BB C\setminus \{0\}$. 

To prove~\eqref{eqn-geo-dim-as}, we first use the countable stability of Hausdorff dimension to get that for each $\delta > 0$, there exists $r_0  > 0$ and $p \in (0,1)$ such that $\BB P\left[  \dim_{\mcl H}^0\left(  P_0^\infty \cap B_{r_0}(0)  \right)    \geq c  - \delta \right] \geq p$. By the scale invariance of the law of $h$, modulo additive constant, and~\eqref{eqn-infty-geo-scale},
\eqb \label{eqn-geodim-scale0}
\BB P\left[  \dim_{\mcl H}^0\left(  P_0^\infty \cap B_r(0)  \right)    \geq c - \delta  \right] \geq p ,\quad\forall r > 0. 
\eqe 
Obviously, $\dim_{\mcl H}^0(P_0^\infty \cap B_r(0))$ is increasing in $r$, so by~\eqref{eqn-geodim-scale0},
\eqb \label{eqn-geodim-scale1}
\BB P\left[ \text{$\exists$ arbitrarily small values of $r > 0$ such that}\: \dim_{\mcl H}^0\left(  P_0^\infty \cap B_r(0)  \right)    \geq c - \delta  \right] \geq p  .
\eqe
By confluence of geodesics started from 0~\cite[Theorem 1.3]{gm-confluence} (plus the fact that the LQG metric induces the same topology as the Euclidean metric), a.s.\ for each $r > 0$ there exists $r'  \in (0,r)$ such that for every $D_h$-geodesic $P$ from 0 to a point of $\BB C\setminus B_r(0)$, we have $P\cap B_{r'}(0) = P_0^\infty \cap B_{r'}(0)$. 
By combining this with~\eqref{eqn-geodim-scale1}, we obtain that for each $r_2 > r_1 > 0$, 
\eqb \label{eqn-geodim-scale2}
\BB P\left[  \dim_{\mcl H}^0(P\cap B_{r_1}(0)) \geq c-\delta,\:\text{$\forall$ geodesic $P$ from 0 to a point of $\BB C\setminus B_{r_2}(0)$} \right] \geq p . 
\eqe 

We will now deduce~\eqref{eqn-geo-dim-as} from~\eqref{eqn-geodim-scale2} together with tail triviality considerations. To do this we will use Lemma~\ref{lem-conf-event} for convenience, but we do not need the full force of the lemma here (we do need all of the conditions from Lemma~\ref{lem-conf-event} to treat the case of the metric ball boundary, however). 
Let $A = A(1-p/2,\gamma)$ be as in Lemma~\ref{lem-conf-event} with $1-p/2$ in place of $p$.
For $r > 0$, let $E_r = E_r(0)$ be the event from that lemma, so that $E_r\in \sigma\left( h |_{B_{A^2 r}(0)} \right)$ and $\BB P[E_r] \geq 1-p/2$. 
Let $G_r$ be the intersection of $E_r$ with the event that every $D_h$-geodesic $P$ from 0 to a point of $\bdy B_{A r}(0)$ satisfies  $ \dim_{\mcl H}^0 (P\cap B_r(0)) \geq c -\delta$.
By~\eqref{eqn-geodim-scale2} (with $r_1 = r$ and $r_2 = A r$),  
\eqb \label{eqn-geodim-pos}
\BB P[G_r] \geq p/2 ,\quad\forall r > 0 .
\eqe 

Recall from Lemma~\ref{lem-conf-event} that on $E_r$, every $D_h$-geodesic from 0 to a point of $\bdy B_{A r}(0)$ is contained in $B_{ A^2 r}(0)$, so on $E_r$ the set of such $D_h$-geodesics is the same as the set of $D_h(\cdot,\cdot ; B_{A^2 r}(0))$-geodesics from 0 to points of $\bdy B_{A r}(0)$. 
Since $E_r \in \sigma\left( h |_{B_{A^2 r}(0)} \right)$ and by Axiom~\ref{item-metric-local} (locality), we get that $G_r \in \sigma\left( h |_{B_{A^2 r}(0)} \right)$. 

By~\eqref{eqn-geodim-pos}, it holds with probability at least $p/2$ that there are arbitrarily small values of $r>0$ for which $G_r$ occurs. 
Since $G_r\in  \sigma\left( h|_{B_{A^2 r}(0)} \right)$ and the tail $\sigma$-algebra $\bigcap_{r  > 0} \sigma(h|_{B_{A^2 r}(0)})$ is trivial~\cite[Lemma 2.2]{hs-euclidean}, this implies that in fact a.s.\ $G_r$ occurs for arbitrarily small values of $r > 0$. 
Henceforth assume that we are working on the (full probability) event that this is the case.

For each $D_h$-geodesic $P$ from 0 to a point $z\in \BB C\setminus \{0\}$ and each $r \in (0,|z|/A)$ there is a segment of $P$ which is a $D_h$-geodesic from 0 to a point of $\bdy B_{A r}(0)$. 
If $G_r$ occurs, then this segment of $P$ has Euclidean dimension at least $c - \delta$. 
From the preceding paragraph, we therefore get that $ \dim_{\mcl H}^0(P\cap B_r(0)) \geq c - \delta$ for arbitrarily small values of $r>0$. 
Since $\delta>0$ can be made arbitrarily small, this implies~\eqref{eqn-geo-dim-as}. 
\end{proof}

We now want to argue that in fact the Euclidean dimension of \emph{any} $D_h$-geodesic started from 0 is bounded above by the constant $\Delta_{\op{geo}}$ from Proposition~\ref{prop-geo-zero-one}. The idea of the proof is that if we see $\mcl B_s^\bullet $ and $h|_{\mcl B_s^\bullet }$ for some $s> 0$, then we cannot tell which $D_h$-geodesic from 0 to $\bdy \mcl B_s^\bullet $ is equal to $P_0^\infty|_{[0,s]}$, so all of these $D_h$-geodesics must have dimension at most $\Delta_{\op{geo}}$. 
The following lemma makes precise the idea that $\mcl B_s^\bullet $ and $h|_{\mcl B_s^\bullet }$ do not determine which point of $\bdy \mcl B_s^\bullet$ is hit by $P_0^\infty$.

\begin{figure}[t!]
 \begin{center}
\includegraphics[scale=1]{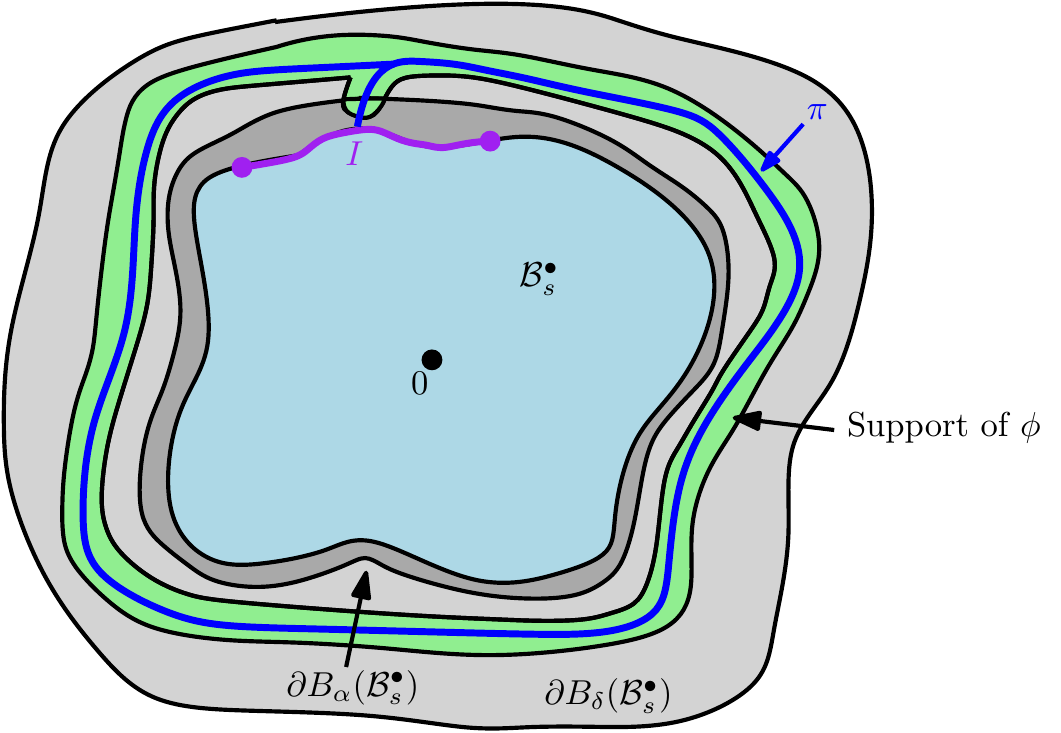}
\vspace{-0.01\textheight}
\caption{Illustration of the proof of Lemma~\ref{lem-geo-arc}. We condition on $(\mcl B_s^\bullet  , h|_{\mcl B_s^\bullet})$ and consider the field $\wt h = h - C \phi$ where $\phi$ is a smooth bump function supported in the light green region and $C$ is large. Then the conditional laws of $h$ and $\wt h$ given $(\mcl B_s^\bullet  , h|_{\mcl B_s^\bullet})$ are mutually absolutely continuous. On the other hand, if we make the constant $C$ large enough and we make the support of $\phi$ sufficiently close to $I$, then the $D_{\wt h}$-distance from any point $z$ on the blue path $\pi$ to $I$ is smaller than the $D_{\wt h}$-distance from $z$ to any point of $\bdy\mcl B_s^\bullet\setminus I$. 
Since every path from 0 to a point outside of $B_\delta(\mcl B_s^\bullet)$ passes through $\pi$, this forces every $D_{\wt h}$-geodesic from 0 to a point outside of $B_\delta(\mcl B_s^\bullet)$ to pass through $I$ (otherwise, we could replace it by a shorter path which did pass through $I$). 
}\label{fig-geo-arc}
\end{center}
\vspace{-1em}
\end{figure}

\begin{lem} \label{lem-geo-arc}
Fix $s  >0$ and $\delta > 0$. 
Let $I$ be a non-trivial arc of $\bdy\mcl B_s^\bullet$ chosen in a $\sigma(\mcl B_s^\bullet  , h|_{\mcl B_s^\bullet})$-measurable manner (recall from Lemma~\ref{lem-metric-curve}that $\bdy\mcl B_s^\bullet$ is a Jordan curve).
Almost surely, it holds with positive conditional probability given $\sigma(\mcl B_s^\bullet  , h|_{\mcl B_s^\bullet})$ that the following is true.
For each $w \in \BB C\setminus B_\delta(\mcl B_s^\bullet)$, every $D_h$-geodesic from 0 to $w$ passes through $I$ (recall that $B_\delta(\cdot)$ denotes the Euclidean $\delta$-neighborhood). 
\end{lem}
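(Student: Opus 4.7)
The plan is to use the Cameron-Martin theorem: I will make a deterministic (given the conditioning) modification of $h$ on $\BB C\setminus \mcl B_s^\bullet$ that forces $D_{\wt h}$-geodesics through $I$, and then transfer positive probability to the original $h$ by absolute continuity. First I will condition on $(\mcl B_s^\bullet, h|_{\mcl B_s^\bullet})$ and write $J := \bdy\mcl B_s^\bullet \setminus I$. I then pick (measurably in the conditioning) a point $z_0$ in the relative interior of $I$ and a small $\rho > 0$, and set $V := B_{2\rho}(z_0)$, so for $\rho$ small enough $\ol V\cap \ol J = \emptyset$ and $V\cap \bdy\mcl B_s^\bullet\subset I$. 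Using that $\bdy\mcl B_s^\bullet$ is a Jordan curve (Lemma~\ref{lem-metric-curve}), I construct a compact ``moat'' $M\subset\{x:\rho\le d(x,\mcl B_s^\bullet)\le 3\rho/2\}\setminus \ol V$, two-dimensional with strictly positive $D_h$-thickness, such that $M\cup\ol V$ contains a Jordan curve separating $\mcl B_s^\bullet$ from $\infty$. Finally, let $\psi:\BB C\to [0,1]$ be smooth and compactly supported in $\BB C\setminus(\mcl B_s^\bullet\cup\ol V)$ with $\psi=1$ on $M$, and set $\wt h := h + C\psi$ for a large constant $C$ to be chosen.

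Since $\psi\ge 0$ vanishes on a neighborhood of $\mcl B_s^\bullet$, Axiom~\ref{item-metric-f} (Weyl scaling) yields $D_{\wt h}\ge D_h$ globally while leaving the $D_h$-lengths of curves in $\mcl B_s^\bullet$ unchanged; consequently $\mcl B_s^\bullet(0;D_{\wt h})=\mcl B_s^\bullet$ and $\wt h|_{\mcl B_s^\bullet}=h|_{\mcl B_s^\bullet}$, so the conditioning data is preserved. By the Cameron-Martin theorem, applied to the conditional GFF on $\BB C\setminus\mcl B_s^\bullet$ (whose Cameron-Martin space contains $C\psi$), the conditional laws of $h$ and $\wt h$ given $(\mcl B_s^\bullet, h|_{\mcl B_s^\bullet})$ are mutually absolutely continuous. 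It therefore suffices to show that, under the law of $\wt h$, with positive conditional probability every $D_{\wt h}$-geodesic from $0$ to a point of $\BB C\setminus B_\delta(\mcl B_s^\bullet)$ passes through $I$.

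To that end, let $P$ be such a $D_{\wt h}$-geodesic, with first exit time $\tau$ from $\mcl B_s^\bullet$ at $z^* = P(\tau)$. For $C$ sufficiently large, $P$ cannot cross $M$: the $\wt h$-length of any such crossing is at least $e^{\xi C}$ times the strictly positive $D_h$-thickness of $M$, whereas the alternative path ``$0\to z_0$ inside $\mcl B_s^\bullet$, then through $V$ out to $w$'' stays in $\{\psi=0\}$ and has $\wt h$-length bounded uniformly in $C$. Because $M\cup\ol V$ separates $\mcl B_s^\bullet$ from $\infty$, $P$ must therefore enter $\ol V$ at some first time $\tau_V>\tau$. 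Now suppose for contradiction that $z^*\in J$. Then $P[\tau,\tau_V]$ lies in $\BB C\setminus(\mcl B_s^\bullet\cup\ol V\cup\mathrm{supp}(\psi))$, where $\wt h = h$, so its $\wt h$-length is at least $m := D_h(\ol J, \bdy V) > 0$. Combined with $|P|_{\wt h}[0,\tau]\ge s$, we get $|P|_{\wt h}\ge s + m + D_{\wt h}(P(\tau_V), w)$. On the other hand, since $z_0, P(\tau_V)\in\ol V$ and $\wt h=h$ on $\ol V$, the triangle inequality gives
\eqbn
D_{\wt h}(0, w)\le s + \mathrm{diam}_h(\ol V) + D_{\wt h}(P(\tau_V), w).
\eqen
Since $P$ is a $D_{\wt h}$-geodesic, $|P|_{\wt h}=D_{\wt h}(0,w)$, so these bounds force $\mathrm{diam}_h(\ol V)\ge m$.

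It remains to choose $\rho$ so that $\mathrm{diam}_h(\ol V)<m$ with positive conditional probability. As $\rho\downarrow 0$, $\mathrm{diam}_h(\ol{B_{2\rho}(z_0)})\to 0$ (because $D_h$ induces the Euclidean topology) and $D_h(\ol J, \bdy B_{2\rho}(z_0))\to D_h(\ol J, z_0) > 0$, so the conditional probability of this event tends to $1$ along a subsequence of $\rho$'s; one can then extract a single $\rho>0$ (measurable in the conditioning) for which the event has positive conditional probability. The main technical obstacles will be the clean topological construction of $M$ with strictly positive $D_h$-thickness (so that the $e^{\xi C}$ penalty genuinely dominates the uniformly bounded via-$V$ alternative), and the bookkeeping to ensure that all the parameters ($z_0, \rho, M, \psi, C$) can be chosen measurably in the conditioning. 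The remaining steps---the Cameron-Martin application and the transfer of positive conditional probability from $\wt h$ to $h$---are routine.
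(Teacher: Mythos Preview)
Your barrier approach---adding $C\psi$ to make a moat $M$ expensive, so that geodesics are forced through the gap at $\ol V$---is dual in spirit to the paper's proof, which instead \emph{subtracts} a large multiple of a bump function to make a tube near $I$ cheap. Both ideas can in principle work, but your specific construction has a genuine topological gap. You require $M$ to be compact with $M\subset\{\rho\le d(\cdot,\mcl B_s^\bullet)\le 3\rho/2\}\setminus \ol V$ and $M\cup\ol V$ to contain a Jordan curve separating $\mcl B_s^\bullet$ from $\infty$. But a compact $M$ contained in the open set $\BB C\setminus\ol V$ lies at positive Euclidean distance from $\ol V$, so $M$ and $\ol V$ are disjoint compact sets; a Jordan curve in their union is connected and hence lies entirely in one of them. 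Since a small ball $\ol V$ cannot by itself separate the much larger $\mcl B_s^\bullet$ from $\infty$, the separating curve must lie in $M$, i.e.\ $M$ alone already separates. Then your ``alternative path'' $0\to z_0\to(\text{through }V)\to w$ necessarily crosses $M\subset\{\psi=1\}$, so its $D_{\wt h}$-length is \emph{not} bounded uniformly in $C$, and you cannot conclude that the $D_{\wt h}$-geodesic $P$ avoids $M$ (indeed it cannot). The deduction ``$P$ enters $\ol V$'' fails, and the comparison (b)--(e) never gets off the ground. (There are smaller issues as well---e.g.\ you need the \emph{internal} $D_h$-diameter of $\ol V$ in your triangle inequality, and the size of $C$ needed depends on random quantities not measurable in the conditioning---but these are fixable; the moat topology is the real obstruction.)

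The paper avoids this by working on the attractive side. It chooses a path $\pi$ in $(B_\delta(\mcl B_s^\bullet)\setminus\mcl B_s^\bullet)\cup I$ which starts at a point of $I$, stays at positive Euclidean distance from $\bdy\mcl B_s^\bullet\setminus I$, and disconnects $\mcl B_s^\bullet$ from $\bdy B_\delta(\mcl B_s^\bullet)$. It then subtracts a large multiple of a bump function supported on a thin neighborhood of $\pi$ (away from $\mcl B_s^\bullet$). On a positive-probability event, this makes the $D_{\wt h}$-distance from any point of $\pi$ to $0$ at most $s+2\zeta$ (short hop to $I$ through the cheap tube), while the $D_{\wt h}$-distance from $\pi$ to $\bdy\mcl B_s^\bullet\setminus I$ remains at least $100\zeta$. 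Since every geodesic to a point outside $B_\delta(\mcl B_s^\bullet)$ must hit $\pi$, it must exit $\mcl B_s^\bullet$ through $I$. Because the modification is localized to a set that already touches $I$ and already separates, there is no gap-in-the-barrier to engineer and no global comparison path through such a gap is needed.
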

\begin{proof}
To lighten notation, let $\mcl F := \sigma(\mcl B_s^\bullet  , h|_{\mcl B_s^\bullet})$. Throughout the proof, we will condition on $\mcl F$ and all choices are required to be made in a $\mcl F$-measurable manner. See Figure~\ref{fig-geo-arc} for an illustration of the argument. 

We first choose (in a $\mcl F$-measurable manner) a path $\pi$ in the annular region $\left( B_\delta(\mcl B_s^\bullet) \setminus \mcl B_s^\bullet \right) \cup I $ which starts from a point of $I$, lies at positive Euclidean distance from $\bdy\mcl B_s^\bullet\setminus I$, and disconnects $\mcl B_s^\bullet$ from $\bdy B_\delta(\mcl B_s^\bullet)$. 
Let $\ep  > 0$ be small enough so that $\pi$ lies at Euclidean distance at least $100\ep$ from each of $\bdy\mcl B_s^\bullet\setminus I$ and $\bdy B_\delta(\mcl B_s^\bullet)$. 

Since $D_h$ a.s.\ induces the Euclidean topology, we can find small enough $\zeta , \alpha \in (0, (\ep\wedge\delta) /100)$ and a large enough $A >1$ (all random and $\mcl F$-measurable) such that the following is true with conditional probability at least $1/2$ given $\mcl F$.
\begin{enumerate}
\item The $D_h$-distance from the $2\ep$-neighborhood $B_{2\ep}(\pi)$ to each of $\bdy\mcl B_s^\bullet\setminus I$ and $\bdy B_\delta(\mcl B_s^\bullet)$ is at least $100 \zeta$. \label{item-geo-arc-far}
\item The $D_h$-distance from each point of $B_{2\alpha}(\mcl B_s^\bullet)$ to $\mcl B_s^\bullet$ is at most $\zeta$.  \label{item-geo-arc-close}
\item For each $z\in \pi \setminus B_{2\alpha}(\mcl B_s^\bullet)$, there is a path from $z$ to a point of $B_{2\alpha}(\mcl B_s^\bullet)$ which is contained in $B_\ep(\pi) \setminus B_{2\alpha}(\mcl B_s^\bullet)$ and has $D_h$-length at most $A$. \label{item-geo-arc-internal}
\end{enumerate} 
Let $E$ be the event that the above numbered conditions hold, so that $\BB P[E  |\mcl F] \geq 1/2$.

Let $\phi : [0,1]\rta \BB C$ be a smooth bump function which supported on a compact subset of $B_{2\ep}(\pi) \setminus \mcl B_s^\bullet$ and which is identically equal to 1 on $B_\ep(\pi) \setminus B_{ \alpha}(\mcl B_s^\bullet)$. 

Recall~\cite[Lemma 2.1]{gm-confluence} that $\mcl B_s^\bullet$ is a local set for $h$, so under the conditional law of $h|_{\BB C\setminus \mcl B_s^\bullet}$ given $\mcl F$ is that of a zero-boundary GFF on $\BB C\setminus\mcl B_s^\bullet$ plus an $\mcl F$-measurable harmonic function. 
By a standard absolute continuity statement for the GFF, if we set
\eqbn
\wt h := h - \frac{1}{\xi} \log (\frac{A}{\zeta}) \phi ,
\eqen
then the conditional laws of $h$ and $\wt h$ given $\mcl F$ are mutually absolutely continuous. 
We also note that by Weyl scaling (Axiom~\ref{item-metric-f}) and since $\phi$ vanishes on $\mcl B_s^\bullet$, the definition of $\mcl B_s^\bullet$ is unaffected by replacing $h$ with $\wt h$. 
Since we know that $\BB P[E | \mcl F]  >0$, it therefore suffices to show that if $E$ occurs then every $D_{\wt h}$-geodesic from 0 to a point outside of $B_\delta(\mcl B_s^\bullet)$ passes through $I$. 

Henceforth assume that $E$ occurs. The rest of the argument is deterministic. 
By Weyl scaling the three conditions in the definition of $E$ lead to the following properties of $D_{\wt h}$.
\begin{enumerate}
\item Since $\phi$ vanishes outside of $B_{2\ep}(\pi)$, the $D_{\wt h}$-distance from the $2\ep$-neighborhood $B_{2\ep}(\pi)$ to each of $\bdy\mcl B_s^\bullet\setminus I$ and $\bdy B_\delta(\mcl B_s^\bullet)$ is at least $100 \zeta$. \label{item-geo-arc-far'}
\item Since $\phi$ is non-negative, $D_{\wt h} \leq D_h$ so in particular the $D_{\wt h}$-distance from each point of $B_{2\alpha}(\mcl B_s^\bullet)$ to $\mcl B_s^\bullet$ is at most $\zeta$.  \label{item-geo-arc-close'}
\item Since $\phi \equiv 1$ on $B_\ep(\pi) \setminus B_{ \alpha}(\mcl B_s^\bullet)$, for each $z\in \pi \setminus B_{2\alpha}(\mcl B_s^\bullet)$, there is a path from $z$ to a point of $B_{2\alpha}(\mcl B_s^\bullet)$ which is contained in $B_\ep(\pi) \setminus B_{2\alpha}(\mcl B_s^\bullet)$ and has $D_{\wt h}$-length at most $\zeta$. \label{item-geo-arc-internal'}
\end{enumerate}

Properties~\ref{item-geo-arc-close'} and~\ref{item-geo-arc-internal'} together imply that the $D_{\wt h}$-distance from each point of $\pi$ to $\mcl B_s^\bullet$ is at most $2\zeta$, and hence the $D_{\wt h}$-distance from each point of $\pi$ to 0 is at most $s + 2\zeta$. Property~\ref{item-geo-arc-far'} implies that the $D_{\wt h}$-distance from each point of $\pi$ to $\mcl B_s^\bullet\setminus I$ is at least $100\zeta$.
Consequently, every path from 0 to any point of $\pi$ which does not pass through $I$ has $D_{\wt h}$-length at least $s+100\zeta$, so cannot be a $D_{\wt h}$-geodesic.
In other words, every $D_{\wt h}$-geodesic from 0 to any point of $\pi$ passes through $I$. 
Since $\pi$ disconnects $\mcl B_s^\bullet$ from $\bdy B_\delta(\mcl B_s^\bullet)$, every $D_{\wt h}$-geodesic from 0 to any point outside of $B_\delta(\mcl B_s^\bullet)$ must pass through $\pi$, and hence must also pass through $I$. 
\end{proof}

\begin{lem} \label{lem-geo-dim-bdy}
Fix $s > 0$. Almost surely, the Euclidean dimension of every $D_h$-geodesic from 0 to a point of $\bdy\mcl B_s^\bullet $ is equal to $\Delta_{\op{geo}}$.
\end{lem}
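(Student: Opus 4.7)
The lower bound is immediate: any $D_h$-geodesic $P$ from $0$ to a point of $\bdy\mcl B_s^\bullet$ goes from $0$ to a nonzero point, so Proposition~\ref{prop-geo-zero-one} gives $\dim_{\mcl H}^0 P \geq \Delta_{\op{geo}}$. The task is to prove the matching upper bound $\dim_{\mcl H}^0 P \leq \Delta_{\op{geo}}$ a.s.\ for \emph{every} such geodesic.

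The key idea, as previewed in the paragraph before the lemma, is that the pair $(\mcl B_s^\bullet,h|_{\mcl B_s^\bullet})$ determines every $D_h$-geodesic from $0$ to $\bdy\mcl B_s^\bullet$ (by Axiom~\ref{item-metric-local}) but does \emph{not} determine which such geodesic agrees with $P_0^\infty|_{[0,s]}$. Write $\mcl F:=\sigma(\mcl B_s^\bullet,h|_{\mcl B_s^\bullet})$. For each rational $t\in(0,s)$, Theorem~\ref{thm-conf} provides an $\mcl F$-measurable finite confluence set $\mcl X_{t,s}\subset\bdy\mcl B_t^\bullet$ with an $\mcl F$-measurable partition of $\bdy\mcl B_s^\bullet$ into arcs $\{I_x\}_{x\in\mcl X_{t,s}}$, together with the unique ($\mcl F$-measurable) $D_h$-geodesic $P_x$ from $0$ to $x$. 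Applying Lemma~\ref{lem-geo-arc} with the arc $I:=I_x$ and some $\delta>0$, we get that with positive $\mcl F$-conditional probability every $D_h$-geodesic from $0$ to a point of $\BB C\setminus B_\delta(\mcl B_s^\bullet)$ passes through $I_x$; in particular the infinite ray $P_0^\infty$ of Proposition~\ref{prop-infty-geo} satisfies $P_0^\infty(s)\in I_x$. By the arc structure of Theorem~\ref{thm-conf}, this forces $P_0^\infty(t)=x$, and then the uniqueness of $P_x$ gives $P_0^\infty|_{[0,t]}=P_x$. On this event $\dim_{\mcl H}^0 P_x\leq \dim_{\mcl H}^0 P_0^\infty=\Delta_{\op{geo}}$ by Proposition~\ref{prop-geo-zero-one}. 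Since $\dim_{\mcl H}^0 P_x$ is $\mcl F$-measurable and the event has positive $\mcl F$-conditional probability, $\dim_{\mcl H}^0 P_x\leq \Delta_{\op{geo}}$ a.s. Intersecting over the countable collection of pairs $(t,x)$ with $t\in\BB Q\cap(0,s)$ and $x\in\mcl X_{t,s}$, we conclude that a.s.\ $\dim_{\mcl H}^0 P_x\leq\Delta_{\op{geo}}$ simultaneously for all such $(t,x)$.

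To finish, fix an arbitrary $D_h$-geodesic $P$ from $0$ to $z\in\bdy\mcl B_s^\bullet$ and a rational $t\in(0,s)$. By Lemma~\ref{lem-regular-geo}, $P|_{[0,t]}$ is the \emph{unique} $D_h$-geodesic from $0$ to $P(t)$. Using Lemma~\ref{lem-conf-no-leftmost} (applied either directly to $P|_{[0,s]}$ when $z$ is interior to its arc, or to a slightly extended geodesic in combination with Proposition~\ref{prop-conf-endpt}), we identify $P(t)$ with the confluence point $x\in\mcl X_{t,s}$ for which $z\in I_x$, so that $P|_{[0,t]}=P_x$ and hence $\dim_{\mcl H}^0 P|_{[0,t]}\leq\Delta_{\op{geo}}$. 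Since $P=\bigl(\bigcup_{t\in\BB Q\cap(0,s)}P|_{[0,t]}\bigr)\cup\{z\}$, countable stability of Hausdorff dimension yields $\dim_{\mcl H}^0 P\leq\Delta_{\op{geo}}$.

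The main obstacle is the step in the last paragraph that identifies $P(t)$ with a confluence point in $\mcl X_{t,s}$: Proposition~\ref{prop-conf-endpt} and Lemma~\ref{lem-conf-no-leftmost} are phrased for geodesics to points of $\BB C\setminus\mcl B_s^\bullet$, or to points in the \emph{interior} of an arc $I_x$, so we must rule out the pathological situation where $z=P(s)$ is an arc endpoint at every sufficiently large rational $t<s$. This will be done either by checking that the set of such ``bad'' times for a fixed endpoint $z$ is sparse enough that densely many rational $t$ land in the good case (which suffices by the countable stability argument above), or, if needed, by separately bounding $\dim_{\mcl H}^0 P|_{[t_0,s]}$ on the (at most countable) exceptional set of geodesics whose endpoint exhibits non-uniqueness, using a recursive application of the same Lemma~\ref{lem-geo-arc} plus Theorem~\ref{thm-general-confluence} argument with $z$ playing the role of a typical base point.
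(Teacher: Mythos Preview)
Your overall structure is the paper's: use Lemma~\ref{lem-geo-arc} to force $P_0^\infty$ through a chosen arc, conclude that each confluence geodesic $P_x$ has Euclidean dimension $\Delta_{\op{geo}}$ because it is $\mcl F$-measurable, and then transfer this to an arbitrary geodesic $P$ to $\bdy\mcl B_s^\bullet$ via the confluence structure and countable stability. The part showing $\dim_{\mcl H}^0 P_x=\Delta_{\op{geo}}$ is fine (with the obvious proviso that Lemma~\ref{lem-geo-arc} needs $I_x$ to be a non-trivial arc, so you should restrict to those $x$).

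The gap is exactly the obstacle you flag at the end, and neither of your suggested fixes works. You need to show that for \emph{every} $D_h$-geodesic $P$ from $0$ to $z\in\bdy\mcl B_s^\bullet$ one has $P(t)\in\mcl X_{t,s}$, but Lemma~\ref{lem-conf-no-leftmost} fails precisely when $z$ is an arc endpoint, and Proposition~\ref{prop-conf-endpt} applies only to geodesics whose target lies in $\BB C\setminus\mcl B_s^\bullet$, not on the boundary. Your first suggestion (that the set of bad rational $t<s$ is sparse) has no basis: for a fixed boundary point $z$ that happens to be an arc endpoint of $\{I_x\}_{x\in\mcl X_{t,s}}$ for one $t$, nothing prevents it from remaining an endpoint for all nearby $t$. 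Your second suggestion, applying confluence ``with $z$ playing the role of a typical base point'', does not work because Theorem~\ref{thm-general-confluence} and its consequences require a \emph{fixed} center (or one with a suitable law, cf.\ Remark~\ref{remark-log-singularity}); an arc endpoint is a highly exceptional random point and there is no reason to expect confluence there.

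The paper's resolution is a one-line trick you are missing: instead of working with $\mcl X_{t,s}$, fix $0<t<s$ and an additional $\ep>0$, and work with the confluence set $\mcl X_{t-\ep,t}\subset\bdy\mcl B_{t-\ep}^\bullet$ and its arcs on $\bdy\mcl B_t^\bullet$. Apply Lemma~\ref{lem-geo-arc} at radius $t$ (not $s$) to show $\dim_{\mcl H}^0 P_x=\Delta_{\op{geo}}$ for each $x\in\mcl X_{t-\ep,t}$ with $I_x$ non-singleton. Now any $D_h$-geodesic $P$ from $0$ to $z\in\bdy\mcl B_s^\bullet$ has its target in $\BB C\setminus\mcl B_t^\bullet$ (since $t<s$), so Proposition~\ref{prop-conf-endpt} applied at radii $t-\ep<t$ gives directly that $P(t-\ep)=x$ for some $x\in\mcl X_{t-\ep,t}$ whose arc is non-singleton. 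Hence $\dim_{\mcl H}^0 P([0,t-\ep])=\Delta_{\op{geo}}$, and sending $t\to s$, $\ep\to 0$ finishes. The point is that inserting an intermediate radius places the target strictly outside the relevant filled ball, which is exactly what Proposition~\ref{prop-conf-endpt} needs.
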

\begin{proof}
Fix $t\in (0,s)$ and $\ep > 0$. Let $\mcl X = \mcl X_{t-\ep , t} \subset \bdy \mcl B_{t-\ep}^\bullet$ be the set of confluence points from Theorem~\ref{thm-conf} so that every leftmost $D_h$-geodesic from 0 to a point of $\bdy\mcl B_t^\bullet$ passes through some $x\in\mcl X $. 
As in Theorem~\ref{thm-conf}, for $x\in\mcl X$, let $I_x\subset \bdy\mcl B_t^\bullet$ be the set of $y\in\bdy\mcl B_t^\bullet$ such that the leftmost geodesic from 0 to $y$ passes through $x$. 
Note that $\mcl X$ and the sets $I_x$ for $x\in\mcl X$ are $\sigma(\mcl B_t^\bullet , h|_{\mcl B_t^\bullet})$-measurable. 

By assertion~\ref{item-conf-unique} of Theorem~\ref{thm-conf}, a.s.\ there is a unique $D_h$-geodesic $P_x$ from 0 to each $x\in\mcl X $. 
We claim that a.s.\ $\dim_{\mcl H}^0 P_x = \Delta_{\op{geo}}$ for each $x\in\mcl X$ for which $I_x$ is not a singleton. Given the claim, we can conclude the proof as follows.
By Proposition~\ref{prop-conf-endpt}, a.s.\ each $D_h$-geodesic from 0 to a point of $\BB C\setminus \mcl B_t^\bullet$ passes through some $x\in\mcl X$ for which $I_x$ is not a singleton.
By the uniqueness of $P_x$, each such $D_h$-geodesic coincides with $P_x$ on the time interval $[0,t-\ep]$. 
In particular, each $D_h$-geodesic $P$ from 0 to a point of $\bdy\mcl B_s^\bullet$ satisfies $\dim_{\mcl H}^0 P([0,t-\ep]) = \Delta_{\op{geo}}$. Sending $t\rta s$ and $\ep\rta 0$ then concludes the proof. 

To prove the above claim, let $x_*\in\mcl X$ be chosen in a $\sigma(\mcl B_t^\bullet , h|_{\mcl B_t^\bullet})$-measurable manner and assume that $I_{x_*}$ is not a singleton. By condition~\ref{item-conf-arcs} of Theorem~\ref{thm-conf}, $I_{x_*}$ is a non-trivial connected arc of $\bdy\mcl B_t^\bullet$. 
By Lemma~\ref{lem-geo-arc}, a.s.\ it holds with positive conditional probability given $\sigma(\mcl B_t^\bullet , h|_{\mcl B_t^\bullet})$ that the $D_h$-geodesic $P_0^\infty$ from 0 to $\infty$ passes through $I_{x_*}$. If this is the case, then $P_0^\infty|_{[0,t-\ep]} = P_{x_*}$. By Proposition~\ref{prop-geo-zero-one}, this implies that with positive conditional probability given $\sigma(\mcl B_t^\bullet , h|_{\mcl B_t^\bullet})$ we have $\dim_{\mcl H}^0 P_{x_*} = \Delta_{\op{geo}}$. Since $P_{x_*}$ is $\sigma(\mcl B_t^\bullet , h|_{\mcl B_t^\bullet})$-measurable, in fact a.s.\ $\dim_{\mcl H}^0 P_{x_*}  =\Delta_{\op{geo}}$. Applying the same argument to each possible choice of $x_* \in \mcl X$ gives our claim. 
\end{proof}

\begin{lem} \label{lem-geo-dim-bdy-z}
Fix $s > 0 $ and $z\in\BB C$. On the event $\{s < D_h(0,z)\}$, a.s.\ the Euclidean dimension of every $D_h$-geodesic from 0 to a point of $\bdy\mcl B_s^{z,\bullet} $ is equal to $\Delta_{\op{geo}}$.
\end{lem}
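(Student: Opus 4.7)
The plan is to mimic the proof of Lemma~\ref{lem-geo-dim-bdy} with the following substitutions: replace Theorem~\ref{thm-conf} by Proposition~\ref{prop-conf-finite} to obtain confluence points $\mcl X = \mcl X_{t-\ep,t}^z \subset \bdy\mcl B_{t-\ep}^{z,\bullet}$ with arcs $I_x \subset \bdy\mcl B_t^{z,\bullet}$ and a.s.\ unique $D_h$-geodesics $P_x$ from $0$ to each $x\in\mcl X$, and replace Proposition~\ref{prop-conf-endpt} by Proposition~\ref{prop-conf-endpt-finite}. I will fix $t\in(0,s)$ and $\ep>0$ and work on $\{s < D_h(0,z)\}$. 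Exactly as in Lemma~\ref{lem-geo-dim-bdy}, the argument will reduce to showing that for each $\mcl F$-measurable $x_*\in\mcl X$ with $I_{x_*}$ not a singleton, where $\mcl F:=\sigma(\mcl B_t^{z,\bullet},h|_{\mcl B_t^{z,\bullet}})$, we have $\dim_{\mcl H}^0 P_{x_*} = \Delta_{\op{geo}}$ almost surely; the lemma then follows by sending $\ep\to 0$ and $t\to s$.

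The key new ingredient I will need is an analog of Lemma~\ref{lem-geo-arc} for filled metric balls targeted at $z$. The proof of Lemma~\ref{lem-geo-arc} is essentially local and uses only that $\mcl B_s^\bullet$ is a local set for $h$. Since $\mcl B_t^{z,\bullet}$ is also a local set for $h$ on $\{t < D_h(0,z)\}$ (by the same argument as \cite[Lemma 2.1]{gm-confluence}), the bump-function construction of Lemma~\ref{lem-geo-arc} carries over to give the following: with positive conditional probability given $\mcl F$, every $D_h$-geodesic from $0$ to a point sufficiently far inside the connected component $U_z$ of $\BB C\setminus\mcl B_t^{z,\bullet}$ containing $z$ passes through $I_{x_*}$.

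When $z$ lies in the unbounded component of $\BB C\setminus\ol{\mcl B_t(0;D_h)}$, we have $\bdy\mcl B_t^{z,\bullet}=\bdy\mcl B_t^\bullet$, and the argument is identical to that of Lemma~\ref{lem-geo-dim-bdy}, using $P_0^\infty$ as the reference geodesic of known dimension $\Delta_{\op{geo}}$. When $z$ lies in a bounded component $U_z$, however, $P_0^\infty$ does not meet $\bdy\mcl B_t^{z,\bullet}$, so I will instead work with the infinite geodesic $\wt P_0^\infty$ of the modified field $\wt h = h - (C/\xi)\phi$, where $\phi$ is a smooth bump function supported in a neighborhood of $I_{x_*}$ inside $U_z$ and $C$ is chosen large enough that $\wt P_0^\infty$ is forced through $I_{x_*}$ (the modified field does this by making paths exiting $\mcl B_t^{z,\bullet}$ through $I_{x_*}$ very cheap). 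Since $\wt h=h$ on $\mcl B_t^{z,\bullet}$, the initial segment $\wt P_0^\infty|_{[0,t-\ep]}$ coincides with $P_{x_*}$ by the uniqueness portion of Proposition~\ref{prop-conf-finite}, and absolute continuity of the laws of $h$ and $\wt h$ given $\mcl F$ ensures that this scenario occurs with positive conditional probability given $\mcl F$.

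The hard part will be to verify that $\dim_{\mcl H}^0 \wt P_0^\infty = \Delta_{\op{geo}}$ in the bounded case, since $\wt h$ is not itself a whole-plane GFF. The idea will be that $\Delta_{\op{geo}}$ is determined by the local behavior of the field near $0$, via the scale invariance of Proposition~\ref{prop-infty-geo}(iii): one can compute $\dim_{\mcl H}^0 P_0^\infty$ by looking at $P_0^\infty$ inside arbitrarily small Euclidean neighborhoods of $0$. Since $\wt h$ agrees with $h$ in a Euclidean neighborhood of $0$ (because $\phi$ is supported in $U_z$, which lies at positive distance from $0$), local absolute continuity between $\wt h$ and $h$ near $0$ identifies $\dim_{\mcl H}^0 \wt P_0^\infty$ with $\Delta_{\op{geo}}$. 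Once this identification is established, the event $\{\dim_{\mcl H}^0 P_{x_*} = \Delta_{\op{geo}}\}$ is $\mcl F$-measurable with positive conditional probability, hence holds almost surely, completing the proof.
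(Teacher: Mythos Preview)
Your proposal has a genuine gap in the bounded-component case, and the paper's proof avoids this issue entirely by a different (and much shorter) route.

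The paper's proof is a one-line reduction: apply the M\"obius map $\phi(w) = zw/(w-z)$ sending $0\to 0$ and $z\to\infty$, use the LQG coordinate change formula (Axiom~\ref{item-metric-coord}) to transport Lemma~\ref{lem-geo-dim-bdy} from the field $h$ to the field $\wt h = h\circ\phi^{-1} + Q\log|(\phi^{-1})'|$, and then use local absolute continuity between $\wt h$ and a whole-plane GFF on bounded sets away from $z$ --- exactly the mechanism of Proposition~\ref{prop-conf-finite}. Since $\phi$ is locally bi-Lipschitz away from its pole and the relevant geodesics stay in $\mcl B_s^{z,\bullet}\not\ni z$, Hausdorff dimension is preserved.

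Your direct approach breaks down when $z$ lies in a \emph{bounded} connected component $U_z$ of $\BB C\setminus\ol{\mcl B_t}$. In that case $\mcl B_t^{z,\bullet}$ is unbounded (it contains $\infty$) and $\bdy\mcl B_t^{z,\bullet}=\bdy U_z$. Your plan is to subtract a bump function supported in $U_z$ near the arc $I_{x_*}\subset\bdy U_z$, making paths that enter $U_z$ through $I_{x_*}$ very cheap, and then claim this forces the infinite ray $\wt P_0^\infty$ (from $0$ to $\infty$ for $D_{\wt h}$) through $I_{x_*}$. But this cannot work: $\infty\in\mcl B_t^{z,\bullet}$, so a geodesic from $0$ to $\infty$ has no need to enter $U_z$ at all. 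Making a bounded bubble cheap creates a free detour, but a geodesic does not take free detours --- it takes the shortest path, which still runs directly through the unbounded part of $\mcl B_t^{z,\bullet}$. No amount of cheapening inside $U_z$ will make $\wt P_0^\infty$ visit $\bdy U_z$, so you never get $\wt P_0^\infty|_{[0,t-\ep]}=P_{x_*}$, and the argument stalls.

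The conformal-map approach solves exactly this: it sends the bounded component $U_z$ containing $z$ to the unbounded component, so after the map the arc $I_{x_*}$ lies on $\bdy\mcl B_t^\bullet$ and the machinery of Lemma~\ref{lem-geo-dim-bdy} (in particular Lemma~\ref{lem-geo-arc} forcing $P_0^\infty$ through a given boundary arc) applies verbatim.
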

\begin{proof}
This follows from exactly the same conformal invariance / absolute continuity argument used in the proof of Proposition~\ref{prop-conf-finite}.
\end{proof}

\begin{proof}[Proof of Theorem~\ref{thm-geo-dim}]
By Lemma~\ref{lem-geo-dim-bdy-z}, it is a.s.\ the case that for each $q\in\BB Q^2$ and each rational $s < D_h(0,q)$, the Euclidean dimension of every $D_h$-geodesic from 0 to a point of $\bdy\mcl B_s^{q,\bullet} $ is equal to $\Delta_{\op{geo}}$.
Henceforth assume that this is the case. 

Now let $P$ be a $D_h$-geodesic from 0 to a point $z\in \BB C\setminus \{0\}$.
Choose a rational time $s\in (0,D_h(0,z))$ and let $q \in \BB Q^2$ be a point which lies in the same connected component of $\BB C\setminus \mcl B_s^{\bullet}$ as $z$.
Then $P|_{[0,s]}$ is a $D_h$-geodesic from 0 to a point of $\bdy\mcl B_s^{q,\bullet} $, so $\dim_{\mcl H}^0(P|_{[0,s]}) = \Delta_{\op{geo}}$. Sending $s\rta D_h(0,z)$ and using the countable stability of Euclidean dimension then shows that $\dim_{\mcl H}^0 P = \Delta_{\op{geo}}$. 
\end{proof}

\section{LQG metric ball boundaries}
\label{sec-ball-bdry}

In the previous section, we proved a zero-one law for LQG geodesics.  For LQG metric ball boundaries, we can prove an even stronger result: Theorem~\ref{thm-ball-bdy-bound} identifies the explicit  a.s.\ $\gamma$-quantum and Euclidean dimensions of an LQG metric ball stopped at the first time it hits a specified point. The reason we can obtain this stronger result is that we can apply the earlier work of~\cite{gwynne-ball-bdy} that identified the \emph{essential supremum} of the $\gamma$-quantum and Euclidean dimensions of LQG metric ball boundaries.  We will also apply a more general result from~\cite{gwynne-ball-bdy} that gives upper bounds for the dimensions of certain subsets of LQG metric ball boundaries.  

In Section~\ref{sec-upper-bound}, we state this generalized upper bound from~\cite{gwynne-ball-bdy}, and we prove a couple of technical lemmas that we need to apply this result to our setting.  In Section~\ref{sec-dim-bdry} we derive a zero-one law for metric ball boundaries, and we use it and the generalized upper bound to prove  Theorem~\ref{thm-ball-bdy-bound}. Finally, in Section~\ref{sec-outer}, we will apply the generalized upper bound to analyze the \emph{exterior boundaries} of LQG metric balls, which we defined in Definition~\ref{def-outer}.

\subsection{A generalized upper bound}
\label{sec-upper-bound}

The proofs in Sections~\ref{sec-dim-bdry} and~\ref{sec-outer} will use the generalized upper bound theorem~\cite[Theorem 2.9]{gwynne-ball-bdy}, which gives upper bounds for  the $\gamma$-quantum and Euclidean dimensions of a large class of subsets of LQG metric ball boundaries.  We restate this theorem here.  We have chosen to state the theorem in slightly less than its full generality only to avoid introducing extra notation that we do not need for our applications.

\begin{thm}[Generalized upper bound] \label{thm-gen-upper}
Suppose that we are given events $\{F_\ep(z) : \ep > 0 , z\in\BB C\}$ and $q>0$ with the following properties. 
For any $\alpha\in [-2,2]$, any $\zeta \in (0,1)$, any bounded open set $V\subset\BB C$ with $\ol V\subset \BB C\setminus \{0\}$, the following is true.
\begin{enumerate}
\item For each $z\in V$,
\eqb \label{eqn-gen-upper-prob}
\BB P\left[ F_\ep(z) \cap \left\{  \sup_{u,v\in B_\ep(z)} D_h(u,v )  \in \left[  \ep^{\xi(Q-\alpha) + \zeta} , \ep^{\xi(Q-\alpha) - \zeta} \right]   \right\} \right] 
\leq \ep^{\alpha^2/2 + q  + o_\zeta(1)+ o_\ep(1) }, 
\eqe
where the rate of the $o_\zeta(1)$ depends only on $\alpha, \gamma$ and the rate of the $o_\ep(1)$ depends only on $ V, \alpha,\zeta,\gamma $ (not on the particular choice of $z$).\footnote{Here and in what follows, for two functions $f,g$ of a positive real number $x$ we write $f(x) = o_x(g(x))$ (resp.\ $f(x) = O_x(g(x))$) if $f(x)/g(x)$ goes to zero (resp.\ remains bounded) as $x\rta 0$. The dependencies of the rate of convergence will always be specified unless they are clear from the context. }
\item There exists an open set $U \subset \BB C$ which contains zero and lies at positive distance from $V$ such that for each small enough $\ep > 0$ (depending on $V$), each of the events $F_\ep(z)$ for $z\in V$ is a.s.\ determined by $h|_{\BB C\setminus U}$.  \label{item-gen-upper-local} 
\end{enumerate}
For $s  >0$, let $\mcl Y_{s}$ be the set of $z\in\bdy\mcl B_{s}$ such that
\eqb \label{eqn-gen-upper-set}
\bigcup_{\ep > 0} \bigcap_{r\in (0,\ep) \cap \BB Q} \bigcap_{w\in B_r(z) \cap \BB Q^2} F_{ r}(w) 
\eqe
occurs, i.e., for each small enough rational $r > 0$, the event $F_{ r}(w)$ occurs for every $w\in B_r(z)\cap \BB Q^2 $ (we consider rational values of $r$ and $w$ to avoid measurability issues).  
Then, almost surely,  
\[
\dim_{\mcl H}^0 Y_{s} \leq
\max\left\{ 0 , 2- \xi Q + \xi^2/2 - q   \right\},
\]
and  
\[
\dim_{\mcl H}^\gamma Y_{s} \leq
\max\left\{ 0 ,   \sup_{\alpha\in [-2,2]} \frac{2 - \alpha^2/2 -q}{ \xi(Q-\alpha) } -1  \right\}.
\] 
\end{thm}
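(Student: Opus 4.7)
The plan is to prove Theorem~\ref{thm-gen-upper} by a standard covering argument that stratifies a Euclidean $\ep$-grid cover of $V$ by a ``thickness'' parameter $\alpha\in[-2,2]$, applies the hypothesis~\eqref{eqn-gen-upper-prob} together with a decoupled estimate for the incidence of $\bdy\mcl B_s$, and then optimizes the resulting exponent in $\alpha$.

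Fix a small $\zeta>0$ and a $\zeta$-mesh of values $\alpha\in[-2,2]$. For each lattice point $z\in\ep\BB Z^2\cap V$ and each mesh value $\alpha$, call $z$ ``$(\alpha,\ep)$-active'' if $F_\ep(z)$ occurs, $\sup_{u,v\in B_\ep(z)}D_h(u,v)\in[\ep^{\xi(Q-\alpha)+\zeta},\ep^{\xi(Q-\alpha)-\zeta}]$, and $B_\ep(z)\cap\bdy\mcl B_s\neq\emptyset$. By the definition of $Y_s$, together with the fact that a.s.\ the local $D_h$-scaling exponent at every point lies in $[-2,2]$ (cf.\ the metric thick-point discussion in Remark~\ref{remark-thick-pts}), every $z\in Y_s\cap V$ lies within $O(\ep)$ of an $(\alpha,\ep)$-active lattice point for some $\alpha$ in the mesh, so these balls cover $Y_s\cap V$.

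The key input is the one-point estimate
\[
\BB P[z\text{ is }(\alpha,\ep)\text{-active}] \leq \ep^{\alpha^2/2 + \xi(Q-\alpha) + q + o_\zeta(1) + o_\ep(1)}.
\]
Hypothesis~\eqref{eqn-gen-upper-prob} supplies the factor $\ep^{\alpha^2/2+q}$ arising from $F_\ep(z)$ combined with the $\alpha$-thickness. The additional $\ep^{\xi(Q-\alpha)}$ factor comes from the observation that, on the $\alpha$-thick event, $B_\ep(z)\cap\bdy\mcl B_s\neq\emptyset$ forces $|D_h(0,z)-s|\leq \ep^{\xi(Q-\alpha)-\zeta}$, and the conditional density of $D_h(0,z)$ is uniformly bounded near $s$ (a one-point estimate I expect to extract from the moment bounds in~\cite{lqg-metric-estimates} exactly as in~\cite{gwynne-ball-bdy}). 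The locality hypothesis~\ref{item-gen-upper-local} is what makes this decoupling work: since $F_\ep(z)$ and the thickness event are measurable with respect to $h|_{\BB C\setminus U}$ with $U\ni 0$, I condition on $h|_{\BB C\setminus U}$ first and then apply the density bound to the resampled field on $U$. Given this one-point estimate, a Markov/union bound over the $O(\ep^{-2})$ lattice sites and over the finite $\alpha$-mesh, together with Borel--Cantelli along $\ep=2^{-n}$, yields an a.s.\ covering bound. For the Euclidean dimension, the $d$-power cost is $\ep^{d-2+\alpha^2/2+q+\xi(Q-\alpha)+o(1)}$; minimizing $2-\alpha^2/2-q-\xi(Q-\alpha)$ in $\alpha$ produces the critical value $\alpha=\xi$, giving the stated bound $2-\xi Q+\xi^2/2-q$. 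For the quantum dimension, each $(\alpha,\ep)$-active ball has $D_h$-diameter $\leq \ep^{\xi(Q-\alpha)-\zeta}$, so the $d$-cost is $\ep^{d\xi(Q-\alpha)-2+\alpha^2/2+q+\xi(Q-\alpha)+o(1)}$, and taking the supremum over $\alpha\in[-2,2]$ of $\frac{2-\alpha^2/2-q}{\xi(Q-\alpha)}-1$ yields the claimed quantum bound.

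The main obstacle is the one-point estimate itself, specifically the step of multiplying the $F_\ep\cap\alpha$-thick probability by $\ep^{\xi(Q-\alpha)}$ from the ball-boundary condition. The locality only gives measurability on $\BB C\setminus U$, while the event $B_\ep(z)\cap\bdy\mcl B_s\neq\emptyset$ involves the full field via $D_h(0,z)$. The delicate point is therefore controlling the conditional law of $D_h(0,z)$ given $h|_{\BB C\setminus U}$, ensuring its density near $s$ is uniformly bounded on a high-probability regular event. I expect this step to closely parallel the corresponding argument in~\cite{gwynne-ball-bdy}, which established the essential-supremum version of the same bound for the full boundary $\bdy\mcl B_s$ (i.e.\ the case $q=0$, $F_\ep\equiv$ the full probability space).
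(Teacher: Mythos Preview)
The paper does not prove Theorem~\ref{thm-gen-upper}; as stated at the start of Section~\ref{sec-upper-bound}, it is simply a restatement of \cite[Theorem~2.9]{gwynne-ball-bdy} and is quoted without proof. Your sketch is exactly the strategy carried out in \cite{gwynne-ball-bdy}: cover $V$ by a Euclidean $\ep$-grid, stratify by the local thickness $\alpha$, multiply the bound from hypothesis~\eqref{eqn-gen-upper-prob} by an additional factor $\ep^{\xi(Q-\alpha)}$ coming from the event $\{B_\ep(z)\cap\bdy\mcl B_s\neq\emptyset\}$, union-bound and run Borel--Cantelli, then optimize in $\alpha$. You have also correctly identified the only genuinely delicate step, namely decoupling the ball-boundary incidence (which feels the field near $0$) from the $F_\ep$-and-thickness event, and that hypothesis~\ref{item-gen-upper-local} is precisely what allows one to condition on $h|_{\BB C\setminus U}$ and resample $h|_U$ to extract a density bound for $D_h(0,z)$.

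One technical point you pass over: the thickness event in~\eqref{eqn-gen-upper-prob} is phrased in terms of the full metric $D_h$, and is therefore not literally $\sigma(h|_{\BB C\setminus U})$-measurable even though $B_\ep(z)\subset\BB C\setminus U$. In \cite{gwynne-ball-bdy} (and mirrored here in the proof of Lemma~\ref{lem-outer-bdy-prob}) this is handled by replacing $\sup_{u,v\in B_\ep(z)}D_h(u,v)$ with a localized diameter $\sup_{u,v\in B_\ep(z)}D_h(u,v;B_{3\ep}(z))$ at the cost of a superpolynomially small error, after which the decoupling goes through cleanly.
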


We will apply Theorem~\ref{thm-gen-upper} for a particular type of events $F_\ep(z)$.  Roughly speaking, we define some ``good'' event $G_r(z)$ that depends locally on $h$, viewed modulo additive constant, and has uniformly positive probability across scales. We let $F_\ep(z)$ be the ``very bad'' event that none of the events $G_r(z)$ occur for a particular range of $r$ values (depending on $\ep$).  The following lemma asserts that the very bad events $F_\ep(z)$ satisfy the conditions of the generalized upper bound.    If we think of ``very bad points'' $z$ as points for which $F_\ep(z)$ occurs, then the generalized upper bound a.s.\ bounds the dimension of points ``surrounded'' by very bad points on some sufficiently small scale.

\begin{lem}
\label{lem-check-conditions}
Suppose that $G_r(z)$ is an event defined for each  $z\in \BB C$ and $r>0$ with the following two properties: 
\begin{itemize}
\item
 The probability of $G_r(z)$ is positive and does not depend on $r$ or $z$.
\item
There exists $b > a > 0$ such that for each $z , r$, the event $G_r(z)$ is a.s.\ determined by
$h|_{B_{b r}(z) \setminus B_{a r}(z) }$, viewed modulo additive constant.
\end{itemize}
Then the event
\eqb
\label{eqn-defn-F-ep-z}	
F_\ep(z) := \bigcap_{r \in [\ep^{1/2}/a , 2\ep^{1/4}/b] \cap \BB Q } [G_r(z)]^c 
\eqe
satisfies the conditions of Theorem~\ref{thm-gen-upper} for some $q > 0$. 
\end{lem}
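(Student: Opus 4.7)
The plan is to verify conditions (1) and (2) of Theorem~\ref{thm-gen-upper} directly. Condition (2) (locality) is straightforward: given bounded $V\subset\BB C$ with $\ol V\subset\BB C\setminus\{0\}$, I choose an open neighborhood $U$ of $0$ at positive distance from $V$. For $\ep$ small enough depending on $U,V,a,b$, every annulus $B_{br}(z)\setminus B_{ar}(z)$ with $z\in V$ and $r\in[\ep^{1/2}/a,2\ep^{1/4}/b]$ is contained in $\BB C\setminus U$, since $br\leq 2\ep^{1/4}\rta 0$ while $z$ stays bounded away from $0$. As each $G_r(z)$ is determined by $h|_{B_{br}(z)\setminus B_{ar}(z)}$ modulo additive constant, so is $F_\ep(z)$, and one can fix the additive constant of $h$ in a manner measurable with respect to $h|_{\BB C\setminus U}$.

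For condition (1), I will bound $\BB P[F_\ep(z)]$ and then combine it with a standard estimate for the diameter event. First I extract a subsequence of scales $r_1<\cdots<r_N$ from $[\ep^{3/8}/a,2\ep^{1/4}/b]\cap\BB Q$ with $r_{k+1}/r_k\geq K$ for a large constant $K$ depending on $a,b,\gamma$, giving $N\geq c\log(1/\ep)$ well-separated disjoint annuli $A_k=B_{br_k}(z)\setminus B_{ar_k}(z)$. Using the standard near-independence of the whole-plane GFF across well-separated scales --- iteratively condition on the field outside $A_N,A_{N-1},\ldots$, using that the harmonic correction from each step becomes essentially constant on the next inner annulus for large $K$ --- together with the positive probability $p_0:=\BB P[G_r(z)]$, I conclude
\[
\BB P\left[\bigcap_{k=1}^N G_{r_k}(z)^c\right]\leq (1-p_0/2)^N = \ep^{q_1+o_\ep(1)}
\]
for some $q_1>0$, and hence $\BB P[F_\ep(z)]\leq \ep^{q_1+o_\ep(1)}$. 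The restricted range $[\ep^{3/8}/a,2\ep^{1/4}/b]$ also ensures that all relevant annuli lie in $B_{2\ep^{1/4}}(z)\setminus B_{\ep^{3/8}}(z)$, which is at Euclidean distance much larger than $\ep$ from $B_\ep(z)$. Applying the GFF Markov property across this gap, the event $F_\ep(z)$ and the diameter event $E_\ep^\alpha(z):=\{\sup_{u,v\in B_\ep(z)}D_h(u,v)\in[\ep^{\xi(Q-\alpha)+\zeta},\ep^{\xi(Q-\alpha)-\zeta}]\}$ are approximately independent, so combining with the standard multifractal/thick-point estimate $\BB P[E_\ep^\alpha(z)]\leq \ep^{\alpha^2/2 - o_\zeta(1)-o_\ep(1)}$ (the same type of input used in~\cite{gwynne-ball-bdy}) I obtain
\[
\BB P[F_\ep(z)\cap E_\ep^\alpha(z)]\leq \ep^{\alpha^2/2 + q_1 - o_\zeta(1)-o_\ep(1)},
\]
which verifies condition (1) with $q=q_1$.

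The main obstacle is the quantitative near-independence of the whole-plane GFF across scales modulo additive constant, which underlies both the polynomial decay of $\BB P[F_\ep(z)]$ and its approximate independence from the diameter event. This is a standard tool in the LQG literature (see e.g.\ the analogous reductions in~\cite{gwynne-ball-bdy,lqg-metric-estimates}), and the care required is in (a) choosing the scale-separation constant $K$ large enough that the harmonic-correction error contributes at most a factor $1/2$ per scale, and (b) restricting the range of $r$ in the chosen subsequence so that a buffer annulus separates the $A_k$ from $B_\ep(z)$. Uniformity of the estimates over $z\in V$ follows from the translation invariance of the whole-plane GFF modulo additive constant.
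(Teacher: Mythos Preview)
Your approach is essentially the same as the paper's: the paper also proves $\BB P[F_\ep(z)] = O_\ep(\ep^q)$ via the near-independence of the GFF across disjoint concentric annuli (citing~\cite[Lemma~3.1]{local-metrics}), and then combines this with the diameter estimate via an approximate-independence argument. Your treatment of condition~(2) and of the polynomial decay of $\BB P[F_\ep(z)]$ is fine; note also that the full $F_\ep(z)$ already lives outside $B_{\ep^{1/2}}(z)$, so restricting to the subrange $[\ep^{3/8}/a,2\ep^{1/4}/b]$ is not needed for the separation.

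The one step you gloss over is the claim that ``applying the GFF Markov property across this gap'' gives approximate independence of $F_\ep(z)$ and the diameter event $E_\ep^\alpha(z)$. The issue is that $E_\ep^\alpha(z)$ is \emph{not} determined by $h|_{B_\ep(z)}$, nor by $h$ restricted to any bounded neighborhood of $z$: the $D_h$-diameter of $B_\ep(z)$ depends on geodesics which could in principle wander far from $z$, in particular into the region $B_{2\ep^{1/4}}(z)\setminus B_{\ep^{1/2}}(z)$ where $F_\ep(z)$ lives. The paper handles this by first replacing $E_\ep^\alpha(z)$ with a localized version $\wt H_\ep(z)\in\sigma(h|_{B_{3\ep}(z)})$, defined using the internal metric on $B_{3\ep}(z)$ together with an annulus-crossing condition, and showing (via~\cite[Proposition~3.1]{lqg-metric-estimates}) that $\BB P[E_\ep^\alpha(z)\setminus\wt H_\ep(z)]$ decays faster than any power of $\ep$. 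Only after this localization does the Markov-property/Radon--Nikodym argument (with H\"older's inequality) go through. This is a genuine, though standard, step you would need to add; without it the near-independence claim as stated is not justified.
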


To prove Lemma~\ref{lem-check-conditions}, we first check that $F_\ep(z)$ is sufficiently local and that its probability (not intersected with any other event) decays sufficiently fast as $\ep \rta 0$:

\begin{lem} \label{lem-outer-bdy-msrble}
With $F_\ep(z)$ defined just above, $F_\ep(z)$ is a.s.\ determined by $h|_{ B_{2\ep^{1/4}}(z) \setminus B_{\ep^{1/2} }(z)}$ viewed modulo additive constant.
Furthermore, there is an exponent $q = q(\gamma) > 0$ such that $\BB P[F_\ep(z)] = O_\ep(\ep^q)$ uniformly over all $z\in\BB C$. 
\end{lem}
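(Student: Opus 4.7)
The first assertion is a direct geometric observation: for every $r \in [\ep^{1/2}/a,\, 2\ep^{1/4}/b]$ we have $ar \ge \ep^{1/2}$ and $br \le 2\ep^{1/4}$, so $B_{br}(z)\setminus B_{ar}(z) \subset B_{2\ep^{1/4}}(z)\setminus B_{\ep^{1/2}}(z)$. Since each $G_r(z)$ is a.s.\ determined by $h$ restricted to the smaller annulus modulo additive constant, the countable intersection defining $F_\ep(z)$ inherits this measurability property on the larger annulus.

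For the probability bound, translation invariance of the law of the whole-plane GFF modulo additive constant reduces matters to $z=0$. The strategy is to mine the range $[\ep^{1/2}/a,\, 2\ep^{1/4}/b]$, which spans $\asymp |\log \ep|$ multiplicative scales, for many approximately independent trials. Fix a large constant $K=K(a,b)$; set $r_1 := \ep^{1/2}/a$ and iteratively $r_{k+1} := K r_k$ while $r_k \le 2\ep^{1/4}/b$ (replacing each $r_k$ by a nearby rational, which does not affect the geometry). This produces $N \ge c_0 |\log\ep|$ scales, with $c_0 = c_0(a,b,K) > 0$, whose annuli $A_k := B_{br_k}(0)\setminus B_{ar_k}(0)$ are pairwise disjoint and well separated.

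The crux of the proof is a near-independence estimate: for $K$ large enough, there exists $p' = p'(K,\gamma) > 0$ such that for every $k$,
\[
\BB P\Bigl[ G_{r_k}(0) \,\Big|\, h|_{\BB C \setminus A_k} \Bigr] \;\ge\; p' \quad \text{a.s.}
\]
The natural route is the Markov decomposition $h|_{A_k} = g_k + \psi_k$, where $g_k$ is a zero-boundary GFF on $A_k$ independent of $h|_{\BB C\setminus A_k}$ and $\psi_k$ is the harmonic extension of the boundary values. Because $G_{r_k}(0)$ depends on $h|_{A_k}$ only modulo additive constant, one may replace $\psi_k$ by $\psi_k$ minus its average over the central circle of $A_k$, and then a Cameron--Martin / Radon--Nikodym change of measure converts the uniform unconditional lower bound $\BB P[G_r(0)] = p > 0$ into the uniform conditional bound $p'$, provided the Dirichlet energy of the centered $\psi_k$ stays bounded. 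Iterating the tower property then gives $\BB P[F_\ep(0)] \le (1-p')^N \le \ep^q$ for $q := c_0 \log\bigl(1/(1-p')\bigr) > 0$.

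The main technical obstacle is the Radon--Nikodym step: one needs to control the Dirichlet energy of the centered harmonic extension $\psi_k$ uniformly in the boundary data. The essential point is that subtracting the mean kills the dominant constant mode, and the remaining harmonic function's Dirichlet energy on the inner portion of $A_k$ is controlled by standard harmonic-oscillation estimates once the inner and outer radii of $A_k$ are separated by a large multiplicative factor, which is why we took $K$ large. Analogous mod-constant near-independence estimates for events on the whole-plane GFF appear repeatedly in the LQG literature leading up to this paper, so the execution should be routine once the framework above is set up.
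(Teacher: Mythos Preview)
Your overall strategy is exactly the paper's: the measurability follows from the annulus inclusion, and the probability bound comes from near-independence of the events $G_{r_k}(z)$ across $\asymp|\log\ep|$ well-separated scales. The paper simply invokes \cite[Lemma~3.1]{local-metrics} as a black box for this near-independence step, whereas you attempt to reprove that lemma from scratch.

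The gap is in your Cameron--Martin argument. You claim an almost-sure conditional lower bound $\BB P[G_{r_k}(0)\mid h|_{\BB C\setminus A_k}]\ge p'$, arguing that after subtracting the circle-average the harmonic extension $\psi_k$ has uniformly bounded Dirichlet energy on the inner portion of $A_k$. This is false: the centered $\psi_k$ is a Gaussian random function whose Dirichlet energy on any sub-annulus is an unbounded random variable (e.g., multiples of $\log|\cdot|$ have zero circle-average but arbitrarily large Dirichlet energy). Harmonic-oscillation estimates control $\psi_k$ in terms of its boundary data, but the boundary data is random and unbounded, so no deterministic Cameron--Martin bound is available. The actual proof of the near-independence lemma does not go through an a.s.\ conditional bound; rather, one shows that the oscillation of $\psi_k$ on the sub-annulus is small \emph{with high probability} (via Gaussian tails for the GFF's harmonic part), and then iterates a conditional estimate that holds except on a small-probability bad event. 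Since you end by noting that such estimates are standard in the literature, the cleanest fix is simply to cite the lemma as the paper does.
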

\begin{proof}
From the locality of $G_r(z)$ and the definition of $F_\ep(z)$, it is immediate that $F_\ep(z)$ is a.s.\ determined by $h|_{B_{2\ep^{1/4}}(z) \setminus B_{\ep^{1/2} }(z) }$ viewed modulo additive constant.

To prove the second part of the lemma, let $p = p(\gamma) > 0$ be a constant such that $\BB P[G_r(z)] \geq p$ for each $z\in\BB C$ and $r  > 0$. 
We can now use a general independence lemma for the restrictions of the GFF in disjoint concentric annuli~\cite[Lemma~3.1]{local-metrics} to find that there is a $q = q(\gamma) > 0$ such that $\BB P\left[ F_\ep(z) \right] = O_\ep( \ep^q ) $.
\end{proof}

We now show that, for any events $F_\ep(z)$ with the two properties we proved in Lemma~\ref{lem-outer-bdy-msrble}, $F_\ep(z)$ satisfies the upper bound~\eqref{eqn-gen-upper-prob} for the probability of $F_\ep(z)$ intersected with the event that the $D_h$-diameter of $B_\ep(z)$ lies in a certain interval. 

\begin{lem} \label{lem-outer-bdy-prob}
Suppose $q > 0$ and we are given events $F_\ep(z)$ for $\ep > 0$ and $z\in\BB C$ such that $F_\ep(z)$ is a.s.\ determined by $ h|_{\BB C\setminus B_{\ep^{1/2}}(z)}  $, viewed modulo additive constant, and $\BB P[F_\ep(z)] = O_\ep(\ep^q)$ as $\ep\rta 0$, uniformly over all $z\in\BB C$. 
Also let $\alpha \in [-2,2]$ and $\zeta\in (0,1)$ and let $V\subset \BB C$ be bounded open set with $\ol V\subset \BB C\setminus \{0\}$. 
Then for each $z\in V$, 
\eqb \label{eqn-outer-bdy-prob}
\BB P\left[ F_\ep(z) \cap \left\{  \sup_{u,v\in B_\ep(z)} D_h(u,v )  \in \left[  \ep^{\xi(Q-\alpha) + \zeta} , \ep^{\xi(Q-\alpha) - \zeta} \right]   \right\} \right] 
\leq \ep^{\alpha^2/2 + q    + o_\zeta(1)+ o_\ep(1) }, 
\eqe
where the rate of convergence of the $o_\zeta(1)$ depends only on $\alpha, \gamma$ and the rate of convergence of the $o_\ep(1)$ depends only on $  V, \alpha,\zeta,\gamma $.
\end{lem}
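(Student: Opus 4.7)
The plan is a standard KPZ-style argument that reduces the LQG-diameter condition to a condition on a circle average of $h$ at $z$, then uses the Markovian structure of the whole-plane GFF together with a Cameron-Martin shift. First, I would apply the upper and lower tail estimates for $D_h$-diameters of small Euclidean balls from \cite{lqg-metric-estimates} (used analogously in the proof of \cite[Theorem 1.1]{gwynne-ball-bdy}) to show that the event $\{\sup_{u,v\in B_\ep(z)}D_h(u,v)\in[\ep^{\xi(Q-\alpha)+\zeta},\ep^{\xi(Q-\alpha)-\zeta}]\}$ is contained, up to an event of polynomially small probability as $\ep\rta 0$, in the circle-average event
\[
E^{\textup{ca}}_\ep(z) := \left\{ h_\ep(z) + \alpha\log\ep \in [-C\zeta|\log\ep|,\,C\zeta|\log\ep|] \right\}
\]
for a constant $C$ depending only on $\gamma$, where $h_\ep(z)$ is the circle average of $h$ on $\bdy B_\ep(z)$.

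Second, I would use the decomposition $h_\ep(z) = h_{\ep^{1/2}}(z) + K_\ep(z)$, where $K_\ep(z) := h_\ep(z) - h_{\ep^{1/2}}(z)$. By the Markov property of the whole-plane GFF on $B_{\ep^{1/2}}(z)$ (harmonic extension plus independent zero-boundary GFF), $K_\ep(z)$ is a centered Gaussian of variance $\tfrac12|\log\ep|+O(1)$ that is independent of $h|_{\BB C\setminus B_{\ep^{1/2}}(z)}$, and hence independent of $F_\ep(z)$. Integrating out $K_\ep(z)$ conditional on $h|_{\BB C\setminus B_{\ep^{1/2}}(z)}$ bounds $\BB P[F_\ep(z)\cap E^{\textup{ca}}_\ep(z)]$ by
\[
\ep^{o_\zeta(1)}\,\BB E\!\left[\mathbf{1}_{F_\ep(z)} \exp\!\left(-\frac{(h_{\ep^{1/2}}(z)+\alpha\log\ep^{-1})^2}{|\log\ep|}\right)\right].
\]

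Third, I would apply a Cameron-Martin shift on the whole-plane GFF chosen to absorb this Gaussian weight: shift $h\mapsto h + g$ with $g$ the minimum-Dirichlet-energy function satisfying $g_{\ep^{1/2}}(z) = -\alpha\log\ep$. One checks that on $\BB C\setminus B_{\ep^{1/2}}(z)$ this shift agrees with $\alpha\log|\cdot-z|$ modulo additive constant. The Gaussian identity $\BB E[\exp(-(Y+\alpha\log\ep^{-1})^2/|\log\ep|)] = \ep^{\alpha^2/2}/\sqrt{2}$ for $Y\sim N(0,|\log\ep|/2)$ produces the factor $\ep^{\alpha^2/2+o(1)}$, reducing the problem to bounding $\BB P[\wt F_\ep(z)]$, the probability of $F_\ep(z)$ evaluated on the shifted field, i.e., under the law of a whole-plane GFF plus $\alpha\log|\cdot-z|$ modulo additive constant.

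Finally, the shifted probability is controlled annulus by annulus: on any fixed bounded annulus $B_{br}(z)\setminus B_{ar}(z)$ with $a<b$ fixed, the shift $\alpha\log|\cdot-z|$ varies by only $\alpha\log(b/a)=O(1)$, so modulo additive constant the shift is a bounded function and the shifted law of $h|_{B_{br}(z)\setminus B_{ar}(z)}$ (modulo constant) is absolutely continuous with respect to the original law with a Radon-Nikodym derivative bounded by a constant depending only on $\alpha,\gamma$. Combined with the hypothesis $\BB P[F_\ep(z)]=O_\ep(\ep^q)$, this yields $\BB P[\wt F_\ep(z)]\leq C(\alpha,\gamma)\ep^{q+o_\ep(1)}$, and multiplying the two factors gives the desired bound~\eqref{eqn-outer-bdy-prob}. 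The main obstacle will be executing the Cameron-Martin shift cleanly for a mod-constant event on a set extending to infinity, since the shift $\alpha\log|\cdot-z|$ is itself unbounded; this is handled by truncating the shift at a large radius, using that $z\in V$ for a bounded set $V$ and that the bound is uniform in $z\in V$.
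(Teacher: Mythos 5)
Your overall plan — localize the diameter event, exploit approximate independence across scales, then appeal to the hypothesis on $\BB P[F_\ep(z)]$ — is in the right spirit and shares its skeleton with the paper's proof. However, the Cameron--Martin/absolute continuity route you take in the second half has a genuine gap that I do not see how to close as stated.

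The problem is in the final step. Your shift is $\alpha\log|\cdot-z|$ (a truncation thereof), and you want to transfer the hypothesis $\BB P[F_\ep(z)]=O_\ep(\ep^q)$ to the shifted field by an absolute continuity argument with a \emph{bounded} Radon--Nikodym derivative. The argument you give applies to annuli $B_{br}(z)\setminus B_{ar}(z)$ with $a<b$ \emph{fixed}, where the shift modulo constant varies by $O(1)$. But $F_\ep(z)$ is only assumed to be determined by $h|_{\BB C\setminus B_{\ep^{1/2}}(z)}$ modulo constant, and in the actual applications (Lemma~\ref{lem-outer-bdy-msrble}) it depends on $h|_{B_{2\ep^{1/4}}(z)\setminus B_{\ep^{1/2}}(z)}$ — an annulus of aspect ratio $2\ep^{-1/4}\to\infty$. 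On that annulus the shift $\alpha\log|\cdot-z|$ varies by $\alpha\log(2\ep^{-1/4})=\Theta(|\log\ep|)$ modulo constant, so the Radon--Nikodym derivative between the shifted and unshifted laws of $h$ restricted to that annulus (modulo constant) is \emph{not} bounded by a constant depending only on $\alpha,\gamma$; it degenerates polynomially in $\ep$. Truncating the shift at a large outer radius, as you propose, does not help: the unboundedness already occurs between scales $\ep^{1/2}$ and $\ep^{1/4}$. There is also a secondary worry you would need to address carefully: your step two tacitly treats $h_{\ep^{1/2}}(z)$ as decoupled from $\mathbf 1_{F_\ep(z)}$, but the modulo-constant field on $\BB C\setminus B_{\ep^{1/2}}(z)$ and the circle average $h_{\ep^{1/2}}(z)$ are not jointly independent for $z\ne 0$, and the weight you must then handle is quadratic in $h_{\ep^{1/2}}(z)$, so it is not a pure Cameron--Martin tilt.

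The paper sidesteps both issues. Instead of circle averages, it replaces the diameter event by a localized version $\wt H_\ep(z)$ formulated in terms of the internal metric $D_h(\cdot,\cdot;B_{3\ep}(z))$ and a low-cost disconnecting loop, so that $\wt H_\ep(z)\in\sigma(h|_{B_{3\ep}(z)})$ exactly, and $\BB P[H_\ep(z)\setminus\wt H_\ep(z)]$ decays superpolynomially. It then exploits the large ratio between the scales $3\ep$ and $\ep^{1/2}$ via the independence-across-annuli estimate of~\cite[Lemma~A.5]{gms-harmonic}: the Radon--Nikodym derivative $M$ between the conditional law of $h|_{\BB C\setminus B_{\ep^{1/2}}(z)}$ given $h|_{B_{3\ep}(z)}$ and an independent copy has moments of order $a/\ep^{1/2}$ bounded by a universal constant. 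Two applications of H\"older's inequality then give $\BB P[F_\ep(z)\cap \wt H_\ep(z)]\lesssim \BB P[F_\ep(z)]^{1-O(\ep^{1/2})}\BB P[\wt H_\ep(z)]^{1-O(\ep^{1/2})}$, which suffices. The key point is that this argument tolerates a power-law loss ($\ep^{O(\ep^{1/2})}=\ep^{o_\ep(1)}$), whereas your bounded-density claim would require a loss that is actually not available. If you want to pursue your route, you would need to replace the bounded-density step by a moment/H\"older estimate as in the paper, and also to separate the circle-average event from $F_\ep(z)$ via a genuine scale gap rather than at the common scale $\ep^{1/2}$.
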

\begin{proof}
To lighten notation, let 
\eqb
H_\ep(z) := \left\{  \sup_{u,v\in B_\ep(z)} D_h(u,v )  \in \left[  \ep^{\xi(Q-\alpha) + \zeta} , \ep^{\xi(Q-\alpha) - \zeta} \right]   \right\} .
\eqe
By Lemma~\ref{lem-outer-bdy-msrble} and a basic estimate for $D_h$-diameters (see, e.g.,~\cite[Lemma 2.3]{gwynne-ball-bdy}), for each $z\in V$ we have
\eqb
\BB P[F_\ep(z) ] = O_\ep(\ep^q) \quad \text{and} \quad \BB P\left[ H_\ep(z) \right] \leq \ep^{\alpha^2/2 + o_\zeta(1) + o_\ep(1)} .
\eqe
The idea of the proof is that $F_\ep(z)$ depends only on $h|_{\BB C\setminus B_{\ep^{1/2}(z)}}$ viewed modulo additive constant, whereas $H_\ep(z)$ is (almost) determined by $h|_{3\ep}(z)$, so $F_\ep(z)$ and $H_\ep(z)$ are approximately independent. 
However, $H_\ep(z)$ is not exactly determined by $h|_{B_{3\ep}(z)}$ since in $D_h$-geodesics paths between points of $B_\ep(z)$ could get very far from $B_\ep(z)$. 
So, to make the above idea precise we need to introduce a localized version of $H_\ep(z)$. 
\medskip

\noindent\textit{Step 1: localizing $H_\ep(z)$.}
Let $\wt H_\ep(z)$ be the event that the following is true.
\begin{enumerate}
\item There is a path in $B_{2\ep}(z) \setminus B_\ep(z)  $ which disconnects the inner and outer boundaries of $B_{2\ep}(z) \setminus B_\ep(z) $ whose length is at most $\ep^{-\zeta} D_h(\bdy B_{2\ep}(z) , \bdy B_{3\ep}(z))$. \label{item-diam-localize-around}
\item We have $\sup_{u,v\in B_\ep(z)} D_h\left(u,v;B_{3\ep}(z) \right)  \in \left[  \ep^{\xi(Q-\alpha) + \zeta} ,  \ep^{\xi(Q-\alpha) - 2\zeta} \right]  $. \label{item-diam-localize-dist}
\end{enumerate}
By the locality of the metric (Axiom~\ref{item-metric-local}), $\wt H_\ep(z)$ is a.s.\ determined by $h|_{B_{3\ep}(z)}$. 
Furthermore, the proof of~\cite[Lemma 2.3]{gwynne-ball-bdy} shows that
\eqb \label{eqn-diam-localize-prob}
\BB P\left[ \wt H_\ep(z) \right] \leq \ep^{\alpha^2/2     + o_\zeta(1)+ o_\ep(1) } ,\quad \forall z \in V .
\eqe

We now argue that 
\eqb \label{eqn-diam-localize-subpoly}
 \BB P[H_\ep(z) \setminus \wt H_\ep(z)] = O_\ep(\ep^N), \quad\forall N\in\BB N ,  
\eqe
uniformly over all $z\in \BB C$. 
We first claim that the probability that condition~\ref{item-diam-localize-around} in the definition of $\wt H_\ep(z)$ fails to occur decays faster than any positive power of $\ep$ as $\ep\rta 0$. Indeed, this follows from~\cite[Proposition 3.1]{lqg-metric-estimates}, applied to compare each of the distance ``around" $B_{2\ep}(z) \setminus B_\ep(z) $ and the distance ``across" $B_{3\ep}(z) \setminus B_{2\ep}(z) $ to the quantity $(2\ep)^{\xi Q} e^{\xi h_{2\ep}(z)}$. 

We will now conclude the proof of~\eqref{eqn-diam-localize-subpoly} by showing that if both $H_\ep(z)$ and condition~\ref{item-diam-localize-around} in the definition of $\wt H_\ep(z)$ occur, then $\wt H_\ep(z)$ occurs. 
Indeed, if $H_\ep(z)$ occurs then $\sup_{u,v\in B_\ep(z)} D_h\left(u,v;B_{3\ep}(z) \right) \geq  \sup_{u,v\in B_\ep(z)} D_h\left(u,v  \right)  \geq  \ep^{\xi(Q-\alpha) + \zeta}$. 
To get the bound in the other direction, suppose $u,v\in B_\ep(z)$ and let $P$ be a $D_h$-geodesic from $u$ to $v$. If $P$ is contained in $B_{3\ep}(z)$ then $D_h(u,v) = D_h(u,v;B_{3\ep}(z))$. 
Otherwise, we can replace a segment of $P$ by a segment of the path in $B_{2\ep}(z) \setminus B_\ep(z) $ from condition~\ref{item-diam-localize-around} in the definition of $\wt H_\ep(z)$ to get a new path from $u$ to $v$ which stays in $B_{3\ep}(z)$ and whose $D_h$-length is at most $\ep^{-\zeta} D_h(u,v)$. 
Therefore, $D_h(u,v;B_{3\ep}(z)) \leq \ep^{-\zeta} D_h(u,v)  $. 
By the definition of $H_\ep(z)$, we infer that $\wt H_\ep(z)$ occurs, and hence~\eqref{eqn-diam-localize-subpoly} holds.
\medskip

\noindent\textit{Step 2: near-independence of $F_\ep(z)$ and $H_\ep(z)$.}
Let $\ol h$ be an independent copy of $h$. 
By a basic estimate for the GFF (see, e.g.,~\cite[Lemma A.5]{gms-harmonic} applied with $\delta = 3\ep^{1/2}$ combined with the fact that $h(3\ep/\cdot) \eqD h$ modulo additive constant), we get that for some universal constant $a\in (0,1)$, the following is true. 
The conditional law of $h|_{\BB C\setminus B_{\ep^{1/2}}(z)}$, viewed modulo additive constant, given $h|_{B_{3\ep}(z)}$ is mutually absolutely continuous w.r.t.\ the unconditional law of $\ol h|_{\BB C\setminus B_{\ep^{1/2}}(z)}$, viewed modulo additive constant.
Furthermore, if $M = M(h|_{B_{3\ep}(z)} ,  \ol h|_{\BB C\setminus B_{\ep^{1/2}}(z)})$ is the Radon-Nikodym derivative of the former law w.r.t.\ the latter law, then for $\ep \in (0,a]$ the $a/\ep^{1/2}$-th moments of $M$ and its reciprocal are each bounded above by a universal constant.
 
Let $\ol F_\ep(z)$ be defined in the same manner as $F_\ep(z)$ but with $\ol h$ in place of $h$. 
Since $\wt H_\ep(z) \in \sigma\left( h|_{B_{3\ep}(z)} \right)$ and $F_\ep(z) $ is a.s.\ determined by $ h|_{\BB C\setminus B_{\ep^{1/2}}(z)}  $, viewed modulo additive constant, 
\allb \label{eqn-diam-localize-ind}
\BB P\left[ F_\ep(z) \cap \wt H_\ep(z) \right] 
= \BB E\left[ \BB P\left[ F_\ep(z) \,|\,     h|_{B_{3\ep}(z)}  \right] \BB 1_{\wt H_\ep(z)} \right]  
= \BB E\left[  \BB E\left[  M \BB 1_{\ol F_\ep(z)} \,|\,  h|_{B_{3\ep}(z)} \right] \BB 1_{\wt H_\ep(z)} \right]   .
\alle
To bound the conditional expectation in~\eqref{eqn-diam-localize-ind}, we use H\"older's inequality (with exponents $a/\ep^{1/2}$ and $1/(1  -\ep^{1/2}/a)$) to get
\allb \label{eqn-diam-localize-holder}
 \BB E\left[  M \BB 1_{\ol F_\ep(z)} \,|\,  h|_{B_{3\ep}(z)} \right]
&\leq \BB E\left[  M^{a/\ep^{1/2}}  \,|\,  h|_{B_{3\ep}(z)} \right]^{\ep^{1/2}/a} \BB P\left[ \ol F_\ep(z)  \,|\,  h|_{B_{3\ep}(z)}   \right]^{1-\ep^{1/2}/a} \notag\\
&= \BB E\left[  M^{a/\ep^{1/2}}  \,|\,  h|_{B_{3\ep}(z)} \right]^{\ep^{1/2}/a} \BB P\left[  F_\ep(z)   \right]^{1-\ep^{1/2}/a} ,
\alle
where in the last line we used that $\ol h \eqD h$ and $\ol h$ is independent from $h$. We now plug~\eqref{eqn-diam-localize-holder} into~\eqref{eqn-diam-localize-ind} and apply H\"older's inequality a second time (with the same exponents) to get
\allb \label{eqn-diam-localize-end}
\BB P\left[ F_\ep(z) \cap \wt H_\ep(z) \right] 
&\leq  \BB P\left[  F_\ep(z)   \right]^{1-\ep^{1/2}/a} \BB E\left[ \BB E\left[  M^{a/\ep^{1/2}}  \,|\,  h|_{B_{3\ep}(z)} \right]^{\ep^{1/2}/a} \BB 1_{\wt H_\ep(z)} \right] \notag\\
&\leq  \BB P\left[  F_\ep(z)   \right]^{1-\ep^{1/2}/a} \BB E\left[  M^{a/\ep^{1/2}}   \right]^{\ep^{1/2}/a} \BB P\left[ \wt H_\ep(z)  \right]^{1-\ep^{1/2}/a} \quad \text{(by H\"older)} \notag\\
&\leq \text{const.} \times \BB P\left[  F_\ep(z)   \right]^{1-\ep^{1/2}/a}  \BB P\left[ \wt H_\ep(z)  \right]^{1-\ep^{1/2}/a} \quad \text{(by our estimate for $M$)} .
\alle
The lemma now follows by using~\eqref{eqn-diam-localize-subpoly} to upper-bound $\BB P[F_\ep(z) \cap H_\ep(z)]$ in terms of $\BB P[F_\ep(z) \cap \wt H_\ep(z)]$, 
then using~\eqref{eqn-diam-localize-prob} and the fact that $\BB P[F_\ep(z)] = O_\ep(\ep^q)$ to upper-bound the right side of~\eqref{eqn-diam-localize-end}.
\end{proof}

\begin{proof}[Proof of Lemma~\ref{lem-check-conditions}]
The lemma follows from Lemmas~\ref{lem-outer-bdy-msrble} and~\ref{lem-outer-bdy-prob}.
\end{proof}

\subsection{The Hausdorff dimension of metric ball boundaries}
\label{sec-dim-bdry}




In this subsection, we prove  Theorem~\ref{thm-ball-bdy-bound}.  We begin by proving the following zero-one law for the dimension of metric balls started from $\infty$. 

\begin{prop} \label{prop-ball-infty-dim}
Define $\mcl B_0^\infty$ as in Proposition~\ref{prop-ball-infty}. 
There are deterministic constants $\Delta_{\op{ball}}^0, \Delta_{\op{ball}}^\gamma > 0$ such that a.s.\ $\dim_{\mcl H}^0 \bdy\mcl B_0^\infty = \Delta_{\op{ball}}^0 $ and $\dim_{\mcl H}^\gamma \bdy\mcl B_0^\infty = \Delta_{\op{ball}}^\gamma $.
Furthermore, for each fixed $z \in\BB C$ a.s.\ $\dim_{\mcl H}^0 \bdy\mcl B_{D_h(z,w)}(w;D_h) \geq \Delta_{\op{ball}}^0$  and $\dim_{\mcl H}^\gamma \bdy\mcl B_{D_h(z,w)}(w;D_h) \geq \Delta_{\op{ball}}^\gamma$  simultaneously for each $w\in\BB C$. 
\end{prop}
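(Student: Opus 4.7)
The plan is to mirror the proof of Proposition~\ref{prop-geo-zero-one}, substituting the scale invariance of $\mcl B_0^\infty$ from Proposition~\ref{prop-ball-infty} for the scale invariance of $P_0^\infty$ and establishing the requisite locality via Lemma~\ref{lem-conf-event}. I restrict to Euclidean dimension, the $\gamma$-quantum case being entirely analogous. Suppose $c > 0$ satisfies $\BB P[\dim_{\mcl H}^0 \bdy \mcl B_0^\infty \geq c] > 0$; it suffices to show that a.s.\ $\dim_{\mcl H}^0(\bdy \mcl B_0^\infty \cap B_r(0)) \geq c$ for every $r > 0$, from which the zero-one law follows by taking $c$ to be the essential supremum. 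Countable stability of Hausdorff dimension together with the scale invariance~\eqref{eqn-ball-infty-scale} gives $\BB P[\dim_{\mcl H}^0(\bdy \mcl B_0^\infty \cap B_r(0)) \geq c - \delta] \geq p$ uniformly in $r > 0$, for some constants $\delta, p > 0$.

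The key step, and the main obstacle, is to show that $\mcl B_0^\infty \cap B_r(0)$ is a local object on the event $E_r(0)$ of Lemma~\ref{lem-conf-event}. Fix $A = A(1 - p/2, \gamma)$ and let $Z_r(0)$ be the associated confluence point. On $E_r(0)$, property~(i) of the lemma forces every $D_h$-geodesic from a point of $B_r(0)$ to a point of $\BB C \setminus B_{Ar}(0)$ to pass through $Z_r(0)$, so for each $z \in B_r(0)$ and $w \in \BB C \setminus B_{Ar}(0)$,
\[
D_h(z,w) - D_h(0,w) = D_h(z, Z_r(0)) - D_h(0, Z_r(0)).
\]
By property~(ii) of the lemma and Axiom~\ref{item-metric-local}, the two distances on the right coincide with their internal versions $D_h(\cdot,\cdot; B_{A^2 r}(0))$. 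Combining this with the characterization of $\mcl B_0^\infty$ extracted from the proof of Proposition~\ref{prop-ball-infty}---namely that $z \in \mcl B_0^\infty$ iff $D_h(z,w) \leq D_h(0,w)$ for all sufficiently distant $w$---I obtain that on $E_r(0)$,
\[
\mcl B_0^\infty \cap B_r(0) = \left\{z \in B_r(0) : D_h(z, Z_r(0); B_{A^2 r}(0)) \leq D_h(0, Z_r(0); B_{A^2 r}(0))\right\}.
\]
This set, along with $E_r(0)$ itself, is determined by $h|_{B_{A^2 r}(0)}$ viewed modulo additive constant (adding a constant to $h$ rescales all distances by a common factor and therefore preserves the inequality). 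Since $B_r(0)$ is open, taking topological boundaries preserves the measurability, so the event $G_r := E_r(0) \cap \{\dim_{\mcl H}^0(\bdy \mcl B_0^\infty \cap B_r(0)) \geq c - \delta\}$ lies in $\sigma(h|_{B_{A^2 r}(0)})$.

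A union bound gives $\BB P[G_r] \geq p/2$ for every $r > 0$, so with probability at least $p/2$ the event $G_r$ occurs for arbitrarily small $r > 0$. Tail triviality of $\bigcap_{r > 0} \sigma(h|_{B_r(0)})$ (\cite[Lemma~2.2]{hs-euclidean}) then upgrades this to probability one. Monotonicity in $r$ and sending $\delta \downarrow 0$ conclude the zero-one law.

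For the lower bound on $\bdy \mcl B_{D_h(z,w)}(w;D_h)$, translate so that $z = 0$. On the full-probability event produced above, for each $w \neq 0$ I choose $r$ from the a.s.\ sequence along which $E_r(0)$ occurs, small enough that $w \in \BB C \setminus B_{Ar}(0)$. The same locality identity shows that on $E_r(0)$, the condition $u \in \mcl B_{D_h(0,w)}(w;D_h)$ for $u \in B_r(0)$, namely $D_h(u,w) \leq D_h(0,w)$, reduces to exactly the inequality defining $\mcl B_0^\infty \cap B_r(0)$. Hence the two sets agree in the open ball $B_r(0)$ and so do their topological boundaries, yielding $\dim_{\mcl H}^0 \bdy \mcl B_{D_h(0,w)}(w;D_h) \geq \Delta_{\op{ball}}^0$ simultaneously for every $w \neq 0$.
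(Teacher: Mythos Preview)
Your proof is correct and follows essentially the same approach as the paper's. Both arguments use the scale invariance of $\mcl B_0^\infty$ to get a uniform positive-probability lower bound, then invoke Lemma~\ref{lem-conf-event} to show that on $E_r(0)$ the set $\mcl B_0^\infty \cap B_r(0)$ coincides with the $D_h$-ball of radius $D_h(0,Z_r(0))$ about $Z_r(0)$ restricted to $B_r(0)$---which is exactly your displayed identity---and that this is determined by $h|_{B_{A^2 r}(0)}$ via property~(ii), after which tail triviality finishes. Your handling of the second statement is slightly more direct than the paper's: rather than appealing to the convergence~\eqref{eqn-ball-infty-conv} and then removing a large-$|w|$ restriction via scale invariance, you reuse the same locality identity on the a.s.\ sequence of small radii along which $E_r(0)$ holds to match $\mcl B_{D_h(0,w)}(w;D_h)$ with $\mcl B_0^\infty$ inside $B_r(0)$ directly.
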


\begin{proof}
We prove the result for Euclidean dimensions; the proof of the result for $\gamma$-quantum dimensions is essentially the same. The basic strategy of the proof is similar to the proofs of Theorem~\ref{thm-net-dim} and Proposition~\ref{prop-geo-zero-one}. 

Let $c > 0$ such that $\BB P\left[ \dim_{\mcl H}^0 \bdy\mcl B_0^\infty \geq c \right]  > 0$. 
We will show that a.s.\ 
\eqb \label{eqn-ball-infty-dim-as}
 \dim_{\mcl H}^0 (\bdy\mcl B_0^\infty \cap B_r(0)) \geq c  ,\quad \forall r >  0. 
\eqe
From~\eqref{eqn-ball-infty-dim-as}, we immediately get that $\dim_{\mcl H}^0 \bdy\mcl B_0^\infty$ is a.s.\ equal to a deterministic constant $\Delta_{\op{ball}}^0$.
Furthermore, by combining~\eqref{eqn-ball-infty-dim-as} (applied with $c=\Delta_{\op{ball}}^0$) with~\eqref{eqn-ball-infty-conv} of Proposition~\ref{prop-ball-infty}, we get that there is a large $R > 0$ such that a.s.\ $\dim_{\mcl H}^0 \bdy\mcl B_{D_h(0,w)}(w;D_h) \geq \Delta_{\op{ball}}^0$ for each $w\in \BB C\setminus B_R(0)$. By the scale invariance of the law of $h$, modulo additive constant, and~\eqref{eqn-ball-infty-scale} we can remove the restriction that $w\in \BB C\setminus B_R(0)$. This gives the second statement of the proposition with $z =0$. The statement for a general $z\in\BB C$ follows from the translation invariance of the law of $h$ modulo additive constant. 

Let us now prove~\eqref{eqn-ball-infty-dim-as}. 
By the countable stability of Hausdorff dimension, for each $\delta \in (0,1)$ there exists $r_0 >0$ and $p\in (0,1)$ such that $\BB P\left[ \dim_{\mcl H}^0 (\bdy\mcl B_0^\infty \cap B_{r_0}(0)) \geq c -\delta \right]  \geq p$. 
By the scale invariance of the law of $\mcl B_0^\infty$ (see~\eqref{eqn-ball-infty-scale}), this implies that in fact
\eqb \label{eqn-ball-infty-dim-pos}
\BB P\left[\dim_{\mcl H}^0 (\bdy\mcl B_0^\infty \cap B_r(0)) \geq c -\delta \right] \geq p ,\quad\forall r > 0. 
\eqe

We now want to use a tail triviality argument to deduce~\eqref{eqn-ball-infty-dim-as} from~\eqref{eqn-ball-infty-dim-pos}.
To this end, let $A = A(1-p/2,\gamma)$ be as in Lemma~\ref{lem-conf-event} with $1-p/2$ in place of $p$.
For $r > 0$, let $E_r = E_r(0)$ and $Z_r = Z_r(0)$ be the point and event from that lemma, so that $E_r\in \sigma\left( h |_{B_{A^2 r}(0)} \right)$ and $\BB P[E_r] \geq 1-p/2$. 
Let
\eqb \label{eqn-ball-infty-event}
G_r := E_r\cap \left\{ \dim_{\mcl H}^0 (\bdy\mcl B_0^\infty \cap B_{A^2 r}(0)) \geq c -\delta \right\} .
\eqe
By~\eqref{eqn-ball-infty-dim-pos}, we have $\BB P[G_r] \geq p/2$. 

We now argue that $G_r\in \sigma\left( h|_{B_{ A^2 r}(0)} \right)$. 
Indeed, we recall from Lemma~\ref{lem-conf-event} that on $E_r$, every $D_h$-geodesic from a point of $B_r(0)$ to a point of $\BB C\setminus B_{A r}(0)$ passes through $Z_r$. 
Hence the proof of~\eqref{eqn-ball-infty-conv} of Proposition~\ref{prop-ball-infty} shows that if $E_r$ occurs, then
\eqb \label{eqn-ball-infty-dim-local}
\mcl B_0^\infty \cap B_r(0) = \mcl B_{D_h(0,Z_r)}\left( Z_r ;D_h   \right)  \cap B_r(0) .
\eqe
On the other hand, Lemma~\ref{lem-conf-event} shows that on $E_r$, every $D_h$-geodesic from $Z_r$ to a point of $B_r(0)$ stays in $B_{A^2 r}(0)$, which means that $D_h(0,Z_r) = D_h(0,Z_r ; B_{A^2 r}(0))$ and
\eqb \label{eqn-ball-infty-dim-internal}
\mcl B_{D_h(0,Z_r)}\left( Z_r ;D_h   \right)  \cap B_r(0) = \mcl B_{D_h(0,Z_r ; B_{A^2 r}(0))}\left( Z_r ;D_h(\cdot,\cdot; B_{A^2 r}(0) )   \right)  \cap B_r(0) .
\eqe
The right side of~\eqref{eqn-ball-infty-dim-internal} is $\sigma\left( h|_{B_{A^2 r}(0)} \right)$-measurable due to Axiom~\ref{item-metric-local}. 
By~\eqref{eqn-ball-infty-dim-local} and~\eqref{eqn-ball-infty-dim-internal}, we therefore obtain that $G_r\in  \sigma\left( h|_{B_{A^2 r}(0)} \right)$, as desired.

Since $\BB P[G_r] \geq p/2$, it holds with positive probability that there is a sequence $r_k\rta 0$ for which $G_{r_k}$ occurs.
Since $G_r\in  \sigma\left( h|_{B_{A^2 r}(0)} \right)$ and $\bigcap_{r>0}  \sigma\left( h|_{B_{A^2 r}(0)} \right)$ is trivial, the probability that such a sequence exists is equal to zero or one, so such a sequence must exist a.s. 
Recalling the definition~\eqref{eqn-ball-infty-event} of $G_r$, we now obtain~\eqref{eqn-ball-infty-dim-as}.
\end{proof}

We devote the remainder of this subsection to proving Theorem~\ref{thm-ball-bdy-bound} from Proposition~\ref{prop-ball-infty-dim}.  As in the above proof of Proposition~\ref{prop-ball-infty-dim}, we will prove just the result for Euclidean dimension, since the proof for $\gamma$-quantum dimension is essentially the same.

To deduce Theorem~\ref{thm-ball-bdy-bound} from Proposition~\ref{prop-ball-infty-dim}, it suffices to show that $\Delta_{\op{ball}}^0 \geq 2 - \xi Q + \xi^2/2$ and a.s.\ $\dim_{\mcl H}^0 \bdy \mcl B_{D_h(0,z)}  \leq  2 - \xi Q + \xi^2/2$.  We prove the second inequality in Lemma~\ref{lem-bdy-dim-compare2revised} below.  First we turn to the first inequality, which we state as a proposition.

\begin{prop}
\label{prop-ball-bdy-bound} $\Delta_{\op{ball}}^0 \geq 2 - \xi Q + \xi^2/2$.
\end{prop}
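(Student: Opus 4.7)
The plan is to show that for every $\delta > 0$,
\[
\BB P\left[ \dim_{\mcl H}^0 \bdy\mcl B_0^\infty \geq 2-\xi Q+\xi^2/2 - \delta \right] > 0.
\]
Since $\dim_{\mcl H}^0 \bdy\mcl B_0^\infty = \Delta_{\op{ball}}^0$ a.s.\ by Proposition~\ref{prop-ball-infty-dim}, this positive probability will immediately upgrade to the deterministic lower bound $\Delta_{\op{ball}}^0 \geq 2-\xi Q+\xi^2/2 - \delta$, and then to $\Delta_{\op{ball}}^0 \geq 2 - \xi Q + \xi^2/2$ upon letting $\delta \to 0$.

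The first ingredient is the essential-supremum identification from \cite[Theorem 1.1]{gwynne-ball-bdy}: for any fixed $s>0$, with positive probability $\dim_{\mcl H}^0 \bdy\mcl B_s(0;D_h)\geq 2-\xi Q+\xi^2/2-\delta$. By scale- and translation-invariance of the law of $h$ modulo additive constant (using Axioms~\ref{item-metric-f} and~\ref{item-metric-coord}), this positive probability does not depend on the center or the radius of the ball. The second ingredient, which will transfer this to $\mcl B_0^\infty$, comes from combining Lemma~\ref{lem-conf-event} with Proposition~\ref{prop-ball-infty}. On the confluence event $E_r(0)$, every $D_h$-geodesic from $B_r(0)$ to a point of $\BB C\setminus B_{A r}(0)$ passes through a common point $Z_r \in B_{A r}(0)\setminus B_r(0)$, giving the additivity $D_h(z,w) = D_h(z,Z_r) + D_h(Z_r,w)$ for every $z\in B_r(0)$ and $w\in\BB C\setminus B_{A r}(0)$. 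Taking $w\rta\infty$ in the convergence $\mcl B_{D_h(0,w)}(w;D_h)\rta\mcl B_0^\infty$ of Proposition~\ref{prop-ball-infty} then yields
\[
\bdy\mcl B_0^\infty \cap B_r(0) = \bdy\mcl B_{D_h(0,Z_r)}(Z_r;D_h) \cap B_r(0) \quad\text{on } E_r(0).
\]

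The main obstacle is that the right-hand side above is the boundary of a ball of \emph{random} radius $D_h(0,Z_r)$, while the first ingredient only gives information about boundaries of fixed-radius balls. I plan to handle this by an absolute-continuity / near-independence argument in the spirit of Lemma~\ref{lem-outer-bdy-prob}. The event $E_r(0)$ and the point $Z_r$ are measurable with respect to $h|_{B_{A^2 r}(0)\setminus B_{r/A}(0)}$ modulo additive constant, and hence approximately independent of the restriction of $h$ to a ball of much smaller Euclidean radius $\ep$ well inside $B_{r/A}(0)$. Conditioning on this annular data, applying the first ingredient at a fixed center in $B_{r/A}(0)$ and a fixed small radius, and then using a further strong-confluence event (Lemma~\ref{lem-conf-event}) at that smaller scale to force the resulting high-dimensional subarc of a fixed-radius ball boundary to actually coincide with a subarc of $\bdy\mcl B_{D_h(0,Z_r)}(Z_r;D_h)\cap B_{r/A}(0)$, will produce, with uniformly positive conditional probability, a subarc of $\bdy\mcl B_{D_h(0,Z_r)}(Z_r;D_h)\cap B_{r/A}(0)$ of Euclidean Hausdorff dimension at least $2-\xi Q+\xi^2/2-\delta$. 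Combining with the identification above yields the desired positive-probability lower bound on $\dim_{\mcl H}^0\bdy\mcl B_0^\infty$, and Proposition~\ref{prop-ball-infty-dim} then completes the proof.
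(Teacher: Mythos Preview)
Your proposal has a genuine gap at the crucial step where you claim that a further confluence event (Lemma~\ref{lem-conf-event}) ``forces the resulting high-dimensional subarc of a fixed-radius ball boundary to actually coincide with a subarc of $\bdy\mcl B_{D_h(0,Z_r)}(Z_r;D_h)$.'' Confluence events do not produce this kind of coincidence. What a confluence event $E_\rho(z_0)$ gives you is that for any center $w\notin B_{A\rho}(z_0)$, the ball $\mcl B_{D_h(z_0,w)}(w;D_h)\cap B_\rho(z_0)$ --- i.e., the ball grown until it \emph{hits $z_0$} --- agrees locally with $\mcl B_{D_h(z_0,Z_\rho(z_0))}(Z_\rho(z_0);D_h)\cap B_\rho(z_0)$. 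Both sides still involve the \emph{random} radius $D_h(z_0,\cdot)$; there is no mechanism here that makes either one agree with a ball of a \emph{deterministic} radius $s$. Moreover, the object you actually need, $\bdy\mcl B_{D_h(0,Z_r)}(Z_r;D_h)$, is targeted at $0$, not at $z_0$; a confluence event around $z_0$ says nothing about this ball unless $z_0$ happens to lie on its boundary, a measure-zero event for fixed $z_0$. The near-independence you invoke does not help either: even after conditioning on the annular data, the radius $D_h(0,Z_r)$ depends on $h$ inside $B_{r/A}(0)$, so you are still facing a random-radius ball with no link to the fixed-$s$ result of~\cite{gwynne-ball-bdy}.

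The paper's proof avoids this obstacle by reversing the direction of comparison. Rather than pushing the fixed-radius lower bound into $\bdy\mcl B_0^\infty$, it pushes the $\Delta_{\op{ball}}^0$ \emph{upper} bound from $\bdy\mcl B_z^\infty$ back to $\bdy\mcl B_t$ for Lebesgue-a.e.\ $t$. On nested confluence events $G_r(z)\cap G_{r'}(z')$ one has $\bdy\mcl B_{D_{h^*}(0,z)}(0;D_h)\cap[B_{r'}(z')\setminus B_r(z)]=\bdy\mcl B_z^{\infty,*}\cap[B_{r'}(z')\setminus B_r(z)]$, where $h^*=h+(\text{random multiple of a bump function supported in }B_{r/A}(z))$. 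The bump-function trick makes the conditional law of $D_{h^*}(0,z)$ given $h$ absolutely continuous w.r.t.\ Lebesgue measure; this is precisely what bridges random radii and deterministic radii, yielding $\dim_{\mcl H}^0(\bdy\mcl B_t\cap[B_{r'}(z')\setminus B_r(z)])\leq\Delta_{\op{ball}}^0$ for a.e.\ $t$ (Lemma~\ref{lem-bdy-dim-compare}). The complementary ``abnormal'' set $\mcl Y_t$ is handled by Theorem~\ref{thm-gen-upper}, giving $\dim_{\mcl H}^0\mcl Y_t\leq 2-\xi Q+\xi^2/2-q$. Together, $\dim_{\mcl H}^0\bdy\mcl B_t\leq\max\{\Delta_{\op{ball}}^0,\,2-\xi Q+\xi^2/2-q\}$ for a.e.\ $t$, and since the essential supremum is $2-\xi Q+\xi^2/2$ by~\cite{gwynne-ball-bdy}, one concludes $\Delta_{\op{ball}}^0\geq 2-\xi Q+\xi^2/2$. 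The random-to-fixed radius conversion via the bump function is the missing idea in your argument.
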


We will extract Proposition~\ref{prop-ball-bdy-bound} from~\cite[Theorem 1.1]{gwynne-ball-bdy}, which says the following.
Suppose we fix $t>0$ and consider the ball $\mcl B_t = \mcl B_t(0;D_h)$.  
Then the essential supremum of the law of the random variable $\dim_{\mcl H}^0 (\bdy\mcl B_t)$ is equal to $2-\xi Q + \xi^2/2$.   

Roughly speaking, we will deduce Proposition~\ref{prop-ball-bdy-bound} from this statement as follows.
For a ``typical" time $t >0$ and a ``typical" point $z\in \bdy\mcl B_t$, we expect that near $z$, $\bdy\mcl B_t$ locally looks like a segment of the boundary of the ball $ \mcl B_0^\infty$ started from $\infty$ and stopped upon hitting $0$, as constructed in Proposition~\ref{prop-ball-infty}. 
In particular, for such a point $z$ and a small enough $\ep  > 0$ it should be that $\dim_{\mcl H}^0(B_\ep(z) \cap \bdy\mcl B_t) \leq \Delta_{\op{ball}}^0$. 
Say that a point which satisfies this condition for some $\ep > 0$ is ``normal". 

Using Theorem~\ref{thm-gen-upper}, we can show that, for any $t>0$, the Hausdorff dimension of the complementary set of ``abnormal" points $z\in \bdy\mcl B_t$ is a.s.\ at most $2-\xi Q - \xi^2/2 - q$ for some $q = q(\gamma)  >0$. 
On the other hand, for Lebesgue-a.e. $t$, the set of ``normal" points in $\bdy\mcl B_t$ has Hausdorff dimension at most $\Delta_{\op{ball}}^0$.  
From this, we deduce that, for such a choice of $t$, $\dim_{\mcl H}^0 (\bdy\mcl B_t) \leq \max\left\{2-\xi Q + \xi^2/2 - q , \Delta_{\op{ball}}^0 \right\}$ almost surely. 
But,~\cite[Theorem 1.1]{gwynne-ball-bdy} tells us that the essential supremum of the law of the random variable $\dim_{\mcl H}^0 (\bdy\mcl B_t)$ is equal to $2-\xi Q + \xi^2/2$. 
Hence we must have $\Delta_{\op{ball}}^0 \geq 2-\xi Q + \xi^2/2$. 

To implement this strategy, we will apply Theorem~\ref{thm-gen-upper} via Lemma~\ref{lem-check-conditions}. 
Let $A = A(1/2,\gamma) > 0$ be as in the statement of Lemma~\ref{lem-conf-event} with $p = 1/2$. 
We define the ``bad" events $F_\ep(z)$ of Theorem~\ref{thm-gen-upper} as in Lemma~\ref{lem-check-conditions}, with $a=1/A$ and $b=A^2$ and the ``good'' events $G_r(z)$ taken to be the confluence events  $E_r(z)$ of Lemma~\ref{lem-conf-event} with $p=1/2$. 
We also let $\mcl Y_t$ be the set of ``bad" points as in Theorem~\ref{thm-gen-upper} with this choice of $F_\ep(z)$. 

We will motivate the choice of events $G_r(z)$ in a moment.  First, with $\mcl Y_t$ as in Theorem~\ref{thm-gen-upper}, we obtain the following upper bound on the Hausdorff dimension of $\mcl Y_t$.

\begin{lem}\label{lem-dim-exceptional-points}
There exists $q = q(\gamma) >0$ such that a.s.\ $\dim_{\mcl H}^0 \mcl Y_t \leq 2 - \xi Q + \xi^2/2 - q$.
\end{lem}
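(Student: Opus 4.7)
The proof is a direct verification of the hypotheses of Theorem~\ref{thm-gen-upper} via Lemma~\ref{lem-check-conditions}, so the plan is short. First I will check that the choice $G_r(z) = E_r(z)$, with $E_r(z)$ the confluence event of Lemma~\ref{lem-conf-event} applied with $p = 1/2$, satisfies the two bullet-point hypotheses of Lemma~\ref{lem-check-conditions}. The probability condition $\mathbb{P}[G_r(z)] \geq 1/2$ holds uniformly in $r$ and $z$ by Lemma~\ref{lem-conf-event}. The locality condition holds with $a = 1/A$ and $b = A^2$, since Lemma~\ref{lem-conf-event} asserts that $E_r(z)$ is a.s.\ a measurable function of $h|_{B_{A^2 r}(z) \setminus B_{r/A}(z)}$ viewed modulo additive constant.

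Next, I apply Lemma~\ref{lem-check-conditions} directly. This yields an exponent $q_0 = q_0(\gamma) > 0$ such that the family of events $\{F_\ep(z) : \ep > 0,\, z \in \mathbb{C}\}$, with $F_\ep(z)$ defined exactly as in~\eqref{eqn-defn-F-ep-z}, satisfies conditions (1) and (2) in the hypothesis of Theorem~\ref{thm-gen-upper}. Note that this is consistent with the definition of $F_\ep(z)$ used to define $\mcl Y_t$ above the statement of the lemma.

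Now I invoke Theorem~\ref{thm-gen-upper} with this family of events $F_\ep(z)$ and with $s = t$. Since the set $\mcl Y_t$ described above the lemma matches the set defined via~\eqref{eqn-gen-upper-set} in Theorem~\ref{thm-gen-upper}, the theorem gives that almost surely
\[
\dim_{\mcl H}^0 \mcl Y_t \leq \max\bigl\{0,\, 2 - \xi Q + \xi^2/2 - q_0\bigr\}.
\]
Finally, I observe that $2 - \xi Q + \xi^2/2 > 0$: this is the essential supremum of $\dim_{\mcl H}^0(\bdy \mcl B_t)$ from~\cite[Theorem 1.1]{gwynne-ball-bdy}, hence strictly positive. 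Therefore, after replacing $q_0$ with $q := \min\{q_0,\, (2 - \xi Q + \xi^2/2)/2\} > 0$, the maximum on the right-hand side is achieved by the second term, and we conclude that a.s.\ $\dim_{\mcl H}^0 \mcl Y_t \leq 2 - \xi Q + \xi^2/2 - q$.

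There is no real obstacle here — all the work has been packaged into Theorem~\ref{thm-gen-upper} (from~\cite{gwynne-ball-bdy}) and Lemma~\ref{lem-check-conditions}. The entire content of this lemma is to point out that the local confluence events $E_r(z)$ from Lemma~\ref{lem-conf-event} are well-suited to feed into that machinery. The more substantive use of this bound comes in the next step of the argument, where one must combine it with the observation that "normal" points of $\bdy \mcl B_t$ have local dimension at most $\Delta_{\op{ball}}^0$ in order to deduce Proposition~\ref{prop-ball-bdy-bound}.
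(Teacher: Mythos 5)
Your proof is correct and takes essentially the same route as the paper: both verify that the confluence events $E_r(z)$ of Lemma~\ref{lem-conf-event} (with $p = 1/2$) satisfy the locality and uniform-probability hypotheses of Lemma~\ref{lem-check-conditions} with $a = 1/A$, $b = A^2$, and then invoke Theorem~\ref{thm-gen-upper}. The only addition you make is the explicit handling of the $\max\{0,\cdot\}$ via shrinking $q_0$, which the paper implicitly elides; that bookkeeping step is correct.
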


\begin{proof}
The result follows from applying Theorem~\ref{thm-gen-upper} via Lemma~\ref{lem-check-conditions}; we just need to check that, with our above definitions, the events $G_r(z)$ satisfy the two conditions of  Lemma~\ref{lem-check-conditions}. 
Indeed, Lemma~\ref{lem-conf-event} implies that the event $G_r(z)$ is a.s. determined by $h|_{B_{A^2 r}(z) \backslash B_{r/A}(z)}$ viewed modulo additive constant, and that $\BB{P}[G_r(z)] \geq 1/2$ for each $z \in \BB{C}$ and $r>0$. 
\end{proof}

The remaining ingredient we need to prove Proposition~\ref{prop-ball-bdy-bound} is to show that $\partial \mcl B_t \backslash \mcl Y_t$ has Hausdorff dimension at most $\Delta_{\op{ball}}^0$ almost surely.  This is where the particular definition of the ``good'' events $G_r(z)$---as the confluence events $E_r(z)$ of Lemma~\ref{lem-conf-event}---plays a crucial role.  Specifically, we will use the following two properties that hold on the event $G_r(z)$ by Lemma~\ref{lem-conf-event}:
\begin{enumerate}[(a)]
\item
Each $D_h$-geodesic from a point of $B_r(z)$ to a point of $\BB C\setminus B_{A r}(z)$ passes through the single (random) point $Z_r(z) \in B_{A r}(z) \setminus B_r(z)$.
\label{item-confluence-g}
\item Each $D_h$-geodesic between points of $\BB C \setminus B_{r}(z)$ is contained in $\BB C \setminus \ol{B_{r/A}(z)}$. More strongly, there is a path $\pi \subset B_r(z) \setminus \ol{B_{r/A}(z)}$ such that 
\eqb \label{eqn-avoids-path-g}
\left(\text{$D_h$-length of $\pi$} \right) < D_h\left(\pi , B_{r/A}(z) \right) .
\eqe 
\label{item-avoids-g}
\end{enumerate}
Here as above, $A > 0$ is as in Lemma~\ref{lem-conf-event} with $p = 1/2$. 

We first prove a lemma that states that, if two ``good'' events $G_r(z)$ and $G_{r'}(z')$ corresponding to nested annuli occur, then we can compare the ball started from infinity and run until it hits $z$ (see Proposition~\ref{prop-ball-infty}) to the ball centered at a point in $\BB{C}$ and run until a specified time.  This comparison is useful because we already know from Proposition~\ref{prop-ball-infty-dim} that the boundary of the ball started from infinity has dimension $\Delta_{\op{ball}}^0$ almost surely.

\begin{lem}
\label{lem-bdy-dim-compare}
If $z,z' \in \BB{C}$ and $r' > r > 0$ are such that $B_{r}(z) \subset B_{r'}(z')$, $B_{r/A}(z) \subset B_{r'/A}(z')$, and $0 \notin B_{A r'}(z')$, then a.s.\ for Lebesgue-a.e.\ $t > D_h(0, B_{r/A}(z))$, the inequality
\[
\dim_{\mcl H}^0(\bdy \mcl{B}_t \cap [B_{r'}(z') \setminus B_r(z)] )   \leq  \Delta_{\op{ball}}^0
\]
holds on the event  $G_r(z) \cap G_{r'}(z')$. 
\end{lem}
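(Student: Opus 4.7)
My plan is to prove the inequality in three steps: (i) use the confluence in $G_{r'}(z')$ to recenter the ball at the confluence point $Z := Z_{r'}(z')$, (ii) use the detour property in $G_r(z)$ to localize the relevant distances to the complement of $\ol{B_{r/A}(z)}$, and (iii) compare with the infinite-volume ball from Proposition~\ref{prop-ball-infty}. For Step~(i), since $0 \notin B_{Ar'}(z')$, property~(i) of $G_{r'}(z')$ from Lemma~\ref{lem-conf-event} gives that every $D_h$-geodesic from $0$ to a point $w \in B_{r'}(z')$ passes through $Z$. Setting $s_0 := D_h(0,Z)$, this yields $D_h(0,w) = s_0 + D_h(Z,w)$ for every $w \in B_{r'}(z')$, so taking level sets I obtain
\[
\bdy \mcl B_t \cap [B_{r'}(z') \setminus B_r(z)] \,=\, \bdy \mcl B_{t-s_0}(Z;D_h) \cap [B_{r'}(z') \setminus B_r(z)]
\]
modulo a subset of $\bdy B_{r'}(z')$, whose Hausdorff dimension is at most $1 \le \Delta_{\op{ball}}^0$. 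Since $t \mapsto s := t - s_0$ is a measure-preserving shift of $\BB R$, it will suffice to show that on $G_r(z) \cap G_{r'}(z')$, a.s.\ for Lebesgue-a.e.\ $s>0$ one has $\dim_{\mcl H}^0\bigl(\bdy \mcl B_s(Z;D_h) \cap [B_{r'}(z') \setminus B_r(z)]\bigr) \le \Delta_{\op{ball}}^0$.

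For Step~(ii), I would apply property~(ii) of $G_r(z)$, which says that every $D_h$-geodesic between two points of $\BB C \setminus B_r(z)$ stays in $\BB C \setminus \ol{B_{r/A}(z)}$. Since $Z \in B_{Ar'}(z') \setminus B_{r'}(z') \subset \BB C \setminus B_r(z)$ and every $w \in B_{r'}(z') \setminus B_r(z)$ is also in $\BB C \setminus B_r(z)$, this gives $D_h(Z,w) = D_h(Z,w; \BB C \setminus \ol{B_{r/A}(z)})$. Hence the set $\bdy \mcl B_s(Z;D_h) \cap [B_{r'}(z') \setminus B_r(z)]$ is measurable with respect to $h|_{\BB C \setminus \ol{B_{r/A}(z)}}$ (given $Z$ and $s$), which decouples it from the field inside $B_{r/A}(z)$.

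For Step~(iii), I would use that, by the translation invariance of the law of $h$ modulo additive constant applied to Proposition~\ref{prop-ball-infty-dim}, for each deterministic $w_0 \in \BB C$ we have $\dim_{\mcl H}^0 \bdy \mcl B_{w_0}^\infty = \Delta_{\op{ball}}^0$ almost surely; combined with the convergence property~\eqref{eqn-ball-infty-conv}, for deterministic $w_0$ and $u$ sufficiently far from $w_0$ the finite-volume boundary $\bdy \mcl B_{D_h(w_0,u)}(u;D_h)$ locally agrees with $\bdy \mcl B_{w_0}^\infty$ near $w_0$ and therefore has dimension at most $\Delta_{\op{ball}}^0$ there. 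Using the locality from Step~(ii), I would then perform a resampling argument: conditional on $h|_{\BB C \setminus \ol{B_{r/A}(z)}}$, one can replace $h|_{B_{r/A}(z)}$ by an independent copy to obtain a field $\wt h$ having the same law as $h$ for which the boundary in the annulus is unchanged, and combine this with a Fubini argument in $s$ to transfer the deterministic-center bound to our random-center setting. The hard part will be making Step~(iii) rigorous: the center $Z$ depends globally on $h$ and the dimension identity for $\bdy \mcl B_{w_0}^\infty$ is only available for \emph{deterministic} $w_0$, so the resampling must be carefully coupled with a Fubini integration over $s$ (and possibly over the center) to extract the bound for Lebesgue-a.e.\ $s$ on the good event.
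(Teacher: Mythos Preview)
Your Steps~(i) and~(ii) capture the right geometry and are essentially correct (the ``modulo $\bdy B_{r'}(z')$'' caveat is in fact unnecessary, since the recentering identity $D_h(0,w)=s_0+D_h(Z,w)$ holds on the open ball $B_{r'}(z')$, so boundaries match exactly there). The genuine gap is Step~(iii): you never explain the mechanism that converts a bound valid at a \emph{single} radius (coming from the infinite ball centered at a deterministic point) into a bound for \emph{Lebesgue-a.e.}\ radius. Your proposed ``replace $h|_{B_{r/A}(z)}$ by an independent copy and do Fubini in $s$'' does not obviously work: you would need to know that the conditional law of the resulting random radius (say $D_{\wt h}(Z,z)$ or $D_{\wt h}(0,z)$) given $h|_{\BB C\setminus B_{r/A}(z)}$ is absolutely continuous with respect to Lebesgue measure, and that its support covers the whole ray $(D_h(0,B_{r/A}(z)),\infty)$. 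Neither of these is immediate from a generic resampling.

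The paper closes this gap with a specific one-parameter perturbation rather than a full resampling, and avoids your recentering at the random point $Z$ altogether. Fix a smooth bump function $\phi$ with $\phi>0$ on $B_{r/A}(z)$, $\phi\equiv 0$ outside, normalized so that $(\phi,\phi)_\nabla=1$, and let $W$ be a standard Gaussian independent of $h$. Set $h^*:=h-(h,\phi)_\nabla\phi+W\phi$. Then $h^*\eqD h$ modulo additive constant, so by Proposition~\ref{prop-ball-infty-dim} applied at the \emph{deterministic} point $z$ one has $\dim_{\mcl H}^0\bdy\mcl B_z^{\infty,*}=\Delta_{\op{ball}}^0$ a.s. Since $G_r(z),G_{r'}(z')$ and the annular boundary depend only on $h|_{\BB C\setminus B_{r/A}(z)}=h^*|_{\BB C\setminus B_{r/A}(z)}$, your Steps~(i)--(ii) (run for $h^*$) give
\[
\bdy\mcl B_{D_{h^*}(0,z)}(0;D_h)\cap[B_{r'}(z')\setminus B_r(z)]=\bdy\mcl B_z^{\infty,*}\cap[B_{r'}(z')\setminus B_r(z)],
\]
hence this set has dimension at most $\Delta_{\op{ball}}^0$ a.s.\ on $G_r(z)\cap G_{r'}(z')$. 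Now the key point: the map $f(x):=D_{h-(h,\phi)_\nabla\phi+x\phi}(0,z)$ is, by Weyl scaling, increasing and locally Lipschitz in $x$, with range exactly $(D_h(0,B_{r/A}(z)),\infty)$. Writing $\Lambda$ for the $\sigma(h)$-measurable set of bad radii $t$, the previous display says $\BB P[f(W)\in\Lambda\,|\,h]=0$, so $f^{-1}(\Lambda)$ has Lebesgue measure zero, and since $f$ is locally Lipschitz so does $f(f^{-1}(\Lambda))=\Lambda\cap(D_h(0,B_{r/A}(z)),\infty)$. This is precisely the Lebesgue-a.e.\ conclusion you need, and it is this explicit Lipschitz parametrization of the radius---not a generic resampling---that makes Step~(iii) go through.
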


Before presenting the proof of the lemma, we sketch the main steps of the argument.
Due to the definition of the metric ball $\mcl B_z^\infty$ started from $\infty$ and stopped upon hitting $z$, property~\eqref{item-confluence-g} of $G_{r'}(z')$ directly allows us to show that on $G_{r'}(z')$, we have $\mcl B_z^\infty \cap B_r(z) = \mcl B_{D_h(0,z)}(0;D_h) \cap B_r(z)$. 
The trickier part of the argument is applying property~\eqref{item-avoids-g} of $G_{r}(z)$ to compare the LQG balls of radii $D_h(0,z)$ and $t$ centered at $0$. To do this, we start the proof by replacing our field $h$ with a field $h^*$ which is equal in distribution to $h$ modulo additive constant, defined so that $ h^* - h $ is a random multiple of a bump function which is supported on $B_{r/A}(z)$. Since $h^* = h$ outside of $B_{r/A}(z)$, property~\eqref{item-avoids-g} of $G_{r}(z)$ says that geodesics between points outside the larger ball $B_{r}(z)$ are the same for both metrics $D_h$ and $D_{h^*}$.  On the other hand, the conditional law given $h$ of the random variable $D_{h^*}(0,z)$ is absolutely continuous with respect to Lebesgue measure on the set of times $t >  D_h(0, B_{r/A}(z))$. So, we can translate an almost sure statement about a $D_h$-ball with radius $D_{h^*}(0,z)$ to a statement about a $D_h$-ball of radius $t  >  D_h(0, B_{r/A}(z))$.

\begin{figure}[t!]
\begin{center}
\includegraphics[scale=.7]{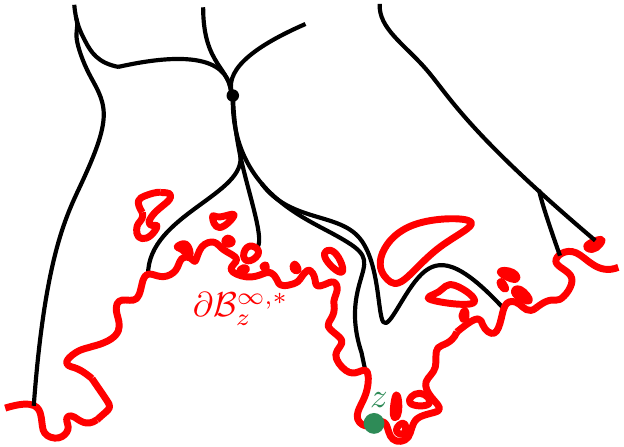}  \hspace{10pt}
\includegraphics[scale=.7]{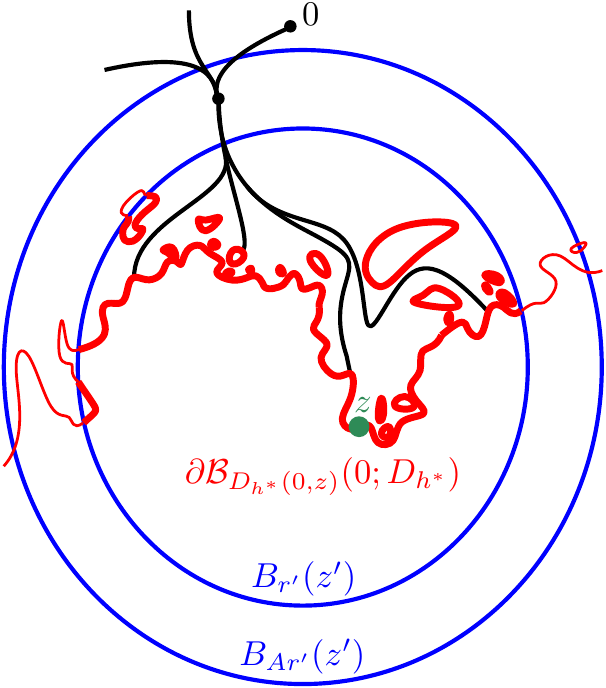}  \hspace{10pt}
\includegraphics[scale=.7]{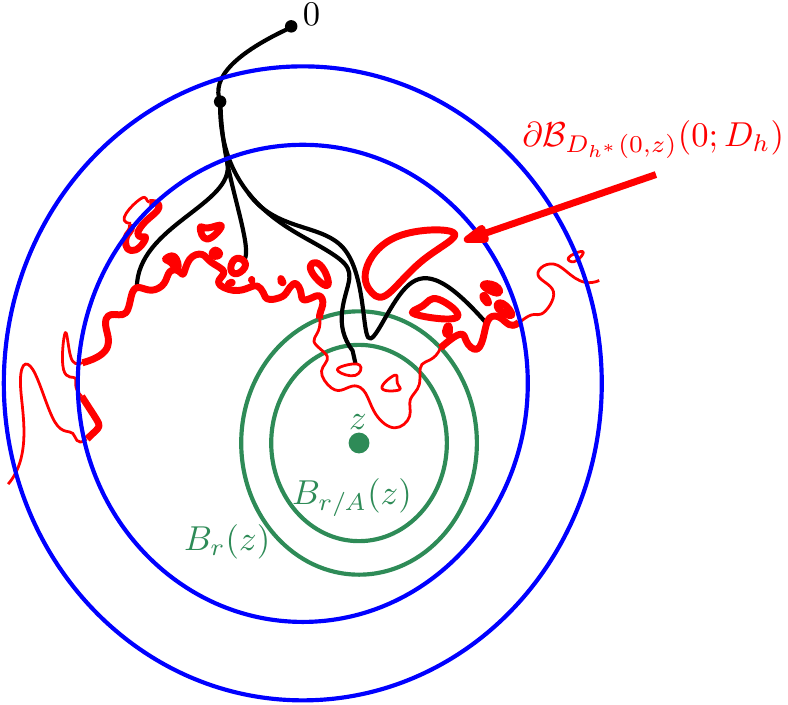} 
\end{center}
\caption{An illustration of the proof of Lemma~\ref{lem-bdy-dim-compare}.  In each figure, the bold part of the red set has Euclidean dimension at most $\Delta_{\op{ball}}^0$ almost surely on the event we are considering. 
\textbf{Left}:  We start with the $D_{h^*}$-metric ball $\mcl B_z^{\infty,*}$ started from $\infty$ and run until hitting $z$; the red set represents its boundary, and the black curves are geodesic rays.  
\textbf{Middle}:  The red set now represents $\bdy \mcl B_{D_{h^*}(0,z)}(0;D_{h^*})$. By property~\eqref{item-confluence-g} of $G_{r'}(z')$, on $G_{r'}(z')$ we have $\bdy\mcl B_z^{\infty,*} \cap B_{r'}(z')= \bdy \mcl B_{D_{h^*}(0,z)}(0;D_{h^*}) \cap B_{r'}(z')$.  
\textbf{Right}: The red curve now represents $\bdy\mcl B_{D_{h^*}(0,z)}(0;D_h)$. By property~\eqref{item-avoids-g} of $G_{r}(z)$, on $G_r(z)$ we have $\bdy \mcl B_{D_{h^*}(0,z)}(0;D_{h }) \setminus B_{r }(z ) = \bdy \mcl B_{D_{h^*}(0,z)}(0;D_{h^*}) \setminus B_{r }(z )$.  The rest of the argument consists of showing the conditional law of $D_{h^*}(0,z)$ given $h$ is mutually absolutely continuous w.r.t.\ Lebesgue measure on its support.
}\label{fig-bdy-dim-compare}
\end{figure}

\begin{proof}[Proof of Lemma~\ref{lem-bdy-dim-compare}]
Let $\phi : \BB C\rta [0,\infty)$ be a smooth bump function which is positive on $B_{r/A}(z)$, is zero outside $B_{r/A}(z)$, and is normalized so that the Dirichlet energy of $\phi$ satisfies $(\phi,\phi)_\nabla =1$. 
Let $Z$ be a standard Gaussian random variable which is independent from $h$. 
Consider the field
\eqb
h^* := h - (h,\phi)_\nabla \phi + Z \phi
\eqe
where $(h,\phi)_\nabla$ is the Dirichlet inner product. 
By the definition of the whole-plane GFF as a sum of i.i.d.\ standard Gaussians times the elements of an orthonormal basis for the Dirichlet inner product, $h^*$ and $h$ are equal in distribution modulo additive constant. 

Let $\mcl B_z^{\infty, *}$ be defined in the same manner as the ball $\mcl B_z^\infty$ of Proposition~\ref{prop-ball-infty} but with $h^*$ in place of $h$.
Since $B_z^{\infty,*}$ is determined by $h^*$ viewed modulo additive constant, we can apply  Proposition~\ref{prop-ball-infty-dim} with $h^*$ in place of $h$ to get that a.s.\ $\dim_{\mcl H}^0 \bdy\mcl B_z^{\infty,*} = \Delta_{\op{ball}}^0$.  We convert this to a statement about the dimension of $\mcl B_t(0;D_h) \cap B_r(z)$ in two stages. 
\begin{enumerate}
\item
Since $B_{r/A}(z) \subset B_{r'/A}(z')$, each of $G_r(z)$ and $G_{r'}(z')$ is determined by $h|_{\BB C\setminus B_{r /A}(z )}$, viewed modulo additive constant. 
So, the definitions of these events are unaffected by replacing $h$ by $h^*$.
By Property~\eqref{item-confluence-g} for $G_{r'}(z')$, on $G_{r'}(z')$ every $D_{h^*}$-geodesic from a point outside $B_{A r'}(z')$ to a point inside $B_{r'}(z')$ passes through the single point $Z_{r'}(z')$.  This implies that $\bdy\mcl B_{D_{h^*}(0,z)}(0;D_{h^*}) \cap B_{r'}(z') = \bdy\mcl B_z^{\infty,*} \cap B_{r'}(z')$. 
\item By Property~\eqref{item-avoids-g} of $G_r(z)$, on $G_r(z)$ every $D_h$-geodesic between two points outside of $B_{r}(z)$ is contained in $\BB C\setminus B_{r/A}(z)$. Since $h^* = h$ outside of $B_{r/A}(z)$, we can use the stronger statement~\eqref{eqn-avoids-path-g} to get that also every $D_{h^*}$-geodesic between two points outside of $B_r(z)$ is contained in $\BB C\setminus B_{r/A}(z)$. Since $h^* = h$ outside of $B_{r/A}(z)$, this implies that $\bdy \mcl B_{D_{h^*}(0,z)}(0;D_{h^*})\setminus B_{r }(z) = \bdy\mcl B_{D_{h^*}(0,z)}(0;D_h) \setminus B_{r }(z)$. 
\end{enumerate}
Thus, on the event $G_{r'}(z') \cap G_{r}(z)$, a.s.\ $\partial \mcl B_{D_{h^*}(0,z)}(0;D_h) \cap [B_{r'}(z') \setminus B_r(z)] = \bdy\mcl B_z^{\infty,*} \cap  [B_{r'}(z') \setminus B_r(z)]$. Hence on this event a.s.\ 
\eqb \label{eqn-ball-cap-mixed}
\dim_{\mcl H}^0\left( \partial \mcl B_{D_{h^*}(0,z)}(0;D_h) \cap [B_{r'}(z') \setminus B_r(z)]  \right) \leq \Delta_{\op{ball}}^0 .  
\eqe

For the rest of the proof we assume that the event $G_{r'}(z') \cap G_{r}(z)$ occurs. 
Let
\eqbn
\Lambda = \Lambda(h) := \left\{ t\in \BB R : \dim_{\mcl H}^0\left( \partial \mcl B_t(0;D_h) \cap [B_{r'}(z') \setminus B_r(z)]  \right) > \Delta_{\op{ball}}^0 \right\}. 
\eqen
By~\eqref{eqn-ball-cap-mixed}, on the $\sigma(h)$-measurable event $G_{r'}(z') \cap G_r(z)$, a.s.
\eqb \label{eqn-lambda-cond}
\BB P[ D_{h^*}(0,z) \in \Lambda \,|\, h ] = 0 .
\eqe
To prove the lemma, we need to show that a.s.\ the Lebesgue measure of $\Lambda \cap (D_h(0,B_{r/A}(z)),\infty)$ is zero.  

For $x\in\BB R$, let $f(x) := D_{h - (h,\phi)_\nabla \phi + x \phi}(0,z)$. 
By~\eqref{eqn-lambda-cond} and since we are assuming that $G_{r'}(z')\cap G_r(z)$ occurs, a.s.\ 
\eqbn
0 = \BB P[ D_{h^*}(0,z) \in \Lambda \,|\, h ]  = \BB P[f(Z) \in \Lambda \,|\, h ] = P[Z \in f^{-1}(\Lambda)\,|\, h ] .
\eqen
Since $Z$ is a standard Gaussian random variable independent from $h$, its conditional law given $h$ is mutually absolutely continuous w.r.t.\ Lebesgue measure on $\BB R$. So, a.s.\ the set $f^{-1}(\Lambda)$ has zero Lebesgue measure. Since $\phi$ is positive on $B_{r/A}(z)$ and zero outside $B_{r/A}(z)$, we deduce from the Weyl scaling property of the metric (Axiom~\ref{item-metric-f}) that a.s.\ for every $x<y$, 
\eqb
0 \leq f(y) - f(x) \leq ( e^{\xi(y-x) \|\phi\|_\infty } - 1) f(x) \leq ( e^{\xi(y-x) \|\phi\|_\infty } - 1) e^{\xi x \|\phi\|_\infty } D_{h - (h,\phi)_\nabla\phi}(0,z) ,
\eqe
where $\|\phi\|_\infty $ is the $L^\infty$ norm.
We deduce that a.s.\  $f$ is locally Lipschitz, and so a.s.\ $f(f^{-1}(\Lambda))$ has Lebesgue measure zero. Moreover, again by Axiom~\ref{item-metric-f}, a.s.\ $f(\BB R) = (D_h(0,B_{r/A}(z)),\infty)$, and therefore $f(f^{-1}(\Lambda)) = \Lambda \cap (D_h(0,B_{r/A}(z)),\infty)$. Thus, a.s.\ the Lebesgue measure of $\Lambda \cap (D_h(0,B_{r/A}(z)),\infty)$ is zero, as desired.  
\end{proof}

We now apply Lemma~\ref{lem-bdy-dim-compare} to prove the desired dimension result for $\partial \mcl B_t \backslash \mcl Y_t$.

\begin{lem}
\label{lem-dim-nonexceptional-points}
It holds almost surely that, for Lebesgue-a.e.\ $t>0$, the set $\partial\mcl B_t \setminus \mcl Y_t$ has Hausdorff dimension at most $\Delta_{\op{ball}}^0$. 
\end{lem}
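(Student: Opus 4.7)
The plan is to show that almost surely, for Lebesgue-a.e.\ $t>0$, every point of $\bdy\mcl B_t\setminus\mcl Y_t$ is contained in some rationally-centered open ball $B_{r'}(w)$ (with $w\in\BB Q^2$, $r'\in\BB Q$) on which $\bdy\mcl B_t$ has Euclidean dimension at most $\Delta_{\op{ball}}^0$. Since Lemma~\ref{lem-bdy-dim-compare} gives dimension bounds only on \emph{annular} regions, the crux is to obtain each such ball bound by writing the ball as a countable union of annuli coming from a shrinking sequence of nested good triples around the point.

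First I would apply Lemma~\ref{lem-bdy-dim-compare} to each quadruple $((w_1,r'_1),(w_2,r'_2))\in(\BB Q^2\times\BB Q_{>0})^2$ satisfying the deterministic hypotheses $B_{r'_2}(w_2)\subset B_{r'_1}(w_1)$, $B_{r'_2/A}(w_2)\subset B_{r'_1/A}(w_1)$, and $0\notin B_{A r'_1}(w_1)$. Taking a countable union of the corresponding Lebesgue null sets of ``bad'' $t$'s, I conclude that almost surely, for Lebesgue-a.e.\ $t>0$ the following uniform bound holds: for every such rational quadruple with $t>D_h(0,B_{r'_2/A}(w_2))$ and $G_{r'_1}(w_1)\cap G_{r'_2}(w_2)$ occurring,
\[
\dim_{\mcl H}^0\bigl(\bdy\mcl B_t\cap[B_{r'_1}(w_1)\setminus B_{r'_2}(w_2)]\bigr)\leq \Delta_{\op{ball}}^0.
\]

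Now I would fix such a $t$ and take $z\in\bdy\mcl B_t\setminus\mcl Y_t$. Unwinding the definitions of $\mcl Y_t$ and $F_\ep$, for every $\ep>0$ there exist rationals $r<\ep$, $w\in B_r(z)\cap\BB Q^2$, and $r'\in[r^{1/2}/a,2r^{1/4}/b]\cap\BB Q$ with $G_{r'}(w)$ occurring; because $r'\gg r$ for small $r$, this forces $z\in B_{r'/A}(w)$. Using this abundance of small good triples I would inductively build a sequence $(w_n,r_n,r'_n)$ with $r'_n\to 0$, $z\in B_{r_n}(w_n)\subset B_{r'_n/A}(w_n)$, $G_{r'_n}(w_n)$ occurring, $0\notin B_{A r'_n}(w_n)$ for all $n$ (arrangeable since $z\neq 0$ and by taking $r'_1$ small), and the \emph{strict} nestings $B_{r'_{n+1}}(w_{n+1})\subset B_{r'_n}(w_n)$ and $B_{r'_{n+1}/A}(w_{n+1})\subset B_{r'_n/A}(w_n)$. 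These nestings can be arranged because $|w_{n+1}-w_n|\leq r_n+r_{n+1}$ is of order at most $r_n$, while $r'_n\geq r_n^{1/2}/a$ is polynomially larger, so picking $r_{n+1}$ sufficiently small leaves room for $r'_{n+1}$. The condition $t>D_h(0,B_{r'_n/A}(w_n))$ holds automatically since $z\in\bdy\mcl B_t$ lies in this Euclidean neighborhood, which must contain points $u$ with $D_h(0,u)<t$.

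Since $r'_n\to 0$ and all balls $B_{r'_n}(w_n)$ contain $z$, we have $\bigcap_n B_{r'_n}(w_n)=\{z\}$, and therefore
\[
\bdy\mcl B_t\cap B_{r'_1}(w_1)\,=\,\{z\}\cup\bigcup_{n\geq 1}\bdy\mcl B_t\cap\bigl[B_{r'_n}(w_n)\setminus B_{r'_{n+1}}(w_{n+1})\bigr].
\]
Applying the uniform annulus bound to each term of this countable union and using the countable stability of Hausdorff dimension yields $\dim_{\mcl H}^0(\bdy\mcl B_t\cap B_{r'_1}(w_1))\leq\Delta_{\op{ball}}^0$. The collection of balls $B_{r'}(w)$ with $w\in\BB Q^2$, $r'\in\BB Q_{>0}$, and $\dim_{\mcl H}^0(\bdy\mcl B_t\cap B_{r'}(w))\leq\Delta_{\op{ball}}^0$ is countable, and by the construction above it covers $\bdy\mcl B_t\setminus\mcl Y_t$; a final application of countable stability gives $\dim_{\mcl H}^0(\bdy\mcl B_t\setminus\mcl Y_t)\leq\Delta_{\op{ball}}^0$. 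The main obstacle is the bookkeeping in the inductive construction --- simultaneously arranging two pairs of strict ball inclusions while keeping $z$ inside the inner ball and forcing $r'_n\to 0$.
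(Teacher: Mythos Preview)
Your proposal is correct and follows essentially the same strategy as the paper's proof: apply Lemma~\ref{lem-bdy-dim-compare} over all rational quadruples to get, for a.e.\ $t$, annular dimension bounds on every nested pair of good balls; then, for each $z\in\bdy\mcl B_t\setminus\mcl Y_t$, extract from the definition of $\mcl Y_t$ a shrinking sequence of good balls around $z$ and telescope. The paper's version is terser and leaves the telescoping decomposition, the nesting bookkeeping (both $B_{r'_{n+1}}(w_{n+1})\subset B_{r'_n}(w_n)$ and its $1/A$-scaled analogue), and the verification of $t>D_h(0,B_{r/A}(z))$ implicit, whereas you spell these out; but the underlying argument is the same.
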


\begin{proof}
Suppose that $w \in \partial \mcl B_t \backslash \mcl Y_t$ for some $t>0$. 
By the definition~\eqref{eqn-gen-upper-set} of $\mcl Y_t$, there exists a sequence of positive rational numbers $r_n \rta 0$ and a sequence of points $z_n \in B_{r_n}(w)  \cap \BB Q^2 $ such that, for each $n$, the event $\left[F_{ r_n}(z_n)\right]^c$ occurs. Recalling the definition~\eqref{eqn-defn-F-ep-z} of $F_\ep(z)$ (with $a$ and $b$ in~\eqref{eqn-defn-F-ep-z} equal to $1/A$ and $A^2$, respectively), we deduce that, for each $n$, the event $G_{r_n^*}(z_n)$ occurs for some $r_n^* \in  [A  r_n^{1/2}, 2r_n^{1/4}/A^2] \cap \BB Q $.   
Observe that the balls $B_{r_n^*}(z_n)$ contain the point $w$ for all sufficiently large $n$ and that $r_n\rta 0$ as $n\rta\infty$.
In other words, we have a sequence of balls $B_r(z)$ of arbitrarily small radii, all containing $w$, for which the corresponding events $G_r(z)$ occur. Since $t>0$ and $w \in \partial \mcl B_t \backslash \mcl Y_t$ were chosen arbitrarily, we deduce that, for each positive integer $k$, we can cover the set 
\eqb
\bigcup_{t > 0} (\partial \mcl B_t \backslash \mcl Y_t)
\label{eqn-union-complements}
\eqe
by a collection $\mathfrak{B}_k$ of balls $B_r(z)$, with rational centers and rational radii which are at most $1/k$, for which the corresponding events $G_r(z)$ occur.   The union $\mathfrak{B} = \bigcup_k \mathfrak{B}_k$ of these collections is countable since the balls all have rational centers and radii.  By Lemma~\ref{lem-bdy-dim-compare}, the following holds almost surely: for almost every $t>0$, 
\[
\dim_{\mcl H} ( \partial \mcl B_t \cap [B_{r'}(z') \backslash B_{r}(z) ]) \leq \Delta_{\op{ball}}^0
\] 
for any nested pair of balls $B_r(z) \subset B_{r'}(z')$ in $\mathfrak{B}$ for which $B_{r/A}(z) \subset B_{r'/A}(z')$.  Since every point in the set~\eqref{eqn-union-complements} is contained in a sequence of balls in $\mathfrak{B}$ of arbitrarily small radii, we deduce that a.s., it is the case that, for almost every $t$ and any point $w \in \bdy \mcl B_t \backslash \mcl Y_t$, the intersection of $\partial \mcl B_t$ with some element in $\mathfrak{B}$ containing $w$ has Hausdorff dimension at most $\Delta_{\op{ball}}^0$.  By the countable stability of Hausdorff dimension, we conclude that a.s.,  the Hausdorff dimension of the entire set $\partial\mcl B_t \setminus \mcl Y_t$ is bounded from above by $\Delta_{\op{ball}}^0$ for almost every $t>0$. 
\end{proof}

We now combine Lemmas~\ref{lem-dim-exceptional-points} and~\ref{lem-dim-nonexceptional-points} to prove Proposition~\ref{prop-ball-bdy-bound}, implementing the strategy we outlined above.

\begin{proof}[Proof of Proposition~\ref{prop-ball-bdy-bound}]
By Lemma~\ref{lem-dim-exceptional-points}, there exists $q>0$ such that for each $t>0$, the Hausdorff dimension of the set $\mcl Y_t$ is almost surely bounded from above by $2 - \xi Q + \xi^2/2 - q$.  On the other hand, Lemma~\ref{lem-dim-nonexceptional-points} asserts that it is a.s. the case that, for almost every $t>0$, the set $\partial \mcl B_t \backslash \mcl Y_t$ a.s. has Hausdorff dimension at most $\Delta_{\op{ball}}^0$. Therefore, for such a choice of $t$, we have a.s.\ $\dim_{\mcl H}^0 \bdy\mcl B_t \leq \max\{\Delta_{\op{ball}}^0, 2-\xi Q + \xi^2/2 - q\}$. By~\cite[Theorem 1.1]{gwynne-ball-bdy}, if we sample $t$ uniformly at random from $[0,1]$, independently from $h$, then for every $\ep > 0$ the Hausdorff dimension of $\partial \mcl B_t$ is at least $2 - \xi Q + \xi^2/2 -\ep $ with positive probability.
Therefore, on an event with positive probability,
\[
2-\xi Q + \xi^2/2 -\ep \leq \dim_{\mcl H}^0 \bdy\mcl B_t \leq \max\{\Delta_{\op{ball}}^0, 2-\xi Q + \xi^2/2 - q\} .
\]
Since $\ep > 0$ is arbitrary, this implies that $\Delta_{\op{ball}}^0 \geq 2 - \xi Q + \xi^2/2$, as desired.
\end{proof}

We now complete the proof of Theorem~\ref{thm-ball-bdy-bound} by proving the following lemma, which we alluded to at the beginning of this subsection.

\begin{lem}
\label{lem-bdy-dim-compare2revised}
For each fixed $z \in \BB{C}$, a.s.\ 
\[
\dim_{\mcl H}^0 \bdy \mcl B_{D_h(0,z)}  \leq  2 - \xi Q + \xi^2/2 .
\] 
\end{lem}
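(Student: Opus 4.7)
My plan is to mirror the proof of Proposition~\ref{prop-ball-bdy-bound}, applied at the random time $t = D_h(0,z)$. Decompose $\bdy\mcl B_{D_h(0,z)}$ as the union of its intersection with the ``exceptional'' set $\mcl Y$ from Theorem~\ref{thm-gen-upper} (invoked via Lemma~\ref{lem-check-conditions} with the confluence events $G_r(w) := E_r(w)$ from Lemma~\ref{lem-conf-event}) and its good complement. Since $\mcl Y$ depends only on $h$ and not on the radius of any particular ball, the proof of Lemma~\ref{lem-dim-exceptional-points} carries over to the random time $s = D_h(0,z)$ and yields $\dim_{\mcl H}^0(\bdy\mcl B_{D_h(0,z)} \cap \mcl Y) \leq 2 - \xi Q + \xi^2/2 - q$ almost surely, for some $q > 0$.

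For the good complement $\bdy\mcl B_{D_h(0,z)} \setminus \mcl Y$, mirroring the proof of Lemma~\ref{lem-dim-nonexceptional-points}, I cover it by countably many balls $B_{r'}(z')$ on which $G_{r'}(z')$ occurs and which are so small that $0 \notin B_{A^2 r'}(z')$ and the (a.s.\ unique, by~\cite[Theorem 1.2]{mq-geodesics}) $D_h$-geodesic from $0$ to $z$ does not enter $B_{r'/A}(z')$. The latter is geometrically possible because this geodesic is a single rectifiable curve with empty planar interior, while a good point $w \in \bdy\mcl B_{D_h(0,z)} \setminus \mcl Y$ with $w \neq z$ does not lie on its open interior (interior points of the geodesic lie strictly inside $\mcl B_{D_h(0,z)}$). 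On such an event, the confluence identity in Lemma~\ref{lem-conf-event}(i) gives $D_h(0,p) = D_h(0, Z_{r'}(z')) + D_h(Z_{r'}(z'), p)$ for every $p \in B_{r'}(z')$, so
\begin{equation*}
\mcl B_{D_h(0,z)}(0;D_h) \cap B_{r'}(z')
= \mcl B_{T^*}(Z_{r'}(z'); D_h) \cap B_{r'}(z'),
\qquad T^* := D_h(0,z) - D_h(0, Z_{r'}(z')).
\end{equation*}
Crucially, under our geometric assumptions both the center $Z_{r'}(z')$ and the radius $T^*$ are $\sigma(h|_{\BB C \setminus B_{r'/A}(z')})$-measurable: $Z_{r'}(z')$ by Lemma~\ref{lem-conf-event}'s locality, $D_h(0, Z_{r'}(z'))$ because the confluence forces the relevant geodesic to stay outside $B_{r'/A}(z')$, and $D_h(0,z)$ by our hypothesis that the geodesic from $0$ to $z$ avoids $B_{r'/A}(z')$.

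Conditioning on $h|_{\BB C \setminus B_{r'/A}(z')}$ therefore freezes both $Z_{r'}(z')$ and $T^*$, while the conditional law of $h|_{B_{r'/A}(z')}$ is a zero-boundary GFF plus a random harmonic function---a ``GFF plus a continuous function'' in the sense of Definition~\ref{def-lqg-metric}. I then apply~\cite[Theorem 1.1]{gwynne-ball-bdy} in this conditional setting, combined with translation invariance to move the center to $Z_{r'}(z')$, to conclude that a.s.\ $\dim_{\mcl H}^0(\bdy\mcl B_{T^*}(Z_{r'}(z'); D_h) \cap B_{r'}(z')) \leq 2 - \xi Q + \xi^2/2$. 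Integrating out the conditioning and summing over the countable cover via the countable stability of Hausdorff dimension, then combining with the bound on the $\mcl Y$-part, completes the proof. The main obstacle I foresee is twofold: first, verifying that the geometric arrangement (geodesic avoiding $B_{r'/A}(z')$) can be arranged for \emph{every} good point in $\bdy\mcl B_{D_h(0,z)} \setminus \mcl Y$ other than perhaps $z$ itself (which is a single point and contributes nothing to the dimension); and second, verifying that the essential-supremum bound of~\cite[Theorem 1.1]{gwynne-ball-bdy} is compatible with the ``GFF plus continuous function'' reformulation used in the conditional argument, which should follow from the local absolute continuity of the GFF and the fact that the bound in~\cite{gwynne-ball-bdy} is proven via Frostman-type moment estimates that transfer under Cameron--Martin shifts.
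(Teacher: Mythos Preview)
Your decomposition into the exceptional set $\mcl Y$ and its complement is natural, but the treatment of the good complement has a genuine gap that makes the argument circular. After conditioning on $h|_{\BB C \setminus B_{r'/A}(z')}$, the radius $T^*$ and center $Z_{r'}(z')$ indeed become measurable. However, the conditional law of the full field $h$ is \emph{not} absolutely continuous with respect to the whole-plane GFF law: the field outside $B_{r'/A}(z')$ is frozen exactly. Hence \cite[Theorem~1.1]{gwynne-ball-bdy}, which is an almost-sure statement for each deterministic radius under the whole-plane GFF law, does not transfer to the conditional law by any Cameron--Martin type argument. Worse, the conditional assertion you need---that conditionally a.s.\ $\dim_{\mcl H}^0(\bdy\mcl B_{T^*}(Z_{r'}(z');D_h)\cap B_{r'}(z')) \leq 2-\xi Q+\xi^2/2$---is, upon integrating out the conditioning, exactly equivalent to the unconditional statement that $\dim_{\mcl H}^0(\bdy\mcl B_{D_h(0,z)}\cap B_{r'}(z')) \leq 2-\xi Q+\xi^2/2$, which is what you are trying to prove. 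Note also that the ball boundary inside $B_{r'}(z')$ genuinely depends on $h|_{B_{r'/A}(z')}$ (geodesics from $Z_{r'}(z')$ to points of $B_{r'}(z')$ may enter $B_{r'/A}(z')$; Lemma~\ref{lem-conf-event}\eqref{item-avoids} only controls geodesics between points \emph{outside} $B_{r'}(z')$), so you cannot alternatively argue that the relevant set is already determined by the conditioning.

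The paper's proof sidesteps this by a different mechanism: rather than conditioning to freeze the radius, it \emph{randomizes} it. One replaces $h$ by $h^* = h - (h,\phi)_\nabla\phi + Z\phi$ for a bump function $\phi$ supported in a small ball $B_{r/\wt A}(z)$ and an independent standard Gaussian $Z$, so that $h^*\eqD h$ modulo additive constant. A bi-Lipschitz-type estimate for $x\mapsto D_{h-(h,\phi)_\nabla\phi+x\phi}(0,z)$ (from \cite[Lemma~2.5]{gwynne-ball-bdy}) shows that the conditional law of $D_{h^*}(0,z)$ given $h$ is absolutely continuous with respect to Lebesgue measure on its range. Since \cite[Theorem~1.1]{gwynne-ball-bdy} gives the dimension upper bound for Lebesgue-a.e.\ fixed radius, one gets a.s.\ $\dim_{\mcl H}^0(\bdy\mcl B_{D_{h^*}(0,z)}(0;D_h)\setminus B_r(z))\leq 2-\xi Q+\xi^2/2$. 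Finally, on a high-probability event (Lemma~\ref{lem-conf-event}\eqref{item-avoids} applied at $z$), the ball boundaries for $D_h$ and $D_{h^*}$ agree outside $B_r(z)$, and since $h^*\eqD h$ the bound transfers to $h$. The idea you are missing is precisely this randomization of the hitting time via a field perturbation, which is what converts the Lebesgue-a.e.\ fixed-time result into an almost-sure statement at the random time $D_h(0,z)$.
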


\begin{proof}
To prove the lemma, we will show that for each $r, \ep > 0$,  
\eqb
\BB P\left[ \dim_{\mcl H}^0(\bdy \mcl B_{D_h(0,z)} \setminus B_r(z) ) \leq  2 - \xi Q + \xi^2/2  \right] \geq 1-\ep .
\label{eqn-implies-lemma}
\eqe
The lemma then follows by sending $r,\ep \rta 0$.

By Lemma~\ref{lem-conf-event}, we can choose $\wt A = \wt A(\ep) >1$ such that, with probability $1-\ep$, each $D_h$-geodesic between points of $\BB{C} \backslash B_r(z)$ is contained in $\BB{C} \backslash \overline{B_{r/\wt A}(z)}$ and more strongly there is a path $\pi$ in $B_r(z) \setminus \ol{B_{r/\wt A}(z)}$ such that 
\eqbn
\left(\text{$D_h$-length of $\pi$} \right) < D_h\left(\pi , B_{r/\wt A}(z) \right) .
\eqen 
(We have introduced the tilde to avoid confusing this $\wt A$ with the constant $A$ we have been referencing throughout this subsection.)
Let $E_\ep$ be the event that this is the case. We henceforth work on the event $E_\ep$. 

As in the proof of Lemma~\ref{lem-bdy-dim-compare}, let $\phi : \BB C\rta [0,\infty)$ be a smooth bump function which is positive on $B_{r/\wt A}(z)$, is zero outside $B_{r/\wt A}(z)$, and is normalized so that the Dirichlet energy $(\phi,\phi)_\nabla$ is 1.  Here, we also stipulate that $\phi$ is identically equal to some constant $c$ on the annulus $B_{r/(2 \wt A)}(z) \setminus B_{r/(3\wt A)}(z)$.  As before, we let $Z$ be a standard Gaussian random variable which is independent from $h$, and we consider the field
\eqb
h^* := h - (h,\phi)_\nabla \phi + Z \phi
\eqe
By the definition of the whole-plane GFF, $h^*$ and $h$ are equal in distribution modulo additive constant.  

Define 
\eqbn
\Lambda := \left\{t > 0 : \dim_{\mcl H}^0(\bdy \mcl B_t(0;D_{h}) \setminus B_r(z) ) > 2 - \xi Q + \xi^2/2 \right\} .
\eqen
By~\cite[Theorem 1.1]{gwynne-ball-bdy}, the set $\Lambda$ a.s.\ has Lebesgue measure zero.  

For $x \in \BB R$, let $f(x) := D_{h - (h,\phi)_\nabla \phi + x \phi}(0,z)$, and note that $D_{h^*}(0,z) = f(Z)$. By~\cite[Lemma 2.5]{gwynne-ball-bdy} (with the field $h$ in that lemma replaced by $h - (h,\phi)_\nabla \phi$ and $\phi$ defined to equal $c$ on the annulus $\mcl A$ in the lemma instead of $1$), it is a.s.\ the case that for every $x <y$,
\[
f(y) - f(x) \geq C_h (y-x)  e^{\xi c x},
\]
where $C_h > 0$ is measurable with respect to $\sigma(h)$.  We deduce that a.s.\ (w.r.t.\ the law of $h$) the set $f^{-1}(\Lambda)$ has Lebesgue measure zero.
Since the conditional law of $Z$ given $h$ is mutually absolutely continuous w.r.t.\ Lebesgue measure, $D_{h^*}(0,z) \notin \Lambda$ almost surely.  In other words, 
\eqb
\dim_{\mcl H}^0\left(\bdy \mcl B_{D_{h^*}(0,z)}(0;D_{h}) \setminus B_r(z) \right) \leq 2 - \xi Q + \xi^2/2 	\qquad \text{a.s.}
\label{eqn-dim-hstar-bound}
\eqe
Since we are working on the event $E_{\ep}$ and $h|_{\BB C\setminus B_{r/\wt A}(z)} = h^*|_{\BB C\setminus B_{r/\wt A}(z)}$, the same argument preceding~\eqref{eqn-ball-cap-mixed} in the proof of Lemma~\ref{lem-bdy-dim-compare} gives
\eqbn
\bdy\mcl B_{D_{h^*}(0,z)}(0;D_{h}) \setminus B_r(z) = \bdy\mcl B_{D_{h^*}(0,z)}(0;D_{h^*}) \setminus B_r(z) .
\eqen
Hence~\eqref{eqn-dim-hstar-bound} implies that
\[
\dim_{\mcl H}^0\left(\bdy \mcl B_{D_{h^*}(0,z)}(0;D_{h^*}) \setminus B_r(z) \right) \leq 2 - \xi Q + \xi^2/2 	\qquad \text{a.s.\ on $E_\ep$}
\]
Since $h^*\eqD h$ and $\BB P[E_\ep] \geq 1-\ep$, this proves~\eqref{eqn-implies-lemma}, and hence the lemma.
\end{proof}

\begin{proof}[Proof of Theorem~\ref{thm-ball-bdy-bound}]
Combining the results of Proposition~\ref{prop-ball-infty-dim}, Lemma~\ref{lem-bdy-dim-compare2revised}, and Proposition~\ref{prop-ball-bdy-bound} yields that for each fixed $z\in\BB C$, a.s.\ 
\[
\Delta_{\op{ball}}^0 \leq \dim_{\mcl H}^0 \bdy\mcl B_{D_h(0,z)}  \leq 2 - \xi Q + \xi^2/2 \leq \Delta_{\op{ball}}^0 .
\] 
\end{proof}

\subsection{The exterior boundary of a metric ball}
\label{sec-outer}

We now study the exterior boundary of an LQG metric ball, which we defined in Definition~\ref{def-outer}.
This random fractal satisfies a zero-one law analogous to the zero-one law for LQG metric ball boundaries that we stated in  Theorem~\ref{prop-ball-infty-dim}. Consider the infinite-volume ``metric ball'' $\mcl B_0^\infty$ we defined in Proposition~\ref{prop-ball-infty}, and define its exterior boundary $\mcl O_0^\infty$ in a manner analogous to Definition~\ref{def-outer}---i.e., as the union of the boundaries of the connected components of $\BB{C} \backslash \mcl B_0^\infty$.

\begin{prop} \label{prop-ball-infty-dim-out} 
There are deterministic constants $\Delta_{\op{out}}^0,  \Delta_{\op{out}}^\gamma > 0$ such that a.s.\ $\dim_{\mcl H}^0 \bdy\mcl O_0^\infty = \Delta_{\op{out}}^0$ and $\dim_{\mcl H}^\gamma \bdy\mcl O_0^\infty = \Delta_{\op{out}}^\gamma$.
Furthermore, for each fixed $z \in\BB C$ a.s.\ $\dim_{\mcl H}^0 \mcl O_z(w;D_h) \geq \Delta_{\op{out}}^0$ and $\dim_{\mcl H}^\gamma \mcl O_z(w;D_h) \geq \Delta_{\op{out}}^\gamma$ simultaneously for each $w\in\BB C$. 
\end{prop}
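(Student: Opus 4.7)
The plan is to follow the strategy of Proposition~\ref{prop-ball-infty-dim}, with $\bdy \mcl B_0^\infty$ replaced throughout by $\mcl O_0^\infty$. The first ingredient is scale invariance: since $\mcl O_0^\infty$ is a deterministic functional of $\mcl B_0^\infty$, namely the union of boundaries of the connected components of $\BB C \setminus \mcl B_0^\infty$, the scaling relation~\eqref{eqn-ball-infty-scale} gives immediately $(h, \mcl O_0^\infty) \eqD (h(r\cdot) - h_r(0), r^{-1} \mcl O_0^\infty)$ for each $r > 0$. The second ingredient is a locality statement for the exterior boundary: if $V \subset \BB C$ is open, then $\mcl O_0^\infty \cap V$ is determined by $\mcl B_0^\infty \cap V$. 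Indeed, a point $z \in V \cap \mcl B_0^\infty$ lies on the boundary of some component $U$ of $\BB C \setminus \mcl B_0^\infty$ if and only if $z$ is a limit of points of $U$; since $V$ is open, this is equivalent to $z$ being a limit point of $U \cap V$, and $U \cap V$ is itself a disjoint union of components of $V \setminus \mcl B_0^\infty$. Hence $z \in \mcl O_0^\infty$ iff $z$ is a limit point of some component of $V \setminus \mcl B_0^\infty$, a property determined purely by $\mcl B_0^\infty \cap V$.

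With these two facts in hand, the zero-one law for $\dim_{\mcl H}^0 \mcl O_0^\infty$ proceeds exactly as for Proposition~\ref{prop-ball-infty-dim}. Let $c > 0$ be such that $\BB P[\dim_{\mcl H}^0 \mcl O_0^\infty \geq c] > 0$; the goal is to show that a.s.\ $\dim_{\mcl H}^0(\mcl O_0^\infty \cap B_r(0)) \geq c$ for every $r > 0$. Countable stability of Hausdorff dimension together with scale invariance of $\mcl O_0^\infty$ gives, for each $\delta > 0$, a constant $p > 0$ such that $\BB P[\dim_{\mcl H}^0(\mcl O_0^\infty \cap B_r(0)) \geq c - \delta] \geq p$ for all $r > 0$. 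Applying Lemma~\ref{lem-conf-event} with $1-p/2$ in place of $p$ yields a constant $A$ and an event $E_r$ of probability at least $1-p/2$ on which $\mcl B_0^\infty \cap B_r(0)$ is $\sigma(h|_{B_{A^2 r}(0)})$-measurable, exactly as in the proof of Proposition~\ref{prop-ball-infty-dim}. Combined with the locality observation, this means on $E_r$ the set $\mcl O_0^\infty \cap B_r(0)$ is also $\sigma(h|_{B_{A^2 r}(0)})$-measurable. Setting $G_r := E_r \cap \{\dim_{\mcl H}^0(\mcl O_0^\infty \cap B_r(0)) \geq c - \delta\}$ gives $G_r \in \sigma(h|_{B_{A^2 r}(0)})$ with $\BB P[G_r] \geq p/2$, and triviality of the tail $\sigma$-algebra $\bigcap_{r > 0} \sigma(h|_{B_{A^2 r}(0)})$ forces $G_r$ to occur for arbitrarily small $r$ a.s. Taking $c$ to be the essential supremum of $\dim_{\mcl H}^0 \mcl O_0^\infty$ produces the deterministic constant $\Delta_{\op{out}}^0$.

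The lower bound for the finite-volume exterior boundary is transferred via the convergence~\eqref{eqn-ball-infty-conv}: for each $r > 0$, there exists $R > r$ such that $\mcl B_{D_h(0,w)}(w; D_h) \cap B_r(0) = \mcl B_0^\infty \cap B_r(0)$ for every $w \in \BB C \setminus B_R(0)$, and by the locality observation this upgrades to the equality $\mcl O_{D_h(0,w)}(w; D_h) \cap B_r(0) = \mcl O_0^\infty \cap B_r(0)$. The lower bound $\dim_{\mcl H}^0 \mcl O_{D_h(0,w)}(w; D_h) \geq \Delta_{\op{out}}^0$ then follows from the first part of the proposition for such $w$, and the restriction $w \notin B_R(0)$ is removed using the scale invariance~\eqref{eqn-ball-infty-scale} together with the translation invariance of the law of $h$ modulo additive constant. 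The $\gamma$-quantum dimension case is identical, using that $D_h$-distances rescale by a factor depending only on $r$ and $h_r(0)$ under the scale-invariance transformation, so quantum Hausdorff dimensions are preserved. The main (minor) subtle point is the locality of the exterior boundary, since $\mcl O_0^\infty$ is defined via connected components of the complement, which is a priori a global notion; but the argument above reduces it to a straightforward point-set topology observation, so I do not anticipate a serious technical obstacle.
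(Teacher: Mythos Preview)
Your proposal is correct and takes the same approach as the paper, whose proof consists of the single sentence ``This follows from exactly the same argument used in the proof of Proposition~\ref{prop-ball-infty-dim}.'' You have spelled out the one additional ingredient needed, namely that $\mcl O_0^\infty \cap V$ is determined by $\mcl B_0^\infty \cap V$ for open $V$.

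There is a small gap in your justification of that locality step. You assert that ``$z \in \mcl O_0^\infty$ iff $z$ is a limit point of some component of $V\setminus\mcl B_0^\infty$,'' arguing via the intermediate statement that $z$ is a limit point of $U\cap V$ (for $U$ the global component). But being a limit point of a union of open components does not automatically imply being a limit point of a single one: think of a comb-type domain $U$ where infinitely many teeth accumulate at $z$, so that the components of $U\cap V$ are thin strips, none individually reaching $z$. This is exactly the kind of pathology that distinguishes $\bdy\mcl B_0^\infty$ from $\mcl O_0^\infty$, so it cannot be dismissed on purely point-set grounds. The fix is to invoke Lemma~\ref{lem-metric-curve} (which applies to $\mcl B_0^\infty$ via the convergence~\eqref{eqn-ball-infty-conv}): since each component $U$ of $\BB C\setminus\mcl B_0^\infty$ has Jordan curve boundary, every $z\in\bdy U$ is accessible by a path inside $U$, and the tail of that path lies in a single component of $U\cap V$. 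With this, your ``iff'' holds and the rest of the argument goes through.
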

\begin{proof}
This follows from exactly the same argument used in the proof of Proposition~\ref{prop-ball-infty-dim}.
\end{proof}

The set $\BB C\setminus \ol{\mcl B_0^\infty}$ has at most countably many connected components (since each component contains a point of $\BB Q^2$). By the countable stability of Hausdorff dimension $\dim_{\mcl H}^0 \mcl O_0^\infty$ (resp. $\dim_{\mcl H}^\gamma \mcl O_0^\infty$) is a.s.\ equal to the supremum of the Euclidean (resp. $\gamma$-quantum) dimensions of the boundaries of the connected components of $\BB C\setminus \ol{\mcl B_0^\infty}$. 
We expect that a.s.\ the  boundary of each of these connected components have Euclidean dimension $\Delta_{\op{out}}^0$ and $\gamma$-quantum dimension $\Delta_{\op{out}}^\gamma$, but we do not prove this here.

As we described in Section~\ref{sec-intro}, the points in $\bdy \mcl B_s \setminus \mcl O_s$ are the points which are not on the boundary of any connected component of $\BB C\setminus \mcl B_{s}$, which can arise as accumulation points of connected components of $\BB C\setminus \mcl B_s$ with arbitrarily small diameters. The rest of this subsection is devoted to proving Theorem~\ref{thm-outer-bdy-compare}, which asserts that, at least with positive probability, the Euclidean and $\gamma$-quantum dimensions of $\mcl O_s$ are strictly smaller than those of $\bdy\mcl B_s$.  See Figure~\ref{fig-outer-bdy-compare} for an illustration and outline of the proof of Theorem~\ref{thm-outer-bdy-compare}.

\begin{figure}[t!]
 \begin{center}
\includegraphics[scale=1]{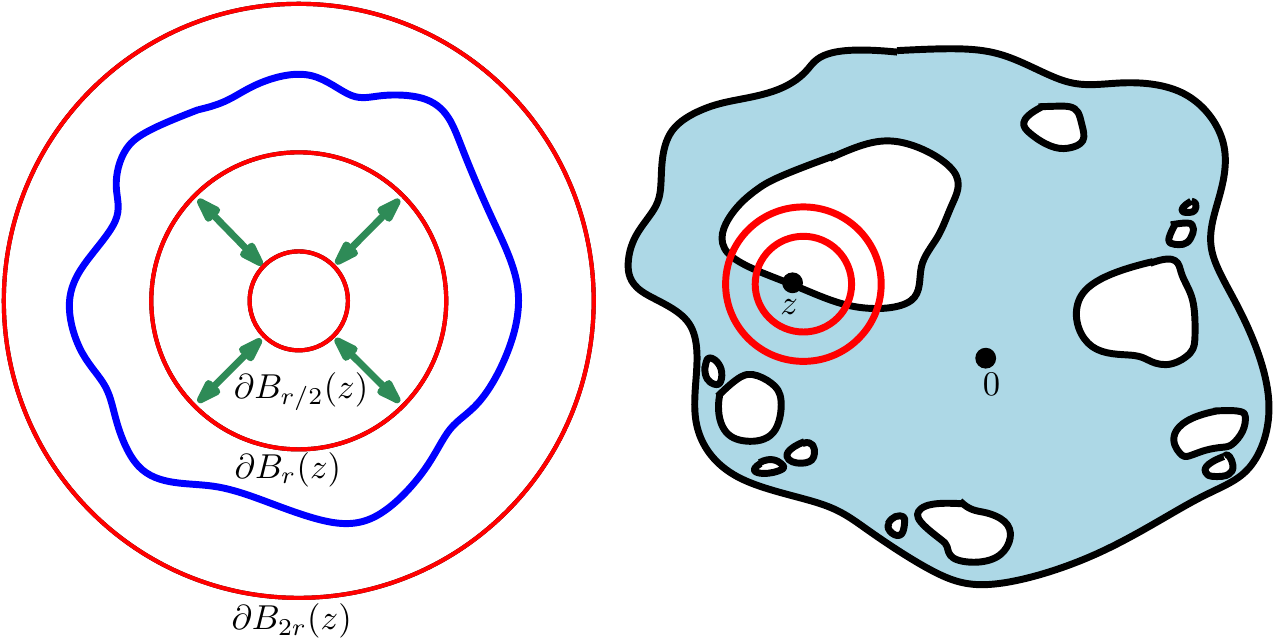}
\vspace{-0.01\textheight}
\caption{Illustration of the main ideas of the proof of Theorem~\ref{thm-outer-bdy-compare}. \textbf{Left.} We define $G_r(z)$ to be the event that there is a path in $B_{2 r}(z) \setminus B_r(z) $ which disconnects its inner and exterior boundaries (blue) whose $D_h$-length is less than $D_h(\bdy B_r(z) , \bdy B_{r/2}(z))$. Then $G_r(z)$ satisfies the two conditions in Lemma~\ref{lem-check-conditions} for a ``good'' event.  This means that, if we define the event $F_\ep(z)$ as in the statement of that lemma, $F_\ep(z)$ satisfies the conditions of the generalized upper bound (Theorem~\ref{thm-gen-upper}).
\textbf{Right.} Theorem~\ref{thm-gen-upper} allows us to reduce our task of proving Theorem~\ref{thm-outer-bdy-compare} to showing that, if $z\in \mcl O_s$, then the event $F_\ep(w)$ occurs for every small enough $\ep > 0$ and every $w \in B_\ep(z) \cap \BB Q^2$.  This is the case because a point on the exterior boundary $\mcl O_s$ has the following property: for every sufficiently small Euclidean annulus $A$ whose inner disk contains $z$, the $D_h$-distance from the inner boundary of $A$ to $z$ must be shorter than the minimal $D_h$-length of the paths in $A$ which disconnect its inner and exterior boundaries. Otherwise, $z$ would not be on the exterior boundary, since the metric ball growth $\{\mcl B_s\}_{s\geq 0}$ would form arbitrarily small ``bubbles" containing $z$ before reaching $z$. 
}\label{fig-outer-bdy-compare}
\end{center}
\vspace{-1em}
\end{figure} 

The proof is based on the generalized upper bound in Theorem~\ref{thm-gen-upper}.  We will apply Theorem~\ref{thm-gen-upper} for the events $F_\ep(z)$ we constructed in Lemma~\ref{lem-check-conditions}, with $a = 1/2$ and $b=1$, and with the ``good'' events $G_r(z)$ defined as   
\[
G_r(z) := \begin{tabular}{l}
 \text{the event that there is a path in the annulus $B_{2r}(z) \setminus B_r(z)$}\\
  \text{which disconnects the inner and outer boundaries of this annulus}\\
  \text{and whose $D_h$-length is shorter than $D_h(\bdy B_{r/2}(z) , \bdy B_r(z))$}
\end{tabular}
\]

To apply Theorem~\ref{thm-gen-upper} via Lemma~\ref{lem-check-conditions}, we need to check that $G_r(z)$ satisfies the conditions of the lemma.

\begin{lem}
\label{lem-outer-good-event-check}
The events $G_r(z)$ satisfy the conditions of Lemma~\ref{lem-check-conditions}.
\end{lem}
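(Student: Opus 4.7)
The plan is to verify the two hypotheses of Lemma~\ref{lem-check-conditions} directly, using the constants $a = 1/2$ and $b = 2$, since the definition of $G_r(z)$ involves both the annulus $B_{2r}(z)\setminus B_r(z)$ (where the competing path lives) and the annulus $\ol{B_r(z)}\setminus B_{r/2}(z)$ (where the competing distance is realized), and the union of these regions is $B_{2r}(z)\setminus B_{r/2}(z)$.

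For the locality condition, I would argue as follows. The $D_h$-length of a path contained in the open annulus $B_{2r}(z)\setminus B_r(z)$ is, by definition, a functional of $h|_{B_{2r}(z)\setminus B_r(z)}$. Next, I would show that the distance $D_h(\bdy B_{r/2}(z),\bdy B_r(z))$ coincides with the internal $D_h(\cdot,\cdot;\ol{B_r(z)}\setminus B_{r/2}(z))$-distance: given any continuous path from $\bdy B_{r/2}(z)$ to $\bdy B_r(z)$, truncating it between its last visit to $\bdy B_{r/2}(z)$ and its first subsequent visit to $\bdy B_r(z)$ yields a sub-path lying in $\ol{B_r(z)}\setminus B_{r/2}(z)$ whose $D_h$-length is no greater; hence minimizing over such sub-paths gives the same infimum. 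Combined with Axiom~\ref{item-metric-local}, this shows the distance is determined by $h|_{\ol{B_r(z)}\setminus B_{r/2}(z)}$. Weyl scaling (Axiom~\ref{item-metric-f}) shows that both sides of the defining inequality of $G_r(z)$ scale by the same factor $e^{\xi c}$ under $h\mapsto h+c$, so $G_r(z)$ is unaffected by the additive constant.

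For the probability condition, translation invariance of $h$ modulo additive constant gives independence of $\BB P[G_r(z)]$ from $z$, while applying Axioms~\ref{item-metric-f} and~\ref{item-metric-coord} to the dilation $w\mapsto rw$ together with $h(r\cdot)-h_r(0)\eqD h$ modulo additive constant yields independence from $r$. It thus remains to prove $\BB P[G_1(0)]>0$, which is the main step. I would use a Cameron-Martin/absolute continuity argument along the lines of~\cite[Lemma~6.1]{gwynne-ball-bdy}: choose a smooth nonnegative bump function $\psi$ compactly supported in $B_2(0)\setminus B_1(0)$ which is identically $1$ in a neighborhood of some smooth Jordan curve $\pi\subset B_2(0)\setminus B_1(0)$ disconnecting $\bdy B_1(0)$ from $\bdy B_2(0)$. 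Since $\psi$ has finite Dirichlet energy, the laws of $h$ and $h-C\psi$ viewed modulo additive constant are mutually absolutely continuous for every $C>0$. Under Weyl scaling, the $D_{h-C\psi}$-length of $\pi$ is at most $e^{-\xi C}$ times the a.s.\ finite $D_h$-length of $\pi$, while the distance $D_{h-C\psi}(\bdy B_{1/2}(0),\bdy B_1(0))$ equals $D_h(\bdy B_{1/2}(0),\bdy B_1(0))$ by the locality fact above, because $\psi$ vanishes on $\ol{B_1(0)}\setminus B_{1/2}(0)$; this latter quantity is a.s.\ positive and finite. Taking $C$ large enough (depending on $h$) makes the defining inequality of $G_1(0)$ hold under the shifted law, and absolute continuity transfers this to positive probability under $h$.

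The only genuinely non-routine step is the positivity argument; the locality and scale-invariance parts reduce to a careful application of the axioms plus the truncation observation for minimizing paths between concentric circles.
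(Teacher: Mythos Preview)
Your proposal is correct and follows essentially the same approach as the paper's proof, which simply cites Axioms~\ref{item-metric-local}--\ref{item-metric-coord} for locality and scale invariance and refers to~\cite[Lemma~6.1]{gwynne-ball-bdy} for positivity. You have spelled out the truncation argument for the crossing distance and the Cameron--Martin positivity argument in detail, which the paper leaves implicit; one minor point is that in the positivity step you should fix a deterministic $C$ first (large enough that $\BB P[e^{-\xi C}\,\op{len}(\pi;D_h) < D_h(\bdy B_{1/2}(0),\bdy B_1(0))]>0$) and then invoke absolute continuity, rather than letting $C$ depend on $h$.
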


\begin{proof}
First, by locality and Weyl scaling (Axioms~\ref{item-metric-local} and~\ref{item-metric-f}), $G_r(z)$ is a.s.\ determined by $h|_{B_{2r}(z) \setminus B_{r/2}(z) }$ viewed modulo additive constant.

Second, by Axioms~\ref{item-metric-f} and~\ref{item-metric-coord} and the scale invariance of the law of $h$ modulo additive constant, $\BB P[G_r(z)] $ does not depend on $r$ or $z$.
Moreover, it is easy to check that $\BB P[G_r(z)] > 0$ for each fixed choice of $r$ and $z$ (see, e.g.,~\cite[Lemma 6.1]{gwynne-ball-bdy}). 
\end{proof}

\begin{proof}[Proof of Theorem~\ref{thm-outer-bdy-compare}]
We will prove the dimension upper bounds for the exterior boundary of a metric balls of a fixed radius.  One can then use exactly the same argument we used in our proof of Lemma~\ref{lem-bdy-dim-compare2revised} to deduce the result for the exterior boundary of a metric ball stopped when it hits a fixed point. 

By combining Lemmas~\ref{lem-outer-good-event-check} and~\ref{lem-check-conditions}, we deduce that the hypotheses of Theorem~\ref{thm-gen-upper} are satisfied for the events $F_\ep(z)$ of~\eqref{eqn-defn-F-ep-z} with the above choice of $G_r(z)$. 
Define $\mcl Y_{s}$ for $s>0$ as in Theorem~\ref{thm-gen-upper} for the above choice of $F_\ep(z)$. 
Then $\mcl Y_{s}$ satisfies the dimension upper bounds of Theorem~\ref{thm-gen-upper}.
These are exactly the bounds we want to prove for  $\mcl O_{s}$.  We will prove these bounds for $\mcl O_{s}$ by showing that $\mcl O_{s} \subset \mcl Y_{s}$. 

Suppose $z \in \mcl O_{s}$. 
Let $\ep   > 0$ be small enough that $0 \notin B_{100\ep^{1/4}}(z)$ and $\bdy B_{100\ep^{1/4}}(z)$ intersects the connected component of $\bdy \mcl B_{s}$ which contains $z$. Let $w\in B_\ep(z) \cap \BB Q^2$.
We claim that $F_{ \ep}(w)$ occurs.

Indeed, if $F_\ep(w)$ does not occur then by definition there is some $r \in [\ep^{1/2} , \ep^{1/4}] \cap \BB Q$ for which $G_r(w)$ occurs, i.e., there is a path $\pi$ in $B_{2r}(w) \setminus B_r(w)$ which disconnects the inner and exterior boundaries of $B_{2r}(w) \setminus B_r(w)$ and whose $D_h$-length is shorter than $D_h(\bdy B_{r/2}(w) , \bdy B_r(w))$. 
Let $P$ be a $D_h$-geodesic from 0 to $z$. 
Since $z\in B_{r/2}(w)$, $P$ must hit the path $\pi$ and then subsequently cross from $\bdy B_r(w)$ to $\bdy B_{r/2}(w)$. 
Since $P$ is a $D_h$-geodesic and the $D_h$-length of $\pi$ is shorter than $D_h(\bdy B_{r/2}(w) , \bdy B_r(w))$, this implies that $\pi \subset \mcl B_{s}$. 
But, $\pi$ disconnects $z$ from $\bdy B_{100\ep^{1/4}}(z)$. This is a contradiction since we have assumed that $\ep$ is small enough so that $\bdy B_{100\ep^{1/4}}(z)$ intersects the connected component of $\bdy \mcl B_{s}$ which contains $z$.
Therefore $F_\ep(w)$ occurs, and so $\mcl O_{s} \subset \mcl Y_{s}$.

Finally, to get that with positive probability $\bdy\mcl B_{s} \setminus \mcl O_s$ is uncountable, we observe that~\cite[Theorem 1.1]{gwynne-ball-bdy} shows that, with positive probability,  $\dim_{\mcl H}^0 \bdy\mcl B_{s}  > 2-\xi Q + \xi^2/2 - q \geq \dim_{\mcl H}^0 \mcl O_{s}$.  If this is the case then there must be uncountably many points in $\bdy\mcl B_{s}\setminus \mcl O_{s}$. Similarly, since Theorem~\ref{thm-ball-bdy-bound} shows that, \emph{almost surely}, $\dim_{\mcl H}^0 \bdy\mcl B_{D_h(0,z)} > 2-\xi Q + \xi^2/2 - q \geq \ \dim_{\mcl H}^0 \mcl O_{D_h(0,z)}$, we deduce that  $\bdy\mcl B_{s}\setminus \mcl O_{s}$ a.s. contains uncountably many points.
 \end{proof}

\bibliography{cibib}
\bibliographystyle{hmralphaabbrv}

\end{document}